\documentclass[11pt,a4paper,reqno]{amsart}
\usepackage{dutchcal}
\usepackage[T1]{fontenc}
\usepackage{phaistos}
\usepackage{protosem}
\usepackage{amsmath}
\usepackage{amsthm}
\usepackage{amsfonts}
\usepackage{amssymb}
\usepackage{graphicx}
\usepackage{color}
\usepackage{amsbsy}
\usepackage{mathrsfs}
\usepackage{bbm}
\usepackage{verbatim}
\usepackage[colorlinks=true,linkcolor=black,citecolor=blue]{hyperref}
\addtolength{\hoffset}{-0.6cm} \addtolength{\textwidth}{1.2cm}
\addtolength{\voffset}{-0.5cm} \addtolength{\textheight}{1.0cm}
\hbadness=100000

%ENVIRONMENTS THEOREMS...q
\newtheorem{theorem}{Theorem}[section]
\newtheorem{lemma}[theorem]{Lemma}

\newtheorem{proposition}[theorem]{Proposition}

\theoremstyle{remark}
\newtheorem{remark}[theorem]{\it \bf{Remark}\/}

\numberwithin{equation}{section}
\catcode`@=11
\def\section{\@startsection{section}{1}%
  \z@{1.5\linespacing\@plus\linespacing}{.5\linespacing}%
  {\normalfont\bfseries\large\centering}}
\catcode`@=12

%%%Alias%%%%%%%%%%%%%%%%%%%%%%%%%%%%%
%%%%%%%%%%%%%%%%%%%%%%%%%%%%%%%%%%
\newcommand{\be}{\begin{equation}}
\newcommand{\ee}{\end{equation}}
\newcommand{\bea}{\begin{eqnarray}}
\newcommand{\eea}{\end{eqnarray}}
\newcommand{\bee}{\begin{eqnarray*}}
\newcommand{\eee}{\end{eqnarray*}}
\def\div{{\rm div \;}}

\def\pa{\partial}
\def\pr{\partial}

\DeclareMathOperator{\peye}{\textproto{o}}
\newcommand{\eye}{r_{\hskip -.1pc\peye}}

\def\RR{\mathbb{R}}

\def\d{\mathcal d}

\def\de{\delta}

\def\uh{\hat{u}}

\catcode`@=11
\def\supess{\mathop{\operator@font Sup\,ess}}
\catcode`@=12

\def\div{{\rm div \;}}

\def\RR{\mathbb{R}}

\def\e{\varepsilon}

\def\bar#1{{\overline #1}}

\def\R2+{\RR ^2_+}

\def\A{\mathcal A}

\def\pa{\partial}

\def\lim{\mathop{\rm lim}}

\def\l{\lambda}

\def\log{{\rm log}}

\def\e{\mathcal e}

\def\ut{\tilde{u}}

\def\rhoh{\hat{\rho}}
\def\Psih{\hat{\Psi}}

\def\T{\Theta}

\def\pa{\partial}

\def\rhot{\tilde{\rho}}
\def\Psit{\tilde{\Psi}}

\def\Lamdba{\Lambda}

\def\pa{\partial}
\def\la{\langle}
\def\matchal{\mathcal}
\def\ra{\rangle}

\def\NL{\textrm{NL}}

\def\Psipx{\Psi_{P}}
\def\rhopx{\rho_{P}}
\def\rhox{\rho}
\def\Psix{\Psi}
\def\qx{Q}

\def\rhox{\rho}
\def\Psix{\Psi}
\def\Phix{\Phi}
\def\rhoh{\hat{\rho}}
\def\Psih{\hat{\Psi}}

\def\Et{\tilde{\mathcal E}}

\def\Ht{\tilde{H}}
\def\ph{\hat{p}}
%%%%%%%%%%%%%%%%%%%%%%%%%%%%%%%%%%%%%%%%%%%%%%%%%%
\begin{document}

\title[]{On the implosion of a three dimensional compressible fluid}
\author[F. Merle]{Frank Merle}
\address{AGM, Universit\'e de Cergy Pontoise and IHES, France}
\email{merle@math.u-cergy.fr}
\author[P. Rapha\"el]{Pierre Rapha\"el}
\address{Department of Pure Mathematics and Mathematical Statistics, Cambridge, UK}
\email{pr463@cam.ac.uk}
\author[I. Rodnianski]{Igor Rodnianski}
\address{Department of Mathematics, Princeton University, Princeton, NJ, USA}
\email{irod@math.princeton.edu}
\author[J. Szeftel]{Jeremie Szeftel}
\address{CNRS $\&$ Laboratoire Jacques Louis Lions, Sorbonne Universit\'e, Paris, France}
\email{jeremie.szeftel@upmc.fr}
\begin{abstract} 
We consider the compressible three dimensional Navier Stokes and Euler equations. In a suitable regime of barotropic laws,  we construct a set of finite energy smooth initial data for which the corresponding solutions to both equations implode (with infinite density) at a later time at a point, and completely describe the associated formation of singularity. Two essential steps of the analysis are the existence of $\mathcal C^\infty$ smooth self-similar solutions to the compressible Euler equations for quantized values of the speed and the derivation of spectral gap estimates for the associated linearized flow which are addressed in the companion papers  \cite{MRRSprofile, MRRSdefoc}. All blow up dynamics obtained for the Navier-Stokes problem are of type II (non self-similar).

\end{abstract}

\maketitle

%%%%%%%%%%%%%%%%%%%%%%%%%%%%%%%%%%%%%%%%%%%%%%%%%%
%%%%%%%%%%%%%%%%%%%%%%%%%%%%%%%%%%%%%%%%%%%%%%%%%%

\section{Introduction}

%%%%%%%%%%%%%%%%%%%%%%%%%%%%%%%%%%%%%%%%%%%%%%%%%%
%%%%%%%%%%%%%%%%%%%%%%%%%%%%%%%%%%%%%%%%%%%%%%%%%%

%%%%%%%%%%%%%%%%%%%%%%%%%%%%%%%%%%%%%%%%%%%%%%%%%%

\subsection{Setting of the problem}

%%%%%%%%%%%%%%%%%%%%%%%%%%%%%%%%%%%%%%%%%%%%%%%%%%

We consider the three dimensional barotropic compressible Navier-Stokes equation:
\be
\label{NScomp}
(\rm{Navier-Stokes})\ \ \left|\begin{array}{lll}\pa_t\rho+\nabla\cdot(\rho u)=0\\
\rho\pa_tu+\rho u\cdot\nabla u+\nabla \pi =\mu\Delta u+\mu'\nabla\div u\\
\pi=\frac{\gamma-1}{\gamma}\rho^\gamma\\
(\rho_{|t=0},u_{|t=0})=(\rho_0(x),u_0(x))\in \Bbb R^*_+\times \Bbb R^3
\end{array}\right.
\ee 
for $\gamma>1$, as well as the compressible Euler equations:
\be
\label{eulercomp}
(\rm{Euler})\ \ \qquad\qquad\left|\begin{array}{lll}\pa_t\rho+\nabla\cdot(\rho u)=0\\
\rho\pa_tu+\rho u\cdot\nabla u+\nabla \pi=0\\
\pi=\frac{\gamma-1}{\gamma}\rho^\gamma\\
(\rho_{|t=0},u_{|t=0})=(\rho_0(x),u_0(x))\in \Bbb R^*_+\times \Bbb R^3
\end{array}\right.
\ee 
with non-vanishing density $\rho>0$, but possibly decaying at $+\infty$ 
\be
\label{vneivenoenenevnove}
\lim_{|x|\to +\infty}\rho(t,x)=0.
\ee 
The problem of understanding global dynamics of classical solutions of compressible fluid dynamics is notoriously difficult, 
as was already observed in the 1-dimensional inviscid case by Challis, \cite{challis}. It becomes even more complicated in higher 
dimensions, including a physically relevant 3-dimensional problem, and in the viscous case due to the lack of access to the 
method of characteristics.
\subsection{Breakdown of solutions for compressible fluids}
For non-vanishing densities, smooth initial data satisfying appropriate fall-off conditions at infinity yield unique local in time strong solutions, \cite{nash,majda,mku,chemin,choe}. However,  for the Euler equations, it has been known since the pioneering work of Sideris \cite{sideris},  that well chosen initial data (with density which is constant outside of a large ball) cannot be continued for all times as strong solutions. The result applies to both large and ``small'' data and holds for all $\gamma>1$. 
Similarly, for the Navier-Stokes equations, there are regimes in which strong solutions to \eqref{NScomp} 
can not be continued, however such results require vanishing conditions on the data. 
It was first shown in \cite{xin} for all compactly supported data
and then in \cite{rozanova} for non-vanishing (density) but decaying at infinity data for $\gamma>\frac 65.$ 
In both Euler and Navier-Stokes cases the underlying convexity arguments give no insight into the nature of the singularity
and while for the compressible Euler equations subsequent work (see below) produced complete description of singularity (shock) formation (at least in the small data near constant density regime), the questions about quantitative singularity formation in
Navier-Stokes and in other Euler regimes remained open.\\

In this paper we address the classical problem of  singularity formation in compressible fluids 
arising from smooth well localized initial data with non-vanishing density. We study both the three dimensional Navier-Stokes equations and its inviscid Euler limit. For a suitable range of equations of state,  we exhibit a  class (finite co-dimension 
manifold in the moduli space) of smooth, well localized (without vacuum) initial data for which the corresponding solutions blow up in finite time at a point and completely describe the associated formation of singularity. The results also extend to the two dimensional Euler equations.
These solutions describe self-{\it implosion} of a fluid/gas in which smooth well localized (in particular finite energy) distribution of matter collapses
upon itself (with {\it infinite density}) in finite time while remaining smooth (in particular, free of shocks) until then. At the collapse time, remaining matter assumes a certain {\it universal} form.\\

With the focus on both the Navier-Stokes and the Euler equations we examine the question of failure of classical solutions to 
be continued globally in time. Specifically, we study the {\it first time singularity problem}, identifying the first time that solutions stop being classical and the singular set on which it happens. In the Navier-Stokes case such results are completely new. For the Euler equations in two and three dimensions such results are connected with the more general singularity formation in quasilinear hyperbolic equations and originate in the works of John \cite{john} and Alinhac \cite{alinhac,alinhac1}. In the Euler case, due to the hyperbolic nature of the equations, one can also study a richer problem of {\it shock formation} which in particular addresses the structure of the {\it full} singular set of the solutions.

%%%%%%%%%%%%%%%%%%%%%%%%%%%%%%%%%%%%%%%%%%%%%%%%%%

\subsection{Quantitative theory of singularity formation for the compressible Euler}

%%%%%%%%%%%%%%%%%%%%%%%%%%%%%%%%%%%%%%%%%%%%%%%%%%
 
We (mostly) limit our discussion to the three dimensional case and completely bypass the rich and storied narrative of the one dimensional case, see e.g. \cite{dafermos}. Shock formation for the three dimensional Euler equations was shown 
in the
work of Christodoulou \cite{christodoulou} in both the relativistic and non-relativistic cases (see also \cite{chm}.) 
The work covered a small data regime of near constant density and small velocities, with the shock forming in the 
irrotational part of the fluid, and provided a complete geometric description at the shock. One of the key features of the 
work and the reason why the result may be called  ``shock formation" is that it constructed and  showed a particular structure of the {\it maximal Cauchy development} of solutions. Such a maximal Cauchy development possesses a boundary $\partial \mathcal H\cup\mathcal H\cup \mathcal C$, part of which -- a smooth null 
3-d hypersuface $\mathcal H$ and 2-d sphere $\partial \mathcal H$ -- is the singular set of the solution. The past endpoints   
$\mathcal H$ are precisely the set $\partial \mathcal H$ --  the {\it first singularity} of the solution. It is also that aspect of the construction that later allowed Christodoulou to study the (restricted) 
{\it shock development} problem, \cite{christ}. \\

While shock formation and shock development problems require studying the maximal Cauchy development and 
the associated first singularity, one could, especially in the non-relativistic setting where the time variable $t$ is well defined,
investigate the problem of the {\it first time singularity.} That problem amounts to understanding a singular set of the solution at the first time $T$ when it becomes singular. In the setting described above, this would be the set 
$T\cap \partial \mathcal H\cup\mathcal H\subset \partial \mathcal H\cup  \mathcal H$ which a priori may not coincide with the first singularity set $\partial \mathcal H$ (or even have the same dimension). On the other hand, just 
the knowledge of the first time singular set provides no information about the maximal Cauchy development, the full singular set or shock formation. In fact, in principle, it may be completely consistent with the full singular set being a 3-d {\it space-like} hypersurface, rather than the null $\mathcal H\cup\partial \mathcal H$, and thus be incompatible with 
the shock development picture. (For the multi-dimensional semilinear wave equations examples of singular sets have been considered and analyzed in 
e.g. \cite{caf,kichen,mz,mz1}.)
\\

Having drawn a distinction between the first singularity (shock formation) and first time singularity formation, we should recall again that the latter 
problem for multi-dimensional compressible Euler equations
had been studied in the works of Alinhac (with a precursor in John, \cite{john}) in two and three dimensions for a more general (quasilinear hyperbolic) 
class of equations, 
including Euler, \cite{alinhac,alinhac1}, in the small data regime and was tied to the failure of Klainerman's 
null condition, \cite{kl}, and to a {\it 1-dimensional Burgers mechanism} of singularity formation. Recently, this has been extended in \cite{speck};
open set of data leading to solutions of the Euler equations with 
{\it non-trivial vorticity} at the first time singularity have been constructed in \cite{ls} and later, in different regimes in \cite{BSV,BSV1}. The 1-dimensional Burgers phenomenon has been lifted to higher dimensions also recently in \cite{cmt} 
for the Burgers equation with transverse viscosity.\\

\subsection{Results}
We now contextualize our results. Once again we limit our discussion to the three dimensional case. There are three 
critical issues. 

First, since in this paper we study the Navier-Stokes and Euler problems {\it simultaneously}, we can 
not even define maximal Cauchy development, which is associated with hyperbolic PDE's, and thus properly speak about 
shock formation. Ours is a first time singularity result. 

Secondly, shock formation  and development for the 
three dimensional compressible Euler equations has been shown only in the small data, near constant density, regime. 
For such data Navier-Stokes solutions remain global, \cite{mn}. Our solutions to both Navier-Stokes and Euler belong to a very 
different, large data regime. For Navier-Stokes, in this regime the density decays at infinity. For Euler, in view of the domain of dependence principle, behavior at infinity, in principle,  is less important. See also comment 5. after the statement of the main theorem.

Lastly, the first time singularities constructed in this paper occur at one point. We do not speculate about the structure of the 
full singular set.  However, we emphasize two important issues. One is that at a singular point {\it all} directions are singular,
unlike the picture established in \cite{christodoulou} where each point of the singular set $\partial \mathcal H$ possesses 
3 regular tangential direction (along $\mathcal H$).  The other one is perhaps the most important 
point: in formation of shock type singularities one expects to maintain boundedness of both density and velocity 
(with their first derivatives blowing up). In solutions constructed in this paper both density and velocity {\it blow up} at the 
singularity. This is a new phenomenon of formation of {\it strong singularities}. It relies on the existence of appropriate self-similar solutions to the 
Euler equations and makes no connection to the link between the Euler equation and explicit solutions of the Burgers equation.

 %%%%%%%%%%%%%%%%%%%%%%%%%%%%%%%%%%%%%%%%%%%%%%%%%%

%%%%%%%%%%%%%%%%%%%%%%%%%%%%%%%%%%%%%%%%%%%%%%%%%%

\subsection{Statement of the result}

%%%%%%%%%%%%%%%%%%%%%%%%%%%%%%%%%%%%%%%%%%%%%%%%%%

We recall that $\gamma$ is the parameter describing the equation of state and define the following additional parameters:
\be
\label{assumptionnonlinearity}
\left|\begin{array}{l}
\ell=\frac{2}{\gamma-1}\\
r^*(d,\ell)=\frac{d+\ell}{\ell+\sqrt{d}},\\
r_+(d,\ell)=1+\frac{d-1}{(1+\sqrt{\ell})^2}\\
\eye(d,\ell)=\left|\begin{array}{l} r^*(d,\ell)\ \ \mbox{for}\ \ \ell<d\\
r_+(d,\ell)\ \ \mbox{for}\ \ \ell>d.
\end{array}\right.
\end{array}\right.
\ee
The standard physical restrictions on the viscosity parameters $\mu,\mu'$ is given by 
$$
\mu\ge 0,\quad 3\mu'-\mu\ge 0
$$
 In what follows, we will allow the weaker assumption 
 $$
\mu+\mu'\ge 0
$$

\begin{theorem}[Implosion for a three dimensional compressible fluid]
\label{thmmain}
There exists a (possibility empty) exceptional countable sequence $(\ell_n)_{n\in \Bbb N}$ whose accumulation points can only be at $\{0,3,+\infty\}$ such that the following holds.  Let $\ell$ be related to $\gamma$ according to \eqref{assumptionnonlinearity}, and assume
\be
\label{ranegeulerns}
\left|\begin{array}{l}
\ell \neq 3\\
\ell>\sqrt{3} \ \ \mbox{for}\ \ (\rm{Navier-Stokes})\\
\ell>0 \ \ \mbox{for}\ \ (\rm{Euler})
\end{array}\right.
\ee 
and $\ell$ avoids the countable values:  
\be
\label{admissibiblevlaur}
\ell\notin\{ \ell_n, n\in \Bbb N\}.
\ee
Then for each such admissible $\ell$,
there exists a discrete sequence of blow up speeds $(r_k)_{k\ge 1}$ with $$1<r_k<\eye(3,\ell), \ \ \lim_{k\to+\infty} r_k=\eye(3,\ell)$$ such that for each $k\ge 1$,  there exists a finite co-dimensional manifold (in the moduli space) of smooth spherically symmetric initial data $(\rho_0,u_0)\in \cap_{m\ge 0} H^m(\Bbb R^3,\Bbb R^*_+\times \Bbb R^3)$ such that the corresponding solutions to {\em both} \eqref{NScomp} and \eqref{eulercomp} in their respective regimes \eqref {ranegeulerns} blow up 
in finite time $0<T<+\infty$ at the center of symmetry with
\be
\label{vmempemempevp}
\|u(t,\cdot)\|_{L^\infty}=\frac{c_{u_0}(1+o_{t\to T}(1))}{(T-t)^{\frac{r_k-1}{r_k}}}\qquad 
\|\rho(t,\cdot)\|_{L^\infty}=\frac{c_{\rho_0}(1+o_{t\to T}(1))}{(T-t)^{\frac{\ell(r_k-1)}{r_k}}}
\ee
for some constants $c_{\rho_0},c_{u_0}> 0$. 
\end{theorem}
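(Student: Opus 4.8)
The plan is to implement a bootstrap/modulation argument built on the self-similar Euler profiles supplied by the companion paper \cite{MRRSprofile} together with the spectral gap of the linearized operator from \cite{MRRSdefoc}. Let me sketch the steps.

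Step 1: Self-similar renormalization. For $\gamma>1$ set $\ell=\frac{2}{\gamma-1}$ and pass to renormalized (self-similar) variables adapted to the blow up speed $r_k$: one introduces $s$ with $\frac{ds}{dt}=\frac{1}{\lambda}$, $y=\frac{x}{\lambda(t)}$, and rescales $(\rho,u)$ by the appropriate powers of $\lambda$ dictated by the scaling of \eqref{eulercomp} so that $\lambda(t)\sim (T-t)^{1/r_k}$ corresponds to a steady profile. In these variables the Euler system becomes an autonomous system whose stationary spherically symmetric solutions are exactly the $\mathcal C^\infty$ quantized-speed profiles $(\rho_{r_k},u_{r_k})$ constructed in \cite{MRRSprofile}; the quantization of $r_k$ (with $1<r_k<\eye(3,\ell)$, $r_k\to\eye(3,\ell)$) and the exclusion of the countable set $\{\ell_n\}$ come directly from that construction, while the thresholds $\ell>\sqrt3$ (Navier-Stokes) versus $\ell>0$ (Euler) reflect where the viscous terms are subcritical with respect to this scaling.

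Step 2: Decomposition and modulation equations. Write the solution as a modulated profile plus error, $(\rho,u)=(\rho_{r_k},u_{r_k})_{\lambda,\text{etc.}}+(\varepsilon_\rho,\varepsilon_u)$, introducing modulation parameters (the scaling $\lambda$, and the internal parameters of the profile). Choose orthogonality conditions that fix the decomposition by killing the projection of $\varepsilon$ onto the generalized kernel / unstable-and-neutral directions of the linearized operator $\mathcal L$ around $(\rho_{r_k},u_{r_k})$. Differentiating the orthogonality conditions produces the modulation ODEs, showing $\lambda$ is, to leading order, governed by the self-similar law $-\frac{\lambda_s}{\lambda}\to$ const, hence $\lambda(t)\sim c(T-t)^{1/r_k}$, which after undoing the rescaling yields \eqref{vmempemempevp} with the stated powers $\frac{r_k-1}{r_k}$ for $u$ and $\frac{\ell(r_k-1)}{r_k}$ for $\rho$.

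Step 3: Energy estimates and the bootstrap. The core is controlling $\varepsilon$ in a sufficiently high Sobolev norm (in fact a well-chosen weighted norm adapted to the hyperbolicity of the linearized Euler flow and to the light cone of the profile). Using the spectral gap estimate from \cite{MRRSdefoc} one gets, after projecting off the finitely many unstable modes (this is the source of the finite codimension of the data manifold), a coercive Lyapunov/energy functional $\mathcal N(\varepsilon)$ satisfying a differential inequality of the form $\frac{d}{ds}\mathcal N\le -\delta\,\mathcal N+(\text{lower order})+(\text{viscous contribution})$. One then runs a bootstrap on the renormalized time interval $s\in[s_0,+\infty)$: assuming $\mathcal N(\varepsilon)(s)$ and the modulation parameters stay in a trapped regime, one proves they remain strictly inside it, improving the bootstrap constants. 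The viscous term is handled by the subcriticality encoded in $\ell>\sqrt3$: in renormalized variables the Navier-Stokes dissipation carries a positive power of $\lambda$ and is a perturbation that is absorbed, which is precisely why all Navier-Stokes blow ups here are type II (non self-similar) — the viscosity forces a logarithmic-type or lower-order correction but does not change the leading self-similar scaling.

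Step 4: Closing the argument and topological selection. A standard Brouwer-type / Lyapunov-Schmiedt argument over the finitely many unstable directions selects, inside the prepared finite-codimension manifold of initial data, the actual data for which $\varepsilon$ stays trapped for all $s$, hence for which $\lambda(t)\to 0$ as $t\to T<\infty$ and the solution stays smooth (all Sobolev norms of the renormalized solution bounded) on $[0,T)$; translating back gives $(\rho_0,u_0)\in\cap_m H^m(\mathbb R^3)$ with $\rho_0>0$, and the blow up rates \eqref{vmempemempevp}. The two-dimensional Euler statement follows by the identical scheme with $d=3$ replaced by $d=2$ in \eqref{assumptionnonlinearity}.

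**Main obstacle.** I expect the hardest part to be Step 3: obtaining a \emph{coercive}, sign-definite energy functional for the linearized \emph{compressible Euler} flow around the self-similar profile. Unlike parabolic problems there is no dissipation to rely on for Euler, the profile is only given implicitly (through an ODE analysis with a sonic/light-cone singularity), and the natural energy norm must be weighted so as to (i) see the spectral gap of \cite{MRRSdefoc} past the finitely many unstable modes, (ii) be propagated by the transport structure of linearized Euler despite the loss of derivatives inherent in quasilinear hyperbolic systems, and (iii) tolerate the viscous perturbation in the Navier-Stokes case. Reconciling the hyperbolic energy method (which wants commutators with well-chosen vector fields adapted to the acoustic cone) with the spectral-theoretic input (which lives on the radial profile) — across the sonic point where the coefficients degenerate — is the crux of the whole construction.
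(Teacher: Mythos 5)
Your sketch captures the broad architecture (profile from \cite{MRRSprofile}, spectral gap from \cite{MRRSdefoc}, bootstrap, topological selection) but Step~2 misdescribes how the paper actually handles the degrees of freedom, and this is a substantive difference, not a cosmetic one.

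The paper does \emph{not} run a modulation analysis. There are no time-dependent modulation parameters $\lambda(s)$ with an associated ODE $-\lambda_s/\lambda\to\text{const}$, and there are no orthogonality conditions imposed on the perturbation to kill neutral/unstable directions. The self-similar scaling is frozen exactly from the outset: one sets $(T-t)=2e^{-r\tau}$, $Z=e^\tau x$ with the front speed $r=r_k$ chosen once and for all, so $\lambda(\tau)=e^{-\tau}$ is deterministic, and the profile $(\rho_P,\Psi_P)$ is an exact stationary solution of the renormalized Euler system in these variables. The only free parameter of that kind is the blow-up time $T$ (equivalently $\tau_0$). The finitely many unstable directions are \emph{not} eliminated by orthogonality; instead, the projection $PX(\tau)$ onto the unstable subspace $V$ is allowed to evolve, is controlled by a bootstrap hypothesis, and a Brouwer shooting argument (Lemma~\ref{browerset}) selects the $V$-component of the initial data so that $PX$ stays trapped for all $\tau$. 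This is a genuine structural difference: a modulation/orthogonality scheme presupposes smooth families of approximate solutions (e.g.\ solitons under a symmetry group) to absorb into parameters, whereas here one has a rigid self-similar front with no continuous internal parameters, and the only tool to handle the unstable spectrum is a topological one.

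Two smaller but real inaccuracies. First, the viscous contribution is not a ``logarithmic-type or lower-order correction'' to the scaling law; in renormalized time it carries the explicit factor $b^2=e^{-2\mathcal e\tau}$ with $\mathcal e=\frac12[\ell(r-1)+r-2]>0$, so the dissipation is exponentially suppressed and treated perturbatively throughout, with no correction to the leading rate. The ``type II'' label comes from the blow-up obeying the Euler scaling rather than the Navier--Stokes one, which renders the scale-invariant Navier--Stokes Sobolev norm divergent. Second, the energy machinery is more layered than a single coercive Lyapunov functional: the paper uses a two-tier regularity scheme ($k_\flat$ for the spectral gap on a compact $Z$-region; $k^\sharp$ for global weighted energy bounds), an unweighted top-order quasilinear energy estimate whose coercivity hinges on the global repulsivity property $(P)$ (hence the restriction $\ell>\sqrt3$ for Navier--Stokes, with localized cut-offs replacing $(P)$ in the Euler-only ranges), a separate global dissipative estimate used exactly once to control the viscous term at infinity, and finally a renormalized finite-speed-of-propagation argument to transport the exponential decay from just past the acoustic cone $Z=Z_2$ to arbitrary compact $Z$-sets. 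If you attempt the modulation/orthogonality route you would still need all of this analytic infrastructure, and in addition you would have to justify the modulation equations themselves, which the paper avoids by design.
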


\begin{remark}
A corresponding statement holds for Euler in dimension 2 in the range $\ell>0$, $\ell\neq 2$, see the third comment of section \ref{sec:commentsonthemainresult}.
\end{remark}

\subsection{Comments on the result}\label{sec:commentsonthemainresult}

We begin our discussion by emphasizing the point that for the Navier-Stokes equations the results of Theorem \ref{thmmain}
{\it do not} describe a self-similar (type I) singularity formation. The blow up profile dominating the behavior on the approach to singularity is a {\it front} for the Navier-Stokes equations and obeys (one of) the Euler scalings\footnote{The Euler equations possess a 2-parameter family of scaling transformations containing a 1-parameter family of Navier-Stokes as a subfamily. The parameter $r$ -- what we call here {\it speed} --- labels a particular choice of a 1-parameter subfamily of the scaling transformations of the Euler equations.} rather than the Navier-Stokes one. The scaling is super-critical for the Navier-Stokes problem:  the scale invariant Sobolev norm\footnote{The Navier-Stokes scaling preserves the $\|\rho(t,\cdot)\|_{\dot H^{s_{NS}}}$ with $s_{NS}=1+\frac 1{2\gamma}$, while the Euler scaling used for the profile preserves the Sobolev norm with the exponent $s_c=\frac 12+\frac 1r$. The condition \eqref{conditione} $\e>0$ which dictates the compatibility of Eulerian regimes with Navier-Stokes is precisely $s_c<s_{NS}$, which means that the scale invariant Navier-Stokes Sobolev norm blows up.} blows up 
at the singular time. Blow up is therefore of type II similar to our previous work \cite{MRRnls}.\\

\noindent{\em 1. Inviscid limit}. The results of Theorem \ref{thmmain} are {\it uniform} relative to the viscosity parameters $\mu, \mu'$ of the Navier-Stokes equations. Neither the sequence $\ell_n$ nor the blow up speeds $r_k$ 
depend on $\mu,\mu'$. Moreover, the singularity formation in Navier-Stokes survives in the inviscid limit. To describe that we note that the finite co-dimensional manifold of data, for which our results hold, 
are constructed as a pair of elements $(x,x')$, where $x$ belongs to a small ball (in appropriate topology) in the 
linear space $U$, while $x'=\Phi_{\mu,\mu'}(x)$ lies in $V$. 
The linear stable space $U$ and (finite-dimensional) unstable space $V$ are the {\it same} for all viscosity parameters $\mu,\mu'$
and constitute a decomposition of the full Hilbert space $H=U\bigoplus  V$.
The nonlinear map $\Phi_{\mu,\mu'}(x)$ is uniformly bounded with respect to the parameters $\mu,\mu'$ and admits a limit in the regime $\mu>0$, $\mu'>0$  and $\mu, \mu'\to 0$. 

As a consequence, singularity formation in the Euler equations
in this paper falls into two categories: in the Navier-Stokes regime $\ell>\sqrt{3}$ singular solutions of the Euler equations also correspond to (and arise as limits of) singular solutions of Navier-Stokes; in the remaining allowed range $\ell<\sqrt{3}$ singular solutions of the Euler equations do not have their viscous analogs. We should however stress 
that both in the Navier-Stokes regime and the ``pure'' Euler regimes blow up  occurs via a self-similar 
 {\it Euler} profile.
 \\

\noindent{\em 2. The range \eqref{ranegeulerns}}. The value $\ell=3$ or $\gamma=\frac 53$, which corresponds to the law for a 
monoatomic ideal gas, is exceptional and signals a phase transition from the blow up rate $r^*(3,\ell)$ for $\ell<3$ to 
$r_+(3,\ell)$ for $\ell>3$.
The nature of the phase portrait underlying the existence of suitable blow up profiles for Euler degenerates dramatically for $\ell=3$ with the formation of a critical triple point, \cite{MRRSprofile}. In the general dimension $d$ this phenomenon happens at $\ell=d$. The lower bound restriction $\ell>\sqrt{3}$ for the Navier-Stokes problem is also essential and sharp and measures the compatibility of the Euler-like blow up with the dissipation term in the Navier-Stokes equations. Viewing dimension $d$ as a parameter, this compatibility can be sharply measured by the condition, see \eqref{conditione}:\\
\underline{$\ell<d$}:
\be\label{rdl}
r^*(d,\ell)=\frac{d+\ell}{\ell+\sqrt{d}}>\frac{2+\ell}{1+\ell}\Leftrightarrow \ell>\ell_0(d)=\frac{2\sqrt{d}-d}{d-1-\sqrt{d}}
\ee
which  {\em always} holds  for $d\ge 4$ (all terms $\ge 0$), {\em never} holds for $d=2$ (all terms $<0$), and for $d=3$ demands $\ell>\sqrt{3}$, this is the lower bound \eqref{ranegeulerns}.\\
\underline{$\ell>d$}: 
$$
r_+(d,\ell)=1+\frac{d-1}{(1+\sqrt{\ell})^2}>\frac{2+\ell}{1+\ell}
$$
also never holds for $d=2$ but always holds for $d=3$, $\ell>3$.\\
This shows the fundamental influence of {\em both} the dimension and the blow up speed, attached to the Eulerian regime, on the strength of dissipation for fluid singularities.\\

\noindent{\em 3. The Euler case}. Our theorem also holds 
for the two dimensional Euler equations in the range $\gamma>1$ and $\gamma\ne 2$. Both the inviscid limit statement and the validity of the ``pure'' Eulerian regimes $(d=3, \ell<\sqrt{3})$, $(d=2,\ell>0)$, arise from the proof of the theorem. Let us note that in the case of Euler, a direct analysis of the dynamical system governing the self similar dynamics \cite{guderley, MRRSprofile} easily produces a continuum of self-similar solutions which, in principle, using the finite speed of propagation, one could try to localize, to produce finite energy self similar blow up solutions. These solutions however arise from the data of limited regularity, see section \ref{veniovninenein}. This procedure cannot be applied in the Navier-Stokes case, and, more generally, our understanding of the {\em finite co-dimensional stability} of these self similar solutions is directly linked to the $\mathcal C^\infty$ regularity. \\

\noindent{\em 4. The sequence $\ell_n$}. The discrete sequence $\ell_n$ of possibly non admissible equations of state is related to the existence of $\mathcal C^\infty$ self similar solutions to the compressible Euler. We proved in \cite{MRRSprofile} that for all $d\ge 2$, such profiles exist for discrete values of the blow up speed in the vicinity of the limiting speed $\eye(d,\ell)$ provided a certain non vanishing condition $S_\infty(d,\ell)\neq 0$ holds. The function $S_\infty(d,\ell)$ is given by an explicit series and is holomorphic in $\ell$ (in a small complex neighborhood of each interval $(0,d)$ and $(d,\infty)$). We do not know how to check the non vanishing condition analytically, but we can prove that the possible zeroes of $S_\infty(d,\cdot)$ are isolated and possibly accumulate only at $\ell\in\{0,d\}$. For small $\ell$, this condition can easily be checked numerically, but the series becomes exceedingly small as $\ell\to d$ and hence the numerical check of a given value becomes problematic, see \cite{MRRSprofile}. We do not know whether the condition $S_\infty(d,\ell)\neq 0$ is necessary for the existence of $\mathcal C^\infty$ self-similar profiles, understanding this would require revisiting the asymptotic analysis $r\uparrow \eye(d,\ell)$ performed in \cite{MRRSprofile} in the degenerate case.\\

\noindent{\em 5. Behavior at infinity \eqref{vneivenoenenevnove} and other domains}. In this paper 
our results apply to the solutions $(\rho,u)$ which decay at infinity. As such,
the solutions have finite energy. However, from that point of view it is unnecessary for both $\rho$ and $u$ to decay.
A particularly interesting case is when $\rho$ approaches a constant at infinity and $u$ vanishes appropriately. For Navier-Stokes such solutions are specifically excluded even from qualitative arguments in \cite{xin,rozanova}.  Our analysis
begins with a construction of self-similar Euler profiles which decay rather slowly. In particular, $\rho\sim {|x|^{-2\frac{r_k-1}{\gamma-1}}}$. For $|x|>5$ we then reconnect our profiles to rapidly decaying functions and consider 
similarly rapidly decaying perturbations. The reconnection procedure is not subtle and its main goal is to create solutions 
of finite energy. One could, in principle, be able to reconnect the profile to one with constant density for large $x$ and 
rapidly decaying velocity, instead. This should lead to a singularity formation result for Navier-Stokes for solutions with 
constant density at infinity. 
Even more generally, the analysis should be amenable to other boundary conditions and domains, e.g. Navier-Stokes and
Euler equations on a torus. An example of such adaptation in the context of a nonlinear heat equation and a domain with Dirichlet boundary condition is provided by \cite{collot}.\\

\noindent{\em 6. Spherical symmetry assumption.} Theorem \ref{thmmain} is proved for spherically symmetric initial data. 
The symmetry is used in a very soft way, and we expect that the blow up  of Theorem \ref{thmmain} is stable modulo finitely many instabilities for non symmetric perturbations, including in particular solutions with non trivial vorticity.\\

\noindent{\em 7. Blow up profile.} The proof of Theorem \ref{thmmain} involves a much more precise description of the blow up  \eqref{vmempemempevp}. In particular, we prove that, after renormalization, the blow up profile  is given by a suitable self-similar solution to the compressible Euler flow, and that singularity occurs at the origin only, with a universal blow up profile away from the singularity, as is also the case in some examples of blow up for the Schr\"odinger equations, see e.g. \cite{MRprofile}. The proof of our main result also implies the existence of the limits for the density $\rho(t,x)$ and velocity $u(t,x)$
as $t$ converges to the blow up time $T$ and $|x|>0$. One can show that for any $x$: $0<|x|<5$,
\be\label{lim}
\lim_{t\uparrow T}\rho(t,x)=\frac {\rho_*}{|x|^{2\frac{r_k-1}{\gamma-1}}}+O(1),\ \ \lim_{t\uparrow T}u(t,x)= \frac {u_*}{|x|^{(r_k-1)}}+O(1).
\ee
for some (universal) constants $\rho_*>$ and $u_*$. Note that the limiting profile \newline$(\frac {\rho_*}{|x|^{2\frac{r_k-1}{\gamma-1}}}, \frac {u_*}{|x|^{(r_k-1)}})$ is {\it not} a solution of the Euler equations. We should emphasize, that in contrast to the previously studied (in mathematical literature) singularity and shock formation for the two and three dimensional Euler equations where solutions remain bounded up to and including the first 
singularity, both the density and velocity of our solutions blow up at the first singularity.\\

\noindent{\em 8. The stability problem.} The results of Theorem \ref{thmmain} hold for a ball in the moduli space of initial data around the self similar profile modulo
a  finite number of unstable directions, possibly none. The proof comes with a complete understanding of the associated linear spectral problem. Providing a {\it precise} count for (non real valued) eigenvalues analytically does not seem obvious, but clearly this problem can be addressed numerically since the radial nature of the self-similar profile allows one to reduce the problem to standard ode's. This remains to be done.\\

\noindent{\em 9. Weak solutions}. 
Solutions to the compressible Navier-Stokes equations constructed in this paper coexist, in principle, with the theory of {\it weak} 
global solutions of P.-L. Lions \cite{lions} and its extension in \cite{fr}. 
Existence of weak global solutions is asserted under finite energy assumptions and
in the range $\gamma>3/2$ (originally, $\gamma\ge 9/5$) in dimension three. These solutions, in particular,
 have the property that for any $T<\infty$, $\rho\in L^\infty([0,T]; L^\gamma(\Bbb R^3))$. On the other hand, from \eqref{lim}, we see that solutions considered in this paper failed to obey a uniform bound in the space $L^{\frac{3(\gamma-1)}{2(r_k-1)}}(\Bbb R^3)$ on the approach to the singular time $T$:
 $$
 \rho\not\in L^\infty([0,T]; L^{\frac{3(\gamma-1)}{2(r_k-1)}}(\Bbb R^3))
 $$
with $r_k$ chosen to be close to the value $\eye(d,\ell)$ from \eqref{rdl}.\\

\subsection{Connection to the blow up for the semilinear Schr\"odinger equation}

%%%%%%%%%%%%%%%%%%%%%%%%%%%%%%%%%%%%%%%%%%%%%%%%%%
 \begin{comment}
{There has been an intense activity for the past twenty years related to the study of blow up bubbles in canonical model like the semilinear heat, wave and Schr\"odinger equations, in both critical and super critical settings, see e.g \cite{MRloglog, MRRnlsmap, RaphRodwave, RodSter, MRRnls, RSch, KST, HV, CRS,DS}. The typical model is the {\em focusing} nonlinear Schr\"odinger equation $$i\pa_tu+\Delta u+u|u|^{p-1}=0, \ \ x\in \Bbb R^d$$ in the energy super critical range $p>\frac{d+2}{d-2}$, $d\ge 3$. In some suitable regime of parameters, smooth decreasing initial data are exhibited which lead to finite time blow up solutions with a complete description of the associated singularity formation in terms of blow up rate and asymptotic profile, both in original and renormalized variables, and both in self similar and non self similar regimes. However the focusing nature of the equation is used in an essential way in the analysis, as it provides the existence of canonical blow up profiles through either soliton like solutions or self similar solutions which are the very starting point of the blow up analysis.}
\end{comment}
Somewhat surprisingly, the mechanism of singularity formation in compressible fluids exhibited in this paper turns out to
be connected with the singularity formation in defocusing super-critical Schr\"odinger equations.
In the companion paper \cite{MRRSdefoc}, we obtain the fist result on the existence of blow up solutions emerging from smooth well localized data for the energy  {\em super-critical defocusing} model
\be
\label{nlsdefoc}
\hskip -2pc({\text {NLS}})\ \ \qquad i\pa_tu+\Delta u-u|u|^{p-1}=0, \ \ x\in \Bbb R^d
\ee 
in a suitable energy super-critical range $p>p(d)$ and $d\ge 5$. Neither soliton solutions nor self-similar solutions are known for \eqref{nlsdefoc}, but we rely on a third blow up scenario, well known for the focusing non-linear heat equation, 
see e.g. \cite{BK,MZduke} and in more recent \cite{MRSheat, CMRcylindrique}: the front scenario. After passing to the hydrodynamical variables, which for (NLS) are the phase and modulus, the front renormalization maps  \eqref{nlsdefoc}
to leading order onto the compressible Euler flow \eqref{eulercomp} with the behavior at infinity given by \eqref{vneivenoenenevnove}. The analysis then follows three canonical steps. These steps run in parallel to the treatment of the Navier-Stokes equations in 
this paper, which is also approximated by the Euler dynamics. The description below applies to both.
\subsection{Strategy of the proof}
\subsubsection{Self-similar Euler profiles}
\label{veniovninenein}
We first derive the leading order blow up profile which corresponds here to self-similar solutions of \eqref{eulercomp}. Continuums of such solutions have been known since the pioneering 
works of Guderley \cite{guderley} and Sedov \cite{sedov}.
However, the rich amount of literature produced since then  is concerned with {\it non-smooth} self-similar 
solutions. This is partly due to the physical motivations, e.g. interests in solutions modeling implosion or detonation waves, where 
self-similar rarefaction or compression is followed by a shock wave (these are self-similar solutions which contain shock discontinuities 
already present in the data), and, partly due to the fact that, as it turns out, global solutions with the desired behavior at infinity and at the
center of symmetry are {\it generically} not $\mathcal C^\infty$. This appears to be a fundamental feature of the self-similar Euler dynamics
and, in the language of underlying acoustic geometry, means that {\it generically} such solutions are not smooth across the 
backward light (acoustic) cone with the vertex at the singularity.

The key of our analysis is the construction of those non-generic $\matchal C^\infty$ solutions and the discovery that regularity is {\em an essential} element in controlling suitable {\it repulsivity} properties of the associated linearized operator. 
This is at the heart of the control of the full blow up. In our companion paper \cite{MRRSprofile} we 
 construct a family of $\mathcal C^\infty$ spherically symmetric self-similar solutions to the compressible Euler equations with suitable behavior at infinity and at the center of symmetry for {\em discrete values of the blow up speed parameter $r$  in the vicinity of the limiting blow up speed $r_{\eye}(d,\ell)$ given by \eqref{assumptionnonlinearity}}.

\subsubsection{Linearized stability}
The second step is to understand how $\mathcal C^\infty$ regularity of the blow up profile is essential to control the associated linearized operator for the Euler problem \eqref{eulercomp} in renormalized variables. Here the problem is treated  as a quasilinear wave equation and we rely on spectral and energy methods to derive the local {\it linearized} asymptotic stability of the blow up profile. The local aspect of the analysis is manifest in the fact that it is only carried out in the region 
which includes, but only barely, the interior  of the backward acoustic cone (associated with the profile) emanating from the singular point. The statement of linear stability holds for a finite co-dimension subspace of initial data. This is ultimately responsible for the assertion that results of Theorem \ref{thmmain} hold for a finite co-dimensional manifold of the moduli space of initial data.
Full details of this analysis are given in \cite{MRRSdefoc}.

\subsubsection{Nonlinear stability}
The final step of our analysis is the proof of global nonlinear stability. Here, the details of the treatment of (NLS) and (NS)
are different. However,  one unifying feature is the {\em dominance} of the Eulerian regime. 
For Navier-Stokes it means that,
{\em in a suitable regime of parameters}, the dissipative term  involving the  Laplace operator $\Delta$ is treated perturbatively  all the way to the blow up time. The reason for this is that the {\it renormalized equations} take the form
(cf. \eqref{renormalizedflow})
\be
\label{renormalizedflow'}
\left|\begin{array}{l}
\pa_\tau \rho_T=-\rho_T\div u_T-\frac{\ell(r-1)}{2}\rho_T-(2u_T+ Z)\cdot\nabla\rho_T\\
 \rho_T^2\pa_\tau u_T=b^2(\mu\Delta u_T+\mu'\nabla\div u_T)-\left[2 u_T\cdot\nabla u_T+(r-2+d)u_T+Z\cdot\nabla u_T\right]\rho_T^2+\nabla\pi.
\end{array}\right.
\ee
Here, $\rho_T$ corresponds to the square root of the density. The blow up time corresponds to $\tau\to \infty$ 
and the point is that the renormalized viscosity factor is given by $b^2\sim e^{-2\mathcal e\tau}$ with the parameter 
\be\label{eq:e'}
\mathcal e=\frac{(1+\gamma)r-2\gamma}{2(\gamma-1)}=\frac12[\ell(r-1)+r-2].
\ee
The positivity of $\mathcal e$ for $r$ close enough to $r_{\rm eye}$, which makes the dissipative term decay as $\tau\to\infty$,  
is precisely the restriction on the upper bound for $\gamma$: 
$\gamma<(2+\sqrt 3)/\sqrt 3$. 

For the Schr\"odinger equations, similar but more subtle (not all the terms involving the original $\Delta$ disappear)  
considerations lead to the restrictions on the range
of the power $p$.

The key to our claim that the results hold uniformly in viscosity and apply directly to the Euler equations is that {\it all}
of our estimates hold uniformly in viscosity. In fact, we exploit the dissipative term exactly once, in Lemma \ref{lem:NS}, 
but it is then 
used to control {\it only} the dissipative term itself.\\

We should finally mention that the methods used in both this paper and \cite{MRRSdefoc} are 
deeply connected with the analysis developed 
in our earlier work,   in particular in \cite{MRRnls}.\\

We will give the proof of Theorem \ref{thmmain} explicitly in the case of (NS) only. The Euler case follows verbatim the same path, is strictly simpler, and the condition $\ell>\sqrt{3}$ will not appear there as it measures only the compatibility of (NS) with (Euler). We will introduce a dimension parameter $d$. This is not to concern ourselves here with 
the higher dimensional Navier-Stokes (even though a certain range of $\gamma$ is available) but rather 
to facilitate considerations of the two dimensional Euler problem. As will be clear from the proof, the parameter $d$ enters meaningfully {\it only} in the treatment of the dissipative term.

\subsection{Organization}
In section \ref{frontrenorm}, we introduce the front renormalization and recall the main results of \cite{MRRSprofile} concerning the existence of $\mathcal C^\infty$ self-similar profiles to the compressible Euler equations. In section \ref{sectionlinear}, we recall the main decay estimates for the associated linearized operator.  Their detailed proofs are contained in \cite{MRRSdefoc}. In section \ref{sectionbootstrap}, we describe our set of initial data and detail the bootstrap bounds needed for our analysis. In section 5, we derive some non-renormalized estimates which are used to control the exterior region $|x|\ge 1$. 
In section 6, we derive a general quasilinear 
 energy estimate at the highest level of regularity. 
 In section 7, we use its {\it unweighted} version to close the bounds for the highest derivative in the $d=3, \ell>\sqrt 3$ 
 case. In section 8, we repeat the argument but this time with a combination of cut-off functions, to close the bounds for the highest derivative in the remaining Euler cases $d=2$ and $d=3, \ell<\sqrt 3$.
 In section 9, we derive and close {\it weighted} energy bounds for all sufficiently high derivatives. Sections 5-9 will allow us to close the pointwise bounds on the 
solution. In section \ref{low} we upgrade the linear estimates of section \ref{sectionlinear} to nonlinear ones and 
propagate them to any compact set in the renormalized variable $Z$ relative to which the acoustic cone terminating in 
a singular point corresponds to the equation $Z=Z_2$.Theorem \ref{thmmain} then follows from a now standard Brouwer like topological argument.\\

\subsection*{Constants and notations} Below we list constants, relations and conventions used throughout the text.\\
\noindent -- Parameters $p$ and $\gamma$ from the equation of state $\pi=\frac{\gamma-1}\gamma \rho^{\gamma}$
\be
\label{definitioncontstas1}
p-1=2(\gamma-1).
\ee
\noindent -- Parameter $\ell$
\be
\ell=\frac{2}{\gamma-1}=\frac{4}{p-1}.
\ee
\noindent -- Front speed parameter $r$ which is assumed to be strictly less but arbitrarily close to one of the limiting values
\be
\eye(d,\ell)=\left|\begin{array}{l} r^*(d,\ell)=\frac{d+\ell}{\ell+\sqrt{d}}\ \ \mbox{for}\ \ \ell<d\\
r_+(d,\ell)=1+\frac{d-1}{(1+\sqrt\ell)^2}\ \ \mbox{for}\ \ \ell>d.
\end{array}\right.
\ee
with $d$ -- general dimension parameter. In particular, we will always use that
$$
r>1;
$$
\noindent -- Parameter $\mathcal e$ measuring compatibility between the Euler and Navier-Stokes
\be\label{eq:e}
\mathcal e=\frac{(1+\gamma)r-2\gamma}{2(\gamma-1)}=\frac12[\ell(r-1)+r-2].
\ee
The requirement $\mathcal e>0$ will be imposed in the Navier-Stokes case to ensure the dominance of the 
Eulerian regime. It forces the 
restriction 
\be
\ell>\ell_0(d)=\frac{2\sqrt{d}-d}{d-1-\sqrt{d}}.
\ee
\noindent -- Original variables $(t,x)$
\noindent -- Renormalized variables $(\tau,Z)$
$$
(T-t)=2e^{-r\tau},\ \ Z=e^{\tau} x.
$$
\noindent -- Original unknowns $(\rho(t,x),u(t,x))$ and the potential $\Psi=\nabla u$.\\

\noindent -- First renormalization 
$$
\hat\rho(t,x)=\left(2^{\frac 1{\gamma-1}}\rho(2t,x)\right)^{\frac 12},\ \ \hat u(t,x)= u(2t,x).
$$
\noindent -- Second renormalization
$$
\rho_T(\tau,Z)=e^{-\frac \ell 2(r-1)\tau}\hat\rho(t,x),\ \ u_T(\tau,Z)=e^{-(r-1)\tau}\hat u(t,x).
$$

\noindent -- Renormalized viscosity parameter $b^2$
\be
b^2=e^{-2\e\tau}
\ee
\noindent -- Profile in renormalized variables $(\rho_P(Z), \Psi_P(Z))$ and the corresponding pair 
$(\hat\rho_P(t,x), \hat\Psi_P(t,x))$.\\
 
\noindent -- Dampened profile in renormalized variables $(\rho_D(\tau,Z), \Psi_D(\tau,Z))$ and the corresponding pair 
$(\hat\rho_D(t,x), \hat\Psi_D(t,x))$.\\

\noindent -- Linearization variables 
$$
\tilde \rho(\tau,Z)=\rho_T(\tau,Z)-\rho_D(\tau,Z),\ \ \Psit(\tau,Z)=\Psi_T(\tau,Z)-\Psi_D(\tau,Z),
$$
and velocity $\ut=\nabla\Psit$.\\

\noindent -- Depending on context, $\nabla$ may denote either derivatives in $x$ or $Z$.
$\nabla^\alpha$ with 
$$
 \alpha=(\alpha_1,\dots,\alpha_d)\in \Bbb N^d, \ \ |\alpha|=\alpha_1+\dots+\alpha_d=k
 $$
will denote a generic $\pa^{\alpha_1}_1....\pa_d^{\alpha_d}$-
derivative of order $k$. Sometimes we will abuse the notation and write $\nabla^k$.\\

\noindent -- $\pa^k$ will denote the vector $(\pa_1^k,...,\pa_d^k)$ of k-th order derivatives.\\

\noindent -- By abuse of notation we will identify $Z$ with $|Z|$ and denote by $\pa_Z$ the radial derivative.

\subsection*{Acknowledgements}  P.R. is supported by the ERC-2014-CoG 646650 SingWave. P.R would like to thank the Universit\'e de la C\^ote d'Azur where part of this work was done for its kind hospitality. I.R. is partially supported by the NSF 
grant DMS \#1709270 and a Simons Investigator Award. J.S  is supported by the ERC grant  ERC-2016 
CoG 725589 EPGR.

%%%%%%%%%%%%%%%%%%%%%%%%%%%%%%%%%%%%%%%%%%%%%%%%%%
%%%%%%%%%%%%%%%%%%%%%%%%%%%%%%%%%%%%%%%%%%%%%%%%%%

\section{Front renormalization}
\label{frontrenorm}
%%%%%%%%%%%%%%%%%%%%%%%%%%%%%%%%%%%%%%%%%%%%%%%%%%
%%%%%%%%%%%%%%%%%%%%%%%%%%%%%%%%%%%%%%%%%%%%%%%%%%

We compute the front renormalization which allows one to treat \eqref{NScomp} as a perturbation of \eqref{eulercomp} in a suitable regime of parameters. We then recall the main facts concerning the existence of $\matchal C^\infty$ smooth decaying at infinity self similar solutions to \eqref{eulercomp} for quantized values of the blow up speed obtained in \cite{MRRSprofile}.

%%%%%%%%%%%%%%%%%%%%%%%%%%%%%%%%%%%%%%%%%%%%%%%%%%

\subsection{Equivalent flow for non vanishing data}

%%%%%%%%%%%%%%%%%%%%%%%%%%%%%%%%%%%%%%%%%%%%%%%%%%

Let us consider the flow \eqref{NScomp} for non vanishing density solutions:
$$
\left|\begin{array}{lll}\pa_t\rho+\nabla\cdot(\rho u)=0\\
\rho\pa_tu+\rho u\cdot\nabla u+\nabla \pi=\mu\Delta u+\mu'\nabla\div u\\
\pi=\frac{\gamma-1}{\gamma}\rho^\gamma\end{array}\right., \ \ x\in \Bbb R^d.
$$ 
We change variables:
\be
\label{changeofvariables}
\left|\begin{array}{l}
\rho(t,x)=\frac{1}{2^{\frac{1}{\gamma-1}}}\rhoh^2\left(\frac t2,x\right)\\
u(t,x)=\hat{u}\left(\frac t2,x\right)=\nabla \Psih \left(\frac t2,x\right)
\end{array}\right.
\ee
The first equation is logarithmic in density:
\bee
&& \frac{\pa_t\rho}{\rho}+\nabla \cdot u+\frac{\nabla \rho}{\rho}\cdot\nabla u=0 \Leftrightarrow \frac{\pa_t\rhoh}{\rhoh}+\nabla \cdot \uh+\frac{2\nabla \rhoh}{\rhoh}\cdot \uh=0\\
&\Leftrightarrow&\pa_t\rhoh+\rhoh \nabla \cdot \uh+2\nabla \rhoh\cdot \uh=0\Leftrightarrow\pa_t\rhoh+\rhoh\Delta \Psih+2\nabla\Psih\cdot\nabla \rhoh=0.
\eee
The second equation becomes:
\bee
&&\frac12\pa_t\uh-\frac{1}{\frac{\rhoh^2}{2^{\frac{1}{\gamma-1}}}}\left(\mu\Delta\uh+\mu'\nabla\div \uh\right)+\uh\cdot\nabla \uh+(\gamma-1)\rho^{\gamma-1}\frac{\nabla \rho}{\rho}=0\\
&\Leftrightarrow &\frac 12 \pa_t\uh-\frac{2^{\frac{1}{\gamma-1}}}{\rhoh^2}\left(\mu\Delta\uh+\mu'\nabla\div \uh\right)+\uh\cdot\nabla \uh+\frac{\gamma-1}{2}\rhoh^{2(\gamma-1)}\frac{2\nabla \rhoh}{\rhoh}=0\\
& \Leftrightarrow& \frac 12 \pa_t\uh-\frac{2^{\frac{1}{\gamma-1}}}{\rhoh^2}\left(\mu\Delta\uh+\mu'\nabla\div \uh\right)+\uh\cdot\nabla \uh+\frac{p-1}{2}\rhoh^{p-1}\frac{\nabla \rhoh}{\rhoh}=0
\eee
and hence the equivalent formulation:
\be
\label{eqivneinoevlijforinoaa}
\left|\begin{array}{l}
\pa_t\rhoh+\rhoh \nabla \cdot \uh+2\nabla \rhoh\cdot \uh=0\\
\pa_t \uh-\frac{\alpha}{\rhoh^2} \left(\mu\Delta\uh+\mu'\nabla\div \uh\right)+2\uh\cdot\nabla \uh+\nabla \ph=0\\
\hat\pi=\rhoh^{p-1}\\
\alpha=2^{\frac{\gamma}{\gamma-1}}
\end{array}\right.
\ee
and hence for spherically symmetric solutions:
\bee
&&\frac 12\pa_t \pa_r\Psih-\frac{{\mu\Delta \uh+\mu'\nabla\div\uh}}{\frac{1}{2^{\frac{1}{\gamma-1}}}\rhoh^2}+\frac 12\pa_r(|\uh|^2)+\pa_r(\rho^{\gamma-1})=0\\
&\Leftrightarrow& \frac 12\pa_r\left[\pa_t\Psih-\mathcal F(\uh,\rhoh)+|\pa_r\Psih|^2+\rhoh^{2(\gamma-1)}\right]=0\\
&\Leftrightarrow& \pa_t\Psih-\mathcal F(\uh,\rhoh)+|\pa_r\Psih|^2+\rhoh^{2(\gamma-1)}=\mathcal B(t)
\eee
where $\mathcal B(t)$ is the Bernoulli function and
\be
\label{defmthjacfl}
\mathcal F(u,\rho)=2^{\frac{\gamma}{\gamma-1}}(\mu+\mu') \int_0^r\frac{\left(\Delta U(r')-\frac 2{r^2}U(r')\right)}{\rhoh^2(r')}dr',
\ee 
since in this case, for $U=\pa_r\Psi$ and $u=U \vec{e}_r$,
$$
\Delta {u}=\nabla\div {u} = \left(\Delta U -\frac {2U}{r^2}\right) \vec{e}_r
$$
By changing $\Psi\mapsto \Psi +a(t)$ with $$\dot{a}=\mathcal B, \ \ a(t)=\int_0^t\mathcal B(\tau)d\tau$$ which does not change velocity, we have the equivalent flow 
\be
\label{NScompbis}
\left|\begin{array}{l}
\pa_t\rhoh+\rhoh\Delta \Psih+2\pa_r\Psih\pa_r\rhoh=0\\
\pa_t\Psih-\mathcal F(\uh,\rhoh)+|\pa_r\Psih|^2+\rhoh^{2(\gamma-1)}=0
\end{array}\right.
\ee

%%%%%%%%%%%%%%%%%%%%%%%%%%%%%%%%%%%%%%%%%%%%%%%%%%

\subsection{Front renormalization}

%%%%%%%%%%%%%%%%%%%%%%%%%%%%%%%%%%%%%%%%%%%%%%%%%%

Let us recall that compressible Euler has the two parameter symmetry transformation group $$\left|\begin{array}{l}\left(\frac{\l}{\nu}\right)^{\frac{2}{\gamma-1}}\rho(s,Z), \ \ \frac{\l}{\nu}u(s,Z)\\
Z=\frac{x}{\l}, \ \ \frac{ds}{dt}=\frac{1}{\nu}
\end{array}\right.$$
which becomes for \eqref{NScompbis}: 
$$\left|\begin{array}{l}\left(\frac{\l}{\nu}\right)^{\frac{1}{\gamma-1}}\rhoh(s,Z), \ \ \frac{\l^2}{\nu}\Psih(s,Z)\\
Z=\frac{x}{\l}, \ \ \frac{ds}{dt}=\frac{1}{\nu}.
\end{array}\right.
$$

\begin{lemma}[Renormalization]
\label{lemmarenormalization}
Let $r$ be the front speed, recall \eqref{eq:e}, and let 
\be
\label{scalinglaws}
\lambda(\tau)=e^{- \tau}, \ \ 
\nu(\tau)=e^{-r\tau}, \ \ b(\tau)=e^{-{\mathcal e}\tau}
\ee
then the renormalization
\be
\label{renormalization}
\left|\begin{array}{l}
\rhoh(t,x)=\left(\frac{\l}{\nu}\right)^{\frac{1}{\gamma-1}}\rho_T(\tau,Z)\\
\Psih(t,x)=\frac{\l^2}{\nu}(\Psi_T+a(\tau))(s,Z), \ \ u_T=\pa_Z\Psi_T\\
a(\tau)=e^{-(r-2)\tau}\\
Z=\frac{x}{\l}, \ \ \frac{d\tau}{dt}=\frac{1}{\nu}
\end{array}\right.
\ee
transforms \eqref{NScompbis} into:
\be
\label{renormalizedflow}
\left|\begin{array}{l}
\pa_\tau \rho_T=-\rho_T\Delta \Psi_T-\frac{\ell(r-1)}{2}\rho_T-(2\pa_Z\Psi_T+ Z)\pa_Z\rho_T\\
 \pa_\tau \Psi_T=b^2\mathcal F(u_T,\rho_T)-\left[(\pa_Z\Psi_T)^2+(r-2)\Psi_T+Z\pa_Z \Psi_T+\rho_T^{p-1}\right]
\end{array}\right.
\ee
with 
\be
\label{def-f}
\mathcal F(u_T,\rho_T)=2^{\frac{\gamma}{\gamma-1}}(\mu+\mu')\int_0^r\frac{\left(\Delta U_T(r')-\frac 2{r^2}U_T(r')\right)}{\rho_T^2(r')}dr'.
\ee 
\end{lemma}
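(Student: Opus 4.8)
The statement is an identity between two systems related by a time-dependent change of variables, so the plan is a direct chain-rule verification. The structural point organizing the computation is that, at each fixed $\tau$, the map \eqref{renormalization} is exactly the scaling symmetry of \eqref{NScompbis} displayed just before the lemma, evaluated at $\lambda=\lambda(\tau)$, $\nu=\nu(\tau)$, followed by the velocity-preserving potential shift $\Psih\mapsto\Psih+\frac{\lambda^2}{\nu}a(\tau)$. Hence every term of \eqref{NScompbis} transforms homogeneously under the spatial part of the substitution, and the only new terms appearing in \eqref{renormalizedflow} originate from differentiating the $\tau$-dependent prefactors $(\lambda/\nu)^{1/(\gamma-1)}=(\lambda/\nu)^{\ell/2}$ and $\lambda^2/\nu$, and from the fact that $Z=x/\lambda(\tau)$ depends on time. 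Concretely, I would first record the elementary identities $\lambda/\nu=e^{(r-1)\tau}$, $\lambda^2/\nu=e^{(r-2)\tau}$, $\tfrac{d\tau}{dt}=\nu^{-1}$, $\pa_t Z=-\tfrac{\dot\lambda}{\lambda}Z=\tfrac{Z}{\nu}$, together with the rescaling rules $\pa_r=\lambda^{-1}\pa_Z$ and $\Delta_x=\lambda^{-2}\Delta_Z$, so that $\pa_t=\nu^{-1}\pa_\tau+\nu^{-1}Z\cdot\nabla_Z$ on functions written in the renormalized variables.

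Next I would treat the continuity equation. Writing $\rhoh=(\lambda/\nu)^{\ell/2}\rho_T$, differentiating the prefactor produces the damping term $\tfrac{\ell(r-1)}{2}\rho_T$ since $\tfrac{d}{d\tau}\log(\lambda/\nu)^{\ell/2}=\tfrac{\ell(r-1)}{2}$; the $\pa_\tau$ part contributes $\pa_\tau\rho_T$ and the motion of $Z$ contributes $Z\pa_Z\rho_T$, both divided by $\nu$. The two spatial terms $\rhoh\Delta\Psih$ and $2\pa_r\Psih\pa_r\rhoh$ rescale by the \emph{same} factor $(\lambda/\nu)^{\ell/2}\nu^{-1}$ once the powers of $\lambda$ coming from $\Delta_x,\pa_r$ and from the amplitude $\lambda^2/\nu$ of $\Psih$ are collected; dividing the whole equation by this common factor gives line one of \eqref{renormalizedflow}. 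Then I would treat the Bernoulli-type equation. Writing $\Psih=\frac{\lambda^2}{\nu}(\Psi_T+a)$, the $\tau$-derivative of $\lambda^2/\nu$ contributes $(r-2)(\Psi_T+a)$, the $\pa_\tau$ part contributes $\pa_\tau\Psi_T+\dot a$ with $\dot a=(2-r)a$, and the motion of $Z$ contributes $Z\pa_Z\Psi_T$; the $a$-contributions cancel identically, so the precise value of $a(\tau)=e^{-(r-2)\tau}$ plays no role in the identity. The algebraic nonlinearities transform as $|\pa_r\Psih|^2=\tfrac{\lambda^2}{\nu^2}(\pa_Z\Psi_T)^2$ and, using $p-1=2(\gamma-1)$ and $\ell(\gamma-1)=2$, $\rhoh^{2(\gamma-1)}=(\lambda/\nu)^2\rho_T^{p-1}=\tfrac{\lambda^2}{\nu^2}\rho_T^{p-1}$, i.e. all of $\pa_t\Psih$, $|\pa_r\Psih|^2$, $\rhoh^{2(\gamma-1)}$ carry the common prefactor $\lambda^2/\nu^2$.

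The one step that is not pure homogeneity is the nonlocal viscous term $\mathcal F$. Here I would rescale the radial operator $\Delta-\tfrac{2}{r^2}$ acting on $U=\pa_r\Psih=\tfrac{\lambda}{\nu}\pa_Z\Psi_T$, the factor $\rhoh^2=(\lambda/\nu)^\ell\rho_T^2$ in the denominator, and the Jacobian $dr'=\lambda\,d\sigma$ of the radial integral in \eqref{defmthjacfl}; assembling these yields $\mathcal F(\uh,\rhoh)=\tfrac{\nu^{\ell-1}}{\lambda^\ell}\mathcal F(u_T,\rho_T)$ with $\mathcal F(u_T,\rho_T)$ as in \eqref{def-f}. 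Dividing the Bernoulli equation by $\lambda^2/\nu^2$ then multiplies the viscous term by $\tfrac{\nu^2}{\lambda^2}\cdot\tfrac{\nu^{\ell-1}}{\lambda^\ell}=\tfrac{\nu^{\ell+1}}{\lambda^{\ell+2}}$, and by \eqref{scalinglaws} this equals $e^{-[\ell(r-1)+r-2]\tau}=e^{-2\mathcal e\tau}=b^2$, which is exactly the defining relation \eqref{eq:e} for $\mathcal e$; this produces line two of \eqref{renormalizedflow} and completes the verification. Accordingly the main (and essentially only) obstacle is bookkeeping: correctly transforming the nonlocal $\mathcal F$ and checking that the resulting power of $\lambda/\nu$ collapses to $b^2$. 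This is the single place where the specific choices $\nu=e^{-r\tau}$, $b=e^{-\mathcal e\tau}$ and the identity $2\mathcal e=\ell(r-1)+r-2$ are used; everything else follows from the chain rule and the scaling invariance of \eqref{NScompbis} recalled above.
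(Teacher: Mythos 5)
Your proof is correct and follows the same direct chain-rule computation as the paper, merely with the verification expanded and organized around the observation that \eqref{renormalization} is the scaling symmetry of \eqref{NScompbis} applied with time-dependent parameters plus the $a(\tau)$-shift. In particular your bookkeeping of the damping term $\frac{\ell(r-1)}{2}\rho_T$, the cancellation of the $a$-contributions against $(r-2)a$, the common prefactor $\lambda^2/\nu^2$ on all terms of the Bernoulli equation, and the rescaling $\mathcal F(\hat u,\hat\rho)=\frac{\nu^{\ell-1}}{\lambda^\ell}\mathcal F(u_T,\rho_T)$ leading to $\frac{\nu^{\ell+1}}{\lambda^{\ell+2}}=e^{-2\mathcal e\tau}=b^2$ all match the paper's (terser) derivation exactly.
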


\begin{proof}[Proof of Lemma \ref{lemmarenormalization}] We renormalize the first equation and obtain 
\bee
&&\pa_\tau\rho_T+\frac{r-1}{\gamma-1} \rho_T+\Lamdba \rho_T+2\pa_Z\Psi_T\pa_Z\rho_T=0\\
&\Leftrightarrow & \pa_\tau \rho_T=-\rho_T\Delta \Psi_T-\frac{\ell(r-1)}{2}\rho_T-(2\pa_Z\Psi_T+ Z)\pa_Z\rho_T.
\eee
For the second equation:
\bee
&&\pa_\tau(\Psi_T+a(\tau))+(r-2)(\Psi+a)+\Lambda \Psi_T+\left(\frac{\nu^{1+\gamma}}{\l^{2\gamma}}\right)^{\frac{1}{\gamma-1}}\mathcal F(u_T,\rho_T)+|\pa_Z\Psi_T|^2+\rho_T^{2(\gamma-1)}=0\\
&\Leftrightarrow & \pa_\tau \Psi_T=b^2\mathcal F(u_T,\rho_T)-\left[(\pa_Z\Psi_T)^2+(r-2)\Psi_T+Z\pa_Z \Psi_T+\rho_T^{p-1}\right]
\eee
with $a_\tau+(r-2)a=0$ and 
$$
b^2=\left(\frac{\nu^{1+\gamma}}{\l^{2\gamma}}\right)^{\frac{1}{\gamma-1}}=\left(e^{-\left[(1+\gamma)r-2\gamma\right]\tau}\right)^{\frac{1}{\gamma-1}}=e^{-2{\mathcal e}\tau},
$$
this is \eqref{renormalizedflow}. 
\end{proof}

We now observe  from \eqref{eq:e}
\be
\label{conditione}
{\mathcal e}>0\Leftrightarrow r>\frac{2+\ell}{1+\ell}.
\ee
and compute for $d=3$:\\
\underline{for $\ell<d$},
\bee
\nonumber &&r^*(d,\ell)=\frac{d+\ell}{\ell+\sqrt{d}}>\frac{2+\ell}{1+\ell}\Leftrightarrow (d+\ell)(1+\ell)>(2+\ell)(\ell+\sqrt{d})\\
\nonumber &\Leftrightarrow&  d+d\ell+\ell+\ell^2>2\ell+2\sqrt{d}+\ell^2+\ell\sqrt{d}\\
& \Leftrightarrow& \ell(d-1-\sqrt{d})>2\sqrt{d}-d  \Leftrightarrow \ell>\ell_0(d)=\frac{2\sqrt{d}-d}{d-1-\sqrt{d}},
\eee
which for $d=3 $ is $$\ell>\ell_0(3)=\frac{2\sqrt{3}-3}{2-\sqrt{3}}=\sqrt{3};$$ 
\underline{for $\ell>d$}, $$
r_+(d,\ell)=1+\frac{d-1}{(1+\sqrt{\ell})^2}>\frac{2+\ell}{1+\ell}\Leftrightarrow 1+\frac{2}{(1+\sqrt{\ell})^2}>\frac{2+\ell}{1+\ell}\Leftrightarrow (1-\sqrt{\ell})^2>0
$$ and thus always holds for $\ell>d=3$.
\begin{remark}
\label{rem:vis}
The requirement that $\mathcal e>0$ is equivalent to the decay (as $\tau\to\infty$) of the parameter $b^2$. This is precisely the value of 
renormalized viscosity in \eqref{renormalizedflow} and its decay signifies the dominance of the Euler dynamics on the 
approach to singularity. We therefore assume from now on and for the rest of this paper that  the parameter $\ell$ is in the range \eqref{ranegeulerns}. 
\end{remark}

The function $r^*(d,\ell)$ is a decreasing function of $\ell$. In particular, for $\ell>0$
\be\label{eq:r}
r^*(d,\ell)< \sqrt d
\ee
%%%%%%%%%%%%%%%%%%%%%%%%%%%%%%%%%%%%%%%%%%%%%%%%

\subsection{Blow up profile and Emden transform}

%%%%%%%%%%%%%%%%%%%%%%%%%%%%%%%%%%%%%%%%%%%%%%%%

A stationary solution to \eqref{renormalizedflow} for $b=0$ satisfies the self similar equation
\be
\label{profileequation}
\left|\begin{array}{l}
(\pa_Z\Psi_P)^2+\rho_P^{p-1}+(r-2)\Psi_P+\Lambda \Psi_P=0\\
\Delta \Psi_P+\frac{\ell(r-1)}{2}+(2\pa_Z\Psi_P+ Z)\frac{\pa_Z\rho_P}{\rho_P}=0
\end{array}\right.
\ee
which can be complemented by the boundary conditions  
\be
\label{boundarydata}
\Psi_P(0)=-\frac{1}{r-2}, \  \ \Psi_P'(0)=0, \ \ \rho_P(0)=1.
\ee
Following \cite{guderley}, \cite{sedov}, the Emden transform 
 \be
 \label{relationsprofileemden}
\left|\begin{array}{lll}
 \phi=\frac{1}{2}\sqrt{\ell}, \ \ p-1=\frac{4}{\ell}\\
 Q=\rho_P^{p-1}=\frac{1}{M^2}, \ \ \frac{1}{M}=\phi Z \sigma\\
 \frac{\Psi'_P}{Z}=-\frac{1}{2}w
 \end{array}\right., \ \ y=\log Z
\ee
maps \eqref{profileequation} into 
\be
\label{systemedefoc}
\left|\begin{array}{ll}
(w-1)w'+\ell \sigma\sigma'+(w^2-rw+\ell \sigma^2)=0\\
\frac{\sigma}{\ell}w'+(w-1)\sigma'+\sigma\left[w\left(\frac{d}{\ell}+1\right)-r\right]=0
\end{array}
\right.
\ee
or equivalently $$\left|\begin{array}{ll}a_1w' +b_1\sigma'+d_1=0\\
a_2 w'+b_2\sigma'+d_2=0
\end{array}\right.$$
with 
\be
\label{defvaluesboinedone}
\left|\begin{array}{ll}
a_1=w-1, \ \ b_1=\ell\sigma, \ \ d_1=w^2-rw+\ell\sigma^2\\
a_2=\frac{\sigma}{\ell}, \ \ b_2=w-1, \ \ d_2=\sigma\left[\left(1+\frac{d}{\ell}\right)w-r\right].
\end{array}\right.
\ee
 Let 
\be
\label{definitionwe}
w_e=\frac{\ell(r-1)}{d}
\ee
and the determinants
\be
\label{calculdeterm}
\left|\begin{array}{lll}
\Delta=a_1b_2-b_1a_2=(w-1)^2-\sigma^2\\
\Delta_1=-b_1d_2+b_2d_1=w(w-1)(w-r)-d(w-w_e)\sigma^2\\
\Delta_2=d_2a_1-d_1a_2=\frac{\sigma}{\ell}\left[(\ell+d-1)w^2-w(\ell+d+\ell r-r)+\ell r-\ell \sigma^2\right].
\end{array}\right.
\ee
then
$$
w'=-\frac{\Delta_1}{\Delta}, \ \ \sigma'=-\frac{\Delta_2}{\Delta}, \ \ 
\frac{dw}{d\sigma}=\frac{\Delta_1}{\Delta_2}.
$$
A solution $w=w(\sigma)$ of the above system can be found from the analysis of the  {\em phase portrait in the $(\sigma,w)$ plane}, see Figure \ref{fig:solutioncurve} and Figure \ref{fig:solutioncurvebis}. The shape of the phase portrait relies {\em in an essential way} on the polynomials $\Delta, \Delta_1, \Delta_2$ and the range of parameters $(r,d,\ell)$. In particular, it is easily seen that there is a unique solution which satisfies \eqref{boundarydata} and 
is $\mathcal C^\infty$ at $Z=0$. The key question is the behavior of this unique solution as $x\to+\infty$. In particular, this solution needs to pass through the point $P_2$, determined by the conditions
\be
\label{neovneneoneo}
\Delta(P_2)=\Delta_1(P_2)=\Delta_2(P_2).
\ee 
At $P_2$, generically (i.e., among all solutions passing through $P_2$,) solutions will  experience an unavoidable discontinuity of higher derivatives. Nonetheless, {\em for a discrete set values of the speed r}, our unique solution curve passes through $P_2$ in 
a ${\mathcal C}^\infty$ fashion.
The following structural proposition on the blow up profile is proved in the companion paper \cite{MRRSprofile}.

\begin{theorem}[Existence and asymptotics of a $\matchal C^\infty$ profile, \cite{MRRSprofile}]
\label{propexistenceprofile}
Let $d\in\{2,3\}$. There exists a (possibility empty) countable sequence $0<\ell_n$ which accumulation points can only be at $\{0,d,+\infty\}$ such that the following holds. Let 
$$\eye(d,\ell)=\left|\begin{array}{l} r^*(d,\ell)=\frac{d+\ell}{\ell+\sqrt{d}}\ \ \mbox{for}\ \ \ell<d\\
r_+(d,\ell)=1+\frac{d-1}{(1+\sqrt\ell)^2}\ \ \mbox{for}\ \ \ell>d
\end{array}\right.$$
be the limiting blow up speed.
Then there exists a sequence $(r_k)_{k\ge 1}$ with 
\be
\label{limimtirmt}
\lim_{k\to \infty} r_k=\eye(d,\ell), \ \ r_k<\eye(d,\ell)
\ee 
such that for all $k\ge 1$, the following holds:\\
\noindent{\em 1. Existence of a smooth profile at the origin}: the unique spherically symmetric solution to \eqref{profileequation} with Cauchy data at the origin \eqref{boundarydata} reaches in finite time $Z_2$ the point $P_2$.\\
\noindent{\em 2. Passing through $P_2$}: the solution passes through $P_2$ with $\mathcal C^\infty$ regularity.\\
\noindent{\em 3. Large $Z$ asymptotic}: the solution connects to the $P_6$ point with the asymptotics as $Z\to +\infty$:
\be
\label{limitprofilesbsi}
\left|\begin{array}{l}
w(Z)=\frac{c_w}{Z^r}\left(1+O\left(\frac{1}{Z^r}\right)\right)\\
\sigma(Z)=\frac{c_\sigma}{Z^r}\left(1+O\left(\frac{1}{Z^r}\right)\right)\\
\end{array}\right.
\ee
or equivalently
\be
\label{decayprofile}
\left|\begin{array}{ll}
Q(Z)=\rho_P^{p-1}(Z)=\frac{c_P^{p-1}}{Z^{2(r-1)}}\left(1+O\left(\frac{1}{Z^r}\right)\right), \\ Z\pa_Z\Psi_P(Z)=\frac{c_\Psi}{Z^{r-2}}\left(1+O\left(\frac{1}{Z^r}\right)\right)\end{array}\right.
\ee 
for some  non zero constants $c_w,c_\sigma,c_P,c_\Psi$, and similarly for all higher order derivatives.\\
\noindent{\em 4. Non vanishing}:  there holds $$\forall Z\ge 0, \ \ \rho_P>0.$$
\noindent{\em 5. Repulsivity inside the light cone}: let 
\be
\label{definitionF}
F=\sigma_P+\Lambda\sigma_P,
\ee 
then there exists $c=c(d,\ell,r)>0$ such that
\be
\label{coercivityquadrcouplinginside}
\forall 0\le Z\le Z_2, \ \ \left|\begin{array}{l}
(1-w-\Lambda w)^2-F^2>c\\
1-w- \Lambda  w-\frac{(1-w)F}{\sigma}\ge c
\end{array}\right.
\ee
\end{theorem}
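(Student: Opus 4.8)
The plan is to run a complete phase-plane analysis of the planar autonomous system underlying \eqref{systemedefoc}. Writing $w'=-\Delta_1/\Delta$, $\sigma'=-\Delta_2/\Delta$ with $'=d/dy$, $y=\log Z$, and removing the singularity on the sonic line $\{\Delta=0\}$ by the reparametrization $dy/d\tilde y=\Delta$, one gets the polynomial system $\dot w=-\Delta_1$, $\dot\sigma=-\Delta_2$, $\dot y=\Delta$, with $\Delta,\Delta_1,\Delta_2$ as in \eqref{calculdeterm}. First I would locate and classify the relevant critical points of the reduced $(w,\sigma)$-flow: the point $P_1$ encoding the center of symmetry $Z=0$, where $\rho_P(0)=1$ forces $M=1$ and hence $\sigma=1/(\phi Z)\to+\infty$ as $Z\to0^+$, so that after the inversion $\sigma\mapsto1/\sigma$ the point $P_1$ becomes a genuine fixed point from which, by linearization together with the Cauchy data \eqref{boundarydata}, exactly one orbit emanates, corresponding to a solution of \eqref{profileequation} that is $\mathcal C^\infty$ at the origin (with an explicit Taylor expansion there); the sonic point $P_2$ on the line $(w-1)^2=\sigma^2$, singled out by the further relations $\Delta_1=\Delta_2=0$ of \eqref{neovneneoneo}; and the point $P_6$ encoding $Z\to+\infty$, where $w,\sigma\to0$. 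The detailed shape of the portrait, and in particular the position of $P_2$ among the zero sets of $\Delta,\Delta_1,\Delta_2$, is somewhat different in the two regimes $\ell<d$ and $\ell>d$ (with $\ell=d$ degenerate and excluded), but in both cases the relevant range is $r$ strictly below, and close to, $\eye(d,\ell)$.

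The second step is the global continuation of the orbit issued from $P_1$ up to the sonic line. For $d\in\{2,3\}$ and $r$ close to $\eye(d,\ell)$ I would build an invariant trapping region bounded by arcs of the nullclines $\{\Delta_1=0\}$, $\{\Delta_2=0\}$ and of the sonic line, inside which $w$ and $\sigma$ are monotone along the flow; the fixed signs of $\Delta,\Delta_1,\Delta_2$ there force this orbit to stay in the region and to be driven to the sonic line exactly at $P_2$, reached at a finite value $Z=Z_2$. This gives part 1. Part 2 --- that the passage through $P_2$ is $\mathcal C^\infty$ for a discrete set of speeds --- is the heart of the argument and the main obstacle. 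Since $P_2$ is a hyperbolic fixed point of the reduced system, the incoming orbit arrives at $P_2$ along a stable direction, while the continuation past $Z_2$ must be chosen among the local branches issuing from $P_2$; the resulting solution $(w(Z),\sigma(Z))$ is $\mathcal C^\infty$ across $Z_2$ only when the incoming orbit matches the distinguished smooth branch, which is a single nonlocal scalar condition on $r$. In the limit $r\uparrow\eye(d,\ell)$ the local structure at $P_2$ degenerates --- a relevant ratio of eigenvalues runs through resonant integer values --- and, after factoring out a regular prefactor, the obstruction to smoothness equals an explicit convergent series $S_\infty(d,\ell)$, holomorphic in $\ell$ on a complex neighborhood of each of $(0,d)$ and $(d,\infty)$. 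The hypothesis $S_\infty(d,\ell)\neq0$, which by holomorphy can fail only on an isolated set $\{\ell_n\}$ accumulating only at $\{0,d,+\infty\}$, together with the oscillation of the subleading correction as $r\uparrow\eye(d,\ell)$, forces the obstruction to vanish along a discrete sequence $r_k\uparrow\eye(d,\ell)$; this simultaneously produces $(r_k)$ and the exceptional $(\ell_n)$.

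The remaining assertions follow from the same picture with much less work. After crossing $P_2$ the orbit enters a second trapping region in which, again by monotonicity, it converges to $P_6$; linearizing at $P_6$ yields $w,\sigma\sim c\,Z^{-r}$ with nonzero constants, which, pushed back through the Emden relations \eqref{relationsprofileemden}, is precisely \eqref{limitprofilesbsi}--\eqref{decayprofile}, and differentiating the now-smooth solution and re-inserting the asymptotics into the ODE propagates the expansion to all higher derivatives (part 3). Part 4 is immediate: $\rho_P^{p-1}=Q=(\phi Z\sigma)^2>0$ for $Z>0$ since the orbit stays in $\{\sigma>0\}$, while $\rho_P(0)=1$ by \eqref{boundarydata}. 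For the repulsivity estimates \eqref{coercivityquadrcouplinginside} I would use $\Lambda=Z\pa_Z=\pa_y$ to write $F=\sigma_P+\Lambda\sigma_P=\sigma_P-\Delta_2/\Delta$ and $1-w-\Lambda w=1-w+\Delta_1/\Delta$, so that both inequalities become rational inequalities in $(w,\sigma)$ with a removable singularity at $P_2$ (where $\Delta,\Delta_1,\Delta_2$ vanish together); the a priori bounds on $(w,\sigma)$ on the segment $0\le Z\le Z_2$ provided by the trapping region reduce them to polynomial inequalities on a compact region, which I would verify directly, uniformly in $r$ near $\eye(d,\ell)$ by continuity down to the limiting configuration $r=\eye(d,\ell)$, taking $c$ to be the resulting positive minimum. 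The genuinely hard part, by a wide margin, is the quantization at $P_2$: the asymptotic analysis as $r\uparrow\eye(d,\ell)$, the construction of $S_\infty(d,\ell)$ together with its convergence and holomorphy, and the extraction of $(r_k)$ from the non-vanishing hypothesis; everything else amounts to careful bookkeeping of the signs of $\Delta,\Delta_1,\Delta_2$ in the $(\sigma,w)$ plane.
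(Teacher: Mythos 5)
This theorem is not proved in the present paper---it is imported verbatim from the companion work \cite{MRRSprofile}; the paper only gives the heuristic picture of Section~\ref{frontrenorm} (Emden transform, phase portrait in the $(\sigma,w)$ plane, the role of the sonic point $P_2$, and the non-vanishing condition on $S_\infty(d,\ell)$). Your proposal reproduces exactly that picture: desingularization of the planar system, identification of $P_1,P_2,P_6$, trapping regions for the monotone orbit, quantization of $r$ at $P_2$ via the degenerate resonance as $r\uparrow \eye(d,\ell)$ controlled by the holomorphic series $S_\infty$, linearization at $P_6$ for the tail asymptotics, and the translation of \eqref{coercivityquadrcouplinginside} into rational inequalities in $(w,\sigma)$ with a removable singularity on the sonic line. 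So, judged against what the present paper actually records, your sketch is the right approach and correctly flags the one genuinely hard point (the $\mathcal C^\infty$ crossing at $P_2$ and the $r\uparrow\eye$ asymptotics); a full comparison with the actual argument of \cite{MRRSprofile} is not possible from the source at hand.

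One small imprecision worth noting: $P_2$ is not simply a hyperbolic fixed point of the reduced flow for the values of $r$ you ultimately need---the whole mechanism is that as $r\uparrow\eye(d,\ell)$ the eigenvalue ratio at $P_2$ passes through resonances, and $S_\infty(d,\ell)$ is not itself ``the obstruction'' but the $r$-independent leading coefficient of that obstruction in the $r\uparrow\eye$ expansion, whose non-vanishing (possible exceptions $\ell_n$) is what makes the subleading oscillation produce the quantized sequence $r_k$. Your later sentences say essentially this, so it is a matter of phrasing rather than a gap, but it is precisely the spot where all the real work of \cite{MRRSprofile} is concentrated.
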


The property \eqref{coercivityquadrcouplinginside} will be fundamental for the dissipativity ({\it in renormalized variables}) of the linearized flow {\em inside the light cone}\footnote{We should explain here that the cylinder $(\tau, Z=Z_2)$ corresponds to the light (null) cone of the acoustical metric associated to the solution $(\rho_P, \Psi_P)$ of the Euler equations. In original variables, this is the backward light cone
$(t, |x|=(T-t)^{\frac 1r})$ from the singular point $(T,0)$.}  $Z<Z_2$.
This is however insufficient. Dissipative term in the Navier-Stokes equations requires control of {\em global} Sobolev norms which, in turn,  demands \eqref{coercivityquadrcouplinginside} to hold globally in space.
\begin{lemma}[Repulsivity outside the light cone, \cite{MRRSprofile}]
\label{lemmnuericalvules}
Let $d=3$ and $$\ell_0(3)=\sqrt{3}<\ell,$$ then 
\be
\label{P}
(P)\ \ \ \  \exists c=c_{d,p,r}>0, \ \ \forall Z\ge Z_2, \ \ \left|\begin{array}{l} (1-w-\Lambda w)^2-F^2>c\\ 1-w-\Lambda w>c
\end{array}\right.
\ee
\end{lemma}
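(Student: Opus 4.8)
The plan is to transport property $(P)$ to a statement about the solution branch of the Emden system \eqref{systemedefoc} in the $(\sigma,w)$ phase plane, compactify that branch, and then verify positivity at its two endpoints together with an interior argument. First I would pass to the Emden variables \eqref{relationsprofileemden}, in which, over $Z\in[Z_2,+\infty)$, the profile of Theorem \ref{propexistenceprofile} traces the unique solution arc $\Gamma$ of \eqref{systemedefoc} joining $P_2$ (at $Z=Z_2$) to $P_6=(0,0)$ (reached as $Z\to+\infty$, by \eqref{limitprofilesbsi}). Since $y=\log Z$ makes $\Lambda=Z\pa_Z=\pa_y$, the system gives $\Lambda w=-\Delta_1/\Delta$ and $\Lambda\sigma=-\Delta_2/\Delta$ with $\Delta,\Delta_1,\Delta_2$ the explicit cubics of \eqref{calculdeterm}, so that, writing $G:=1-w-\Lambda w$, one has along $\Gamma\setminus\{P_2\}$
\[
G=\frac{(1-w)\Delta+\Delta_1}{\Delta},\qquad F=\sigma+\Lambda\sigma=\frac{\sigma\,\Delta-\Delta_2}{\Delta},
\]
two rational functions of $(\sigma,w)$; property $(P)$ is then exactly the assertion that $G$ and $G^2-F^2=(G-F)(G+F)$ admit uniform positive lower bounds along $\Gamma$.

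The two endpoints come essentially for free. At $P_6$, the asymptotics \eqref{limitprofilesbsi}--\eqref{decayprofile}, which are valid to all orders, give $w,\sigma,\Lambda w,\Lambda\sigma\to 0$, hence $G\to 1$ and $G^2-F^2\to1$, and these bounds persist on a $\Gamma$-neighbourhood of $P_6$. At $P_2$, the $\mathcal C^\infty$ passage through $P_2$ (part 2 of Theorem \ref{propexistenceprofile}) ensures that $w,\sigma$ and their $Z$-derivatives, hence $G$ and $F$, extend continuously from the region $Z<Z_2$, where the first line of \eqref{coercivityquadrcouplinginside} already yields $G^2-F^2>c$; in particular $G$ never vanishes on $[0,Z_2]$ and thus keeps a constant sign there, and a short Taylor expansion of \eqref{profileequation} at the origin using \eqref{boundarydata} gives $w(0)=w_e=\tfrac{\ell(r-1)}{d}$ and $\Lambda w(0)=0$, so that $G(0)=1-w_e>0$ since for $d=3$ the bound $r<\eye(3,\ell)$ forces $r<1+\tfrac3\ell$ and hence $w_e<1$. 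Therefore $G>0$ on $[0,Z_2]$ and, by continuity, $G(Z_2)>0$ and $(G^2-F^2)(Z_2)>0$, equivalently $G-F>0$ and $G+F>0$ at $P_2$ as well.

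It then remains to propagate positivity of $G$, $G-F$ and $G+F$ along the interior of the compact arc $\Gamma$; once this is done, continuity on $\Gamma$ together with compactness produces the uniform constant $c=c_{d,p,r}>0$ claimed in $(P)$. For this I would work in the $(\sigma,w)$ plane: after clearing $\Delta$, the loci $\{G=0\}$, $\{G=F\}$, $\{G=-F\}$ are algebraic curves, and the phase-portrait analysis of \cite{MRRSprofile} pins the $P_2\to P_6$ branch in the region $\{\sigma>0,\ w<1,\ w+\sigma<1\}$ (so $\Delta=(w-1-\sigma)(w-1+\sigma)>0$ there, the arc being off the sonic line away from $P_2$) with $w$ and $w+\sigma$ decreasing in $Z$, whence $\Lambda w<0$ and $\Lambda(w+\sigma)<0$; this already gives $G=1-w-\Lambda w>0$ (both $1-w>0$ and $-\Lambda w>0$) and $G-F=(1-w-\sigma)-\Lambda(w+\sigma)>0$, while $G+F=(1-w+\sigma)-\Lambda(w-\sigma)$ is treated in the same spirit. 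Equivalently, at any hypothetical first interior zero of $G$, $G-F$ or $G+F$ one checks that the Emden vector field \eqref{systemedefoc} points back into the positive region. It is precisely here that the hypothesis $\ell>\sqrt 3$, i.e. $\mathcal e>0$ by \eqref{conditione}, enters: it is the sharp condition keeping the $P_2\to P_6$ branch inside $\{G>0\}\cap\{G^2>F^2\}$ (for $\ell<\sqrt 3$ this global repulsivity is expected to fail, which is exactly why $(P)$ --- and with it the Navier--Stokes analysis --- requires $\ell>\sqrt 3$).

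The main obstacle is this last step. In contrast with \eqref{coercivityquadrcouplinginside}, which lives on the compact interval $[0,Z_2]$, the relevant $Z$-range here is unbounded, so one is forced to compactify through the phase plane and to handle the degenerate sonic point $P_2$, where $\Delta\to 0$ and $G,F$ are a priori of the form $0/0$ (the passage being controlled only by the $\mathcal C^\infty$ regularity of the profile there). Moreover the positivity is genuinely quantitative --- tied to the exact shape of the $P_2\to P_6$ branch and to the bound $\ell>\sqrt 3$ --- so it cannot be obtained from soft ODE principles alone and must rely on the detailed phase-portrait analysis of \cite{MRRSprofile}.
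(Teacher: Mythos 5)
This Lemma is cited from the companion paper \cite{MRRSprofile} and has no proof in the present text, so your sketch must be assessed on its own merits. The general architecture --- pass to Emden variables, compactify the $P_2\to P_6$ arc in the $(\sigma,w)$ plane, check the two endpoints, then propagate along the interior --- is reasonable, and the endpoint analysis is sound: the asymptotics \eqref{limitprofilesbsi} give $G:=1-w-\Lambda w\to 1$ and $F\to 0$ at $P_6$, while at $P_2$ the $\mathcal C^\infty$ passage guaranteed by Theorem \ref{propexistenceprofile}, combined with \eqref{coercivityquadrcouplinginside} and the sign $G(0)=1-w_e>0$ that you correctly compute, gives $G(Z_2)>0$ and $G^2(Z_2)-F^2(Z_2)>0$.

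The interior propagation, however, has a genuine gap, concentrated exactly where you write ``treated in the same spirit.'' If the arc lies in $\{w<1,\ w+\sigma<1,\ \sigma>0\}$ with $w$ and $w+\sigma$ decreasing in $Z$, you do obtain $G>0$ and $G-F=(1-w-\sigma)-\Lambda(w+\sigma)>0$. But $G+F=(1-w+\sigma)-\Lambda(w-\sigma)$ is structurally different: $\Lambda(w-\sigma)$ has no a priori sign, since $w$ and $\sigma$ both decay at the same rate $Z^{-r}$ by \eqref{limitprofilesbsi}, so the monotonicity of $w$ and $w+\sigma$ gives no control on $\Lambda(w-\sigma)$. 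Since $G^2-F^2=(G-F)(G+F)$ and $F$ becomes negative for $Z$ large (as $F\approx(1-r)\sigma$ with $r>1$), failing to bound $G+F$ from below is failing to prove the first line of $(P)$, which is the hard half of the Lemma. Worse, the soft monotonicity argument you do carry out nowhere invokes $\ell>\sqrt 3$; if it sufficed, it would yield $(P)$ for every $\ell>0$, contradicting the paper's own statement at the start of Section~8 that for $d=3$, $\ell\le\sqrt 3$ property $(P)$ fails for $Z>Z_2$. So the quantitative phase-portrait computation that actually detects the threshold $\ell=\sqrt 3$ --- presumably a barrier or no-escape estimate with the cubics $\Delta,\Delta_1,\Delta_2$ of \eqref{calculdeterm} along the $P_2\to P_6$ branch --- is entirely absent. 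Identifying $\ell>\sqrt 3$ with $\mathcal e>0$ via \eqref{conditione} is only a coincidence of numerical thresholds (the latter governs the decay of the renormalized viscosity, the former the geometry of the Euler phase portrait) and does not show where the hypothesis is used. You flag this honestly as ``the main obstacle,'' which is accurate; it means the sketch is a plausible framing of the problem, not a proof of the Lemma.
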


From now on and for the rest of this paper, we assume $$\hskip .3pc\quad\qquad\qquad\left|\begin{array}{l}d=3\\ \ell_0(3)<\ell
\end{array}\right.\qquad (Navier-Stokes)
$$ 
$$\left|\begin{array}{l}d=2,3\\ 0<\ell
\end{array}\right.\qquad (Euler)
$$ 
and pick once and for all a blow up speed $r=r_k$ close enough to $\eye(d,\ell)$ so that $(P)$ holds and $\e>0$.

\begin{figure}
\centering
\includegraphics[width=13cm]{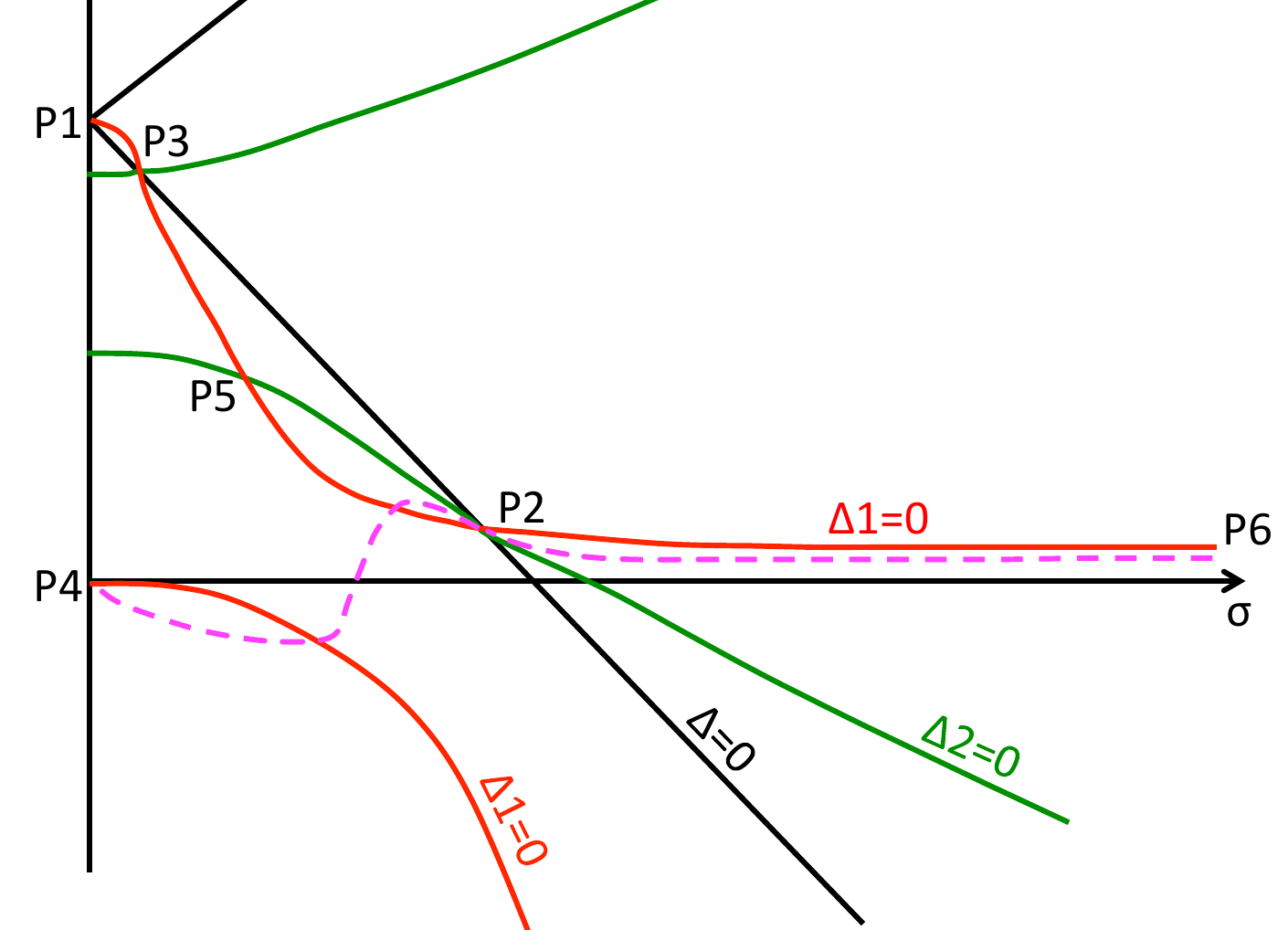}
\caption{Phase portrait in the range $1<r<r^*(d,\ell)$. Dashed curve is the trajectory of the solution constructed in Theorem \ref{propexistenceprofile}.}
\label{fig:solutioncurve}
\end{figure}

\begin{figure}
\centering
\includegraphics[width=13cm]{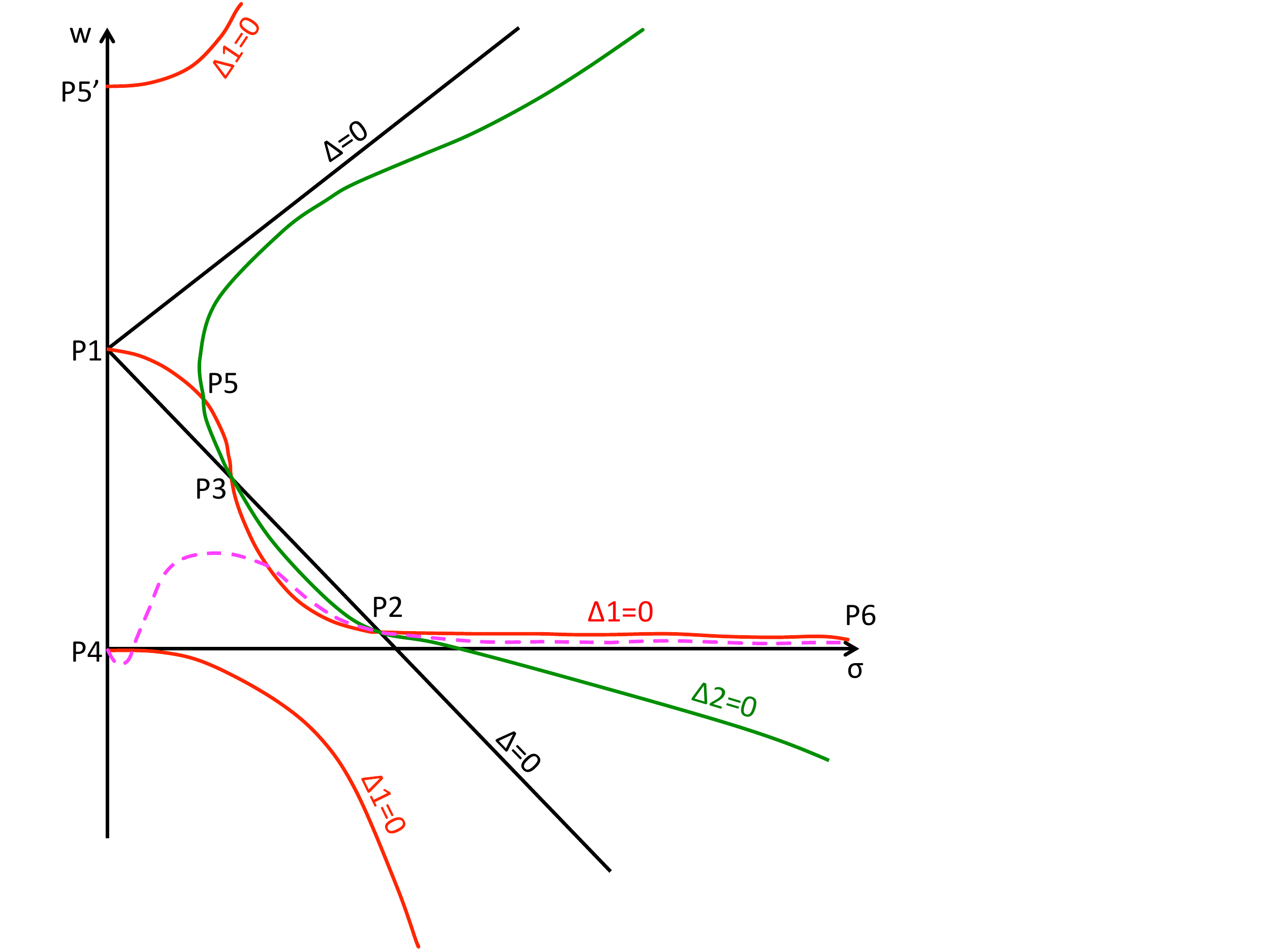}
\caption{Phase portrait in the range $r^*(d,\ell)<r<r_+(d,\ell)$, $\ell>d$. Dashed curve is the trajectory of the solution constructed in Theorem \ref{propexistenceprofile}.}
\label{fig:solutioncurvebis}
\end{figure}

%%%%%%%%%%%%%%%%%%%%%%%%%%%%%%%%%%%%%%%%%%%%%%

\subsection{Linearization of the renormalized flow}

%%%%%%%%%%%%%%%%%%%%%%%%%%%%%%%%%%%%%%%%%%%%%%

We aim at building a global in self-similar time $\tau\in[\tau_0,+\infty)$ solution to \eqref{renormalizedflow} with non vanishing density $\rho_T>0$. We define 
\be
\label{defhtwohunbis}
\left|\begin{array}{ll}
H_2=1+2\frac{\Psipx'}{Z}=1-w\\
H_1=-\left(\Delta \Psipx+\frac{\ell(r-1)}2\right)=H_2\frac{\Lambda \rhopx}{\rhopx}=\frac{\ell}{2}(1-w)\left[1+\frac{\Lambda \sigma}{\sigma}\right]
\end{array}\right.
\ee

We linearize \be\label{eq:lk}
\rho_T=\rho_P+\bar\rho, \ \ \Psi_T=\Psi_P+\bar\Psi.
\ee
 We compute, using the profile equation \eqref{profileequation}, for the first equation:
\bee
\pa_\tau \bar\rho&=&-(\rho_P+\bar\rho)\Delta(\Psi_P+\bar\Psi)-\frac{\ell(r-1)}{2}(\rho_P+\bar\rho)-(2\pa_Z\Psi_P+ Z+2\pa_Z\bar\Psi)(\pa_Z\rho_P+\pa_Z\bar\rho)\\
& = & -\rho_T\Delta \bar\Psi-2\nabla\rho_T\cdot\nabla \bar\Psi+H_1\bar\rho-H_2\Lambda \bar\rho
\eee
and for the second one:
\bee
\pa_\tau \bar\Psi&=&b^2\matchal F(u_T,\rho_T)-\Big\{|\nabla \Psi_P|^2+2\nabla \Psi_P\cdot\nabla \bar\Psi+|\nabla \bar\Psi|^2\\
&+&(r-2)\Psi_P+(r-2)\bar\Psi+(\Lambda\Psi_P+\Lambda\bar\Psi)+(\rho_P+\bar\rho)^{p-1}\Big\}\\
& = & b^2\matchal F(u_T,\rho_T)-\left\{2\nabla \Psi_P\cdot\nabla \bar\Psi+\Lambda \bar\Psi+(r-2)\bar\Psi+|\nabla\bar\Psi|^2+(\rho_P+\bar\rho)^{p-1}-\rho_P^{p-1}\right\}\\
& = & b^2\matchal F(u_T,\rho_T)-\left\{H_2\Lambda \bar\Psi+(r-2)\bar\Psi+|\nabla \bar\Psi|^2+(p-1)\rho_P^{p-2}\bar\rho +\NL(\rho)\right\}
\eee
with $$\NL(\rho)=(\rho_P+\bar\rho)^{p-1}-\rho_P^{p-1}-(p-1)\rho_P^{p-2}\bar\rho.$$
Hence the exact linearized flow
\be
\label{exactliearizedflow}
\left|\begin{array}{ll} \pa_\tau \bar\rho=H_1\bar\rho-H_2\Lambda \bar\rho-\rho_T\Delta \bar\Psi-2\nabla\rho_T\cdot\nabla \bar\Psi\\
\pa_\tau \bar\Psi=b^2\matchal F(u_T,\rho_T)-\left\{H_2\Lambda \bar\Psi+(r-2)\bar\Psi+|\nabla \bar\Psi|^2+(p-1)\rho_P^{p-2}\bar\rho +\NL(\rho)\right\}\end{array}\right.
\ee
Theorem \ref{thmmain} is therefore equivalent to constructing a finite co-dimensional manifold of smooth well localized initial data leading to global in renormalized $\tau$-time solutions to \eqref{exactliearizedflow}.

%%%%%%%%%%%%%%%%%%%%%%%%%%%%%%%%%%%%%%%
%%%%%%%%%%%%%%%%%%%%%%%%%%%%%%%%%%%%%%%

\section{Linear theory slightly beyond the light cone}
\label{sectionlinear}
%%%%%%%%%%%%%%%%%%%%%%%%%%%%%%%%%%%%%%%
%%%%%%%%%%%%%%%%%%%%%%%%%%%%%%%%%%%%%%%

Our aim in this section is to study the linearized problem \eqref{exactliearizedflow} for the exact Euler problem $b=0$. We in particular aim at setting up the suitable functional framework in order to apply classical propagator estimates which will yield exponential decay on compact sets in $Z$, modulo the control of a finite number of unstable directions. We mainly collect here the results which were proved in detail in \cite{MRRSdefoc} and apply verbatim.

%%%%%%%%%%%%%%%%%%%%%%%%%%%%%%%%%%%%%%%

\subsection{Linearized equations}

%%%%%%%%%%%%%%%%%%%%%%%%%%%%%%%%%%%%%%%

Recall the exact linearized flow \eqref{exactliearizedflow} which we rewrite:
$$
\left|\begin{array}{ll} \pa_\tau \bar\rho=H_1\bar\rho-H_2\Lambda \bar\rho-\rho_P\Delta \bar\Psi-2\nabla\rho_P\cdot\nabla \bar\Psi-\bar\rho\Delta \bar\Psi-2\nabla \bar\rho\cdot\nabla \bar\Psi\\
\pa_\tau \bar\Psi=b^2\mathcal F(u_T,\rho_T)-\left\{H_2\Lambda \bar\Psi+(r-2)\bar\Psi+(p-1)\rho_P^{p-2}\bar\rho +|\nabla \bar\Psi|^2+\NL(\rho)\right\}\end{array}\right.
$$
We introduce the new unknown 
\be
\label{defnewvariablephi}
\Phix=\rhopx \bar\Psi
\ee and obtain equivalently, using \eqref{defhtwohunbis}:
\be
\label{nekoneneon}
\left|\begin{array}{ll} \pa_\tau \bar\rhox=H_1\bar\rhox-H_2\Lambda \bar\rhox-\Delta \Phix+H_3\Phix+G_\rho\\
\pa_\tau \Phix=-(p-1)\qx\bar\rhox-H_2\Lambda \Phix+(H_1-(r-2))\Phix+G_\Phi
 \end{array}\right.
\ee
with $$\qx=\rhopx^{p-1}, \ \ H_3=\frac{\Delta\rhopx}{\rhopx}$$ and the nonlinear terms:
\be
\label{defgrho}
\left|\begin{array}{ll}G_\rho=-\bar\rho\Delta \bar\Psi-2\nabla\bar\rho\cdot\nabla \bar\Psix\\
G_\Phi=-\rho_P(|\nabla \bar\Psi|^2+\NL(\rho))+b^2\rho_P\mathcal F(u_T,\rho_T)
\end{array}\right.
\ee

 We transform \eqref{nekoneneon} into a wave equation for $\Phi$:
  \bee
&&\pa_\tau^2\Phix=  (p-1)Q\Delta \Phi-H_2^2\Lambda^2\Phi-2H_2\Lambda\pa_\tau \Phix+  A_1\Lambda \Phi+A_2\pa_\tau\Phix+A_3 \Phix\\
& + & \pa_\tau G_\Phi-\left(H_1+H_2\frac{\Lambda Q}{Q}\right)G_\Phi+H_2\Lambda G_\Phi-(p-1)QG_\rho
\eee
with
$$\left|\begin{array}{llll}
A_1=H_2H_1-H_2\Lambda H_2+H_2(H_1-(r-2))+H_2^2\frac{\Lambda Q}{Q}\\
A_2=2H_1-(r-2)+H_2\frac{\Lambda Q}{Q}\\
A_3=-(H_1-(r-2))H_1+H_2\Lambda H_1-H_2(H_1-(r-2))\frac{\Lambda Q}{Q} -  (p-1)QH_3
\end{array}\right.
$$
\begin{remark}[Null coordinates and red shift]
 We note that the principal symbol of the above wave equation is given by the second order operator
$$
\Box_Q:=\pa_\tau^2 - ((p-1)Q-H_2^2 Z^2)\pa_Z^2+2H_2Z \pa_Z\pa_\tau.
$$
This operator governs propagation of sound waves associated to the background solution $(\rho_P, \Psi_P)$ of the Euler equations.

In the variables of Emden transform $(\tau,y=\log Z)$, $\Box_Q$ can be written equivalently as 
$$
\Box_Q=\pa_\tau^2 -\left[\sigma^2-(1-w)^2\right]\pa_y^2+2(1-w)\pa_y\pa_\tau
$$
The two principal null direction associated with the above equation are 
$$
L=\pa_\tau+ \left[(1-w)-\sigma\right]\pa_y,\qquad \underline{L}=\pa_\tau+ \left[(1-w)+\sigma\right]\pa_y,
$$
so that 
$$
\Box_Q=L\underline{L}
$$
We observe that at $P_2$, we have $L=\pa_\tau$ and the surface $Z=Z_2$ is a null cone. Moreover, the associated acoustical metric\footnote{This is the metric on the $1+1$-dimensional quotient manifold obtained after removing the action of the rotation group.} is
$$
g_Q=\Delta d\tau^2 -2(1-w) d\tau dy +dy^2,\qquad \Delta=(1-w)^2-\sigma^2
$$
for which $\pa_\tau$ is a Killing field (generator of translation symmetry). Therefore, $Z=Z_2$ is a {\it Killing horizon} (generated by a null Killing field.) We can make it even more precise by transforming the metric $g_Q$ into a slightly different form by defining the coordinate $s$:
$$
s=\tau-f(y),\qquad f'=\frac {1-w}{\Delta},
$$
so that 
$$
g_Q=\Delta (ds)^2-\frac {\sigma^2}{\Delta} dy^2
$$
and then the coordinate $y^*$:
$$
y^*=\int \frac{\sigma}{\Delta} dy, 
$$
so that
$$
g_Q=\Delta\, d(s+y^*)\, d(s-y^*)
$$
and $y+x^*$ and $y-x^*$ are the null coordinates of $g_Q$.
The Killing horizon $Z=Z_2$ corresponds to $y^*=-\infty$ and $\Delta\sim e^{Cy^*}$ for some positive constant $C$.
In this form, near $Z_2$ the metric $g_Q$ resembles the $1+1$-quotient Schwarzschild metric near the black hole horizon.

The associated {\it surface gravity} $\kappa$ which can be computed according to
\begin{align*}
\kappa&=\frac {\pa_{y^*} \Delta}{2\Delta}|_{P_2} =\frac {\pa_{y} \Delta}{2\sigma}|_{P_2}=
\frac {-w'(1-w)-\sigma'\sigma)}{\sigma}|_{P_2}\\ &=(-w'-\sigma')|_{P_2}=1-w-\Lambda w -\frac{(1-w)F}{\sigma}|_{P_2}>0
\end{align*}
This is precisely the repulsive condition \eqref{coercivityquadrcouplinginside}  (at $P_2$).
The positivity of surface gravity implies the presence of the {\it red shift} effect along $Z=Z_2$ both as an optical phenomenon for the acoustical metric
$g_Q$ and also as an indicator of local monotonicity estimates for solutions of the wave equation 
$\Box_Q \varphi=0$, \cite{dr}. The complication in the analysis below is the presence of lower order terms in the wave equation as well as the need for global in space estimates.
\end{remark}

We focus now on deriving decay estimates for \eqref{nekoneneon}.

%%%%%%%%%%%%%%%%%%%%%%%%%%%%%%%%%%%%%%%

\subsection{The linearized operator with a shifted measure}

%%%%%%%%%%%%%%%%%%%%%%%%%%%%%%%%%%%%%%%

Pick a small enough parameter $$0<a\ll 1$$ and consider the new variable
\be
\label{defintionT}
\T=\pa_\tau\Phi+aH_2\Lambda \Phi,
\ee 
we compute the $(\T,\Phi)$ equation
\be
\label{newlinearflow}
\pa_\tau X=\mathcal M X +G, \ \ X=\left|\begin{array}{ll}\Phi\\ \Theta\end{array}\right., \ \ G=\left|\begin{array}{ll}0\\ G_\T\end{array}\right.
\ee
with 
\be
\label{defiiotinm}
\mathcal M=\left(\begin{array}{ll} -aH_2\Lambda & 1\\ (p-1)Q\Delta -(1-a)^2H_2^2\Lambda^2+\tilde{A_2}\Lambda +A_3& -(2-a)H_2\Lambda +A_2\end{array}\right)
\ee
where
\be
\label{defgt}
G_\T=\pa_\tau G_\Phi-\left(H_1+H_2\frac{\Lambda Q}{Q}\right)G_\Phi+H_2\Lambda G_\Phi-(p-1)QG_\rho
\ee 
and 
$$\tilde{A}_2=A_1+(2a-a^2)H_2\Lambda H_2-a A_2H_2.$$
The fine structure of the operator \eqref{defiiotinm} involves the understanding of the associated shifted light cone.

\begin{lemma}[Shifted measure, \cite{MRRSdefoc}]
\label{shiftoight}
Let 
\be
\label{defdacnoneo}
D_a=(1-a)^2(w-1)^2-\sigma^2
\ee
then for $0<a<a^*$ small enough, there exists a $\mathcal C^1$ map $a\mapsto Z_a$ with $$Z_{a=0}=Z_2, \ \ \frac{\pa Z_a}{\pa a}>0$$ such that 
\be
\label{eineneoneonoe}
\left|\begin{array}{l}
D_a(Z_a)=0\\
-D_a(Z)>0\ \ \mbox{on}\ \ 0\le Z<Z_a\\
\lim_{Z\to 0}Z^2(-D_a)>0.
\end{array}\right.
\ee
\end{lemma}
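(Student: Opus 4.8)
The plan is to analyze the polynomial $D_a(Z)=(1-a)^2(w(Z)-1)^2-\sigma(Z)^2$ as a perturbation, in the small parameter $a$, of the function $D_0(Z)=(1-w)^2-\sigma^2=\Delta(Z)$ already studied in Theorem \ref{propexistenceprofile} and Lemma \ref{lemmnuericalvules}. The key structural input is that at $a=0$ the zero $Z_2$ of $\Delta$ is \emph{simple} and the derivative of $\Delta$ there is controlled by the surface gravity / repulsivity condition. Indeed, along the solution curve $(w,\sigma)$ the identity
$$
\partial_Z \Delta = \frac 1Z\big[-w'(1-w)-\sigma'\sigma\big]\cdot 2 = \frac{2\sigma}{Z}\Big(1-w-\Lambda w-\frac{(1-w)F}{\sigma}\Big)
$$
(rewritten in the $y=\log Z$ variable exactly as in the red-shift remark) shows, thanks to the second inequality in \eqref{coercivityquadrcouplinginside} at $P_2$, that $\partial_Z\Delta(Z_2)>0$ and in particular $\Delta$ has a nondegenerate zero at $Z_2$. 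Hence by the implicit function theorem applied to $(a,Z)\mapsto D_a(Z)$, whose partial derivative in $Z$ at $(0,Z_2)$ is $\partial_Z\Delta(Z_2)\neq 0$ and which is $\mathcal C^1$ jointly (since $w,\sigma$ are $\mathcal C^\infty$ by Theorem \ref{propexistenceprofile}), there is a $\mathcal C^1$ branch $a\mapsto Z_a$ with $Z_0=Z_2$ and $D_a(Z_a)=0$ for $0<a<a^*$.

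Next I would check the three asserted properties. For the sign $-D_a(Z)>0$ on $[0,Z_a)$: at $a=0$ we have $-\Delta>0$ on $[0,Z_2)$ — this is essentially the first inequality of \eqref{coercivityquadrcouplinginside}, namely $(1-w-\Lambda w)^2-F^2>c$ being the positivity of $-\Delta$ away from the cone together with $\Delta(Z_2)=0$ and $\partial_Z\Delta(Z_2)>0$, which forces $\Delta<0$ just inside. Since $D_a=\Delta+(2a-a^2)(w-1)^2\geq \Delta$ pointwise (the correction is a nonpositive multiple... actually $(2a-a^2)>0$ so $D_a\geq\Delta$, hence $-D_a\leq -\Delta$), the perturbation only \emph{shrinks} $-D_a$, so one must be slightly careful: I would instead argue on the closed interval $[0,Z_2-\delta]$ where $-\Delta\geq c_\delta>0$ uniformly, so for $a$ small $-D_a>0$ there too; and on $[Z_2-\delta,Z_a]$ one uses the nondegeneracy $\partial_Z D_a<0$... wait, $\partial_Z D_a(Z_a)>0$, so $D_a<0$ just left of $Z_a$, i.e. $-D_a>0$ there. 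Combining the uniform bound away from the cone with the monotonicity near $Z_a$ gives $-D_a>0$ on all of $[0,Z_a)$. For the sign of $\partial_a Z_a$: differentiating $D_a(Z_a)=0$ gives $\partial_a Z_a = -\dfrac{\partial_a D_a}{\partial_Z D_a}\Big|_{Z_a} = -\dfrac{(2-2a)(w-1)^2}{\partial_Z D_a}$; since $(w-1)^2>0$ near $P_2$ (we have $w\neq 1$ at $P_2$, as otherwise the metric degenerates) and $\partial_Z D_a>0$ by continuity from $\partial_Z\Delta(Z_2)>0$, the ratio is negative times positive, giving $\partial_a Z_a>0$. For the limit at the origin: from the boundary data \eqref{boundarydata} and the Emden relations \eqref{relationsprofileemden}, as $Z\to 0$ one has $\tfrac 1M=\phi Z\sigma\to 0$ with $Q=\rho_P^{p-1}\to 1$, so $\sigma\sim \tfrac{1}{\phi Z}$, i.e. $Z^2\sigma^2\to \tfrac{1}{\phi^2}=\tfrac{4}{\ell}>0$, while $Z^2(w-1)^2\to 0$ since $w\to 0$; therefore $Z^2(-D_a)\to \tfrac{4}{\ell}>0$.

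The main obstacle I anticipate is not any single one of these verifications — each is short — but rather making the sign statement $-D_a>0$ on $[0,Z_a)$ airtight \emph{uniformly}, i.e. exhibiting the threshold $a^*$ explicitly enough that on the entire interval (not just near $Z_2$ and not just near $0$) the positivity survives the perturbation. The cleanest route is a compactness argument: $-\Delta$ is continuous and strictly positive on every $[0,Z_2-\delta]$, and the perturbation $D_a-\Delta=(2a-a^2)(w-1)^2$ is bounded on compacts by $Ca$, so choosing $\delta$ small (using the nondegenerate zero to control the window $[Z_2-\delta,Z_2]$ and beyond to $Z_a$ via the sign of $\partial_Z D_a$) and then $a^*$ small relative to $c_\delta$ closes it. I would also record, as in \cite{MRRSdefoc}, that $Z_a>Z_2$ (consistent with $\partial_a Z_a>0$), so the shifted cone strictly contains the acoustic cone, which is the geometric point motivating the construction.
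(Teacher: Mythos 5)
The proof of this lemma is deferred to \cite{MRRSdefoc}, so there is no in-paper proof to compare against; your implicit-function-theorem strategy (nondegeneracy of the zero of $\Delta$ at $Z_2$ via the red-shift/repulsivity condition, plus perturbation in $a$) is the natural one and is almost certainly the route the companion paper takes. However, there is a genuine sign error in your treatment of the perturbation that reverses the structure of the argument. Since $(1-a)^2=1-(2a-a^2)$, we have
$$D_a=(1-a)^2(w-1)^2-\sigma^2=\Delta-(2a-a^2)(w-1)^2\le \Delta,$$
i.e.\ $-D_a\ge -\Delta$. You wrote $D_a=\Delta+(2a-a^2)(w-1)^2\ge\Delta$ and concluded that the perturbation \emph{shrinks} $-D_a$, which is backwards. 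The correct sign makes the argument \emph{easier} than you anticipate: on $[0,Z_2)$ the inequality $-D_a\ge -\Delta>0$ holds automatically for every $a\in(0,1)$, with no smallness or compactness argument needed there; the only region requiring care is $[Z_2,Z_a]$, where one uses $D_a(Z_2)=-(2a-a^2)(w(Z_2)-1)^2<0$, the monotonicity $\partial_Z D_a>0$ on a fixed neighborhood of $Z_2$ (stable under small $a$ since $\partial_Z\Delta(Z_2)>0$), and $D_a(Z_a)=0$, to conclude $D_a<0$ on $[Z_2,Z_a)$. Your compactness discussion is premised on the wrong sign and should be discarded.

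Two further points, less serious but worth fixing. First, the claim that $-\Delta>0$ on $[0,Z_2)$ is ``essentially the first inequality of \eqref{coercivityquadrcouplinginside}'' is a misattribution: $(1-w-\Lambda w)^2-F^2>c$ is a statement about the derived quantities $1-w-\Lambda w$ and $F=\sigma+\Lambda\sigma$, not about $(1-w)^2-\sigma^2$, and it does not directly yield $\Delta<0$. The correct source for $\Delta<0$ on $[0,Z_2)$ is the phase-portrait analysis of Theorem~\ref{propexistenceprofile}: the trajectory emanates from $\sigma=+\infty$ (where $\Delta\to-\infty$) and by construction first meets the sonic curve $\Delta=0$ at $P_2$; $Z_2$ is defined as that first hitting time. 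The repulsivity inequality enters only to give $\partial_Z\Delta(Z_2)>0$, i.e.\ the \emph{transversality} of the crossing. Second, in your implicit-differentiation step you wrote $\partial_a Z_a=-\frac{(2-2a)(w-1)^2}{\partial_Z D_a}$, which is off by a sign: $\partial_a D_a=-2(1-a)(w-1)^2$, so $\partial_a Z_a=+\frac{2(1-a)(w-1)^2}{\partial_Z D_a}>0$. You land on the right conclusion, but the displayed formula is wrong. Finally, the assertion ``$w\to 0$ as $Z\to 0$'' is neither justified nor needed; boundedness of $w$ near the origin (which follows from smoothness of $\Psi_P$) already gives $Z^2(w-1)^2\to 0$, and with $Z^2\sigma^2\to\phi^{-2}=4/\ell$ one obtains $\lim_{Z\to 0}Z^2(-D_a)=4/\ell>0$ as required.
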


%%%%%%%%%%%%%%%%%%%%%%%%%%%%%%%%%%%%%%%

\subsection{Commuting with derivatives}

%%%%%%%%%%%%%%%%%%%%%%%%%%%%%%%%%%%%%%%

We define $$\T_k=\Delta^k\T, \ \ \Phi_k=\Delta^k\Phi$$ and commute the linearized flow with derivatives. 

\begin{lemma}[Commuting with derivatives, \cite{MRRSdefoc}]
\label{lemmaderivative}
Let $k\in \Bbb N$. There exists a smooth measure $g$ defined for $Z\in [0,Z_a]$ such that the following holds. Let 
$$\mathcal L_g\Phi_k=\frac{1}{gZ^{d-1}}\pa_Z\left(Z^{d-1}Z^2g(-D_a) \pa_Z\Phi_k\right),$$ 
then there holds 
\be
\label{commutationderivative}
\Delta^k(\mathcal MX)=\mathcal M_k\left|\begin{array}{ll}\Phi_k\\ \T_k\end{array}\right.+\widetilde{\mathcal M}_k X
\ee
with
$$
\mathcal M_k\left|\begin{array}{ll}\Phi_k\\ \T_k\end{array}\right.=\left|\begin{array}{ll}-aH_2\Lambda \Phi_k-2ak(H_2+\Lambda H_2)\Phi_k+ \T_k\\ \matchal L_g \Phi_k-(2-a)H_2\Lambda \T_k-2k(2-a)(H_2+\Lambda H_2) \T_k +A_2\T_k\end{array}\right.
$$
where $\widetilde{\mathcal M}_k$ satisfies the following pointwise bound
\be
\label{contlmak}
|\widetilde{\mathcal M}_kX|\lesssim_k \left|\begin{array}{ll} \displaystyle\sum_{j=0}^{2k-1}|\pa^{j}_Z\Phi|,\\
 \displaystyle\sum_{j=0}^{2k}|\pa^{j}_Z\Phi|+\sum_{j=0}^{2k-1}|\pa_Z^j\T|.
\end{array}\right.
\ee
Moreover, $g>0$ in $[0,Z_a)$ and admits the asymptotics:
\be
\label{asymptoticsg}
\left|\begin{array}{ll}g(Z)=1+O(Z^2)\ \ \mbox{as}\ \ Z\to 0\\
g(Z)=c_{a,d,r,\ell}(Z_a-Z)^{c_g}\left[1+O(Z-Z_a)\right]\ \mbox{as}\ \ Z\uparrow Z_a, \ \ .
\end{array}\right.
\ee
with 
\be
\label{estcg}
c_g>0
\ee for all $k\ge k_1$ large enough and $0<a<a^*$ small enough.
\end{lemma}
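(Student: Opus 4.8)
\textbf{Proof proposal for Lemma \ref{lemmaderivative}.}

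The plan is to proceed by induction on $k$, at each step differentiating the system \eqref{newlinearflow} (equivalently the scalar wave equation whose principal part is $\Box_Q$) $2k$ times in the radial variable and carefully tracking which terms land in the principal operator and which can be absorbed into the remainder $\widetilde{\mathcal M}_k$. The first step is to record the commutator structure: applying $\Delta$ (the flat radial Laplacian) to $\mathcal M X$ produces, from the top-left entry $-aH_2\Lambda$, a term $-aH_2\Lambda(\Delta\Phi)$ together with a commutator $[-aH_2\Lambda,\Delta]\Phi$; since $\Lambda=Z\pa_Z$ and $H_2=H_2(Z)$ are smooth on $[0,Z_a]$, the commutator $[\Delta,\Lambda]=2\Delta$ plus lower-order terms, and $[\Delta,H_2]$ is a first-order operator with smooth coefficients, so this contributes the explicit correction $-2ak(H_2+\Lambda H_2)\Phi_k$ (the coefficient $2k$ coming from iterating $[\Delta,\Lambda]=2\Delta$ $k$ times) plus a remainder bounded by lower-order derivatives of $\Phi$. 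The identical bookkeeping applies to the bottom-right entry $-(2-a)H_2\Lambda+A_2$, yielding the $-2k(2-a)(H_2+\Lambda H_2)\T_k$ correction; and the off-diagonal $1$ commutes trivially.

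The genuinely delicate point is the bottom-left principal term $(p-1)Q\Delta-(1-a)^2H_2^2\Lambda^2+\widetilde A_2\Lambda+A_3$. Here I would combine the leading second-order pieces $(p-1)Q\Delta-(1-a)^2H_2^2\Lambda^2$ into a single divergence-form operator. In the radial variable this second-order operator has top symbol $(p-1)Q-(1-a)^2 Z^2 H_2^2 = Z^2\big((p-1)\sigma^{-2}\phi^{-2}\cdots - (1-a)^2(1-w)^2\big)$ — modulo the Emden dictionary \eqref{relationsprofileemden}, \eqref{defhtwohunbis} this is exactly $Z^2 D_a$ up to a positive multiplicative constant, which is why $-D_a$ (positive for $Z<Z_a$ by Lemma \ref{shiftoight}) is the right weight. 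The measure $g$ is then \emph{constructed} — not merely asserted — as the solution of the first-order ODE that makes the cross terms (the $\Lambda$-order contributions to $\mathcal L_g\Phi_k$ coming from differentiating the coefficient $Z^2 g(-D_a)$) reproduce, after $2k$ derivatives, precisely the shifted operator $\mathcal L_g\Phi_k=\frac{1}{gZ^{d-1}}\pa_Z(Z^{d-1}Z^2 g(-D_a)\pa_Z\Phi_k)$ with no first-order error at the principal level; concretely $g$ is obtained by integrating a ratio of smooth functions, with the integrand having a simple pole at $Z_a$ (because $-D_a$ vanishes simply there, $\pa_Z D_a(Z_a)\neq 0$ following from $\frac{\pa Z_a}{\pa a}>0$ in Lemma \ref{shiftoight}) and a controlled singularity at $Z=0$ (because $-D_a\sim c/Z^2$ there). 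This simultaneously yields the asymptotics \eqref{asymptoticsg}: near $0$ one gets $g=1+O(Z^2)$ after normalizing the constant, and near $Z_a$ one gets the power-law $(Z_a-Z)^{c_g}$ with exponent $c_g$ equal to the residue of that pole; the strict positivity $c_g>0$ in \eqref{estcg} is then a computation expressing $c_g$ in terms of the surface-gravity-type quantity, and it holds once $a$ is small and $k\ge k_1$ large because the $2k$-fold differentiation shifts $c_g$ by a quantity linear in $k$ with positive slope (the same $2k(2-a)(H_2+\Lambda H_2)$ mechanism), so a large enough $k$ forces positivity regardless of the sign of the $k=0$ residue.

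Once $g$ is fixed this way, the residual operator $\widetilde{\mathcal M}_k$ collects: (i) all commutators of $\Delta^k$ with the smooth zeroth- and first-order coefficients $H_1,A_2,A_3,\widetilde A_2,H_3$; (ii) the error between $\Delta^k$ applied to the second-order principal part and the clean divergence form $\mathcal L_g$ — this is where choosing $g$ correctly kills the top-order error and leaves only terms of order $\le 2k$ derivatives on $\Phi$; and (iii) the lower-order shards produced by iterating $[\Delta,\Lambda]$. Each of these is manifestly a differential operator of order at most $2k$ in $\Phi$ and order at most $2k-1$ in $\T$ in the first component, matching \eqref{contlmak}; the slight asymmetry (one more derivative of $\Phi$ allowed in the $\T_k$-equation) reflects that $\mathcal L_g\Phi_k$ itself is genuinely second order in $\Phi_k$, so its coefficient-commutators cost derivatives of $\Phi$ up to order $2k$. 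The main obstacle, and the part deserving real care rather than hand-waving, is step (ii) together with the extraction of $c_g$: one must verify that the ODE defining $g$ is globally solvable on $[0,Z_a)$ with $g>0$ throughout (no intermediate zero of the integrand denominator, which uses $-D_a>0$ strictly on $[0,Z_a)$ from Lemma \ref{shiftoight}) and that the exponent $c_g$, an explicit rational expression in the profile data at $Z_a$, is strictly positive for the relevant $k$ and $a$ — this is the analogue here of the positive-surface-gravity/red-shift condition and is precisely where the repulsivity estimate \eqref{coercivityquadrcouplinginside} of Theorem \ref{propexistenceprofile} enters. All remaining manipulations are routine Leibniz-rule expansions.
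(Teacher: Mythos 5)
Your proposal identifies the right architecture for the proof: commute $\Delta^k$ through $\mathcal M$, pin down the second-order symbol, construct $g$ as the solution of a first-order ODE that reproduces the first-order coefficient in divergence form, then read off $c_g$ from the pole of that ODE at $Z_a$ and invoke the repulsivity/surface-gravity positivity to conclude $c_g>0$ at large $k$. The Emden computation you gesture at is in fact exact rather than ``up to a positive multiplicative constant'': from $Q=\rho_P^{p-1}$, $1/M=\phi Z\sigma$, $(p-1)\phi^2=1$, one has $(p-1)Q-(1-a)^2H_2^2Z^2 = Z^2\left(\sigma^2-(1-a)^2(1-w)^2\right)=Z^2(-D_a)$, so your identification of the principal symbol is correct. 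I cannot check your proposal against the paper's actual proof, which is deferred to the companion paper \cite{MRRSdefoc}, but the structure you describe is precisely what the statement's ingredients (the weight $-D_a$, the power-law vanishing of $g$ at $Z_a$, the threshold $k\ge k_1$) force, and it is consistent with the red-shift remark preceding the lemma.

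Two points deserve correction or caution. First, your derivation of the $-2ak(H_2+\Lambda H_2)\Phi_k$ correction attributes the whole coefficient to iterating $[\Delta,\Lambda]=2\Delta$; that only produces $2kH_2\Phi_k$. The $2k\,\Lambda H_2\,\Phi_k$ piece comes from a separate source, namely the commutator $[\Delta^k,H_2]\Lambda\Phi$ and then re-expressing $\pa_Z\Delta^{k-1}(\Lambda\Phi)$ in terms of $\Phi_k$ using $\Delta$ in radial coordinates (one gets $(\Lambda H_2)\Phi_k$ plus lower-order debris). Second, you write that ``the $2k$-fold differentiation shifts $c_g$ by a quantity linear in $k$ with positive slope (the same $2k(2-a)(H_2+\Lambda H_2)$ mechanism).'' This conflates two distinct $k$-dependences: the term $-2k(2-a)(H_2+\Lambda H_2)\T_k$ is an explicit entry of $\mathcal M_k$ acting on $\T_k$, whereas the $k$-dependence of $c_g$ enters through the first-order ODE for $g$, whose source is the commutator of $\Delta^k$ with the genuinely second-order block $(p-1)Q\Delta-(1-a)^2H_2^2\Lambda^2$. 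That commutator produces a $k$-linear term roughly of the form $2k\,\pa_Z\big(Z^2(-D_a)\big)\pa_Z\Phi_k$, and it is the evaluation of this coefficient at $Z_a$ (where $-D_a=0$ and $\pa_Z(-D_a)<0$, i.e., the surface gravity is positive by \eqref{coercivityquadrcouplinginside}) that drives $c_g$ to $+\infty$ linearly in $k$. You correctly flag this as the part requiring care; it is precisely this computation — determining the $k$-linear slope $\beta(Z_a)$ and checking its sign — that your sketch states but does not carry out, and which constitutes the mathematical content of \eqref{estcg}.
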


%%%%%%%%%%%%%%%%%%%%%%%%%%%%%%%%%%%%%%%%%%%

\subsection{Maximal accretivity and spectral gap}

%%%%%%%%%%%%%%%%%%%%%%%%%%%%%%%%%%%%%

The linear theory we use relies on the spectral structure of compact perturbations of maximal accretive operators.\\

\noindent\underline{Hilbert space}. We define the space of test functions
$$\mathcal D_0=\mathcal D_\Phi\times C_{\rm radial}^\infty([0,Z_a],\Bbb C),$$
and let ${\Bbb H}_{2k}$
 be the completion of $\mathcal D_0$ for the scalar product:
\be
\label{defscalarproduct}
 \la X,\tilde{X}\ra = \la\la\Phi, \Phi\ra\ra+(\T_k, \tilde{\T}_k)_g+\int \chi \T\overline{\tilde{\T}}Z^{d-1}dZ
\ee
where
\bea
\label{defscalarproductbis}
\la\la \Phi,\tilde{\Phi}\ra\ra&=&-(\mathcal L_g\Phi_k,\tilde{\Phi}_{k})_g+\int \chi \Phi\overline{\tilde{\Phi}}gZ^{d-1}dZ,
\eea
$$
(\T_k, \tilde{\T}_k)_g=\int \T_k\overline{\tilde{\T}_k}g Z^{d-1}dZ,
$$
$\chi$ be a smooth cut off function  supported on the set $|Z|<Z_2$ such that 
$$g\geq \frac 12\ \ \mbox{on}\ \ {\rm Supp}\chi.$$
\noindent\underline{Unbounded operator}. Following \eqref{defiiotinm} we define the operator  $$\ \ \mathcal M=\left(\begin{array}{ll} -aH_2\Lambda & 1\\ (p-1)Q\Delta -(1-a)^2H_2^2\Lambda^2+\tilde{A_2}\Lambda +A_3& -(2-a)H_2\Lambda {+A_2}\end{array}\right)$$
  with domain 
\be
\label{defintiondoamin}
D(\mathcal M)=\{X\in \Bbb H_{2k}, \ \ \mathcal M X\in \Bbb H_{2k}\}
\ee 
equipped with the domain norm. We then pick suitable directions $(X_i)_{1\leq i\leq N}\in \Bbb H_{2k}$ and consider the finite rank projection operator
$$\mathcal A=\sum_{i=1}^N\la \cdot,X_i\ra X_i.$$
The following fundamental accretivity property is proved in \cite{MRRSdefoc}.

\begin{proposition}[Maximal accretivity/dissipativity, \cite{MRRSdefoc}]
\label{propaccretif}
There exist $k_{\flat}\gg 1$ and $0<c^*,a^*\ll 1$ such that for all $k\ge k_{\flat}$,  $\forall 0<a<a^*$ small enough, there exist $N=N(k,a)$ directions $(X_i)_{1\leq i\leq N}\in \Bbb H_{2k}$ such that the modified unbounded operator 
$$\tilde{\mathcal M}:={\mathcal M -} \mathcal A$$
is {dissipative}
\be
\label{accretivityalmost}
\forall X\in \matchal D(\matchal M), \ \ \Re\la (-\mathcal {\tilde M} X,X\ra \geq c^*ak \la X,X\ra
\ee
and maximal:
\be
\label{estmaximal}
\forall R>0, \ \ \forall F\in \Bbb H_{2k}, \ \ \exists X\in \matchal D(\matchal M)\ \ \mbox{such that} \ \ (-\tilde{\matchal M}+R)X=F.
\ee
\end{proposition}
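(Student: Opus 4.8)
The plan is to establish the dissipativity bound \eqref{accretivityalmost} by a direct weighted energy estimate carried out at the level of the $k$-times commuted system \eqref{commutationderivative}, and to obtain the maximality statement \eqref{estmaximal} by eliminating $\Theta$ and solving the resulting degenerate second order elliptic problem for $\Phi$ on $[0,Z_a]$ via a Lax--Milgram argument whose coercivity is furnished by the dissipativity computation itself. Throughout, the two structural inputs are the repulsivity of the profile both inside the light cone \eqref{coercivityquadrcouplinginside} and slightly beyond it \eqref{P}, and the sign and degeneracy properties of the measure $g(-D_a)$ from Lemma \ref{lemmaderivative}.

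\emph{Dissipativity.} First I would compute $\Re\langle -\mathcal M X,X\rangle$ in the scalar product \eqref{defscalarproduct} using the splitting $\Delta^k(\mathcal M X)=\mathcal M_k(\Phi_k,\Theta_k)+\widetilde{\mathcal M}_k X$ of Lemma \ref{lemmaderivative}. The leading contribution comes from the transport pieces $-aH_2\Lambda$ on $\Phi_k$ and $-(2-a)H_2\Lambda$ on $\Theta_k$, together with the $k$-proportional zeroth order commutator terms $-2ak(H_2+\Lambda H_2)\Phi_k$ and $-2k(2-a)(H_2+\Lambda H_2)\Theta_k$: integrating $H_2\Lambda$ by parts against the measures $gZ^{d-1}dZ$ and $Z^2g(-D_a)Z^{d-1}dZ$ produces, modulo bounded error, a quadratic form whose coefficient is a multiple of $k(H_2+\Lambda H_2)=k(1-w-\Lambda w)$, which by \eqref{coercivityquadrcouplinginside} on $[0,Z_2]$ and \eqref{P} on $[Z_2,Z_a]$ is $\geq ck>0$ on the whole interval; this is the algebraic form of the red shift mechanism highlighted in the Remark above, and it yields a gain $\gtrsim c^\sharp ak\langle X,X\rangle$ (the $\Phi$ sector, scaled by $a$, being the bottleneck). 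The operator $\mathcal L_g$ enters with a favorable sign since, by its definition, $-(\mathcal L_g\Phi_k,\Phi_k)_g=\int Z^{d-1}Z^2g(-D_a)|\partial_Z\Phi_k|^2\geq 0$ as $g>0$ and $-D_a>0$ on $[0,Z_a)$; the off-diagonal wave-type coupling between the $\Phi_k$ and $\Theta_k$ equations is antisymmetric modulo lower order and therefore harmless. All remaining contributions --- the coefficients $A_2,A_3,\tilde A_2$, the commutators between $\mathcal L_g$ and $\Lambda$, and the genuinely lower order error $\widetilde{\mathcal M}_k X$ bounded by \eqref{contlmak} --- carry $k$-independent coefficients and strictly fewer derivatives than the top order controlled by $\mathbb H_{2k}$, so they are absorbed: near $Z_a$ using a Hardy inequality adapted to the degenerate weight $Z^2g(-D_a)\sim (Z_a-Z)^{1+c_g}$ (here the positivity \eqref{estcg} of $c_g$ for $k\geq k_1$ is essential), near $Z=0$ using the regular singular structure $Z^2(-D_a)\to c_0>0$, and the residual truly low-frequency remainder being killed by the finite rank operator $\mathcal A=\sum\langle\cdot,X_i\rangle X_i$ through the compactness of the embedding $\mathbb H_{2k}\hookrightarrow\mathbb H_{2k'}$ for $k'<k$ --- which is exactly why $N=N(k,a)$ must be allowed to grow as $a\to 0$. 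Since $-\tilde{\mathcal M}=-\mathcal M+\mathcal A$ adds $\sum|\langle X,X_i\rangle|^2\geq 0$ to the left-hand side, this gives \eqref{accretivityalmost} with a constant $c^*ak$.

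\emph{Maximality.} Given $F=(F_1,F_2)\in\mathbb H_{2k}$ and $R>0$, I would solve $(-\tilde{\mathcal M}+R)X=F$ by reading $\Theta$ off the first line, $\Theta=R\Phi+aH_2\Lambda\Phi-F_1$ up to the finite rank correction coming from $\mathcal A$, and substituting into the second line; after commuting with $\Delta^k$ this yields a scalar equation for $\Phi$ which is uniformly elliptic on $(0,Z_a)$, degenerates at the endpoint $Z_a$ with principal part $\sim Z^2g(-D_a)$, and has a regular singular point at $Z=0$. I would set it up variationally on the energy space underlying the form \eqref{defscalarproductbis}: the associated bilinear form is bounded there and, for $R$ large, coercive by (an adaptation of) the dissipativity computation above; no boundary condition is needed at the degenerate endpoint $Z_a$ and the natural finite energy condition at $Z=0$ is built into the space, so Lax--Milgram produces a weak solution. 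Interior elliptic regularity together with an endpoint analysis --- matching the indicial roots of the degenerate operator at $Z_a$ (using $c_g>0$ and the asymptotics \eqref{asymptoticsg}) and the Bessel-type analysis at $Z=0$ --- upgrades it to $X\in D(\mathcal M)=D(\tilde{\mathcal M})$, which gives \eqref{estmaximal}.

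\emph{Main obstacle.} The delicate point is the behavior at the shifted light cone $Z=Z_a$. One must take $a$ small enough that $[0,Z_a]$ stays in the region where the global repulsivity \eqref{P} holds, yet simultaneously take $k$ large and choose the measure $g$ so that $c_g>0$; this positivity is precisely what makes the degenerate weight tame enough for the Hardy inequalities controlling the lower order terms to close and for the variational problem to be well posed without spurious boundary contributions at $Z_a$. Balancing these competing demands --- the $k$-gain $c^*ak$ produced by the transport/red shift mechanism against the $k$-independent but derivative-losing error terms and the degeneration of the measure, uniformly up to the endpoint $Z_a$ --- is the heart of the argument, which is carried out in full in \cite{MRRSdefoc}.
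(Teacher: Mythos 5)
The paper itself does not prove Proposition \ref{propaccretif}: it states it and defers the proof to the companion paper \cite{MRRSdefoc} (see the sentence preceding the statement and the opening remark of Section \ref{sectionlinear}). There is therefore no in-paper argument to compare against; what follows assesses your sketch against the infrastructure the paper does supply.

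Your outline is consistent with that infrastructure and identifies the correct mechanisms: the $k$-proportional zeroth-order commutator terms $-2ak(H_2+\Lambda H_2)\Phi_k$ and $-2k(2-a)(H_2+\Lambda H_2)\Theta_k$ from Lemma \ref{lemmaderivative} paired against the strict positivity of $H_2+\Lambda H_2=1-w-\Lambda w$ from \eqref{coercivityquadrcouplinginside}--\eqref{P} as the source of the $c^*ak$ gain (with the $\Phi$ sector, carrying the extra factor $a$, indeed the bottleneck); the self-adjointness of $\mathcal L_g$ in $(\cdot,\cdot)_g$ as the reason the coupling between the "$1$" entry of $\mathcal M$ and the $\mathcal L_g\Phi_k$ entry of $\mathcal M_k$ cancels exactly in $\Re\la-\mathcal M X,X\ra$; the degeneracy exponent $c_g>0$ from \eqref{asymptoticsg}--\eqref{estcg} as the input making the Hardy estimates near $Z_a$ close; and compactness of $\Bbb H_{2k}\hookrightarrow\Bbb H_{2k'}$ to build the finite-rank correction $\mathcal A$ with $N(k,a)$ growing as $a\to 0$.

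Two points should be made precise before the argument is complete. First, the remainder $\widetilde{\mathcal M}_kX$ is bounded in \eqref{contlmak} only up to a $k$-dependent implicit constant ($\lesssim_k$); your absorption step implicitly requires that this growth is slower than the $c^*ak$ gain once interpolation and the weighted Hardy inequalities are applied, and this bookkeeping — which is precisely why $c_g>0$ is required only for $k\ge k_1$ and why the parameters $a^*,k_{\flat}$ must be calibrated together — needs to be carried out explicitly rather than asserted. Second, your Lax--Milgram argument yields surjectivity of $(-\tilde{\mathcal M}+R)$ only for $R$ large; you should finish by noting that, $-\tilde{\mathcal M}$ being dissipative, surjectivity at a single $R_0>0$ propagates to all $R>0$ via the standard resolvent identity and Neumann-series argument underlying the Lumer--Phillips theorem. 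Neither point is a conceptual gap, but both are places where your sketch stops short of a proof.
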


Exponential decay in time  locally in space will now follow from the following classical statement, see  \cite{engelnagel,MRRSdefoc} for a detailed proof.

\begin{lemma}[Exponential decay modulo finitely many instabilities]
 \label{elmsnjennnw}
  Let $\delta_g>0$ and let $T_0$ be the strongly continuous semigroup generated by a maximal dissipative operator 
  $\tilde {\mathcal M}+\delta_g$, and $T$ be the strongly continuous semi group generated by ${\mathcal M}=
  \tilde {\mathcal M}+ A$ where $A$ is a compact operator on $H$. 
   Then the following holds:\\
(i) the set $\Lambda_{\delta_g}(\mathcal M)=\sigma(A)\cap \{\l\in \Bbb C, \ \ \Re(\l)> -\frac{\delta_g}2\}$ is finite, 
each eigenvalue $\l\in \Lambda_{\delta_g}(\mathcal M)$ has finite algebraic multiplicity $k_\l$. In particular, the subspace 
$V_{\delta_g}(\mathcal M)$ is finite dimensional;\\
(ii) 
We have $\Lambda_{\delta_g}(\mathcal M)=\overline{ \Lambda_{\delta_g}(\mathcal M^*)}$
and $dim V_{\delta_g}(\mathcal M^*)=dim V_{\delta_g}(\mathcal M)$.
The direct sum decomposition 
\be\label{eq:direct}
H=V_{\delta_g}(\mathcal M)\bigoplus V^\perp_{\delta_g}(\mathcal M^*)
\ee
is preserved by $T(t)$ and there holds:
\be
\label{stabiliteexpo}
\forall X\in V^\perp_{\delta_g}(\mathcal M^*), \ \ \|T(t)X\|\leq M_{\delta_g} e^{-\frac {\delta_g}2 t}\|X\|.
\ee
(iii) The restriction of $A$ to $V_{\delta_g}(\mathcal M)$ is given by a direct sum of $(m_\l\times m_\l)_{\l\in \Lambda_{\delta_g}(\mathcal M)}$
matrices each of which is the Jordan block associated to the eigenvalue $\l$ and the number of Jordan blocks corresponding 
to $\l$ is equal to the geometric multiplicity of $\l$ -- $m^g_\l=dim ker (\mathcal M-\l I)$. In particular, $m^a_\l\le m^g_\l k_\l$.
Each block corresponds to an invariant subspace $J_\l$ and the semigroup $T$ restricted to $J_\l$ is given by the nilpotent 
matrix 
$$
T(t)|_{J_\l}=\begin{pmatrix} e^{\l t} & te^{\l t}&...&t^{m_\l-1} e^{\l t}\\
0& e^{\l t}&...&t^{m_\l-2} e^{\l t}\\
...\\
0&0&...& e^{\l t}
\end{pmatrix}
$$

  \end{lemma}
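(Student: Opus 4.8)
\emph{Proof plan.} I would read this as a statement in $C_0$-semigroup theory and reduce it to the analytic Fredholm theorem together with the Gearhart--Pr\"uss characterization of exponential stability on a Hilbert space (see \cite{engelnagel}). Since $\tilde{\mathcal M}+\delta_g$ is maximal dissipative, $T_0$ is a contraction semigroup and $\tilde{\mathcal M}$ generates $S(t)=e^{-\delta_g t}T_0(t)$ with $\|S(t)\|\le e^{-\delta_g t}$; hence the resolvent set of $\tilde{\mathcal M}$ contains $\{\Re\lambda>-\delta_g\}$, with $\|(\lambda-\tilde{\mathcal M})^{-1}\|\le(\Re\lambda+\delta_g)^{-1}$ and $(\lambda-\tilde{\mathcal M})^{-1}=\int_0^\infty e^{-\lambda t}S(t)\,dt$ there, while $\mathcal M=\tilde{\mathcal M}+A$ still generates a $C_0$-semigroup $T$ by bounded perturbation. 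The first step is Duhamel's formula $T(t)=S(t)+\int_0^tS(t-s)AT(s)\,ds$: approximating $A$ in operator norm by finite rank operators --- for which the corresponding integral is compact by an Arzel\`a--Ascoli argument --- one sees that the integral term is a norm limit of compact operators, so $T(t)-S(t)$ is compact for every $t>0$.

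For item (i) I would factor $\lambda-\mathcal M=(\lambda-\tilde{\mathcal M})(I-K(\lambda))$ with $K(\lambda)=(\lambda-\tilde{\mathcal M})^{-1}A$, a holomorphic compact-operator-valued function on the connected set $\{\Re\lambda>-\delta_g\}$ satisfying $\|K(\lambda)\|<1$ for $\Re\lambda$ large. The analytic Fredholm theorem then gives that $\sigma(\mathcal M)\cap\{\Re\lambda>-\delta_g\}$ is a discrete set of eigenvalues of finite algebraic multiplicity and $(\lambda-\mathcal M)^{-1}$ is meromorphic there. Applying the operator-valued Riemann--Lebesgue lemma to $\int_0^\infty e^{-\lambda t}S(t)A\,dt$ shows $\|K(\lambda)\|\to0$ as $|\Im\lambda|\to\infty$, uniformly for $\Re\lambda\ge-\delta_g/2$, so the eigenvalues in that half-plane stay in a bounded set; with discreteness this forces $\Lambda_{\delta_g}(\mathcal M)=\sigma(\mathcal M)\cap\{\Re\lambda>-\delta_g/2\}$ to be finite, each eigenvalue of finite algebraic multiplicity. (Shrinking $\delta_g$ slightly, which is harmless since in the application it is a free small parameter, I may also assume the line $\{\Re\lambda=-\delta_g/2\}$ carries no spectrum.)

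For (ii), let $P=\frac{1}{2\pi i}\oint_\Gamma(\lambda-\mathcal M)^{-1}\,d\lambda$ be the Riesz projection for a contour $\Gamma$ enclosing exactly $\Lambda_{\delta_g}(\mathcal M)$: a bounded finite rank projection commuting with $\mathcal M$ and $T(t)$, with $V_{\delta_g}(\mathcal M):=\mathrm{Ran}\,P$ finite-dimensional, $\mathcal M$-invariant and $\sigma(\mathcal M|_{V_{\delta_g}})=\Lambda_{\delta_g}(\mathcal M)$. On a Hilbert space $\tilde{\mathcal M}^*+\delta_g$ is again maximal dissipative with $A^*$ compact, so everything above applies to $\mathcal M^*$; since $\sigma(\mathcal M^*)=\overline{\sigma(\mathcal M)}$ one gets $\Lambda_{\delta_g}(\mathcal M^*)=\overline{\Lambda_{\delta_g}(\mathcal M)}$, its Riesz projection is $P^*$, so $\dim V_{\delta_g}(\mathcal M^*)=\mathrm{rank}\,P=\dim V_{\delta_g}(\mathcal M)$ and, $P^*$ having closed range, $\mathrm{Ker}\,P=(\mathrm{Ran}\,P^*)^\perp=V^\perp_{\delta_g}(\mathcal M^*)$, which is the $T(t)$-invariant splitting \eqref{eq:direct}. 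For \eqref{stabiliteexpo}, the restricted generator $\mathcal M_-$ on $\mathrm{Ker}\,P$ has spectrum $\sigma(\mathcal M)\setminus\Lambda_{\delta_g}(\mathcal M)\subset\{\Re\lambda\le-\delta_g/2\}$, and since the residues of $(\lambda-\mathcal M)^{-1}$ along $\Lambda_{\delta_g}(\mathcal M)$ range in $\mathrm{Ran}\,P$, $(\lambda-\mathcal M_-)^{-1}$ extends holomorphically across $\{\Re\lambda>-\delta_g/2\}$; combining the large-$\Re\lambda$ bound, continuity on bounded sets, and the high-frequency bound from (i), one gets $\sup_{\Re\lambda\ge-\delta_g/2}\|(\lambda-\mathcal M_-)^{-1}\|<\infty$, and Gearhart--Pr\"uss yields $\|T(t)X\|\le M_{\delta_g}e^{-\delta_g t/2}\|X\|$ on $V^\perp_{\delta_g}(\mathcal M^*)$. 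Finally (iii) is finite-dimensional linear algebra: $T(t)=e^{t\mathcal M|_{V_{\delta_g}}}$ on $V_{\delta_g}(\mathcal M)$, so putting $\mathcal M|_{V_{\delta_g}}$ in Jordan normal form and exponentiating each block gives the stated upper triangular picture, with one block per Jordan chain and $m^a_\lambda\le m^g_\lambda k_\lambda$ the usual bound.

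I expect the decay estimate \eqref{stabiliteexpo} to be the main obstacle. The spectral localization $\sigma(\mathcal M_-)\subset\{\Re\lambda\le-\delta_g/2\}$ does not by itself control $T(t)|_{\mathrm{Ker}P}$, since the spectral mapping theorem may fail for general $C_0$-semigroups; the genuine input is the Hilbert-space Gearhart--Pr\"uss theorem, and to apply it one really needs the compact-perturbation structure of the earlier steps in order to bound $(\lambda-\mathcal M_-)^{-1}$ uniformly up to the line $\Re\lambda=-\delta_g/2$ --- most delicately at high frequency, where a naive Neumann series in $A$ does not converge and it is exactly the Riemann--Lebesgue estimate that closes the argument. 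An alternative route is through the invariance of the essential spectral radius of $T(t)$ under the compact perturbation $T(t)-S(t)$, but identifying the corresponding spectral subspace of $T(t)$ with $\mathrm{Ker}\,P$ requires the same resolvent analysis anyway. The remaining items --- finiteness of $\Lambda_{\delta_g}(\mathcal M)$, the adjoint bookkeeping, and the Jordan description --- are the analytic Fredholm theorem plus routine spectral-projection arguments and finite-dimensional linear algebra.
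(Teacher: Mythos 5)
The paper does not prove this lemma itself; it states it as classical and cites \cite{engelnagel,MRRSdefoc} for a detailed proof, so there is no internal argument to compare against. Your route --- the resolvent factorization $\lambda-\mathcal M=(\lambda-\tilde{\mathcal M})(I-K(\lambda))$ with $K(\lambda)=(\lambda-\tilde{\mathcal M})^{-1}A$, analytic Fredholm for discreteness, the vector-valued Riemann--Lebesgue decay of $K(\lambda)$ via finite-rank approximation of the compact $A$ (giving both finiteness of $\Lambda_{\delta_g}$ and the high-frequency resolvent bound), Riesz projections and their adjoints for the splitting \eqref{eq:direct}, Gearhart--Pr\"uss for \eqref{stabiliteexpo}, and Jordan form for item (iii) --- is precisely the standard Hilbert-space treatment found in the cited Engel--Nagel text and is correct, and you rightly flag that spectral localization alone is not enough since the spectral mapping theorem can fail, so the uniform resolvent bound on the half-plane is the genuine input. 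One small reading remark: in the lemma statement ``$\sigma(A)$'' and ``the restriction of $A$'' should be read as $\sigma(\mathcal M)$ and $\mathcal M|_{V_{\delta_g}}$, as you implicitly do; the symbol $A$ is recycled in the subsequent Lemma \ref{browerset} for the matrix $\mathcal M|_V$.
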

  Our final result in this section is  a Brouwer type argument for the evolution of unstable modes.
\begin{lemma}[Brouwer argument, \cite{MRRSdefoc}]
\label{browerset}
Let $A,\delta_g$ as in Lemma \ref{elmsnjennnw} with the decomposition 
$$
H=U\bigoplus V
$$
into stable and unstable subspaces Fix sufficiently large $t_0>0$ (dependent on $A$). 
Let $F(t)$ such that, $\forall t\ge t_0$, $F(t)\in V$ and 
$$
\|F(t)\|\le e^{-\frac {2\delta_g}3 t}
$$
 be given. Let $X(t)$ denote  the solution to the ode
$$
\left|\begin{array}{l}
\frac {dX} {dt} = A X + F(t)\\
 X(t_0)=x\in V.
 \end{array}\right. 
$$
Then, for any $x$ in the ball
$$
\|x\|\le e^{-\frac {3\delta_g} 5t_0},
$$
we have
\be\label{eq:growth}
\|X(t)\|\le e^{-\frac{\delta_g} 2 t},\qquad t_0\le t\le t_0+\Gamma
\ee
for some large constant $\Gamma$ (which only depends on $A$ and $t_0$.)
Moreover, there exists $x^*\in V$ in the same ball as a above such that $\forall t\ge t_0$,
$$
\|X(t)\|\le e^{-\frac {3\delta_g}5 t}
$$

\end{lemma}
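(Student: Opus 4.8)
The plan is to reduce the statement to a finite-dimensional linear ODE on $V$ and to close it with a differential transversality (``shooting'') argument whose last step is the no-retraction theorem. Since $x\in V$, $F(t)\in V$ for $t\ge t_0$, and $V$ is invariant under the generator, the solution $X(t)$ remains in the finite-dimensional space $V$; writing (with a mild abuse) $A$ for the restriction of the generator to $V$, we face $\dot X=AX+F$ with $\Re\sigma(A)>-\delta_g/2$ by Lemma \ref{elmsnjennnw}. Set $2\eta_0:=\tfrac{\delta_g}2+\min_{\lambda\in\sigma(A)}\Re\lambda>0$. As $A$ may carry non-trivial Jordan blocks, the ambient inner product does not control its numerical range, so I would first pass, by the standard Lyapunov construction on a finite-dimensional space, to an equivalent inner product $\langle\cdot,\cdot\rangle_*$ with $c_*\|\cdot\|\le\|\cdot\|_*\le C_*\|\cdot\|$ for which $\Re\langle AY,Y\rangle_*\ge(-\tfrac{\delta_g}2+\eta_0)\|Y\|_*^2$ for all $Y\in V$, and carry out all estimates in $\|\cdot\|_*$, transferring back at the end. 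The first assertion \eqref{eq:growth} is then immediate: Duhamel gives $X(t)=e^{A(t-t_0)}x+\int_{t_0}^te^{A(t-s)}F(s)\,ds$, and on $[t_0,t_0+\Gamma]$ one has $\|e^{A\tau}\|_*\le e^{\|A\|_*\Gamma}$, hence $\|X(t)\|_*\lesssim_\Gamma\|x\|_*+\int_{t_0}^{t_0+\Gamma}\|F(s)\|_*\,ds\lesssim_\Gamma e^{-\frac{3\delta_g}5 t_0}$ (using $\tfrac{2\delta_g}3>\tfrac{3\delta_g}5$); since also $\tfrac{3\delta_g}5>\tfrac{\delta_g}2$, for $t_0$ large (which then fixes the admissible range of $\Gamma$, e.g.\ any $\Gamma\le\varepsilon_0 t_0$) this is $\le e^{-\frac{\delta_g}2(t_0+\Gamma)}\le e^{-\frac{\delta_g}2 t}$ on that interval, and dividing by $c_*$ yields \eqref{eq:growth}.

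For the existence of $x^*$ I would introduce the shrinking ball $B(t)=\{Y\in V:\|Y\|_*\le e^{-\frac{3\delta_g}5 t}\}$ and, for $x\in B(t_0)$, the exit time $T^*(x)=\sup\{t\ge t_0:\|X(s;x)\|_*\le e^{-\frac{3\delta_g}5 s}\ \text{for all}\ s\in[t_0,t]\}$. The crux is an \emph{outgoing} property: whenever $\|X(t)\|_*=e^{-\frac{3\delta_g}5 t}$ for some $t\ge t_0$, the renormalized quantity $N(t):=e^{\frac{6\delta_g}5 t}\|X(t)\|_*^2$ equals $1$ and
$$
N'(t)=e^{\frac{6\delta_g}5 t}\Big[\tfrac{6\delta_g}5\|X\|_*^2+2\Re\langle AX,X\rangle_*+2\Re\langle F,X\rangle_*\Big]\ \ge\ \Big(\tfrac{\delta_g}5+2\eta_0\Big)-C\,e^{-\frac{\delta_g}{15} t}\ >\ 0
$$
for $t\ge t_0$ with $t_0$ large: the first two terms contribute $\tfrac{6\delta_g}5-\delta_g+2\eta_0=\tfrac{\delta_g}5+2\eta_0$ by the numerical-range bound, while the forcing term is controlled by $2\|F(t)\|_*\,e^{\frac{6\delta_g}5 t}\|X(t)\|_*\lesssim e^{(\frac{6\delta_g}5-\frac{2\delta_g}3-\frac{3\delta_g}5)t}=e^{-\frac{\delta_g}{15} t}$. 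Therefore, once $X(\cdot;x)$ touches $\partial B(t)$ it leaves $B(\cdot)$ at once and never re-enters; when finite, $T^*(x)$ is the first (and only) contact time, with $\|X(T^*(x);x)\|_*=e^{-\frac{3\delta_g}5 T^*(x)}$, the map $x\mapsto T^*(x)$ is continuous on the open set where it is finite (by the implicit function theorem, the needed non-degeneracy being $\partial_t(\|X\|_*^2-e^{-\frac{6\delta_g}5 t})=e^{-\frac{6\delta_g}5 t}N'(t)>0$ at the contact point), and $T^*(x)=t_0$ whenever $\|x\|_*=e^{-\frac{3\delta_g}5 t_0}$ (then $N(t_0)=1$ and $N'(t_0)>0$).

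To conclude, suppose for contradiction that $T^*(x)<+\infty$ for every $x\in B(t_0)$. Then
$$
\Xi(x):=e^{-\frac{3\delta_g}5 t_0}\,\frac{X(T^*(x);x)}{\|X(T^*(x);x)\|_*}
$$
is well defined (its denominator equals $e^{-\frac{3\delta_g}5 T^*(x)}\neq0$), continuous on $B(t_0)$ by the previous paragraph, valued in the boundary sphere $\partial B(t_0)$, and equal to the identity there (if $\|x\|_*=e^{-\frac{3\delta_g}5 t_0}$ then $T^*(x)=t_0$ and $X(t_0;x)=x$). This would be a continuous retraction of a finite-dimensional closed ball onto its boundary, which cannot exist; hence some $x^*$ with $\|x^*\|_*\le e^{-\frac{3\delta_g}5 t_0}$ satisfies $T^*(x^*)=+\infty$, i.e.\ $\|X(t;x^*)\|_*\le e^{-\frac{3\delta_g}5 t}$ for all $t\ge t_0$. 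Passing back through $\|\cdot\|\le c_*^{-1}\|\cdot\|_*$ gives the bound up to a constant, and if the constant $1$ in the statement is to be recovered verbatim one reruns the whole argument with the barrier rate chosen strictly between $\tfrac{\delta_g}2$ and $\tfrac{3\delta_g}5$ and absorbs $c_*^{-1}$ into a further enlargement of $t_0$.

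The main obstacle is this transversality estimate together with its finite-dimensional bookkeeping: one must upgrade the spectral information $\Re\sigma(A)>-\delta_g/2$ to a genuine numerical-range bound through an adapted norm (the spectrum alone is not enough when $A$ is non-normal), and then check that the three exponents $\tfrac{\delta_g}2<\tfrac{3\delta_g}5<\tfrac{2\delta_g}3$ are separated by enough room that, after the $e^{\frac{6\delta_g}5 t}$ rescaling, the drift $AX$ is strictly outgoing on the inner barrier while the forcing $F$ stays genuinely of lower order. Granting this, the continuity of the exit time and the no-retraction step are entirely routine.
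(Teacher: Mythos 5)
Your proposal is correct and captures what the paper only cites from the companion paper \cite{MRRSdefoc}: a finite-dimensional reduction to the invariant subspace $V$, an adapted (Lyapunov) inner product to upgrade the spectral condition $\Re\sigma(A|_V)>-\delta_g/2$ to a numerical-range bound (needed because $A|_V$ may carry Jordan blocks, as the paper acknowledges through its use of the nilpotent part $\mathcal N$ in \eqref{eq:nilp} and \eqref{eq:unstboot}), an outgoing barrier estimate exploiting the strict separation $\tfrac{\delta_g}{2}<\tfrac{3\delta_g}{5}<\tfrac{2\delta_g}{3}$, and the no-retraction theorem. Your arithmetic checks out: the leading term in the renormalized energy derivative is $\tfrac{6\delta_g}5-\delta_g+2\eta_0=\tfrac{\delta_g}5+2\eta_0>0$ and the forcing contributes $O(e^{-\delta_g t/15})$, which is absorbed for $t_0$ large; the implicit-function-theorem argument for continuity of the exit time is applied correctly at the transversal crossing.

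Two remarks on bookkeeping. First, the restriction $\Gamma\le\epsilon_0 t_0$ you need so that the Duhamel constant $e^{\|A\|_*\Gamma}$ is absorbed by $e^{\delta_g t_0/10}$ is consistent with the way the paper deploys the lemma (cf.\ \eqref{definitionna} and Lemma~\ref{lemmalightcone}, where one first fixes $\Gamma$ and then enlarges $\tau_0$ accordingly). Second, the mismatch between the ambient ball and the $\|\cdot\|_*$-ball is a real point, and you flag it correctly, but the remedy you propose (shifting the barrier rate into $(\tfrac{\delta_g}2,\tfrac{3\delta_g}5)$) is a bit heavier than necessary. It is cleaner to keep the rate $\tfrac{3\delta_g}5$ and simply run the retraction argument on the smaller $\|\cdot\|_*$-ball $\{\|x\|_*\le c_*\,e^{-\frac{3\delta_g}5 t_0}\}$ with barrier $\|X(t)\|_*\le c_*\,e^{-\frac{3\delta_g}5 t}$: the multiplicative constant cancels in the outgoing computation, the produced $x^*$ satisfies $\|x^*\|\le c_*^{-1}\|x^*\|_*\le e^{-\frac{3\delta_g}5 t_0}$, and $\|X(t;x^*)\|\le c_*^{-1}\|X(t;x^*)\|_*\le e^{-\frac{3\delta_g}5 t}$, recovering the stated constants verbatim.
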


%%%%%%%%%%%%%%%%%%%%%%%%%%%%%%%%%%%%%%%%
%%%%%%%%%%%%%%%%%%%%%%%%%%%%%%%%%%%%%%%%

\section{Setting up the bootstrap}
\label{sectionbootstrap}
%%%%%%%%%%%%%%%%%%%%%%%%%%%%%%%%%%%%%%%%
%%%%%%%%%%%%%%%%%%%%%%%%%%%%%%%%%%%%%%%%

In this section we detail the set of smooth well localized initial data which lead to the conclusions of 
Theorem \ref{thmmain}. 

%%%%%%%%%%%%%%%%%%%%%%%%%%%%%%%%%%%%%%%%

\subsection{Cauchy theory and renormalization} 

%%%%%%%%%%%%%%%%%%%%%%%%%%%%%%%%%%%%%%%%

We use local Cauchy theory for strong solutions for Navier-Stokes from \cite{choe}.

\begin{theorem}[Local Cauchy theory NS, \cite{choe}]
Assume 
\be
\label{assumptionsdata}
\left|\begin{array}{l}
\rho_0\in H^1\cap W^{1,6}\\
u_0\in \dot{H^1}\cap \dot{H}^2\\
\frac{-\Delta u_0+\nabla p_0}{\sqrt{\rho_0}}\in L^2
\end{array}\right.
\ee
then there exists a unique local strong solution $(\rho,u)\in L^\infty([0,T)\cap H^1\cap W^{1,6})\times L^\infty([0,T), \dot{H^1}\cap\dot{H}^2)$ to \eqref{NScomp}. Moreover, the maximal time of existence $T$ is characterized by the 
condition 
\bea\label{eq:blowupcriterionNScase}
\int_0^T \|\nabla u\|_{L^\infty(\Bbb R^3)}=\infty
\eea
\end{theorem}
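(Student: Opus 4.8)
This is a classical local well‑posedness statement quoted from \cite{choe}, so I will only sketch the strategy. The plan has four parts: construct the solution by a linearized iteration, derive short‑time a priori estimates uniform in the iteration, pass to the limit, and establish the continuation criterion \eqref{eq:blowupcriterionNScase} by a bootstrap. For the scheme, given an approximate velocity $u^{(n)}$ one first solves the linear continuity equation $\partial_t\rho^{(n+1)}+\div(\rho^{(n+1)}u^{(n)})=0$ with data $\rho_0$; this is a transport equation, so the characteristic flow preserves positivity of the density, and by differentiating along characteristics together with Gronwall one controls $\|\rho^{(n+1)}\|_{H^1\cap W^{1,6}}$ in terms of $\int_0^t\|\nabla u^{(n)}\|_{L^\infty}$. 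One then solves the linear Lamé–parabolic system $\rho^{(n+1)}\partial_tu^{(n+1)}=\mu\Delta u^{(n+1)}+\mu'\nabla\div u^{(n+1)}-\rho^{(n+1)}u^{(n)}\cdot\nabla u^{(n)}-\nabla\pi(\rho^{(n+1)})$ for $u^{(n+1)}$.

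Since $\rho_0\in H^1$ the density decays at infinity, so this momentum system is genuinely \emph{degenerate parabolic}, and the natural energy quantity is $\sqrt{\rho}\,\partial_tu$ rather than $\partial_tu$ itself. Testing the momentum equation against $\partial_tu$ and then against $\Delta\partial_tu$, and invoking elliptic regularity for the Lamé operator $\mu\Delta+\mu'\nabla\div$ (which remains elliptic under the weaker assumption $\mu+\mu'\ge 0$, in particular after the spherically symmetric reduction used elsewhere in this paper), one obtains closed estimates for $\|\sqrt{\rho}\,\partial_tu\|_{L^2}$ and $\|u\|_{\dot H^1\cap\dot H^2}$; the compatibility hypothesis $(-\Delta u_0+\nabla p_0)/\sqrt{\rho_0}\in L^2$ is precisely what makes $\sqrt{\rho}\,\partial_tu|_{t=0}\in L^2$ and so initializes this estimate. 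A Gronwall argument then closes all of these bounds on a time interval $[0,T]$ with $T$ depending only on the data norms in \eqref{assumptionsdata}. Estimating the differences $(\rho^{(n+1)}-\rho^{(n)},u^{(n+1)}-u^{(n)})$ in a low‑regularity norm shows the iteration is a contraction there while staying bounded at top order; interpolation gives strong convergence in intermediate norms and weak‑$*$ convergence at top order, the limit is the claimed strong solution, and uniqueness follows from the same difference estimate.

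For the continuation criterion, suppose $T^*<\infty$ is the maximal time and $\int_0^{T^*}\|\nabla u\|_{L^\infty}\,dt<\infty$. From the transport equation, $\frac{d}{dt}\|\rho\|_{H^1\cap W^{1,6}}\lesssim (1+\|\nabla u\|_{L^\infty})\|\rho\|_{H^1\cap W^{1,6}}$, so $\|\rho(t)\|_{H^1\cap W^{1,6}}$ stays bounded on $[0,T^*)$ and the density neither concentrates nor develops new vacuum structure. Feeding this back into the parabolic estimates for the momentum equation and using a logarithmic Sobolev (Beale--Kato--Majda type) inequality to absorb the top‑order velocity nonlinearities, one propagates the bounds on $\|\sqrt{\rho}\,\partial_tu\|_{L^2}$ and $\|u\|_{\dot H^1\cap\dot H^2}$ up to $T^*$. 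Hence the solution extends as a strong solution past $T^*$, contradicting maximality; equivalently, finite‑time blow‑up forces \eqref{eq:blowupcriterionNScase}.

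The main obstacle, absent in the constant‑density or incompressible settings, is the lack of a positive lower bound on $\rho$: the momentum equation is degenerate, one cannot estimate $\partial_tu$ directly, and one must work throughout with the weighted quantity $\sqrt{\rho}\,\partial_tu$ while carrying along the compatibility condition. This is exactly what dictates the function spaces in \eqref{assumptionsdata}, and it is also the delicate point in the continuation argument, where one must verify that the possibly vanishing density does not degenerate further under the control of $\int\|\nabla u\|_{L^\infty}$ alone.
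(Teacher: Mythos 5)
The paper does not prove this theorem; it is cited directly from Choe--Kim \cite{choe}, so there is no in-paper argument to compare against. Your sketch is a faithful reconstruction of the cited strategy: the linearized iteration alternating between a transport equation for $\rho$ and a degenerate--parabolic Lam\'e system for $u$, the reliance on the weighted quantity $\sqrt\rho\,\pa_t u$ precisely because no positive lower bound on $\rho$ is available, the compatibility condition as the correct initialization of the weighted parabolic energy, and the propagation of all bounds by Gronwall once $\int\|\nabla u\|_{L^\infty}$ is controlled. The overall architecture matches the reference, and the identification of vacuum as the main obstruction is the right emphasis.

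One point to tighten. Your parenthetical assertion that the Lam\'e operator ``remains elliptic under the weaker assumption $\mu+\mu'\ge 0$'' is not correct as stated for the full three-dimensional problem, which is what the theorem covers. The principal symbol of $\mu\Delta+\mu'\nabla\div$ on $\Bbb R^3$-valued fields has eigenvalues $\mu|\xi|^2$ (double) and $(\mu+\mu')|\xi|^2$ (simple), so strong ellipticity requires the strict inequalities $\mu>0$ and $\mu+\mu'>0$; the Choe--Kim regularity theory genuinely needs them, and the compatibility hypothesis $(-\Delta u_0+\nabla p_0)/\sqrt{\rho_0}\in L^2$ is written against that elliptic background. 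The relaxed bound $\mu+\mu'\ge 0$ allowed elsewhere in the paper is compatible with the blow-up analysis only because (i) the spherically symmetric reduction used in the main theorem collapses the operator to $(\mu+\mu')\Delta$, and (ii) the dissipation is treated perturbatively and its sign is exploited exactly once (Lemma \ref{lem:NS}); the borderline case $\mu+\mu'=0$ falls under the Euler local theory quoted immediately afterwards, not under this theorem. So you should either assume strict ellipticity here, or explicitly invoke the spherically symmetric reduction (and then this statement as quoted, for general $3$D data, no longer applies).
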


In the Euler case we can use the results from \cite{mku} and \cite{chemin}
\begin{theorem}[Local Cauchy theory, Euler, \cite{mku,chemin}]
Assume 
\be
\label{assumptionsdatae}
\rho^{\frac{\gamma-1}2}_0, u_0\in H^s
\ee
for some $s>1+\frac d2$,
then there exists a unique local strong solution $(\rho^{\frac{\gamma-1}2},u)\in C^0([0,T)\cap H^s)$ to \eqref{eulercomp}. Moreover, the maximal time of existence $T$ is characterized by the 
condition 
\bea\label{eq:blowupcriterionEulercase}
\int_0^T \|\nabla u\|_{L^\infty(\Bbb R^d)}=\infty
\eea
\end{theorem}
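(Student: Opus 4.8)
\noindent The plan is to treat \eqref{eulercomp} as a quasilinear symmetric hyperbolic system and then invoke the classical local well-posedness machinery; this is essentially the content of \cite{mku,chemin}, whose structure I would reproduce. \textbf{Step 1 (reduction).} I would introduce the rescaled sound speed $c=\sqrt{\gamma-1}\,\rho^{\frac{\gamma-1}{2}}$, so that $c^2=\pi'(\rho)$ and $\rho$ is a smooth function of $c$; a direct computation turns \eqref{eulercomp}, for non-vanishing density, into
\be
\label{symsyssketch}
\left|\begin{array}{l}
\pa_t c+u\cdot\nabla c+\frac{\gamma-1}{2}\,c\,\div u=0,\\
\pa_t u+u\cdot\nabla u+\frac{2}{\gamma-1}\,c\,\nabla c=0,
\end{array}\right.
\ee
i.e. $\pa_tV+\sum_{j=1}^dA_j(V)\pa_jV=0$ with $V=(c,u)$ and $A_j$ affine in $V$, and the constant positive definite matrix $S=\mathrm{diag}\big(\frac{4}{(\gamma-1)^2},1,\dots,1\big)$ makes each $SA_j(V)$ symmetric. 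The decisive point --- due to Makino--Ukai--Kawashima, and the reason for the choice of the unknown $\rho^{\frac{\gamma-1}{2}}$ rather than $\log\rho$ --- is that the $A_j(V)$ are polynomial in $V$, hence smooth even at $c=0$, so the vacuum region allowed at infinity by $\rho^{\frac{\gamma-1}{2}}\in H^s$ causes no degeneracy. Since $s>1+\frac d2$, the algebra property $H^s\cdot H^s\subset H^s$, the embedding $H^s\hookrightarrow W^{1,\infty}$, and the Moser and Klainerman--Majda product/commutator inequalities are available.

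\textbf{Step 2 (existence and uniqueness).} I would then run the standard iteration for quasilinear symmetric hyperbolic systems (Friedrichs--Kato--Majda): given $V^{(n)}$ with $V^{(n)}_{|t=0}=V_0$, let $V^{(n+1)}$ solve the \emph{linear} symmetric hyperbolic equation with coefficients frozen at $V^{(n)}$. The linear energy estimate, combined with the commutator/Moser inequalities, gives a uniform-in-$n$ bound $\|V^{(n)}(t)\|_{H^s}\le 2\|V_0\|_{H^s}$ on some $[0,T]$ with $T=T(\|V_0\|_{H^s})>0$; one checks that $(V^{(n)})$ is Cauchy in the weak norm $C([0,T];L^2)$, and interpolation with the uniform $H^s$ bound plus a Bona--Smith regularization upgrades the limit to a solution in $C([0,T];H^s)\cap C^1([0,T];H^{s-1})$, so that $\rho^{\frac{\gamma-1}{2}}=(\gamma-1)^{-1/2}c$ solves \eqref{eulercomp}. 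Uniqueness is an $L^2$ energy estimate on the difference of two solutions, their coefficients controlled by $\|\nabla V\|_{L^\infty}\lesssim\|V\|_{H^s}<\infty$.

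\textbf{Step 3 (continuation criterion --- the main obstacle).} Commuting \eqref{symsyssketch} with $\nabla^\alpha$, $|\alpha|\le s$, symmetrizing, and using the Klainerman--Majda commutator estimate yields $\frac{d}{dt}\|V(t)\|_{H^s}^2\lesssim(1+\|\nabla V(t)\|_{L^\infty})\|V(t)\|_{H^s}^2$, so by Gr\"onwall the solution extends past $T$ whenever $\int_0^T\|\nabla V\|_{L^\infty}\,dt<\infty$. The delicate step is to replace $\|\nabla V\|_{L^\infty}$ by $\|\nabla u\|_{L^\infty}$ alone: here I would use that the $c$-equation in \eqref{symsyssketch} is a pure transport equation driven by $u$, so that integrating it and its spatial derivatives along the flow of $u$ --- together with a refined (paradifferential, or Beale--Kato--Majda--type logarithmic) control of the top-order velocity terms that appear --- bounds $\|c\|_{L^\infty}$ and then $\|\nabla c\|_{L^\infty}$ in terms of $\int_0^t\|\nabla u\|_{L^\infty}$ and $\log(e+\|V\|_{H^s})$, closing a Gr\"onwall loop which shows that finiteness of $\int_0^T\|\nabla u\|_{L^\infty}\,dt$ forces $\sup_{[0,T)}\|V\|_{H^s}<\infty$. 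This last reduction, rather than the classical symmetrizable-hyperbolic theory of Steps 1--2, is where the real work lies, and is exactly what \cite{mku,chemin} carry out.
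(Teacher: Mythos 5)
This theorem is stated in the paper as a citation of \cite{mku,chemin}; the paper gives no proof, so your sketch cannot be compared against one and must be judged on its own merits. Steps 1 and 2 are correct and are the standard route: the Makino transform $c=\sqrt{\gamma-1}\,\rho^{(\gamma-1)/2}$ turns \eqref{eulercomp} into a quasilinear symmetric hyperbolic system $\pa_tV+\sum_jA_j(V)\pa_jV=0$ whose coefficient matrices are \emph{affine} in $V=(c,u)$, hence smooth across vacuum, and the Friedrichs--Kato--Majda iteration together with Moser and Kato--Ponce commutator estimates in $H^s$, $s>1+\tfrac d2$, yields local well-posedness with $T=T(\|V_0\|_{H^s})>0$. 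This is exactly the content of the cited references.

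Step 3, however, has a genuine gap, which you flag as ``the real work'' but whose sketched resolution does not close. Differentiating the transport equation for $c$ along the flow of $u$ produces the source $\tfrac{\gamma-1}{2}\,c\,\nabla\div u$, which contains \emph{two} derivatives of $u$. For $s>1+\tfrac d2$ the embedding $H^s\hookrightarrow W^{1,\infty}$ gives $\nabla u\in L^\infty$ but not $\nabla^2u\in L^\infty$ (that would require $s>2+\tfrac d2$), so $\|\nabla\div u\|_{L^\infty}$ cannot be bounded by $\|\nabla u\|_{L^\infty}$ and $\log(e+\|V\|_{H^s})$. The Beale--Kato--Majda type logarithmic inequality you invoke compensates a \emph{borderline} Sobolev failure (the endpoint $H^{d/2}\not\hookrightarrow L^\infty$), not a missing full derivative; it therefore does not upgrade $\|\nabla u\|_{L^\infty}$ into control of $\|\nabla^2u\|_{L^\infty}$, and the Gr\"onwall loop you describe does not close. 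What the symmetric hyperbolic energy identity delivers directly is the continuation criterion $\int_0^T\|\nabla(c,u)\|_{L^\infty}\,dt=\infty$; passing from this to the stated criterion $\int_0^T\|\nabla u\|_{L^\infty}\,dt=\infty$ alone requires a finer structural argument than the one you sketch, and is not obviously available at this regularity. For the purposes of the present paper the two forms of the criterion are interchangeable, since the bootstrap bounds of section \ref{sectionbootstrap} control both $\nabla u_T$ and $\nabla\rho_T$ (hence both $\nabla u$ and $\nabla c$) simultaneously on the relevant time interval.
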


On an interval $[0,T^*]$, $T^*\le T$, where $\rho(t,x)$ does not vanish,
we equivalently work with \eqref{NScompbis} and proceed to the decomposition of Lemma \ref{lemmarenormalization} 
$$\left|\begin{array}{l}
\rhoh(t,x)=\left(\frac{\l}{\nu}\right)^{\frac{1}{\gamma-1}}\rho_T(\tau,Z)\\
\uh(t,x)=\frac{\l}{\nu}u_T(s,Z), \ \ u_T=\nabla\Psi_T=U_T \vec{e}_r
\end{array}\right.
$$
with the renormalization:
\be
\label{renormalizationbis}
\left|\begin{array}{lll}
Z=y\sqrt{b}=Z^*x, \ \ Z^*=\frac{1}{\l}=e^{\tau}\\
\lambda(\tau)=e^{- \tau}, \ \ \nu(\tau)=e^{-r\tau}, \ \ b(\tau)=e^{-e\tau}\\
\tau=\frac{-\log(T-t)}{r}, \ \ \tau_0=\frac{-\log T}{r}.
\end{array}\right.
\ee
Our claim is that given $$\tau_0=\frac{-\log T}{r}$$
large enough, we can construct a finite co-dimensional manifold of smooth well localized initial data $(\rhoh_0, \uh_0)$ such that the corresponding solution to the renormalized flow \eqref{renormalizedflow} is global $\tau\in[\tau_0,+\infty)$, bounded in a suitable topology and non vanishing. Going back to the original variables yields a solution to \eqref{NScomp} which blows up at $T$ in the regime described by Theorem \ref{thmmain}.\\

%%%%%%%%%%%%%%%%%%%%%%%%%%%%%%%%%%%%%%%%

\subsection{Regularity and dampening of the profile outside the singularity}

%%%%%%%%%%%%%%%%%%%%%%%%%%%%%%%%%%%%%%%%

The profile solution $(\rho_P,\Psi_P)$ has an intrinsic slow decay as $Z\to +\infty$ forced by the self similar equation $$\rho_P(Z)=\frac{c_P}{\la Z\ra^{\frac{2(r-1)}{p-1}}}\left(1+O\left(\frac{1}{\la Z\ra^r}\right)\right)$$
which need be regularized in order to produce finite energy non vanishing initial data.\\

\noindent{\em 1. Regularity of the profile}. Recall the asymptotics \eqref{decayprofile} and the choice of parameters \eqref{scalinglaws} which show that in the original variables  $(t,x)$ both the density and the velocity profiles are regular away from the singular point $x=0$:
\bea
\label{outerprofile}
\nonumber
&&\rho_P(t,x)=\left(\frac{\l}{\nu}\right)^{\frac{1}{\gamma-1}}\rho_P\left(\frac{x}{\lambda}\right)=\frac{c_\rho e^{\frac{(r-1)}{\gamma-1}\tau}}{Z^{\frac{2(r-1)}{p-1}}}\left[1+O\left(\frac{1}{\la Z\ra^{r}}\right)\right]\\
&=& \frac{c_\rho}{|x|^{\frac{2(r-1)}{p-1}}}\left[1+O\left(\frac{1}{\la Z\ra^{r}}\right)\right]
\eea
and
\bea
\label{outervelocity}
\nonumber U_P(t,x)&=&\frac{\l}{\nu}\pa_Z\Psi_P\left(\frac{x}{\lambda}\right)=e^{(r-1)\tau}\frac{c_\Psi}{Z^{r-1}}\left[1+O\left(\frac{1}{\la Z\ra^{r}}\right)\right]\\
&=& \frac{c_{\Psi}}{x^{r-1}}\left[1+O\left(\frac{1}{\la Z\ra^{r}}\right)\right]
\eea

\noindent{\em 2. Dampening of the tail}.  The above regularity allows us to turn our profile into a finite energy (and better)
solution. We dampen the tail outside the singularity $x\ge 1$, i.e., $Z\ge Z^*$ as follows.  Let
\be
\label{defconnexion}
\mathcal K_{\rho}(x)=\left|\begin{array}{ll} 0\ \  \mbox{for}\ \ |x|\leq 5\\
n_P-\frac{2(r-1)}{p-1}\ \ \mbox{for}\ \ |x|\geq 10
\end{array}\right., 
\ee
for some large enough universal constant $$n_P=n_P(d)\gg 1.$$
We then define the dampened tail profile $\rho_D$: in the original variables
\be
\label{dampenedprofile}\hat\rho_D(t,x)=\hat\rho_P(t,x)e^{-\int_0^x\frac{\matchal K_\rho(x')}{x'}dx'}=\left|\begin{array}{ll} \rhoh_P(t,x)\ \ \mbox{for}\ \ |x|\leq 5\\  
\frac{c_{n,\delta}}{|x|^{n_P}}\left[1+O\left(e^{-r\tau})\right)\right]\ \ \mbox{for}\ \ |x|\ge 10\end{array}\right., 
\ee
and in the renormalized variables: 
\be
\label{definitionprofilewithtailchange}
\rho_D(\tau,Z)=\left(\frac{\nu}{\l}\right)^{\frac 2{p-1}} \rhoh_D(t,x), \ \ x=\frac{Z}{Z^*}.
\ee 
Let $$\zeta(x)=e^{-\int_0^x\frac{\matchal K_\rho(x')}{x'}dx'}, $$ we have the equivalent representation:
\be
\label{fromularhod}
\rho_D(\tau,Z)=(\l\sqrt{b})^{\frac 2{p-1}} \rhoh_D(\tau, x)=(\l\sqrt{b})^{\frac 2{p-1}}\rhoh_P(t,x)\zeta(x)=\zeta(\l Z)\rho_P(Z)
\ee

Note that by construction for  $j\in \Bbb N^*$:
\be
\label{fundmaetnaldecay}
-\frac{Z^j\pa^j_Z\rho_D}{\rho_D}=\left|\begin{array}{ll}\ \ (-1)^{j-1}\left(\frac{2(r-1)}{p-1}\right)^j+O\left(\frac{1}{\la Z\ra^r}\right)\ \ \mbox{for}\ \ Z\leq 5Z^*\\
(-1)^{j-1}n_P^j+O\left(\frac{1}{\la Z\ra^r}\right)\mbox{for}\ \ Z\geq 10Z^*
\end{array}\right.
\ee
and 
\be
\label{kevnkneonenoen}
 \left|\frac{\la Z\ra^j\pa_j\rho_D}{\rho_D}\right|_{L^\infty}\lesssim c_{j}.
 \ee
 We  proceed similarly for the velocity profile which can be even made compactly supported.  Let
 $$
\zeta_{u}(x)=\left|\begin{array}{ll} 1\ \  \mbox{for}\ \ |x|\leq 5\\
0\ \ \mbox{for}\ \ |x|\geq 10
\end{array}\right., 
$$
and define
\be
\label{dampenedprofilebis}
U_D(t,x)=\hat U_P(t,x)\zeta_u(x)=\left|\begin{array}{ll} \hat U_P(t,x)\ \ \mbox{for}\ \ |x|\leq 5\\  
0\ \ \mbox{for}\ \ |x|\ge 10\end{array}\right., 
\ee
 and thus in renormalized variables: 
\be
\label{definitionprofilewithtailchangespeed}
U_D(\tau,Z)=\frac{\nu}{\l} \hat U_D(t,x)=\zeta_u(\l Z)U_P(Z), \ \ x=\frac{Z}{Z^*}.
\ee 
We then let $$\Psi_D(\tau,Z)=-\frac{1}{r-2}+\int_0^ZU_D(\tau,z)dz$$ so that by construction $\Psi_D=\Psi_P$ for $Z\le 5Z^*$.

%%%%%%%%%%%%%%%%%%%%%%%%%%%%%%%%%%%%%%%%

\subsection{Initial data}

%%%%%%%%%%%%%%%%%%%%%%%%%%%%%%%%%%%%%%%%

We now describe explicitly open set of initial data which are perturbations of the profile $(\rho_D,\Psi_D)$ in a suitable topology. The conclusions of Theorem \ref{thmmain} will hold for a finite co-dimension set of such data.  Our first restriction is that the initial data $(\rho_0,u_0)$ in the original, non-renormalized variables satisfy the assumptions \eqref{assumptionsdata} and \eqref{assumptionsdatae} for the validity of the local Cauchy theory.\\

We now pick universal constants $0<a\ll 1$, $Z_0\gg 1$ which will be adjusted along the proof and depend only on $(d,\ell)$.
 We define two levels of regularity $$\frac d2\ll k_{\flat}\ll {k^\sharp}$$ where ${k^\sharp}$ denotes the maximum level of regularity required for the solution and $k_{\flat}$ is the level of regularity required for linear spectral theory on a compact set.\\

\noindent{\em 0}. Variables and notations for derivatives. We define the variables
 \be
 \label{definitionvariables}
 \left|\begin{array}{l}
 \rho_T=\rho_D+\rhot\\
  \Psi_T=\Psi_D+\Psit \\
  u_D=\nabla\Psi_D,\ \ \ut=\nabla\Psit\\
  U_D=\pa_Z\Psi_D,\ \ \tilde U=\pa_Z\Psit\\
  \Phi=\rho_P\Psi
  \end{array}\right.
 \ee
and specify the data in the $(\rhot,\Psit)$ variables. We will use the following notations for derivatives. Given $k\in \Bbb N$, we note $$\pa^k=(\pa_1^k,...,\pa_d^k), \ \ f^{(k)}:=\pa^kf$$ the vector of $k$-th derivatives in each direction. The notation $\pa_Z^k f$ is the $k$-th radial derivative. We  let $$\rhot_k=\Delta^k \rhot,\qquad \Psit_k=\Delta^k\Psi.$$
Given a multiindex $\alpha=(\alpha_1,\ldots,\alpha_d)\in \Bbb N^d$, we note $$\nabla^\alpha=\pa^{\alpha_1}_1\dots\pa^{\alpha_d}_d, \ \ |\alpha|=\alpha_1+\dots+\alpha_d.$$ Sometimes, we will use the notation 
$\nabla^k$ to denote a $\nabla^\alpha$ derivatives of order $k=|\alpha|$.
 
\noindent{\em 1. Initializing the Brouwer argument}.  We define the variables adapted to the spectral analysis according to \eqref{defnewvariablephi},  \eqref{defintionT}: 
  \be
  \label{notatinotphi}
  \left|\begin{array}{ll}
  \Phi=\rho_P\Psi \\
   T=\pa_\tau\Phi+aH_2\Lambda \Phi
   \end{array}\right., \ \ X=\left|\begin{array}{ll}\Phi\\ \T \end{array}\right.
  \ee
and recall the scalar product \eqref{defscalarproduct}. For $0<c_g,a\ll1$ small enough, we choose $k_{\flat}\gg 1$ such that Proposition \ref{propaccretif}  applies in the Hilbert space $\Bbb H_{2k_{\flat}}$
with the spectral gap 
\be
\label{accretivityalmostbis}
\forall X\in \matchal D(\matchal M), \ \ \Re\la (-\matchal M+\matchal A) X,X\ra \geq c_g\la X,X\ra.
\ee
Hence $$\mathcal M=(\mathcal M-\matchal A+c_g)-c_g+\mathcal A$$ and we may apply Lemma \ref{elmsnjennnw}: 
\be
\label{nvknneknengno}
\Lambda_0=\{\l \in \Bbb C, \ \ \Re(\l)\ge 0\} \cap \{\l\ \ \mbox{is an eigenvalue of}\ \ \mathcal M\}=(\l_i)_{1\le i\le N}
\ee 
is a finite set corresponding to unstable eigenvalues, $V$ is an associated (unstable) finite dimensional invariant set, $U$ is the complementary 
(stable) invariant set  
\be\label{eq:decomp}
  \Bbb H_{2k_{\flat}}=U\bigoplus V
  \ee
  and $P$ is the associated projection on $V$. We denote by ${\mathcal N}$ the nilpotent part of the matrix representing 
  ${\mathcal M}$ on V:
  \be\label{eq:nilp}
  {\mathcal M}|_V={\mathcal N} + {\text {diag}}
  \ee 
Then there exist $C, \delta_g>0$ such that \eqref{stabiliteexpo} holds: 
$$
\forall X\in U, \ \ \|e^{\tau\mathcal M}X\|_{\Bbb H_{2k_{\flat}}}\leq C e^{-\frac{\delta_g}{2} \tau}\|X\|_{\Bbb H_{2k_{\flat}}},\qquad \forall \tau\ge\tau_0.
$$
We now choose the data at $\tau_0$ such that 
$$
\|(I-P) X(\tau_0)\|_{\Bbb H_{2k_{\flat}}}\le e^{-\frac{\delta_g}{2} \tau_0},\qquad \|PX(\tau_0)\|_{\Bbb H_{2k_{\flat}}}\le e^{-\frac{3\delta_g}5 \tau_0}.
$$

   \noindent{\em 2. Bounds on local low Sobolev norms}. Let $0\le m\leq 2k_{\flat}$ and 
   \be
   \label{venovnoenneneo}
   \nu_0=-\frac{2(r-1)}{p-1}+\frac{\delta_g}{2},
   \ee 
   let the weight function 
  \be
 \label{defchimunu}
 \xi_{\nu_0,m}=\frac{1}{\la Z\ra^{d-2(r-1)+2(\nu_0-m)}}\zeta\left(\frac{Z}{Z^*}\right), \ \ \zeta(Z)=\left|\begin{array}{ll}1\ \ \mbox{for}\ \ Z\leq 2\\ 0\ \ \mbox{for}\ \ Z\ge 3.
 \end{array}\right.
 \ee
Then:
 \be
 \label{improvedsobolevlowinit}
 \sum_{m=0}^{2k_{\flat}}\int \xi_{\nu_0,m}\left((p-1)\rho_P^{p-1}(\nabla^m\rho(\tau_0))^2+|\nabla \nabla^m\Phi(\tau_0)|^2\right)\leq e^{-\delta_g\tau_0}.
 \ee
    
\noindent{\em 3. Pointwise assumptions}. We assume the following interior pointwise bounds 
\be
\label{smallnessoutsideinitbis}
\forall 0\leq k\leq 2k^{\sharp}, \ \ \left\|\frac{\la Z\ra^k\pa_Z^k\rhot(\tau_0)}{\rho_D}\right\|_{L^\infty(Z\le Z^*_0)}+\left\|{\la Z\ra^{r-1}}\la Z\ra^k\nabla^k\ut(\tau_0)\right\|_{L^\infty(Z\le Z^*_0)}\le \lambda_0^{c_0}
\ee
for some small enough universal constant $c_0$, and the exterior bounds:
\be
\label{smallnessoutsideinit}
\forall 0\leq k\leq 2{k^\sharp}, \ \ \left\|\frac{Z^{k+1}\pa_Z^k\rhot(\tau_0)}{\rho_D}\right\|_{L^\infty(Z\ge Z^*_0)}+\frac{\|Z^{k{+1}}\nabla^k\ut(\tau_0)\|_{L^\infty(Z\ge Z^*_0)}}{\lambda_0^{r-1}}\le \lambda_0^{C_0}
\ee
for some large enough universal $C_0(d,r,\ell)$. Note in particular that \eqref{smallnessoutsideinitbis}, \eqref{smallnessoutsideinit} ensure that for all $0<\lambda_0$ small enough:
\be
\label{smallenenoendata}
\left\|\frac{\rhot(\tau_0)}{\rho_D}\right\|_{L^\infty}\leq \mathcal d_0\ll 1
\ee and hence the data does not vanish.\\

\noindent {\em 4. Global bounds for high energy norms}.  We pick a large enough constant ${k^\sharp}(d,r,\ell)$ and consider the global energy norm 
\be
\label{globalsobolevnorm}
\|\rhot,\Psit\|_{{k^\sharp}}^2:=\sum_{j=0}^{{k^\sharp}}\sum_{|\alpha|=j}\int \frac{(p-1)\rho_D^{p-2}\rho_T(\nabla^\alpha\rhot)^2+\rho_T^2|\nabla \nabla^\alpha\Psi|^2}{\la Z\ra^{2({k^\sharp}-j)}},
\ee
then we require:
\be
\label{sobolevinit}
 \|\rhot(\tau_0),\Psit(\tau_0)\|_{{k^\sharp}}\le \mathcal d_0
\ee
We now define the weight functions 
\be\label{eight}
\chi_k= \la Z\ra^{2k-2\sigma-d+\frac{2(r-1)(p+1)}{p-1}}\left\la \frac Z{Z^*}\right\ra^{2n_P+2\sigma-\frac{2(r-1)(p+1)}{p-1}}
\ee
and the associated weighted energy norms
\be\label{aeight}
\|\rhot,\Psit\|_{m,\sigma}^2=\sum_{j=0}^m\sum_{|\alpha|=j}\int \chi_j\left[(p-1)\rho_D^{p-2}\rho_T|\nabla^\alpha\rhot|^2+\rho_T^2|\nabla \nabla^\alpha\Psit|^2\right]
\ee
We fix $0<\sigma({k^\sharp})\ll \de_g$ and require that, {for $\sigma=\sigma({k^\sharp})$,}
\be\label{sobolevinitnioeneon}
\|\rhot(\tau_0),\Psit(\tau_0)\|_{{k^\sharp},\sigma}\le \mathcal d_0 e^{-\sigma \tau_0}
\ee

\begin{remark}
$\mathcal d_0$ will denote any small constant dependent on the smallness of initial data and $\tau_0^{-1}$.
\end{remark}
\begin{remark}
We note that a straightforward integration by parts and induction argument implies that the norms $\|\rhot,\Psit\|_{m,\sigma}$
and $\|\rhot,\Psit\|_{{k^\sharp}}$ are equivalent to the ones with $\nabla^\alpha\rhot$ and $\nabla^\alpha\Psit$ replaced by 
$$
\pa^j\rhot=\{\pa_1^j,...,\pa_d^{j}\rhot\},\qquad \pa^j\Psit=\{\pa_1^{j},...,\pa_d^{j}\Psit\}
$$
 as well as 
$\Delta^j\rhot,\Delta^j\Psit$
with $j$ varying from $0$ to $\frac m2$ and $\frac {{k^\sharp}}2$ respectively (if $m$ and ${k^\sharp}$ are even.) In what follows, we will
use this equivalence continually and without mentioning. In fact, in what follows we will specifically work with the norms
$$
\|\rhot,\Psit\|_{{k^\sharp}}^2:=\sum_{j=0}^{\frac {{k^\sharp}}2}\int \frac{(p-1)\rho_D^{p-2}\rho_T(\Delta^j\rhot)^2+\rho_T^2|\nabla \Delta^j\Psi|^2}{\la Z\ra^{2({k^\sharp}-2j)}}
$$
and 
$$
\|\rhot,\Psit\|_{m,\sigma}^2=\sum_{j=0}^m\int \chi_j\left[(p-1)\rho_D^{p-2}\rho_T|\pa^j\rhot|^2+\rho_T^2|\nabla \pa^j\Psit|^2\right]
$$
\end{remark}

%%%%%%%%%%%%%%%%%%%%%%%%%%%%%%%%%%%%%

\subsection{Bootstrap bounds}

%%%%%%%%%%%%%%%%%%%%%%%%%%%%%%%%%%%%%

 Since the initial data satisfy \eqref{assumptionsdata} we have a local in time solution which can be decomposed and renormalized according to \eqref{renormalizationbis} and \eqref{definitionvariables}.
We now consider the time interval $[\tau_0,\tau^*)$ such that the following bounds hold on $[\tau_0,\tau^*)$:\\

\noindent{\em 1. Control of the unstable modes}:
Assume (see \eqref{eq:nilp}) that
\be\label{eq:unstboot}
\|e^{t{\mathcal N}} PX(\tau)\|_{\Bbb H_{2k_{\flat}}}\le e^{-\frac {19\delta_g}{30}\tau} 
\ee
\noindent {\em 2. Local decay of low Sobolev norms}: for any $0\le k\le 2k_{\flat}$, any large $\hat Z\le Z^*$ 
and universal constant $C=C(k_{\flat})$:
\be\label{eq:bootdecay}
 \|(\rhot,\Psit)\|_{H^k(Z\le \hat{Z})}\le \hat Z^C  e^{-\frac{3\delta_g}8\tau}
\ee

\noindent {\em 3. Global weighted energy bound}. We fix $0<\sigma({k^\sharp})\ll \de_g$. {For $\sigma=\sigma({k^\sharp})$,} we assume the bound:
\be
\label{boundbootbound}
 \|\rhot,\Psit\|_{{k^\sharp},\sigma}^2\leq e^{-2\sigma \tau}.
\ee

\noindent {\em 4. Pointwise bounds}: 
\be
\label{smallglobalboot}
\left|\begin{array}{l}
 0\le k\le {{k^\sharp}-2}, \ \ \left\|\frac{\la Z\ra^{k}\rhot_k}{\rho_D}\right\|_{L^\infty}+\left\|\la Z\ra ^{k}{\la Z\ra^{r-1}}\ut_k\right\|_{L^\infty(Z \le Z^*)}\leq \mathcal d\\[3mm]
  0\le k\le {{k^\sharp}-1},\ \ \left\|\la Z\ra ^{k}\la Z\ra^{r-1}\left\la\frac Z{Z^*}\right\ra^{-(r-1)}\ut_k\right\|_{L^\infty(1\le Z)}\le \mathcal d
\end{array}\right.
\ee
for some small enough universal constant $0<\mathcal d\ll 1$.

 The heart of the proof of Theorem \ref{thmmain} is the following:

\begin{proposition}[Bootstrap]
\label{propboot}
 Assume that \eqref{eq:unstboot}, \eqref{eq:bootdecay}, \eqref{boundbootbound}, \eqref{smallglobalboot}
  hold on $[\tau_0,\tau^*]$ with ${\mathcal d}^{-1},\tau_0$ large enough. Then the following holds:\\
\noindent{\em 1. Exit criterion}. The bounds  \eqref{eq:bootdecay}, \eqref{boundbootbound}, \eqref{smallglobalboot} can be strictly improved on $[\tau_0,\tau^*)$. Equivalently, $\tau^*<+\infty$ implies 
\be
\label{eibovbeiboebeo}
\|e^{t{\mathcal N}} PX(\tau^*)\|_{\Bbb H_{2k_{\flat}}} e^{\frac{19\delta_g}{30}\tau^*}=1.
 \ee
 \noindent{\em 2. Linear evolution}. The right hand side $G$ of the equation for $X(\tau)$ 
 $$
 \pa_\tau X = {\mathcal M} X + G 
 $$
 satisfies 
 \be\label{eq:Gest}
 \|G(\tau)\|_{\Bbb H_{2k_{\flat}}} \le e^{-\frac{2\delta_g}3\tau},\qquad \forall\tau\in [\tau_0,\tau^*]
 \ee
\end{proposition}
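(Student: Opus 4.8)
I would establish the source bound \eqref{eq:Gest} first, since it follows at once from the bootstrap assumptions and is exactly what feeds the improvement of the local decay \eqref{eq:bootdecay}; then I would run the scheme of sections~5--10 to strictly improve \eqref{eq:bootdecay}, \eqref{boundbootbound} and \eqref{smallglobalboot}, and conclude the exit alternative by continuity.

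\textbf{Bound on $G$.} Write $G=(0,G_\Theta)$. Since the weight $g$ in \eqref{defscalarproduct} is bounded on $[0,Z_a]$ and vanishes there only like $(Z_a-Z)^{c_g}$ with $c_g>0$ by \eqref{asymptoticsg}--\eqref{estcg}, and $\chi$ is a bounded cutoff, one has $\|G\|_{\Bbb H_{2k_\flat}}\lesssim\|G_\Theta\|_{H^{2k_\flat}(Z\le Z_a)}$, so the whole matter reduces to a fixed‑compact‑set Sobolev estimate. Inspecting \eqref{defgrho}--\eqref{defgt}, every term of $G_\Theta$ is either (a) at least quadratic in $(\bar\rho,\bar\Psi)$ --- the $G_\rho$, $|\nabla\bar\Psi|^2$ and $\mathrm{NL}(\rho)$ contributions and finitely many $\tau$‑ and $\Lambda$‑derivatives of them, the $\tau$‑derivatives being re‑expressed through the flow \eqref{exactliearizedflow} at the cost of a bounded number of extra spatial derivatives --- or (b) the viscous term $b^2\rho_P\mathcal F(u_T,\rho_T)$ and its derivatives. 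For (a) I would distribute the $\Delta^{k_\flat}$‑derivatives by Leibniz, put the highest‑order factor in $L^2$ and the remaining factor in $L^\infty$ (Sobolev embedding on the compact set, licit since $k_\flat\gg d/2$); both factors are then controlled by \eqref{eq:bootdecay}, the finitely many derivatives in excess of $2k_\flat$ being produced by interpolating \eqref{eq:bootdecay} against the bootstrap bound \eqref{smallglobalboot} (which gives $\|(\bar\rho,\bar\Psi)\|_{H^{k^\sharp-2}(Z\le Z_a)}\lesssim\mathcal d$) at a small exponent $\theta$. Each quadratic term is therefore $O(e^{-\frac{3\delta_g}{4}(1-O(\theta))\tau})$, which is $\le e^{-\frac{2\delta_g}{3}\tau}$ for $\tau_0$ large because $\tfrac34>\tfrac23$. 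For (b) I would use that $\rho_T=\rho_P+\bar\rho$ is bounded below on $\{Z\le Z_a\}$ (non‑vanishing profile plus the small $\bar\rho$, \eqref{smallenenoendata}) and that the high Sobolev norms of $(\rho_T,\Psi_T)$ there are bounded (smooth profile plus $O(\mathcal d)$, \eqref{smallglobalboot}), whence $b^2\mathcal F$ and its derivatives are $O(b^2)=O(e^{-2\mathcal e\tau})$; since $\delta_g$ may be taken below the spectral gap $c_g$ of Proposition~\ref{propaccretif} while $\mathcal e>0$ is fixed, $2\mathcal e>\tfrac{2\delta_g}{3}$ and \eqref{eq:Gest} follows.

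\textbf{Improving the three bounds.} (i) In $\{|x|\ge1\}$ the renormalized coefficients degenerate, so there I would work directly with \eqref{NScompbis}, for which the dampened profile $(\rho_D,\Psi_D)$ is a smooth rapidly decaying approximate solution, and propagate the exterior bounds \eqref{smallnessoutsideinit} from the smallness $\lambda_0^{C_0}$ using the dissipative smoothing and the decay structure (section~5). (ii) For the interior, the key is the quasilinear energy identity of section~6 at top order: its principal part is $\Box_Q$, whose flux on the acoustic cone $\{Z=Z_2\}$ (and on the shifted $\{Z=Z_a\}$) has the favorable sign thanks to the repulsivity \eqref{coercivityquadrcouplinginside} and Lemma~\ref{lemmnuericalvules}, i.e.\ the positivity of the surface gravity / red‑shift; the dissipative term contributes a good sign plus lower‑order errors that are $O(b^2(\cdots))$ and are absorbed exactly as in Lemma~\ref{lem:NS} precisely because $\mathcal e>0$; the commutator and genuinely nonlinear errors are absorbed using the bootstrap pointwise smallness \eqref{smallglobalboot} together with interpolation. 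In the present case $d=3,\ \ell>\sqrt3$ the unweighted estimate of section~7 closes the highest derivative, after which the weighted estimates of section~9 --- with the weights $\chi_k$ of \eqref{eight} tuned so that weight$\,\times\,$repulsivity beats the rate $2\sigma$ --- give $\|\bar\rho,\bar\Psi\|_{k^\sharp,\sigma}^2\le\tfrac12e^{-2\sigma\tau}$, improving \eqref{boundbootbound}. (iii) Weighted Sobolev embedding ($k^\sharp\gg d/2$) converts these energy bounds into the strict improvement of \eqref{smallglobalboot}, the $\la Z\ra$‑powers being supplied by the weights. (iv) On $\{Z\le Z_a\}$ I would write $\partial_\tau X=\mathcal M X+G$, split $X=PX+(I-P)X$, and apply Duhamel with the semigroup decay \eqref{stabiliteexpo}, the bound \eqref{eq:Gest}, and the initial smallness $\|(I-P)X(\tau_0)\|\le e^{-\frac{\delta_g}{2}\tau_0}$; since $\tfrac{2\delta_g}{3}>\tfrac{\delta_g}{2}$ this yields $\|(I-P)X(\tau)\|_{\Bbb H_{2k_\flat}}\lesssim e^{-\frac{\delta_g}{2}\tau}$, and section~10 propagates this (using also (ii) in the region $Z_a\le Z$) to every compact $\{Z\le\hat Z\}$ with $\hat Z$ up to slightly beyond $Z_2$, strictly improving \eqref{eq:bootdecay} since $\tfrac{\delta_g}{2}>\tfrac{3\delta_g}{8}$.

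\textbf{Conclusion and main obstacle.} The unstable component obeys the ODE $\tfrac{d}{d\tau}PX=(\mathcal N+\mathrm{diag})PX+PG$ with $\|PG\|\le\|G\|\le e^{-\frac{2\delta_g}{3}\tau}$, which is the setting of Lemma~\ref{browerset}; the bound \eqref{eq:unstboot} is the one bound that is \emph{not} improved within the bootstrap (its fate being settled by the topological argument over the choice of unstable data), so by continuity $\tau^*<+\infty$ forces the saturation \eqref{eibovbeiboebeo}. The main obstacle is step (ii): closing the top‑order quasilinear energy estimate \emph{globally in space} while keeping the dissipative term perturbative --- the acoustic‑cone flux must be dominated by the repulsivity and the lower‑order and nonlinear errors must fit inside the margin left by the weights, all of which is borderline exactly where $\mathcal e>0$ is borderline (the restriction $\ell>\sqrt3$ in $d=3$); a secondary difficulty is the outward propagation in step (iv) across the region where the acoustical metric is only barely favorable, up to the cone $\{Z=Z_2\}$.
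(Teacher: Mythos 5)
Your proposal is correct and follows essentially the same route as the paper: the $G$-bound via the compact-set Sobolev algebra property combined with interpolation between \eqref{eq:bootdecay} and the higher-regularity control, the scheme of exterior non-renormalized estimates plus quasilinear top-order (un)weighted energy identities keyed on the repulsivity \eqref{coercivityquadrcouplinginside}, \eqref{P}, pointwise bounds by weighted Sobolev embedding, and interior linear decay with Duhamel plus a renormalized finite-speed-of-propagation argument. The only slip is in step (iv): the propagation extends the interior decay from $\{Z\le\tilde Z_a\}$ to \emph{all} compact sets $\{Z\le\hat Z\}$ with $\hat Z\le Z^*$ (hence the $\hat Z^C$ factor in \eqref{eq:bootdecay}), using backward-in-$\tau$ contracting cutoffs whose moving boundary stays outside the acoustic cone so the boundary flux has a sign, rather than stopping "slightly beyond $Z_2$" as you wrote; and the paper interpolates \eqref{eq:bootdecay} against the \emph{improved} top-order bound \eqref{cneioneoneon} rather than \eqref{smallglobalboot}, though this yields the same conclusion.
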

\begin{remark}
\label{weak}
We note that the assumption \eqref{eq:unstboot}
implies that 
\be\label{eq:unstX}
\|PX(\tau)\|_{\Bbb H_{2k_{\flat}}}\le e^{-\frac {\delta_g}{2}\tau}, \qquad \forall\tau\in [\tau_0,\tau^*)
\ee
We will prove the bootstrap proposition \ref{propboot} under the weaker assumption \eqref{eq:unstX}.
Specifically, we will define $[\tau_0,\tau^*]$ to be the maximal time interval on which \eqref{eq:unstX} holds 
and will show that both the bounds  \eqref{eq:bootdecay}, \eqref{boundbootbound}, \eqref{smallglobalboot} can be improved and that $G$ satisfies \eqref{eq:Gest}.
\end{remark}

An elementary application of the Brouwer topological theorem will ensure that there must exist a data such that $\tau^*=+\infty$, and these are the blow up waves of Theorem \ref{thmmain}.\\

We now focus on the proof of Proposition \ref{propboot} and work on a time interval $[\tau_0,\tau^*]$, $\tau_0<\tau^*\le+\infty$ on which  \eqref{eq:bootdecay}, \eqref{boundbootbound}, \eqref{smallglobalboot} hold.

 %%%%%%%%%%%%%%%%%%%%%%%%%%%%%%%%%%%%%%%%%%
%%%%%%%%%%%%%%%%%%%%%%%%%%%%%%%%%%%%%%%%

 %%%%%%%%%%%%%%%%%%%%%%%%%%%%%%%%%%%%%%%%%%
%%%%%%%%%%%%%%%%%%%%%%%%%%%%%%%%%%%%%%%%

\section{Global non-renormalized estimate}

  %%%%%%%%%%%%%%%%%%%%%%%%%%%%%%%%%%%%%%%%
  
  Recall the original (NS) equations \eqref{eqivneinoevlijforinoaa} (written for the square root of the density):
\be\label{eq:non}
\left|\begin{array}{l}
\pa_t\rhoh+\rhoh \nabla \cdot \uh+2\nabla \rhoh\cdot \uh=0\\
\rhoh^2\pa_t \uh-\alpha \left(\mu\Delta\uh+\mu'\nabla\div \uh\right)+2\rhoh^2\uh\cdot\nabla \uh+(p-1)\rhoh^p\nabla \rhoh=0\\
\ph=\rhoh^{p-1}\\
\end{array}\right.
\ee
The standard energy estimate for the above equation takes the form 
$$
 \frac d{dt} \int \left(\frac{1}{p+1} \rhoh^{p+1}+\frac{1}{2}\rhoh^2 |\uh|^2\right)+\alpha \left(\int \mu|\nabla\uh|^2+\mu'|\div\uh|^2\right)=0.
$$
In view of the assumptions on initial data, consistent with rapid vanishing of the dampened profile density 
$\hat\rho_D\sim x^{-n_P}$, this estimate and its higher derivative versions provide very weak control of solutions for 
large $x$. To gain such control we use an auxiliary estimate. First, we once again observe that for spherically symmetric
solutions $\Delta \hat u=\nabla\div \hat u$.

\begin{lemma}[Velocity dissipation]
\label{lemmainitialization}
There following inequality holds for any $t\in (0,T)$
\bea
\label{intiializetion}
\nonumber&&\int|\nabla \uh(t,\cdot)|^2+(\mu+\mu')\alpha \int_{0}^t\int\frac{|\Delta \uh|^2}{\rhoh^2}\\
&\lesssim& \int_{0}^t\int \Big[|\uh||\nabla \uh||\Delta\uh|+|\nabla \rhoh^{p-1}| |\Delta\uh|\Big]+\int|\nabla u(0,\cdot)|^2.
\eea
\end{lemma}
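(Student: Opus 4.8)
The plan is to carry out a weighted $\dot H^1$ energy estimate for the velocity. Start from the momentum equation in \eqref{eq:non} which, thanks to the spherical symmetry identity $\mu\Delta\uh+\mu'\na\div\uh=(\mu+\mu')\Delta\uh$ recalled just above, reads
\[
\rhoh^2\pa_t\uh-(\mu+\mu')\alpha\,\Delta\uh+2\rhoh^2\uh\cdot\na\uh+(p-1)\rhoh^p\na\rhoh=0.
\]
Take the $L^2(\RR^d)$ scalar product of this identity with $-\rhoh^{-2}\Delta\uh$. Note that the weight $\rhoh^{-2}$ corresponds, in the original density variable, to $\rho^{-1}$, which is precisely the weight controlled by the Cauchy regularity class of \eqref{assumptionsdata} (through the propagated bound $(-\Delta u+\na p)/\sqrt{\rho}\in L^2$), so all the integrals below are finite on $(0,T)$, where moreover the density stays positive.

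For the first term, an integration by parts in space gives
\[
-\int\pa_t\uh\cdot\Delta\uh=\int\na(\pa_t\uh):\na\uh=\frac12\frac d{dt}\int|\na\uh|^2,
\]
the boundary term at infinity vanishing by the decay of $\uh$ in the Cauchy class. The viscous term yields exactly $(\mu+\mu')\alpha\int|\Delta\uh|^2/\rhoh^2$, which we keep on the left, while the transport and pressure terms, moved to the right, are bounded in absolute value. We obtain the differential inequality
\[
\frac12\frac d{dt}\int|\na\uh|^2+(\mu+\mu')\alpha\int\frac{|\Delta\uh|^2}{\rhoh^2}\le 2\int|\uh|\,|\na\uh|\,|\Delta\uh|+(p-1)\int\rhoh^{p-2}|\na\rhoh|\,|\Delta\uh|.
\]
Integrating in time over $(0,t)$ and using $\na\rhoh^{p-1}=(p-1)\rhoh^{p-2}\na\rhoh$ to rewrite the last term produces exactly \eqref{intiializetion}.

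The only point that is not purely algebraic is the justification, at the regularity level provided by the local Cauchy theory, of the spatial integrations by parts and of the identity $-\int\pa_t\uh\cdot\Delta\uh=\frac12\frac d{dt}\int|\na\uh|^2$: I would establish the estimate first for a mollification of the solution (using the equation itself to control $\pa_t\uh$ in $L^2_{\rm loc}$, and the positivity of $\rhoh$ on $(0,T)$ to handle the $\rhoh^{-2}$ weight), then pass to the limit. I expect this regularization and passage-to-the-limit step, rather than the energy identity itself, to be the only genuine — and quite modest — obstacle.
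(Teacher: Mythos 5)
Your proof is correct and follows exactly the paper's argument: divide the momentum equation by $\rhoh^2$ (equivalently, test against $-\rhoh^{-2}\Delta\uh$), integrate by parts in the $\pa_t\uh$ term to produce $\frac12\frac{d}{dt}\int|\nabla\uh|^2$, keep the resulting dissipation on the left, bound the remaining terms by absolute values, and integrate in time. The extra remarks about regularization to justify the integrations by parts are reasonable but are not addressed in the paper, which treats the identity as formal.
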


The main feature of the above estimate is the second term on the left hand side generated by the dissipative term in the Navier-Stokes equations. With the density in the denominator, it provides very strong control on the velocity at infinity.

\begin{proof}
Recall \eqref{eqivneinoevlijforinoaa}:
Dividing by $\rhoh^2$ and multiplying the second equation in \eqref{eq:non} by $\Delta\uh$ we compute:
\bee
&&\frac{1}{2}\frac{d}{dt}\int |\nabla \uh|^2+\alpha (\mu+\mu') \int \frac{|\Delta \uh|^2}{\rhoh^2}=2\int\uh\cdot\nabla \uh\cdot\Delta\uh+\int(p-1)\rhoh^{p-2}\nabla \rhoh\cdot \Delta\uh
\eee
which concludes the proof of \eqref{intiializetion}.\\
\end{proof}

We now reinterpret this estimate in the renormalized variables and show the boundedness of the right hand side.
Recall that 
 $$
\left|\begin{array}{l}
\rhoh(t,x)=\left(\frac{\l}{\nu}\right)^{\frac2{p-1}}\rho_T(\tau,Z)\\
\uh(t,x)=\frac{\l}{\nu}u_T(\tau,Z), \ \  u_T=\nabla\Psi_T
\end{array}\right.
$$
and
$$ \left|\begin{array}{l} \nu_t=\frac{\nu_\tau}{\nu}=-r, \ \ \nu=(T-t)=e^{-r\tau}\\
\l(\tau)=e^{-\tau}=(T-t)^{\frac{1}{r}},\qquad Z^*=e^\tau, \qquad b^2=(Z^*)^{-\ell(r-1)-r+2}.
\end{array}\right.
$$
Then 
\bee
(\mu+\mu')\int_{0}^T\int \frac{|\Delta \uh|^2}{\rhoh^2}&=&(\mu+\mu')\int_{\tau_0}^\infty \int (Z^*)^{-d-r+4+2(r-1)-\ell(r-1)}
\frac{|\Delta u_T|^2}{\rho_T^2}\\&=&(\mu+\mu')\int_{\tau_0}^\infty b^2 (Z^*)^{-d+2r}\int 
\frac{\left|\Delta u_T\right|^2}{\rho_T^2}
\eee
and
\bee
&&
\int_{0}^T\int \left(|\uh||\nabla \uh||\Delta\uh|+|\nabla \rhoh^{p-1}||\Delta \uh|\right)\\
&=&
\int_{\tau_0}^\infty \int (Z^*)^{-d-r+3+3(r-1)}  \left(|u_T||\nabla u_T||\Delta u_T|+|\nabla \rho_T^{p-1}||\Delta u_T|\right).
\eee
We observe that by the definition of the weight function $\chi_0$ in \eqref{eight}
$$
\la Z\ra^{-d {+} 2(r-1)-2\sigma}\left \la \frac Z{Z^*}\right \ra^{2\sigma -2(r-1)}\lesssim \chi_0 \rho_D^2
$$
We now use the pointwise bootstrap estimates \eqref{smallglobalboot}, which hold for both $\rhot, \ut$ and $\rho_T, u_T$ to estimate{, recalling also that we are in the case $d=3$,}
\bee
&&\int_{0}^T\int\left(|\uh||\nabla \uh||\Delta\uh|+|\nabla \rhoh^{p-1}||\Delta \uh|\right)\\
&\lesssim& \int_{\tau_0}^\infty (Z^*)^{-d-r+3+3(r-1)}\\
&&\times\int \la Z\ra^{-3-{(r-1)}} \left \la \frac Z{Z^*}\right \ra^{{r-1}} 
\left (|u_T| |\la Z\ra \nabla u_T|+ \rho_T^{(p-3)/2} |\la Z\ra \nabla\rho_T| \rho_T^{(p-1)/2}\right)\\
&\lesssim& \int_{\tau_0}^\infty (Z^*)^{-d-r+3+3(r-1)+2\sigma }\\
&&\times\int \chi_0\rho_D^2{\la Z\ra^{-3(r-1)}}\left \la \frac Z{Z^*}\right \ra^{{3}(r-1)} 
\left (|u_T| |\la Z\ra \nabla u_T|+ \rho_T^{(p-3)/2} |{\la Z\ra}\nabla\rho_T| \rho_T^{(p-1)/2}\right)\\
&\lesssim& \int_{\tau_0}^\infty (Z^*)^{-d-r+3+3(r-1)+2\sigma } \|\rho_T,\Psi_T\|_{1,\sigma}^2
\eee
In the last inequality we used {the fact that $r>1$ and} the definition of the norm $\|\rho_T,\Psi_T\|_{1,\sigma}$ from \eqref{aeight}.
We now note that \eqref{boundbootbound}{, together with the decay of the profile $(\rho_D,u_D)$ and the fact that $\sigma>0$,} implies that $\|\rho_T,\Psi_T\|_{1,\sigma}\leq \mathcal D$
with some constant $\mathcal D$ which depends on the size of the profile $(\rho_D,u_D)$. 
As a consequence,
$$
\int_{0}^T\int\left(|\uh||\nabla \uh||\Delta\uh|+|\nabla \rhoh^{p-1}||\Delta \uh|\right)\lesssim \mathcal D\int_{\tau_0}^\infty (Z^*)^{-d-r+3+3(r-1)+2\sigma }
$$
As long as the constant $\sigma$ is chosen to be sufficiently small, the convergence of the above integral is guaranteed
under the condition 
$$
r<\frac d2
$$
Since $r$ is either close to $r^*(d,\ell)$ or $r_+(d,\ell)$  and $r^*<r_+$, we need
$$
r_+=1+\frac {d-1}{(1+\sqrt \ell)^2}<\frac{d}{2}.
$$
For $d=3$ this holds for $\ell>1$,
which is satisfied  in view of the condition $\ell>\sqrt 3$. {Hence, we have obtained
\bee
\int_{0}^T\int\left(|\uh||\nabla \uh||\Delta\uh|+|\nabla \rhoh^{p-1}||\Delta \uh|\right) &\lesssim& \mathcal D.
\eee}
Furthermore, since the initial data, since $\uh$ at $t={0}$ is assumed to be in $\dot H^1$, {we infer, in view of Lemma \ref{lemmainitialization},
\bee
(\mu+\mu') \int_{0}^T\int\frac{|\Delta \uh|^2}{\rhoh^2} &\lesssim& \mathcal D
\eee
and hence
\bee
(\mu+\mu')\int_{\tau_0}^\infty b^2 (Z^*)^{-d+2r}\int 
\frac{\left|\Delta u_T\right|^2}{\rho_T^2} &\lesssim& \mathcal D
\eee}

We now observe that by bootstrap assumptions \eqref{smallglobalboot}.
\bee
\int_{\tau_0}^\infty b^2 (Z^*)^{-d+2r}\int_{Z\le Z^*} &&
\frac{|u_T|^2+Z^4\left|\Delta u_T\right|^2}{\la Z\ra^4\rho_T^2}\\&&\lesssim\int_{\tau_0}^\infty (Z^*)^{-\ell(r-1)-r+2-d+2r}\int_{Z\le Z^*} 
 \langle Z\rangle^{-4-2(r-1)+\ell(r-1) +d-1}\\ &&\lesssim \int_{\tau_0}^\infty \max\{(Z^*)^{-r},
 (Z^*)^{-\ell(r-1)-r+2-d+2r}\}\lesssim 1
\eee
as long as $r>0$ and 
$$
\ell(r-1)+r-2+d-2r>0
$$
For $r=r^*(d,\ell)$
\bee
\ell(r-1)+r-2+d-2r&=&\ell \frac{d-\sqrt d}{\ell+\sqrt d} -2+d-\frac{\ell+d}{\ell+\sqrt d}\\ &=&\frac {\ell(d-\sqrt d)-\ell-d-2\ell
-2\sqrt d +\ell d+d\sqrt d}{\ell+\sqrt d}\\&=&\frac {\ell(d-\sqrt d)-d
+\ell (d-3)+(d-2)\sqrt d}{\ell+\sqrt d}.
\eee
For $d=3$, this is equivalent to $\ell>1$, which holds. On the other hand, the value of $r_+(d,\ell)$
for $d=3$ and  $\ell>3$
$$
r_+(d,\ell)=1+\frac {d-1}{(1+\sqrt\ell)^2}<\frac d2,
$$
which, in view of the condition $\ell(r-1)+r-2>0$, gives us the desired conclusion. So, for $Z\le Z^*$ we control
both $u_T$ and $\Delta u_T$.

In the region $Z>Z^*$, we note again that
$$
\Delta u_T=\left(\Delta U_T-\frac 2{Z^2} U_T\right) \vec{e_r}.
$$
In addition,
$$
\left(\Delta U_T-\frac 2{Z^2} U_T\right) = \pa_Z\left(\frac 1{Z^2} \pa_Z( Z^2 U_T)\right) 
$$
Using the bootstrap assumption 
$$
\frac 12\le\frac {\rho_T}{\rho_D}\le 2
$$
with 
$$
c\left\langle\frac Z{Z^*}\right\rangle^{-n_p+\frac {2(r-1)}{p-1}}\langle Z\rangle^{-\frac {2(r-1)}{p-1}}\le\rho_D\le C\left\langle\frac Z{Z^*}\right\rangle^{-n_p+\frac {2(r-1)}{p-1}}\langle Z\rangle^{-\frac {2(r-1)}{p-1}}
$$
we can apply a Hardy type inequality (twice) in the region $Z\ge Z^*$, to arrive at the following global dissipative estimate in renormalized variables:
\begin{lemma}\label{lem:NS}
\be\label{eq:globald}
(\mu+\mu')\int_{\tau_0}^\infty b^2 (Z^*)^{-d+2r}\int 
\frac{|u_T|^2+|Z^2\Delta u_T|^2}{\la Z\ra^{4}\rho_T^2}\le \mathcal D,
\ee
where $\mathcal D$ is a constant dependent only on the (full, i.e., including the profile) initial data.

\end{lemma}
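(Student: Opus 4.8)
The plan is to obtain \eqref{eq:globald} by combining the ingredients already assembled with one additional input --- a double Hardy inequality controlling $|u_T|$ by $|\Delta u_T|$ in the exterior region $Z\ge Z^*$. First I would dispose of the genuinely dissipative piece. Since $Z\le \la Z\ra$ we have $\frac{|Z^2\Delta u_T|^2}{\la Z\ra^4\rho_T^2}\le \frac{|\Delta u_T|^2}{\rho_T^2}$ pointwise, so the $|Z^2\Delta u_T|^2$ contribution to \eqref{eq:globald}, for all $Z$ at once, is dominated by the bound $(\mu+\mu')\int_{\tau_0}^\infty b^2(Z^*)^{-d+2r}\int\frac{|\Delta u_T|^2}{\rho_T^2}\lesssim \mathcal D$ already derived from Lemma \ref{lemmainitialization} together with $\hat u(0,\cdot)\in\dot H^1$. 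Likewise, the $|u_T|^2/(\la Z\ra^4\rho_T^2)$ contribution from the interior $Z\le Z^*$ is precisely the term already bounded by the pointwise bootstrap \eqref{smallglobalboot} in the computation preceding the statement. Thus it remains only to establish
$$
(\mu+\mu')\int_{\tau_0}^\infty b^2(Z^*)^{-d+2r}\int_{Z\ge Z^*}\frac{|u_T|^2}{\la Z\ra^4\rho_T^2}\lesssim \mathcal D.
$$

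For this I would use the spherically symmetric identity $\Delta U_T-\frac2{Z^2}U_T=\partial_Z\big(\frac1{Z^2}\partial_Z(Z^2U_T)\big)$ and integrate twice outward from $Z=Z^*$. Setting $g=\frac1{Z^2}\partial_Z(Z^2U_T)=\partial_ZU_T+\frac2Z U_T$, so that $\partial_Z g=\Delta U_T-\frac2{Z^2}U_T$, the first step recovers $g$ on $[Z^*,\infty)$ from its derivative and the boundary value $g(Z^*)$, and the second step recovers $Z^2U_T$ from $Z'^2 g$ and the boundary value $(Z^*)^2U_T(Z^*)$. Each step is a one-dimensional weighted Hardy inequality; the required power conditions on the weight $\la Z\ra^{-4}\rho_T^{-2}$ hold because in $Z\ge Z^*$ the bootstrap sandwich $\frac12\le \rho_T/\rho_D\le 2$ together with $\rho_D\sim \la Z/Z^*\ra^{-n_P+\frac{2(r-1)}{p-1}}\la Z\ra^{-\frac{2(r-1)}{p-1}}$ makes $\rho_T^{-2}$ a large positive power of $Z$ once the universal constant $n_P=n_P(d)$ is taken large enough; the factor $\la Z\ra^{-4}$ in \eqref{eq:globald} is there precisely to compensate the two powers of $Z^2$ picked up when passing from $\Delta u_T$ back to $u_T$ through these two integrations. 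The boundary contributions at $Z=Z^*$, namely $|g(Z^*)|^2$ and $(Z^*)^4|U_T(Z^*)|^2$, are estimated by the interior pointwise bounds \eqref{smallglobalboot} on $U_T$ and $\nabla U_T$ (valid up to $Z=Z^*$) and the profile decay of $u_D$. Feeding the two Hardy steps into the $\tau$-integral and adding up the four contributions (interior $|u_T|^2$, interior and exterior $|\Delta u_T|^2$, exterior $|u_T|^2$) produces \eqref{eq:globald}, with $\mathcal D$ depending only on $\|\hat u(0,\cdot)\|_{\dot H^1}$ and the size of the profile $(\rho_D,u_D)$.

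The step I expect to be the main obstacle is the bookkeeping inside the two Hardy inequalities: one must weigh the $\tau$-dependent prefactor $b^2(Z^*)^{-d+2r}$ against the $Z^*$-dependence hidden inside $\rho_D$ through the ratio $\la Z/Z^*\ra$, and in particular check that the boundary terms generated at $Z=Z^*$ carry a strictly negative total power of $Z^*=e^\tau$, so that the resulting $\int_{\tau_0}^\infty$ still converges. No new structural input is needed beyond what was already used for the interior region --- smallness of $\sigma$, the inequality $r<d/2$ (equivalent to $\ell>1$, hence implied by $\ell>\sqrt3$), and positivity of $\ell(r-1)+r-2$ --- provided $n_P$ is taken sufficiently large, which is legitimate since $n_P$ is a free universal constant in the construction of $\rho_D$.
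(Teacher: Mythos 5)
Your proposal follows the same route as the paper: dispose of the $|Z^2\Delta u_T|^2$ piece via the renormalized version of the velocity-dissipation estimate from Lemma \ref{lemmainitialization}, control the interior $Z\le Z^*$ contribution directly from the pointwise bootstrap \eqref{smallglobalboot}, and control the exterior $|u_T|^2$ piece by applying the one-dimensional Hardy inequality twice through the radial identity $\Delta U_T-\tfrac{2}{Z^2}U_T=\partial_Z\bigl(\tfrac{1}{Z^2}\partial_Z(Z^2U_T)\bigr)$, using the sandwich $\tfrac12\le\rho_T/\rho_D\le 2$ and the profile decay to verify the weight conditions. Your identification of the obstacle — tracking the $Z^*$-powers in the Hardy boundary terms against the prefactor $b^2(Z^*)^{-d+2r}$ — is exactly the bookkeeping the paper leaves implicit.
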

\begin{remark}
The inequality \eqref{eq:globald} is used in the treatment of the Navier-Stokes case {\it only}. As a result, the same
applies to the dimensional 
calculations appearing in its proof. 
\end{remark}
We also use the opportunity to translate our bootstrap assumptions back to the original variables.
Below we will include estimates which apply to the full solution $\uh, \rhoh$ rather than the full solution minus the 
profile and only in the exterior region $|x|\ge 10$.

\noindent {\em 1. Exterior weighted Sobolev bounds}. \eqref{boundbootbound} translates into the following bounds for the velocity
$\uh$: $\forall 0\le k\le {k^\sharp}$
\be\label{eq:xuh}
\int_{|x|\ge 10} \la x\ra^{-d+2k}|\nabla^k\uh|^2\lesssim 1 
\ee
and the density $\rhoh$: $\forall 0\le k\le {k^\sharp}$
\be\label{eq:xrhoh}
\int_{10\le |x|\le 12} |\nabla^k\rhoh|^2\lesssim 1.
\ee

\noindent {\em 2. Exterior pointwise bounds}. \eqref{smallglobalboot} translates into the following bounds
$\uh$: $\forall 0\le k\le \frac {{k^\sharp}}2$
\be\label{eq:xuhrho}
\left\|\frac {\la x\ra^k \nabla^k\rhoh}{\rhoh_D}\right\|_{L^\infty(|x|\ge 10)} + \|{\la x\ra^{k} \nabla^k\uh}\|_{L^\infty(|x|\ge 10)}\lesssim 1 
\ee
We now derive {\it improved}, relative to the bootstrap assumptions, exterior weighted   Sobolev and pointwise bounds for 
the density $\rhot$. We let 
$$
\rho_I(x)=\frac{c_{n,\delta}}{|x|^{n_P}}
$$
denote the $t$-independent leading order term in $\rho_D$, so that according to \eqref{dampenedprofile}
$$
\left|\frac{\rho_I-\rho_D}{\rho_D}\right|\lesssim e^{-r\tau}, \ \ {|x|\geq 10},
$$
with the similar inequalities also holding for derivatives. In particular,  \eqref{eq:xuhrho}
holds with $\rho_I$ in place of $\rho_D$.

Let $\zeta(x)$ be a smooth function vanishing for $|x|\le 10$ such that 
\be\label{eq:ze}
\zeta(x) \lesssim \la x\ra |\nabla\zeta(x)|\lesssim \zeta(x) + {\bf 1}_{10\le |x|\le 12}
\ee
and $\nabla^\alpha$ denote a generic $x$-derivative of order $|\alpha|\le {k^\sharp}-1$.
Applying $\nabla^\alpha$ to the first equation of \eqref{eq:non}
\be\label{eq:drt}
\pa_t\nabla^\alpha\rhoh=-\sum_{\beta+\gamma=\alpha} \nabla^\beta \rhoh \nabla\nabla^\gamma \uh-
2\nabla \nabla^\beta \rhoh \nabla^\gamma \uh,
\ee
multiplying by 
$\zeta^2(x) \la x\ra^{2|\alpha|}\frac{\nabla^\alpha \rhoh}{\rhoh_I^2}$ 
and integrating we easily derive 
\bea\label{eq:rho}
&&\frac d{dt} \int \zeta^2\la x\ra ^{2|\alpha|} \left|\frac{\nabla^\alpha\rhoh}{\rhoh_I}\right|^2 \le \left(\int \zeta^2 \la x\ra^{2|\alpha|-1}\left|\frac{\nabla^\alpha\rhoh}{\rhoh_I}\right|^2\right)^{\frac 12}\sum_{|\beta|+|\gamma|= |\alpha|+1, |\beta|\le \frac {{k^\sharp}}2}\left\| \la x\ra^{|\beta|} \frac{\nabla^\beta\rhoh}{\rhoh_I}\right\|_{L^\infty(|x|\ge 10)}\notag\\
&&\times
\left(\int \zeta^2 \la x\ra^{2|\gamma|-1} |\nabla^\gamma\uh|^2\right)^{{\frac{1}{2}}} + \left(\int \zeta^2 \la x\ra^{2|\alpha|-1}(x) \left|\frac{\nabla^\alpha\rhoh}{\rhoh_I}\right|^2\right)^{\frac 12}\notag\\  &&\times \sum_{|\beta|+|\gamma|= |\alpha|+1, {1\leq}|\gamma|\le \frac {{k^\sharp}}2}\left(\int \zeta^2\la x\ra^{2|\beta|-1} \left|\frac{\nabla^\beta\rhoh}{\rhoh_I}\right|^2\right)^{\frac 12}\left\|\la x\ra^{{|\gamma|}}\nabla^\gamma\uh\right\|_{L^\infty(|x|\ge 10)}
\notag\\
&&+ {\left(\int \zeta^2 \la x\ra^{2|\alpha|-1}(x) \left|\frac{\nabla^\alpha\rhoh}{\rhoh_I}\right|^2\right)\left\|\uh\right\|_{L^\infty(|x|\ge 10)}}\notag\\
&&+ \left(\int \zeta^2 \la x\ra^{2|\alpha|-1}(x) \left|\frac{\nabla^\alpha\rhoh}{\rhoh_I}\right|^2\right)^{\frac 12}\left(\int (\nabla\zeta)^2\la x\ra^{2|\alpha|+1} \left|\frac{\nabla^\alpha\rhoh}{\rhoh_I}\right|^2\right)^{\frac 12}\left\|\uh\right\|_{L^\infty(|x|\ge 10)}
\eea
{where the last two terms on the right hand side come from the integration by parts of $\nabla^\alpha\rhoh\nabla\nabla^\alpha\rhoh$,} and where while integrating by parts we used the bound
$$
{\la x\ra\frac{|\nabla\rhoh_I|}{\rhoh_I}\lesssim 1}.
$$
We now examine our pointwise and integrated bootstrap assumptions \eqref{eq:xuh}, \eqref{eq:xrhoh}, \eqref{eq:xuhrho} to see that we can choose
$\zeta$ to be a smooth function supported in $|x|\ge 10$ and for large $x$ behaving like
$$
\zeta^2(x)\sim \la x\ra^{-d+2(r-1)},
$$
but with this choice, after time integration, the initial data would be an infinite integral. Therefore, we  
first integrate the above differential inequality with 
$$
\zeta^2(x)\sim \la x\ra^{-d-2\sigma}
$$
{for large $x$ and } for some $\sigma>0$ to obtain that 
$$
\int_{|x|\ge 12}\la x\ra ^{-d-2\sigma+2|\alpha|} \left|\frac{\nabla^\alpha\rhoh}{\rhoh_I}\right|^2(t)
\le \mathcal D
$$
with a constant $\mathcal D$ depending on the full profile.
We now rewrite \eqref{eq:drt} by subtracting $\nabla^\alpha\rhoh_I${, and by noticing that $\pr_t\rhoh_I=0$,}
$$
\pa_t(\nabla^\alpha(\rhoh-\rhoh_I))=-\sum_{\beta+\gamma=\alpha} \nabla^\beta \rhoh \nabla\nabla^\gamma \uh-
2\nabla \nabla^\beta \rhoh \nabla^\gamma \uh,
$$
multiply by 
$\zeta^2(x) \la x\ra^{2|\alpha|}\frac{\nabla^\alpha (\rhoh-\rhoh_I)}{\rhoh_I^2}$ and derive the energy identity, similar to the above:
\bea\label{eq:rhof}
&&\frac d{dt} \int \zeta^2\la x\ra ^{2|\alpha|} \left|\frac{\nabla^\alpha(\rhoh-\rhoh_I)}{\rhoh_I}\right|^2 
\le \left(\int \zeta^2 \la x\ra^{2|\alpha|-1}\left|\frac{\nabla^\alpha(\rhoh-\rhoh_I)}{\rhoh_I}\right|^2\right)^{\frac 12}\notag\\ 
&\times&\sum_{|\beta|+|\gamma|= |\alpha|+1, |\beta|\le \frac {{k^\sharp}}2}\left\| \la x\ra^{|\beta|} \frac{\nabla^\beta\rhoh}{\rhoh_I}\right\|_{L^\infty(|x|\ge 10)}
\left(\int \zeta^2 \la x\ra^{2|\gamma|-1} |\nabla^\gamma\uh|^2\right)^{{\frac{1}{2}}}\notag\\ 
&+& \left(\int \zeta^2 \la x\ra^{2|\alpha|-1}(x) \left|\frac{\nabla^\alpha(\rhoh-\rhoh_I)}{\rhoh_I}\right|^2\right)^{\frac 12}\notag\\  
&&\times \sum_{|\beta|+|\gamma|= |\alpha|+1, {1\leq}|\gamma|\le \frac {{k^\sharp}}2}\left(\int \zeta^2\la x\ra^{2|\beta|-1} \left|\frac{\nabla^\beta\rhoh}{\rhoh_I}\right|^2\right)^{\frac 12}\left\|\la x\ra^{{|\gamma|}}\nabla^\gamma\uh\right\|_{L^\infty(|x|\ge 10)}
\notag\\
&&+{ \left(\int \zeta^2 \la x\ra^{2|\alpha|-1}(x) \left|\frac{\nabla^\alpha(\rhoh-\rhoh_I)}{\rhoh_I}\right|^2\right)\left\|\uh\right\|_{L^\infty(|x|\ge 10)}}\notag\\
&&+ \left(\int \zeta^2 \la x\ra^{2|\alpha|-1}(x) \left|\frac{\nabla^\alpha(\rhoh-\rhoh_I)}{\rhoh_I}\right|^2\right)^{\frac 12}\left(\int (\nabla\zeta)^2\la x\ra^{2|\alpha|+1} \left|\frac{\nabla^\alpha(\rhoh-\rhoh_I)}{\rhoh_I}\right|^2\right)^{\frac 12}\left\|\uh\right\|_{L^\infty(|x|\ge 10)}\notag\\  &+&\left(\int \zeta^2 \la x\ra^{2|\alpha|-1}(x) \left|\frac{\nabla^\alpha(\rhoh-\rhoh_I)}{\rhoh_I}\right|^2\right)^{\frac 12}\left(\int \zeta^2\la x\ra^{2|\alpha|+1} \left|\frac{\nabla\nabla^\alpha\rhoh_I}{\rhoh_I}\right|^2\right)^{\frac 12}\left\|\uh\right\|_{L^\infty(|x|\ge 10)}
\eea
We integrate this differential inequality with 
$$
\zeta^2(x)\sim \la x\ra^{-d-2\sigma+\mu},
$$
where 
$$
\mu=\min\{1,2(r-1)\}>0.
$$
All the norms involving $\rhoh$ and $\rhoh-\rhoh_I$ (note that we can either control the latter by absorbing them to 
the left hand side or split them into $\rhoh$ and $\rhoh_I$ and use the previous step to control $\rhoh$ and
the integrability of the function $\zeta^2\la x\ra^{-1}$ to control $\rhoh_I$) on the right hand side will be finite by the previous step, the norms involving $\uh$ will 
be finite by the bootstrap assumptions and the choice of $\mu$, the initial data will be small in view of the assumptions on
$\rhoh-\rhoh_I$ and so will be the time interval $[T_0,T]$. We obtain
\be\label{eq:strrho}
\int_{|x|\ge 12}\la x\ra ^{-d-2\sigma+\mu+2k} \left|\frac{\nabla^k(\rhoh-\rhoh_I)}{\rhoh_I}\right|^2\le \mathcal d_0
\ee
for any $0\le k\le {k^\sharp}-1$.
This estimate immediately implies the pointwise bound
\be\label{eq:strinf}
\left\|\frac{\la x\ra^{k+\frac \mu 2-\sigma}\nabla^k(\rhoh-\rhoh_D)}{\rhoh_D}\right\|_{L^\infty(|x|\ge 12)}\le \mathcal d_0
\ee
for any $0\le k\le {k^\sharp}-2$.
We can translate the above bounds to renormalized variables to obtain
\be\label{eq:strrhor}
\int_{Z\ge 12Z^*}\la Z\ra ^{-d+2k} \left\la \frac Z{Z^*}\right\ra^{\mu-2\sigma}
\left|\frac{\nabla^k\rhot}{\rho_D}\right|^2\le \mathcal d_0
\ee
for any $0\le k\le {k^\sharp}-1$ and 
\be\label{eq:strinfr}
\left\|\left\la\frac Z{Z^*}\right\ra^{\frac \mu 2-\sigma}\frac{\la Z\ra^{k}\nabla^k\rhot}{\rho_D}\right\|_{L^\infty(Z\ge 12Z^*)}\le \mathcal d_0
\ee
for any $0\le k\le {k^\sharp}-2$.

%%%%%%%%%%%%%%%%%%%%%%%%%%%%%%%%%%%%%%%%%%%
\section{Quasilinear energy identity}

%%%%%%%%%%%%%%%%%%%%%%%%%%%%%%%%%%%%%%%%%%%%%%%%
%%%%%%%%%%%%%%%%%%%%%%%%%%%%%%%%%%%%%%%%%%%%%%%%%%
%%%%%%%%%%%%%%%%%%%%%%%%%%%%%%%%%%%%%%%%%%%%%%%%%%%%

\subsection{Linearized flow and control of the potentials}

%%%%%%%%%%%%%%%%%%%%%%%%%%%%%%%%%%%%%%%%

We derive the equations taking into account the localization of the profile.\\

\noindent{\bf step 1} Equation for $\tilde{\rho},\tilde{\Psi}$. Recall \eqref{renormalizedflow}:
$$\left|\begin{array}{ll}\pa_\tau \rho_T=-\rho_T\Delta \Psi_T-\frac{\ell(r-1)}{2}\rho_T-\left(2\pa_Z\Psi_T+ Z\right)\pa_Z\rho_T\\
\pa_\tau \Psi_T=b^2\mathcal F-\left[|\nabla \Psi_T|^2+(r-2)\Psi_T+\Lambda \Psi_T+\rho_T^{p-1}\right]
\end{array}\right.
$$
We  define
\be
\label{profileequationtilde}
\left|\begin{array}{ll}
\pa_\tau \Psi_D+\left[|\nabla \Psi_D|^2+\rho_D^{p-1}+(r-2)\Psi_D+\Lambda \Psi_D\right]=\tilde{\mathcal E}_{P,\Psi}\\
\pa_\tau \rho_D+\rho_D\left[\Delta \Psi_D+\frac{\ell(r-1)}{2}+\left(2\pa_Z\Psi_D+ Z\right)\frac{\pa_Z\rho_D}{\rho_D}\right]=\tilde{\mathcal E}_{P,\rho}
\end{array}\right.
\ee
with $\Et_{P,\rho}, \Et_{P,\Psi}$ supported in $Z\geq 3Z^*$. We introduce the modified potentials  
\be
\label{vneioneinoenvoen}
\Ht_2=1+2\frac{\Psi'_D}{Z}, \ \ \Ht_1=-\left(\Delta \Psi_D+\frac{\ell(r-1)}{2}\right).
\ee 
Their leading order asymptotic behavior for large $Z$ is the same as $H_1, H_2$. It is not affected by dampening 
of the profile.
We now compute the linearized flow in the variables \eqref{definitionvariables}:
\be
\label{exactliearizedflowtilde}
\left|\begin{array}{ll} \pa_\tau \rhot=-\rho_T\Delta \Psit-2\nabla\rho_T\cdot\nabla \Psit+\Ht_1\rhot-\Ht_2\Lambda \rhot-\tilde{\mathcal E}_{P,\rho}\\
\pa_\tau \Psit=b^2\mathcal F-\left[\Ht_2\Lambda \Psit+(r-2)\Psit+|\nabla \Psit|^2+(p-1)\rho_D^{p-2}\rhot +\NL(\rhot)\right]-\Et_{P,\Psi}
\end{array}\right.
\ee
with the nonlinear term 
\be
\label{defnlthoth}
\NL(\tilde{\rho})=(\rho_D+\rhot)^{p-1}-\rho_D^{p-1}-(p-1)\rho_D^{p-2}\rhot.
\ee
Our main task is now to produce an energy identity for \eqref{exactliearizedflowtilde} which respects the quasilinear nature of \eqref{exactliearizedflowtilde} and does not loose derivatives. Observe that the asymptotic bounds for  $Z$ large \eqref{esterrorpotentials}, \eqref{esterrorpotentialsbis} of the potentials are still valid after localization. They will be systematically used in the sequel.\\

\noindent{\bf step 2} Estimate of the potential. We recall the Emden transform formulas \eqref{defhtwohunbis}:
$$
\left|\begin{array}{ll}
H_2=(1-w)\\
H_1=\frac{\ell}{2}(1-w)\left[1+\frac{\Lambda \sigma}{\sigma}\right]\\
H_3=\frac{\Delta \rho_P}{\rho_P}
\end{array}\right.
$$
which, using \eqref{limitprofilesbsi}, \eqref{decayprofile}, yield the bounds:
 $$
\left|\begin{array}{llll}
H_2=1+O\left(\frac{1}{\la Z\ra^r}\right), \ \ H_1=-\frac{2(r-1)}{p-1}+O\left(\frac{1}{\la Z\ra^r}\right)\\
 |\la Z\ra^j\pa_Z^j H_1|+||\la Z\ra^j\pa_Z^jH_2|\lesssim \frac 1{\la Z\ra^{r}}, \ \ j\ge 1\\
 |\la Z\ra^j\pa_Z^j H_3|\lesssim \frac{1}{\la Z\ra^2}\\
  \frac{1}{\la Z\ra^{2(r-1)}}\left[1+O\left(\frac{1}{\la Z\ra^{r}}\right)\right]\lesssim_j |\la Z\ra^j\pa_Z^jQ|\lesssim_j \frac{1}{\la Z\ra^{2(r-1)}}
 \end{array}\right.
 $$
and the commutator bounds
\be
\label{esterrorpotentialsbis}
\left|\begin{array}{lllll}
|[\pa_i^m,H_1]\rho|\lesssim  \sum_{j=0}^{m-1}\frac{|\pa_Z^j\rho|}{\la Z\ra^{r+m-j}}\\
|\nabla\left([\pa_i^m,H_1]\rho\right)|\lesssim \sum_{j=0}^{m}\frac{|\pa_Z^j\rho|}{\la Z\ra^{m-j+r+1}}\\
|\pa_i^m(Q\rho)-Q \rho_m|\lesssim Q\sum_{j=0}^{m-1}\frac{|\pa_Z^j\rho|}{\la Z\ra^{m-j}}\\
|[\pa_i^m,H_2]\Lambda\rho|\lesssim \sum_{j=1}^{m}\frac{|\pa_Z^j\rho|}{\la Z\ra^{r+m-j}}\\
|\nabla\left([\pa_i^m,H_2]\Lambda\Phi\right)|\lesssim \sum_{j=1}^{m+1}\frac{|\pa^j_Z\Phi|}{\la Z\ra^{r+1+m-j}}.
\end{array}\right.
\ee
The same bounds hold for the modified potentials $\Ht_1,\Ht_2$ from \eqref{vneioneinoenvoen}.

%%%%%%%%%%%%%

\subsection{Equations}

%%%%%%%%%%%%%

We have 
$$
\left|\begin{array}{l} \pa_\tau \rhot=-\rho_T\Delta \Psit-2\nabla\rho_T\cdot\nabla \Psit+\Ht_1\rhot-\Ht_2\Lambda \rhot-\tilde{\mathcal E}_{P,\rho}\\
\pa_\tau \Psit=b^2\matchal F -\left[\Ht_2\Lambda \Psit+(r-2)\Psit+|\nabla \Psit|^2+(p-1)\rho_D^{p-2}\rhot +\NL(\rhot)\right]-\Et_{P,\Psi}.
\end{array}\right.
$$
We let  $$ \rhot_{({k^\sharp})}=\Delta^{K}\rhot, \ \ \Psit_{({k^\sharp})}=\Delta^{K}\Psi, \ \ \ut_{({k^\sharp})}=\nabla \Psit_{({k^\sharp})}$$
We use $$[\Delta^{K},\Lambda]={k^\sharp}\Delta^{K}$$ and (recall \eqref{estimatecommutatorvlkeveln}):
$$[\Delta^k,V]\Phi-2k\nabla V\cdot\nabla \Delta^{k-1}\Phi=\sum_{|\alpha|+|\beta|=2k,|\beta|\le 2k-2}c_{k,\alpha,\beta}\nabla^\alpha V\nabla^\beta\Phi
$$ 
which gives:
$$
\Delta^{K}(\Ht_2\Lambda \rhot)={k^\sharp}(\Ht_2+\Lambda \Ht_2)\rho_{({k^\sharp})}+\Ht_2\Lambda \rho_{({k^\sharp})}+\mathcal A_{{k^\sharp}}(\rhot) 
$$
with from \eqref{esterrorpotentials}:
\be
\label{estimatepenttniialvone}
\left|\begin{array}{l}
|\A_{{k^\sharp}}(\rhot)|\lesssim c_k\sum_{j=1}^{{k^\sharp}-1}\frac{|\nabla^j\rhot|}{\la Z\ra^{{k^\sharp}+r-j}}\\
|\nabla \A_{{k^\sharp}}(\rhot)|\lesssim c_k\sum_{j=1}^{{k^\sharp}}\frac{|\nabla^j\rhot|}{\la Z\ra^{{k^\sharp}+r+1-j}}
\end{array}\right.
\ee
where $\nabla^j=\pa^{\alpha_1}_1\dots\pa^{\alpha_d}_d$, $j=\alpha_1+\dots+\alpha_d$ denotes a generic derivative of order $j$. Using \eqref{estimatecommutatorvlkeveln} again:
\bea
\label{estqthohrkbisbis}
\nonumber \pa_\tau \rhot_{({k^\sharp})}&=&\left[\tilde H_1-{k^\sharp}(\tilde H_2+\Lambda \Ht_2)\right]\rhot_{({k^\sharp})}-\Ht_2\Lambda \rhot_{({k^\sharp})}-(\Delta^{K}\rho_T)\Delta \Psit-{k^\sharp}\nabla\rho_T\cdot\nabla \Psit_{({k^\sharp})}-\rho_T\Delta\Psit_{({k^\sharp})}\\
 &-& 2\nabla(\Delta^{K}\rho_T)\cdot\nabla \Psit-2\nabla \rho_T\cdot\nabla \Psit_{({k^\sharp})}+  F_1
\eea
with
\bea
\label{formluafonebis}
F_1&=&-\Delta^{K}\tilde{\mathcal E}_{P,\rho}+[\Delta^{K},\Ht_1]\rhot-\A_{{k^\sharp}}(\rhot)\\
\nonumber &-& \sum_{\left|\begin{array}{ll} j_1+j_2={k^\sharp}\\ j_1\geq 2, j_2\geq 1\end{array}\right.}c_{j_1,j_2}\nabla^{j_1}\rho_T\pa^{j_2}\Delta\Psit-\sum_{\left|\begin{array}{ll}j_1+j_2={k^\sharp}\\
j_1,j_2\geq 1\end{array}\right.}c_{j_1,j_2}\nabla^{j_1}\nabla\rho_T\cdot\nabla^{j_2}\nabla\Psit.
\eea
For the second equation, we have similarly:
\bea
\label{nkenononenonbis}
 &&\pa_\tau\Psit_{({k^\sharp})}\\
\nonumber &=&- {k^\sharp}(\Ht_2+\Lambda \Ht_2)\Psit_{({k^\sharp})}-\Ht_2\Lambda \Psit_{({k^\sharp})}-(r-2)\Psit_{({k^\sharp})}-2\nabla \Psit\cdot\nabla \Psit_{({k^\sharp})}\\
\nonumber&-& \left[(p-1)\rho_P^{p-2}\rhot_{({k^\sharp})}+{k^\sharp}(p-1)(p-2)\rho_D^{p-3}\nabla\rho_D\cdot\nabla\Delta^{K-1}\rhot\right]+F_2
\eea
with
\bea
\label{estqthohrkbisbisbisbisbibfebjbifebfji}
\nonumber F_2&=& b^2\Delta^{K}\matchal F -\Delta^{K}\tilde{\mathcal E}_{P,\Psi}-\A_{{k^\sharp}}(\Psit)-(p-1)\left([\Delta^{K},\rho_D^{p-2}]\rhot-{k^\sharp}(p-2)\rho_D^{p-3}\nabla\rho_D\cdot\nabla\Delta^{K-1}\rhot\right)\\
&-& \sum_{j_1+j_2={k^\sharp},j_1,j_2\geq 1}\nabla^{j_1}\nabla\Psit\cdot\nabla^{j_2}\nabla\Psit-\Delta^{K}\NL(\rhot).
\eea

\noindent{\bf step 1} Algebraic energy identity.

Let $\chi$ be a smooth function $\chi=\chi(\tau,Z)$ 
and compute the quasilinear energy identity:
\bee
&&\frac 12\frac{d}{d\tau}\left\{(p-1)\int \chi\rho_D^{p-2}\rho_T\rhot_{({k^\sharp})}^2+\int\chi\rho_T^2|\nabla \Psit_{({k^\sharp})}|^2\right\}\\
&=& \frac 12\left\{(p-1)\int \pa_\tau\chi\rho_D^{p-2}\rho_T\rhot_{({k^\sharp})}^2+\int\pa_\tau\chi\rho_T^2|\nabla \Psit_{({k^\sharp})}|^2\right\}
\\&+&
\frac{p-1}{2}\int \chi(p-2)\pa_\tau\rho_D\rho_D^{p-3}\rho_T\rhot_{({k^\sharp})}^2+ \int\chi\pa_\tau\rho_T\left[\frac{p-1}{2}\rho_D^{p-2}\rhot_{({k^\sharp})}^2+\rho_T|\nabla \Psit_{({k^\sharp})}|^2\right]\\
&+& \int \pa_\tau\rhot_{({k^\sharp})}\left[(p-1)\chi\rho_D^{p-2}\rho_T\rhot_{({k^\sharp})}\right]\\
&-&  \int\pa_\tau\Psit_{({k^\sharp})}\left[2\chi\rho_T\nabla\rho_T\cdot\nabla \Psit_{({k^\sharp})}+\chi\rho_T^2\Delta \Psit_{({k^\sharp})}+\rho_T^2\nabla \chi\cdot\nabla \Psit_{({k^\sharp})}\right].
\eee
We inject the equation:
\bee
&&\int\pa_\tau \rhot_{({k^\sharp})}\left[(p-1)\chi\rho_D^{p-2}\rho_T\rhot_{({k^\sharp})}\right]=  \int F_1\left[(p-1)\chi\rho_D^{p-2}\rho_T\rhot_{({k^\sharp})}\right]\\
& + & \int \left[(\Ht_1-k^{\sharp}(\Ht_2+\Lambda \Ht_2))\rhot_{({k^\sharp})}-\Ht_2\Lambda \rhot_{({k^\sharp})}-(\Delta^K\rho_T)\Delta \Psit-2\nabla(\Delta^K\rho_T)\cdot\nabla \Psit\right]\\
&\times& \left[(p-1)\chi\rho_D^{p-2}\rho_T\rhot_{({k^\sharp})}\right]\\
& - & \int k^{\sharp}\nabla\rho_T\cdot\nabla\Psit_{({k^\sharp})}\left[(p-1)\chi\rho_D^{p-2}\rho_T\rhot_{({k^\sharp})}\right]\\
&-&  \int (\rho_T\Delta\Psit_{({k^\sharp})}+2\nabla \rho_T\cdot\nabla \Psit_{({k^\sharp})})\left[(p-1)\chi\rho_D^{p-2}\rho_T\rhot_{({k^\sharp})}\right]
\eee
and
\bee
&-&  \int\pa_\tau\Psit_{({k^\sharp})}\left[2\chi\rho_T\nabla\rho_T\cdot\nabla \Psit_{({k^\sharp})}+\chi\rho_T^2\Delta \Psit_{({k^\sharp})}+\rho_T^2\nabla \chi\cdot\nabla \Psit_{({k^\sharp})}\right]=  -\int F_2\nabla\cdot(\chi\rho_T^2\nabla \Psit_{({k^\sharp})})\\
&-& \int\Big\{-k^{\sharp}(\Ht_2+\Lambda \Ht_2)\Psit_{({k^\sharp})}-\Ht_2\Lambda \Psit_{({k^\sharp})}-(r-2)\Psit_{({k^\sharp})}-2\nabla \Psit\cdot\nabla \Psit_{({k^\sharp})}\\
&-& \left[(p-1)\rho_P^{p-2}\rhot_{({k^\sharp})}+k^{\sharp}(p-1)(p-2)\rho_D^{p-3}\nabla\rho_D\cdot\nabla\Delta^{K-1}\rhot\right]\Big\}\\
&\times&\left[2\chi\rho_T\nabla\rho_T\cdot\nabla \Psit_{({k^\sharp})}+\chi\rho_T^2\Delta \Psit_{({k^\sharp})}+\rho_T^2\nabla \chi\cdot\nabla \Psit_{({k^\sharp})}\right]\\
& = & \int \chi\rho^2_T\nabla \Psit_{({k^\sharp})}\cdot\nabla F_2\\
&-& \int\left[-k^{\sharp}(\Ht_2+\Lambda\Ht_2)\Psit_{({k^\sharp})}-\Ht_2\Lambda \Psit_{({k^\sharp})}-(r-2)\Psit_{({k^\sharp})}-2\nabla \Psit\cdot\nabla \Psit_{({k^\sharp})}\right]\nabla\cdot(\chi\rho_T^2\nabla \Psit_{({k^\sharp})})\\
& +& \int (p-1)\rho_P^{p-2}\rhot_{({k^\sharp})}\left[2\chi\rho_T\nabla\rho_T\cdot\nabla \Psit_{({k^\sharp})}+\chi\rho_T^2\Delta \Psit_{({k^\sharp})}+\rho_T^2\nabla \chi\cdot\nabla \Psit_{({k^\sharp})}\right]\\
& + & \int k^{\sharp}(p-1)(p-2)\rho_D^{p-3}\nabla\rho_D\cdot\nabla\Delta^{K-1}\rhot\nabla\cdot(\chi\rho_T^2\nabla \Psit_{({k^\sharp})})
\eee
Adding both identities yields the quasilinear energy identity:
\bea\label{enerq}
&&\frac 12\frac{d}{d\tau}\left\{(p-1)\int \chi\rho_D^{p-2}\rho_T\rhot_{({k^\sharp})}^2+\int\chi\rho_T^2|\nabla \Psit_{({k^\sharp})}|^2\right\}\notag\\
\nonumber & = & \frac 12\int\left(\frac{\pa_\tau\chi}{\chi}+\frac{\pa_\tau\rho_T}{\rho_T}+(p-2)\frac{\pa_\tau\rho_D}{\rho_D}\right)(p-1)\chi\rho_D^{p-2}\rho_T\rhot_{({k^\sharp})}^2+\frac 12\int\left(\frac{\pa_\tau\chi}{\chi}+2\frac{\pa_\tau\rho_T}{\rho_T}\right)\rhot_T^2|\nabla \Psi_{({k^\sharp})}|^2\\
\nonumber & + & \int F_1\chi(p-1)\rho_D^{p-2}\rho_T\rhot_{({k^\sharp})}+\int \chi\rho^2_T\nabla F_2\cdot\nabla \Psit_{({k^\sharp})}\notag\\
\nonumber  & + & \int \left[(\Ht_1-k^{\sharp}(\Ht_2+\Lambda\Ht_2))\rhot_{({k^\sharp})}-\Ht_2\Lambda \rhot_{({k^\sharp})}-(\Delta^K\rho_T)\Delta \Psit-2\nabla(\Delta^K\rho_T)\cdot\nabla \Psit\right]\\
&\times & (p-1)\chi\rho_D^{p-2}\rho_T\rhot_{({k^\sharp})}\notag\\
\nonumber&-& \int\left[-k^{\sharp}(\Ht_2+\Lambda \Ht_2)\Psit_{({k^\sharp})}-\Ht_2\Lambda \Psit_{({k^\sharp})}-(r-2)\Psit_{({k^\sharp})}-2\nabla \Psit\cdot\nabla \Psit_{({k^\sharp})}\right]\nabla\cdot(\chi\rho_T^2\nabla \Psi_{({k^\sharp})})\notag\\
\nonumber& - & \int k^{\sharp}\nabla\rho_T\cdot\nabla \Psit_{({k^\sharp})}\left[(p-1)\chi\rho_D^{p-2}\rho_T\rhot_{({k^\sharp})}\right]\\
\nonumber &+&  \int k^{\sharp}(p-1)(p-2)\rho_D^{p-3}\nabla\rho_D\cdot\nabla\Delta^{K-1}\rhot\nabla\cdot(\chi\rho_T^2\nabla \Psit_{({k^\sharp})})\\
& + &\int(p-1)\rho_D^{p-2}\rhot_{({k^\sharp})}\left[\rho^2_T\nabla \chi\cdot\nabla \Psit_{({k^\sharp})}\right].
\eea

\noindent{\bf step 2} Reexpressing the quadratic terms. We integrate by parts:
\bee
-\int \Ht_2\Lambda \rhot_{({k^\sharp})}(p-1)\chi\rho_D^{p-2}\rho_T\rhot_{({k^\sharp})}=\frac{p-1}{2}\int  \chi\Ht_2\rho_T\rho_D^{p-2}\rhot_{({k^\sharp})}^2\left(d+\frac{\Lambda \Ht_2}{\Ht_2}+\frac{(p-2)\Lambda\rho_D}{\rho_D}+\frac{\Lambda\chi}{\chi}\right).
\eee
Then
\bee
&&k^{\sharp}\int(\Ht_2+\Lambda \Ht_2)\Psit_{({k^\sharp})}\nabla\cdot(\chi\rho_T^2\nabla \Psi_{({k^\sharp})})\\
&=&  -k^{\sharp}\int\chi\rho_T^2(\Ht_2+\Lambda \Ht_2)|\nabla \Psit_{({k^\sharp})}|^2-k^{\sharp}\int \chi\rho_T^2\Psit_{({k^\sharp})}\nabla \Psit_{({k^\sharp})}\cdot \nabla(\Ht_2+\Lambda \Ht_2)
\eee
and using spherical symmetry:
\bee
&&\int \Ht_2\Lambda \Psit_{({k^\sharp})}\nabla\cdot(\chi\rho_T^2\nabla \Psi_{({k^\sharp})})=-\int\chi\rho_T^2\nabla \Psi_{({k^\sharp})}\cdot\nabla(\Ht_2\Lambda \Psit_{({k^\sharp})})\\
&=& -\int\chi\Lambda \Ht_2\rho_T^2|\nabla \Psit_{({k^\sharp})}|^2-  \int \Ht_2\chi\rho_T^2\pa_Z\Psit_{({k^\sharp})}\pa_Z(\Lamdba \Psit_{({k^\sharp})})\\
& = & -\int\chi\rho^2_T\Lambda \Ht_2\rho_T^2|\nabla \Psit_{({k^\sharp})}|^2+\frac 12\int\chi\rho_T^2\Ht_2|\nabla \Psit_{({k^\sharp})}|^2\left[d-2+\frac{\Lamdba \Ht_2}{\Ht_2}+\frac{\Lambda \chi}{\chi}+\frac{2\Lambda\rho_T}{\rho_T}\right]\\
& = & \int\chi\Ht_2\rho_T^2|\nabla \Psit_{({k^\sharp})}|^2\left[\frac{d-2}{2}-\frac 12\frac{\Lamdba \Ht_2}{\Ht_2}+\frac 12\frac{\Lambda \chi}{\chi}+\frac{\Lambda\rho_T}{\rho_T}\right]
\eee
and
$$(r-2)\int \Psit_{({k^\sharp})}\nabla\cdot(\chi\rho_T^2\nabla \Psi_{({k^\sharp})})=-(r-2)\int\chi\rho_T^2|\nabla \Psit_{({k^\sharp})}|^2$$
and integrating by parts and using radiality:
\bee
&& \int k^{\sharp}(p-1)(p-2)\rho_D^{p-3}\nabla\rho_D\cdot\nabla\Delta^{K-1}\rhot\nabla\cdot(\chi\rho_T^2\nabla \Psit_{({k^\sharp})})\\
 & = & -k^{\sharp}(p-1)(p-2)\int \chi\rho_T^2\nabla \Psi_{({k^\sharp})}\cdot\nabla\left[\rho_D^{p-3}\nabla\rho_D\cdot\nabla\Delta^{K-1}\rhot\right]\\
 & = & -k^{\sharp}(p-1)(p-2)\int \chi\rho_T^2\rho_D^{p-3}\pa_Z\rho_D\rhot_{({k^\sharp})}\pa_Z \Psit_{({k^\sharp})}\\
 & - & k^{\sharp}(p-1)(p-2)\int \chi\rho_T^2\pa_Z \Psit_{({k^\sharp})}\left[\pa_Z\left(\rho_D^{p-3}\pa_Z\rho_D\pa_Z\Delta^{K-1}\rhot\right)-\rho_D^{p-3}\pa_Z\rho_D\rhot_{({k^\sharp})}\right]
 \eee
 We therefore arrive to the quasilinear energy identity:
 \bea
\label{algebracienergyidnentiypouet}
  &&\frac 12\frac{d}{d\tau}\left\{(p-1)\int \chi\rho_D^{p-2}\rho_T\rhot_{({k^\sharp})}^2+\int\chi\rho_T^2|\nabla \Psit_{({k^\sharp})}|^2\right\}\\
\nonumber& = &  \frac 12\int\left(\frac{\pa_\tau\chi}{\chi}+\frac{\pa_\tau\rho_T}{\rho_T}+(p-2)\frac{\pa_\tau\rho_D}{\rho_D}\right)(p-1)\chi\rho_D^{p-2}\rho_T\rhot_{({k^\sharp})}^2+\frac 12\int\left(\frac{\pa_\tau\chi}{\chi}+2\frac{\pa_\tau\rho_T}{\rho_T}\right)\rhot_T^2|\nabla \Psi_{({k^\sharp})}|^2\\
\nonumber& - & \int (p-1)\chi\rho_D^{p-2}\rho_T\rhot_{({k^\sharp})}^2\left[-\Ht_1+k^{\sharp}(\Ht_2+\Lambda\Ht_2)-\frac d2\Ht_2-\frac 12\Lambda \Ht_2-\frac{p-2}{2}\Ht_2\frac{\Lambda\rho_D}{\rho_D}-\frac{\Ht_2}2\frac{\Lambda\chi}{\chi}\right]\\
\nonumber& - & \int \chi\rho_T^2|\nabla \Psit_{({k^\sharp})}|^2\left[k^{\sharp}(\Ht_2+\Lambda \Ht_2)+r-2-\frac{d-2}{2}\Ht_2+\frac 12\Lamdba \Ht_2-\frac{\Ht_2}2\frac{\Lambda \chi}{\chi}-\Ht_2\frac{\Lambda\rho_T}{\rho_T}\right]\\
\nonumber& &+\int \rhot_{({k^\sharp})}\pa_Z\Psit_{({k^\sharp})}\left[- k^{\sharp}(p-1)\chi\rho_D^{p-2}\rho_T\pa_Z\rho_T\right.\\
\nonumber &- & \left.k^{\sharp}(p-1)(p-2)\chi\rho_T^2\rho_D^{p-3}\pa_Z\rho_D+(p-1)\rho_D^{p-2}\rho^2_T\pa_Z\chi\right]\\
\nonumber & + & \int F_1\chi(p-1)\rho_D^{p-2}\rho_T\rhot_{({k^\sharp})}+\int \chi\rho^2_T\nabla F_2\cdot\nabla \Psit_{({k^\sharp})}\\
\nonumber& + & \nonumber  \int \left[-(\Delta^K\rho_T)\Delta \Psit-2\nabla(\Delta^K\rho_T)\cdot\nabla \Psit\right](p-1)\chi\rho_D^{p-2}\rho_T\rhot_{({k^\sharp})}\\
\nonumber&- & k^{\sharp}(p-1)(p-2)\int \chi\rho_T^2\pa_Z \Psi_{({k^\sharp})}\left[\pa_Z\left(\rho_D^{p-3}\pa_Z\rho_D\pa_Z\Delta^{K-1}\rhot\right)-\rho_D^{p-3}\pa_Z\rho_D\rhot_{({k^\sharp})}\right]\\
\nonumber& + & 2\int\nabla \Psit\cdot\nabla \Psit_{({k^\sharp})}\nabla\cdot(\chi\rho_T^2\nabla \Psit_{({k^\sharp})})-k^{\sharp}\int \chi\rho_T^2\Psit_{({k^\sharp})}\nabla \Psit_{({k^\sharp})}\cdot \nabla(\Ht_2+\Lambda \Ht_2).
\eea

%%%%%%%%%%%%%%%%%%%%%%%%%%%%%%%%%%%%%%%%%%%%%

\subsection{Quadratic  forms}

%%%%%%%%%%%%%%%%%%%%%%%%%%%%%%%%%%%%%%%%%%%%%

We study the $(\chi,\Lamdba \chi)$ quadratic forms appearing in \eqref{algebracienergyidnentiypouet}.\\

\noindent{\bf step 1} Leading order $\chi$ quadratic form. We recall from \eqref{coercivityquadrcouplinginside}, \eqref{P}:
\be
\label{vnioevineneovklvknlve}
H_2+\Lambda H_2=(1-w-\Lambda w)\ge c_{d,p,r}>0.
\ee 
We assume that $k^\sharp\gg 1$, so that the terms with $k^\sharp$ dominate:
$$
-\Ht_1+k^{\sharp}(\Ht_2+\Lambda\Ht_2)-\frac d2\Ht_2-\frac 12\Lambda \Ht_2-\frac{p-2}{2}\Ht_2\frac{\Lambda\rho_D}{\rho_D}=k^{\sharp}\left(1+O\left(\frac 1{k^\sharp}\right)\right)(\Ht_2+\Lambda\Ht_2)
$$
$$
k^{\sharp}(\Ht_2+\Lambda \Ht_2)+r-2-\frac{d-2}{2}\Ht_2+\frac 12\Lamdba \Ht_2-\Ht_2\frac{\Lambda\rho_T}{\rho_T}
=k^{\sharp}\left(1+O\left(\frac 1{k^\sharp}\right)\right)(\Ht_2+\Lambda\Ht_2)
$$
and  claim the pointwise coercivity of the quadratic form: $\exists c_{d,p,r}>0$ such that uniformly $\forall Z\ge 0$,
\bea
\label{enineinevnveo}
\nonumber 
&&k^\sharp (\Ht_2+\Lambda \Ht_2)\left[ (p-1)\rho_D^{p-2}\rho_T\rhot_{{({k^\sharp})}}^2+\rho_T^2|\nabla \Psit_{({k^\sharp})}|^2\right]+ k^\sharp(p-1)\rho_D\pa_Z (\rho_D^{p-1})\rhot_{({k^\sharp})}\pa_Z \Psit_{({k^\sharp})}\\
& \ge & c_{d,p,r} k^\sharp\left[ (p-1) \rho_D^{p-2}\rho_T\rhot_{{({k^\sharp})}}^2+\rho_T^2|\nabla \Psit_{({k^\sharp})}|^2\right]
\eea
The cross term is lower order for $Z$ large: $$|(p-1)\rho_D\pa_Z (\rho_D^{p-1})\rhot_{({k^\sharp})}\pa_Z \Psit_{({k^\sharp})}|\lesssim \frac{\rho_T^{p-1}}{\la Z\ra}\rhot_{({k^\sharp})}\rho_T\pa_Z\Psi_{({k^\sharp})}\le \mathcal d\left[ (p-1)\rho_D^{p-2}\rho_T\rhot_{{({k^\sharp})}}^2+\rho_T^2|\nabla \Psit_{({k^\sharp})}|^2\right]
$$ for $Z>Z(\mathcal d)$ large enough. For $Z\le Z(\mathcal d)$, using the smallness \eqref{smallglobalboot}, \eqref{enineinevnveo}
 is implied by: 
\bea
\label{enineinevnveobis}
\nonumber 
&&(H_2+\Lambda H_2)\left[ (p-1)Q\rhot_{{({k^\sharp})}}^2+\rho_P^2|\nabla \Psit_{({k^\sharp})}|^2\right]+ (p-1)\rho_P\pa_Z Q\rhot_{({k^\sharp})}\pa_Z \Psit_{({k^\sharp})}\\
& \ge & c_{d,p,r}\left[ (p-1)Q\rhot_{{({k^\sharp})}}^2+\rho_P^2|\nabla \Psit_{({k^\sharp})}|^2\right]
\eea
We compute the discriminant:
\bee
&&{\rm Discr}=(p-1)^2\rho_P^2(\pa_ZQ)^2-4(p-1)\rho_P^2Q(H_2+\Lambda H_2)^2\\
& = & (p-1)\rho_P^2 Q\left[(p-1)\frac{(\pa_ZQ)^2}{Q}-4(1-w-\Lambda w)^2\right]
\eee
We compute from \eqref{relationsprofileemden} recalling \eqref{definitionF}:
\bee
(p-1)\frac{(\pa_ZQ)^2}{Q} &=&   (p-1)\left(2\pa_Z\sqrt{Q}\right)^2=(p-1)\left(\frac{1-e}{2}\sqrt{\ell}\pa_Z(\sigma_P Z)\right)^2=(1-e)^2(\pa_Z(Z\sigma_P))^2\\
& =& \frac{4}{r^2}(\pa_Z(Z\sigma_P))^2=4F^2
\eee
and hence from \eqref{coercivityquadrcouplinginside}, \eqref{P} the lower bound:
$$-D=4(p-1)\rho_P^2 Q\left[(1-w-\Lambda w)^2-F^2\right]\ge c_{d,p,r}(p-1)\rho_P^2Q, \ \ c_{d,p,r}>0$$
which together with \eqref{vnioevineneovklvknlve} concludes the proof of \eqref{enineinevnveo}.
\\

\noindent{\bf step 2} Leading order $\Lambda \chi$ quadratic form. The quadratic form containing $\Lambda\chi$:
\bee
&&\int -\Lambda \chi\left\{\frac{H_2}{2}\left[(p-1) Q\rhot_{({k^\sharp})}^2+\rho_P^2|\nabla \Psit_{({k^\sharp})}|^2\right]-\frac{1}{Z}(p-1)\rho_P Q\rhot_{({k^\sharp})}\pa_Z\Psit_{({k^\sharp})}\right\}
\eee
Its discriminant is 
\bee\label{outside}
{\rm Discr}&=&\left(\frac{(p-1)Q\rho_P}{Z}\right)^2-(p-1)Q\rho_P^2H_2^2=(p-1)Q\rho_P^2\left[\sigma^2-(1-w)^2\right]<0
\eee
for $Z>Z_2$.\\

We note that \eqref{enineinevnveo} holds for {\it all} $Z$ only under the condition \eqref{P} which hold in $d=3$ and 
$\ell>\sqrt 3$. On the other hand, for $d=2$ or $d=3$ and $\ell\leq\sqrt 3$,  \eqref{enineinevnveo} still holds for $Z\leq Z_2$
and $Z$ sufficiently large $Z\geq Z(\d)$. In those cases, choosing
\be
\chi=\left|\begin{array}{ll}1\ \ \ \ Z\leq Z_2\\
e^{-j^\sharp (Z-Z_2)}\ \ Z>Z_2
\end{array}\right.
\ee
with $j^\sharp\gg k^\sharp$ ensures that the {\it full} $(\chi,\Lambda\chi)$ quadratic form is positive definite:
\bea
\label{coereuler}
\nonumber 
k^\sharp \chi (\Ht_2&+&\Lambda \Ht_2)\left[ (p-1)\rho_D^{p-2}\rho_T\rhot_{{({k^\sharp})}}^2+\rho_T^2|\nabla \Psit_{({k^\sharp})}|^2\right]+ (p-1)\chi\rho_D\pa_Z (\rho_D^{p-1})\rhot_{({k^\sharp})}\pa_Z \Psit_{({k^\sharp})}
\\&-&\Lambda \chi\left\{\frac{H_2}{2}\left[(p-1) Q\rhot_{({k^\sharp})}^2+\rho_P^2|\nabla \Psit_{({k^\sharp})}|^2\right]-\frac{1}{Z}(p-1)\rho_P Q\rhot_{({k^\sharp})}\pa_Z\Psit_{({k^\sharp})}\right\}\notag\\
& \ge & c_{d,p,r} k^\sharp \chi\left[ (p-1) \rho_D^{p-2}\rho_T\rhot_{{({k^\sharp})}}^2+\rho_T^2|\nabla \Psit_{({k^\sharp})}|^2\right]
\eea

\section{The highest unweighted energy norm}
\label{sectionsobolev}

%%%%%%%%%%%%%%%%%%%%%%%%%%%%%%%%%%%%%%%%
 %%%%%%%%%%%%%%%%%%%%%%%%%%%%%%%%%%%%%%%%%%

In this section we establish control of the highest energy norm. This is an essential step to control the $b$ dependence of the flow. It will be achieved through an {\it unweighted} energy estimate for the highest order derivatives. Below we will systematically exploit the gains achieved through faster decay in $Z$ of various tail terms, see e.g. \eqref{esterrorpotentials}. Typical 
improvements will be usually of order $r$ or $(r-1)$\footnote{Recall that in the range of considered 
$r$, close to the limiting values $\eye(d,\ell)$, we have $r_+(d,\ell)>r^*(d,\ell)=\frac{d+\ell}{\sqrt d+\ell}>1$.}.
 Sometimes, we will replace them by a generic constant $\delta>0$.

%%%%%%%%%%%%%%%%%%%%%%%%%%%%%%%%%%%%%%%%%%%%%%%%%%%%%%

\subsection{Controlling the highest energy norm}

%%%%%%%%%%%%%%%%%%%%%%%%%%%%%%%%%%%%%%%%%%%%%%%%%%%%%%

We now prove the highest order energy estimate {\em without weight}. Coercivity of a quadratic form arising in the estimate will follow thanks to the global lower bound \eqref{P} and \eqref{enineinevnveo}.
We let $${k^\sharp}=2K, \ \ K\in \Bbb N$$ and denote in this section $$ \rhot_{({k^\sharp})}=\Delta^{K}\rhot, \ \ \Psit_{({k^\sharp})}=\Delta^{K}\Psi, \ \ \ut_{({k^\sharp})}=\nabla \Psit_{({k^\sharp})}.$$

\begin{lemma}[Control of the highest unweighted energy norm]
\label{proproopr}
For some universal constant $c_{{k^\sharp}}$ ($0<c_{{k^\sharp}}\ll \delta_g$),
\be
\label{cneioneoneon}
(p-1)\int \rho_D^{p-2}\rho_T\rhot_{({k^\sharp})}^2+\int\rho_T^2|\nabla \Psit_{({k^\sharp})}|^2\le e^{-c_{{k^\sharp}}\tau}
\ee
\end{lemma}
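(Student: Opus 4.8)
The plan is to run the quasilinear energy identity \eqref{algebracienergyidnentiypouet} with the simplest possible choice of cutoff, namely $\chi\equiv 1$ (so that $\pa_\tau\chi=0$ and all $\Lambda\chi/\chi$ and $\nabla\chi$ terms vanish), use the coercivity \eqref{enineinevnveo} to extract a large friction coefficient $c_{{k^\sharp}}k^\sharp$ on the energy, and then show that every remaining term is either lower order (a gain of a power $r$ or $r-1$ in $\la Z\ra$, exploiting \eqref{esterrorpotentials}, \eqref{esterrorpotentialsbis}) or can be absorbed into the friction by taking $k^\sharp$ large, $\mathcal d$ small and $\tau_0$ large. First I would isolate the leading quadratic form: the lines in \eqref{algebracienergyidnentiypouet} proportional to $k^\sharp(\Ht_2+\Lambda\Ht_2)$ together with the cross term $k^\sharp(p-1)\rho_D\pa_Z(\rho_D^{p-1})\rhot_{({k^\sharp})}\pa_Z\Psit_{({k^\sharp})}$, and invoke \eqref{enineinevnveo} (which holds globally in $Z$ here precisely because $(P)$ from Lemma \ref{lemmnuericalvules} is in force in the $d=3$, $\ell>\sqrt3$ case) to bound this block below by $c_{d,p,r}k^\sharp\big[(p-1)\rho_D^{p-2}\rho_T\rhot_{({k^\sharp})}^2+\rho_T^2|\nabla\Psit_{({k^\sharp})}|^2\big]$. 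Denoting $E_{k^\sharp}(\tau)=(p-1)\int\rho_D^{p-2}\rho_T\rhot_{({k^\sharp})}^2+\int\rho_T^2|\nabla\Psit_{({k^\sharp})}|^2$, this already gives a differential inequality of the shape $\frac12\frac{d}{d\tau}E_{k^\sharp}\le -c_{d,p,r}k^\sharp E_{k^\sharp}+(\text{error})$.

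Next I would estimate the error terms. The $\pa_\tau\rho_T/\rho_T$ and $\pa_\tau\rho_D/\rho_D$ terms are $O(1)$ (using the renormalized equations and the bootstrap bounds \eqref{smallglobalboot}), hence negligible against $c_{d,p,r}k^\sharp E_{k^\sharp}$ once $k^\sharp$ is large. The genuinely quasilinear "transport-type" terms $-(\Delta^K\rho_T)\Delta\Psit-2\nabla(\Delta^K\rho_T)\cdot\nabla\Psit$ paired against $\rho_D^{p-2}\rho_T\rhot_{({k^\sharp})}$, the term $2\int\nabla\Psit\cdot\nabla\Psit_{({k^\sharp})}\,\nabla\!\cdot(\rho_T^2\nabla\Psit_{({k^\sharp})})$, and the $k^\sharp$ commutator-type pieces $k^\sharp\int\nabla\rho_T\cdot\nabla\Psit_{({k^\sharp})}\rho_D^{p-2}\rho_T\rhot_{({k^\sharp})}$ and the $\rho_D^{p-3}\pa_Z\rho_D$ corrections: in each of these the "outer" factor ($\nabla\Psit$, $\nabla^2\Psit$, $\pa_Z\rho_D/\rho_D$, $\pa_Z\rho_T$, etc.) is controlled in $L^\infty$ by the pointwise bootstrap \eqref{smallglobalboot} and is small — either $O(\mathcal d)$ or $O(\la Z\ra^{-1})$ — so these are bounded by $\mathcal d\, E_{k^\sharp}$ plus, after one integration by parts on the highest-derivative term $\nabla\Psit_{({k^\sharp})}\cdot\nabla^2\Psit_{({k^\sharp})}$, a controlled $\int(\text{div of small vector})|\nabla\Psit_{({k^\sharp})}|^2$. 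For the commutator terms hidden in $F_1,F_2$ from \eqref{formluafonebis}, \eqref{estqthohrkbisbisbisbisbibfebjbifebfji} one uses \eqref{esterrorpotentialsbis} and \eqref{estimatepenttniialvone}: each produces lower-order derivatives $\pa_Z^j$ with $j<k^\sharp$ weighted by negative powers of $\la Z\ra$, so Cauchy–Schwarz against the energy density and the already-established lower-order Sobolev control (the weighted bound \eqref{boundbootbound} and its consequences in sections 5, together with interpolation) bounds them by $\mathcal d\,E_{k^\sharp}+e^{-c\tau}$. The profile error terms $\Delta^K\Et_{P,\rho}$, $\Delta^K\Et_{P,\Psi}$ are supported in $Z\ge 3Z^*$ where $\rho_D$ decays like $\la Z/Z^*\ra^{-n_P}$ and everything is tiny, contributing $O(e^{-c\tau})$. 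Finally the viscous term $b^2\Delta^K\mathcal F$ inside $F_2$ is handled using the global dissipative estimate \eqref{eq:globald} of Lemma \ref{lem:NS} together with $b^2=e^{-2\mathcal e\tau}$ and $\mathcal e>0$: it is integrable in $\tau$ with an exponentially decaying bound.

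Putting this together yields $\frac{d}{d\tau}E_{k^\sharp}\le -c_{{k^\sharp}}E_{k^\sharp}+C e^{-c\tau}$ for some $c_{{k^\sharp}}>0$ which we may take $\ll\delta_g$ by adjusting constants (the point being that $c_{{k^\sharp}}$ comes from the $k^\sharp$-friction minus small absorbable errors, so it is strictly positive but we only need a qualitative smallness against $\delta_g$), and with initial datum $E_{k^\sharp}(\tau_0)\lesssim\mathcal d_0^2$ by \eqref{sobolevinit}. Gronwall then gives $E_{k^\sharp}(\tau)\le e^{-c_{{k^\sharp}}\tau}$ for $\tau_0$ large, which is \eqref{cneioneoneon}. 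The main obstacle I anticipate is bookkeeping the highest-derivative quasilinear terms without losing a derivative: specifically the terms where $\Delta^K$ falls entirely on $\rho_T$ or $\Psi_T$ and is paired against another top-order factor. These must be treated by integration by parts so that the "bad" derivative count is redistributed, turning the apparent derivative loss into a $\int(\text{lower-order, small coefficient})\times(\text{top energy density})$ contribution; the whole design of the quasilinear identity \eqref{algebracienergyidnentiypouet} — carrying the weights $\rho_D^{p-2}\rho_T$ and $\rho_T^2$ and the cross term — is precisely what makes this redistribution close, and verifying that no uncontrolled boundary or commutator term survives is the delicate part.
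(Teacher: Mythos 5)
Your overall strategy matches the paper's: run the quasilinear energy identity \eqref{algebracienergyidnentiypouet} with $\chi\equiv1$, extract a large friction from the coercivity \eqref{enineinevnveo} (valid globally thanks to property $(P)$), and bound errors by $\mathcal d I_{{k^\sharp}}+e^{-c\tau}$. However, there are two genuine gaps.

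The first concerns the nonlinear term $\Delta^K\NL(\rhot)$ inside $F_2$. Its top-order piece, by \eqref{pointwiseboundnltilde}, is $F'(\rhot/\rho_D)\,\rho_D^{p-2}\nabla\rhot_{({k^\sharp})}$, and the resulting contribution
\[
\int F'\bigl(\tfrac{\rhot}{\rho_D}\bigr)\rho_D^{p-2}\rho_T^2\,\nabla\rhot_{({k^\sharp})}\cdot\nabla\Psit_{({k^\sharp})}
\]
is \emph{not} controlled by $I_{{k^\sharp}}$: the energy controls $\rhot_{({k^\sharp})}^2$, not $|\nabla\rhot_{({k^\sharp})}|^2$, so the smallness of $F'$ does not help and Cauchy--Schwarz against the energy density fails by one derivative. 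Your claim that "Cauchy--Schwarz against the energy density and the already-established lower-order Sobolev control bounds them by $\mathcal d E_{k^\sharp}+e^{-c\tau}$" is therefore false for this term. The paper instead integrates by parts in space, then substitutes the $\rhot_{({k^\sharp})}$-equation \eqref{estqthohrkbisbis} to trade $\rho_T\Delta\Psit_{({k^\sharp})}$ for $\pa_\tau\rhot_{({k^\sharp})}$, and finally integrates by parts \emph{in time}. This produces a boundary term $\frac{d}{d\tau}\bigl\{\int F'\rho_D^{p-2}\rho_T\rhot_{({k^\sharp})}^2\bigr\}$ which is an $O(\mathcal d)$ perturbation of the energy and must be carried inside the time derivative, giving the final differential inequality $\frac12\frac{d}{d\tau}\bigl\{I_{{k^\sharp}}(1+O(\mathcal d))\bigr\}+\sqrt{{k^\sharp}}\,I_{{k^\sharp}}\le e^{-c_{{k^\sharp}}\tau}$ rather than the cleaner form you wrote. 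Without this temporal integration by parts the argument does not close.

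The second gap is the dissipative term. You invoke the global space-time estimate \eqref{eq:globald} of Lemma \ref{lem:NS}, but that bound is integrated in $\tau$ and involves only low ($\Delta u_T$) derivatives, so it cannot yield a pointwise-in-time bound on $\int\rho_T^2\nabla(b^2\Delta^K\mathcal F)\cdot\nabla\Psit_{({k^\sharp})}$. In fact \eqref{eq:globald} is used only in the weighted estimates of Lemma \ref{propinduction}, not here. In the unweighted estimate the paper exploits the intrinsic sign of the principal viscous piece, $(\mu+\mu')b^2\int\Delta\ut_{({k^\sharp})}\cdot\ut_{({k^\sharp})}\le-\frac{(\mu+\mu')b^2}{2}\int|\nabla\ut_{({k^\sharp})}|^2+e^{-c_{{k^\sharp}}\tau}$, and then absorbs every Leibniz correction (using the Faa di Bruno bounds \eqref{nboienneoen1}, the pointwise bootstrap, and the exterior density control \eqref{eq:strrhor}) into $\frac{b^2}{10}\int|\nabla\ut_{({k^\sharp})}|^2+e^{-c_{{k^\sharp}}\tau}$. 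You should replace your appeal to \eqref{eq:globald} with this self-contained sign argument.
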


\begin{proof}[Proof of Lemma \ref{proproopr}] \,{}

\noindent{\bf step 1} Control of lower order terms. We interpolate the rough bound inherited from 
\eqref{boundbootbound}:
$$(p-1)\int \rho_D^{p-2}\rho_T\rhot_{({k^\sharp})}^2+\int\rho_T^2|\nabla \Psit_{({k^\sharp})}|^2\le 1
$$ with the low Sobolev bound \eqref{eq:bootdecay} for $Z\leq (Z^*)^c$, with $0<c=c(k^\sharp, \delta_g)\ll 1$, and use \eqref{boundbootbound} for $Z>(Z^*)^c$
 to estimate: 
\be
\label{veniovnineonelweroidre} 
\sum_{j=0}^{{k^\sharp}-1}\sum_{i=1}^d(p-1)\int \rho_D^{p-2}\rho_T\frac{(\pa^j_i\rhot)^2}{\la Z\ra^{2({k^\sharp}-j)}}+\int\rho_T^2\frac{|\nabla \pa^j_i\Psit|^2}{\la Z\ra^{2({k^\sharp}-j)}}\le e^{-c_{{k^\sharp}}\tau}
\ee where $c_{{k^\sharp}}=c({k^\sharp},\delta_g)>0$. The estimate \eqref{veniovnineonelweroidre} will be 
used repeatedly in the sequel.\\

\noindent{\bf step 2} Energy identity. We use the identity derived in \eqref{enerq} with $\chi\equiv 1$:
\bea
\label{algebracienergyidnentiybis}
&&\frac 12\frac{d}{d\tau}\left\{(p-1)\int\rho_D^{p-2}\rho_T\rhot_{({k^\sharp})}^2+\int\rho_T^2|\nabla \Psit_{({k^\sharp})}|^2\right\}\\
\nonumber & = & \frac 12\int\left(\frac{\pa_\tau\rho_T}{\rho_T}+(p-2)\frac{\pa_\tau\rho_D}{\rho_D}\right)(p-1)\rho_D^{p-2}\rho_T\rhot_{({k^\sharp})}^2+\frac 12\int\left(2\frac{\pa_\tau\rho_T}{\rho_T}\right)\rhot_T^2|\nabla \Psi_{({k^\sharp})}|^2\\
\nonumber & + & \int F_1(p-1)\rho_D^{p-2}\rho_T\rhot_{({k^\sharp})}+\int \rho^2_T\nabla F_2\cdot\nabla \Psit_{({k^\sharp})}\\
\nonumber  & + & \int \left[(\Ht_1-{k^\sharp}(\Ht_2+\Lambda \Ht_2))\rhot_{({k^\sharp})}-\Ht_2\Lambda \rhot_{({k^\sharp})}-(\Delta^{K}\rho_T)\Delta \Psit-2\nabla(\Delta^{K}\rho_T)\cdot\nabla \Psit\right](p-1)\rho_D^{p-2}\rho_T\rhot_{({k^\sharp})}\\
\nonumber&-& \int\left[-{k^\sharp}(\Ht_2+\Lamdba \Ht_2)\Psit_{({k^\sharp})}-\Ht_2\Lambda \Psit_{({k^\sharp})}-(r-2)\Psit_{({k^\sharp})}-2\nabla \Psit\cdot\nabla \Psit_{({k^\sharp})}\right]\nabla\cdot(\rho_T^2\nabla \Psi_{({k^\sharp})})\\
\nonumber& - & {k^\sharp}\int \nabla\rho_T\cdot\nabla\Psit_{({k^\sharp})}\left[(p-1)\rho_D^{p-2}\rho_T\rhot_{({k^\sharp})}\right]+  \int {k^\sharp}(p-1)(p-2)\rho_D^{p-3}\nabla\rho_D\cdot\nabla \Delta^{K-1}\rhot\nabla\cdot(\rho_T^2\nabla \Psi_{({k^\sharp})}).
\eea
We now estimate all terms in \eqref{algebracienergyidnentiybis}. We track {\em exactly} the quadratic terms which arise at the highest level of derivatives
and which will be shown to be coercive provided ${k^\sharp}>{k^\sharp}^*(d,r,p)\gg 1$ has been chosen large enough. 

We denote 
$$I_{{k^\sharp}}=(p-1)\int\rho_D^{p-2}\rho_T\rhot_{{k^\sharp}}^2+\int\rho_T^2|\nabla \Psit_{{k^\sharp}}|^2.$$

\noindent{\bf step 3} Leading order terms.\\

\noindent\underline{\em Cross term}. We use 
\be
\label{vnenvenveneneonv}
\frac{|\rhot|}{\rho_T}+\frac{|\Lambda\rhot|}{\rho_T}\lesssim \mathcal d
\ee
 to  compute the first coupling term:
\bee
{k^\sharp}(p-1)&&\int\nabla\rho_T\cdot\nabla\Psit_{({k^\sharp})}\rho_D^{p-2}\rho_T\rhot_{({k^\sharp})}= -{k^\sharp}\int\rho_D\nabla \rho^{p-1}_D\cdot\nabla \Psit_{({k^\sharp})}\rhot_{({k^\sharp})}\\&&+ O\left(\mathcal d\int \frac{|\nabla \Psit_{({k^\sharp})}|\rho_D^{p-1}\rho_T|\rhot_{({k^\sharp})}|}{\la Z\ra^{\frac 12}}\right)\\
&& =  -{k^\sharp}\int\rho_D\nabla \rho^{p-1}_D\cdot\nabla \Psit_{({k^\sharp})}\rhot_{({k^\sharp})}+O\left(\mathcal d I_{{k^\sharp}}\right) \eee
The second coupling term is computed after an integration by parts using \eqref{vnenvenveneneonv}, the control of lower order terms \eqref{veniovnineonelweroidre} and the radial assumption:
\bee
& & {k^\sharp}(p-1)(p-2)\int \nabla \cdot(\rho_T^2\nabla \Psi_{({k^\sharp})})\rho_D^{p-3}\nabla\rho_D\cdot\nabla\Delta^{K-1}\rhot\\
& = & -{k^\sharp}(p-1)(p-2)\int\rho_T^2\nabla \Psi_{({k^\sharp})}\cdot\nabla\left(\rho_D^{p-3}\nabla\rho_D\cdot\nabla\Delta^{K-1}\rhot\right)\\
& = &  -{k^\sharp}(p-1)(p-2)\int\rho_T^2\pa_Z\Psi_{({k^\sharp})}\pa_Z\left(\rho_D^{p-3}\pa_Z\rho_D\pa_Z\Delta^{K-1}\rhot\right)\\
& = &  -{k^\sharp}(p-1)(p-2)\int\rho_D^{p-3}\pa_Z\rho_D\rho_T^2\pa_Z\Psi_{({k^\sharp})}\pa_Z^2\Delta^{K-1}\rhot+O\left(\int c_{({k^\sharp})}\rho_T|\nabla \Psi_{({k^\sharp})}|\rho_T^{p-1}\frac{|\pa^{{k^\sharp}-1}\rhot|}{\la Z\ra}\right)\\
& = & -\int {k^\sharp}(p-2)\rho_D\pa_Z(\rho_D^{p-1})\pa_Z\Psit_{({k^\sharp})}\rhot_{({k^\sharp})}+ O\left(\mathcal d I_{{k^\sharp}} +\int c_{({k^\sharp})}\rho_T|\nabla \Psi_{({k^\sharp})}|\rho_T^{p-1}\frac{|\pa^{{k^\sharp}-1}\rhot|}{\la Z\ra}\right)\\
& = & -{k^\sharp}(p-2)\int\rho_D\nabla \rho^{p-1}_D\cdot\nabla \Psit_{({k^\sharp})}\rhot_{({k^\sharp})}+O(e^{-c_{{k^\sharp}}\tau}+\mathcal d I_{{k^\sharp}}).
\eee

\noindent\underline{\em $\rho_{({k^\sharp})}$ terms}. We compute:
$$\int(\Ht_1-{k^\sharp}(\Ht_2+\Lambda \Ht_2)\rhot_{({k^\sharp})})(p-1)\rho_D^{p-2}\rho_T\rhot_{({k^\sharp})}= \int(\Ht_1-{k^\sharp}(\Ht_2+\Lambda \Ht_2))(p-1)\rho_D^{p-2}\rho_T\rhot^2_{({k^\sharp})}.$$
We now use the global lower bound $$H_2+\Lambda H_2=(1-w-w')\ge c_{p,d,r}, \ \ c_{p,d,r}>0$$ to conclude that 
the same bound holds for $\Ht_2$, see \eqref{vneioneinoenvoen}, and 
to estimate using \eqref{esterrorpotentials}, \eqref{veniovnineonelweroidre}:
\bee
&&\int(\Ht_1-{k^\sharp}(\Ht_2+\Lambda \Ht_2))(p-1)\rho_D^{p-2}\rho_T\rhot^2_{({k^\sharp})}\\
& \leq& -{k^\sharp}\int\left[1+O_{{k^\sharp}\to +\infty}\left(\frac{1}{{k^\sharp}}\right)\right](\Ht_2+\Lambda \Ht_2)(p-1)\rho_D^{p-2}\rho_T\rhot^2_{({k^\sharp})}\\
\eee
Next,
\bee
&&\left|\int \left[(\Delta^{K}\rho_D)\Delta \Psit-2\nabla(\Delta^{K}\rho_D)\cdot\nabla \Psit\right](p-1)\rho_D^{p-2}\rho_T\rhot_{({k^\sharp})}\right|\\
& \leq & \mathcal d\int \rho_D^{p-2}\rho_T\rhot_{({k^\sharp})}^2+\frac{C}{\mathcal d}\int\rho_T^{p-2}\rho_T^2\left[\frac{|\pa^2\Psit|^2}{\la Z\ra^{2{k^\sharp}}}+\frac{|\pa\Psit|^2}{\la Z\ra^{2({k^\sharp}+1)}}\right]\\
& \leq & \mathcal d I_{{k^\sharp}}+e^{-c_{{k^\sharp}}\tau}
\eee
and for the nonlinear term after an integration by parts:
$$\left|\int \left[\rhot_{({k^\sharp})}\Delta \Psit-2\nabla\rhot_{({k^\sharp})}\cdot\nabla \Psit\right](p-1)\rho_D^{p-2}\rho_T\rhot_{({k^\sharp})}\right|\lesssim \mathcal d\int \rho_D^{p-2}\rho_T\rhot_{({k^\sharp})}^2.
$$
Integrating by parts and using \eqref{esterrorpotentials}, \eqref{globalbeahvoiru}:
\bee
&&-\int \Ht_2\Lambda \rhot_{({k^\sharp})}\left[(p-1)\rho_D^{p-2}\rho_T\rhot_{({k^\sharp})}\right]+\frac{p-1}{2}\int (p-2)\pa_\tau\rho_D\rho_D^{p-3}\rho_T\rhot_{({k^\sharp})}^2+\frac{p-1}{2}\int \pa_\tau\rho_T\rho_D^{p-2}\rhot_{({k^\sharp})}^2\\
&=& \frac {p-1}2\int \rhot_{({k^\sharp})}^2\left[\nabla \cdot(Z\Ht_2\rho_D^{p-2}\rho_T)+\pa_\tau(\rho_D^{p-2})\rho_T+ \pa_\tau(\rho_T)\rho^{p-2}_D\right]=  O\left(\int \rho_D^{p-2}\rho_T\rhot_{({k^\sharp})}^2\right)
\eee

\noindent\underline{\em $\Psi_{({k^\sharp})}$ terms}. We estimate:
\bee
&&(r-2)\int\rho_T\Psit_{({k^\sharp})}\left[2\nabla\rho_T\cdot\nabla \Psit_{({k^\sharp})}+\rho_T\Delta \Psit_{({k^\sharp})}\right] \\
&=&  -(r-2)\int \Psit_{({k^\sharp})}^2\nabla\cdot(\rho_T\nabla \rho_T)-(r-2)\int \nabla \Psit_{({k^\sharp})}\cdot\nabla(\rho_T^2\Psit_{({k^\sharp})})\\
& = & -(r-2)\int \rho_T^2|\nabla \Psit_{({k^\sharp})}|^2
\eee
and similarly, using \eqref{esterrorpotentials}, \eqref{veniovnineonelweroidre}:
\bee
&&{k^\sharp}\int\rho_T(\Ht_2+\Lambda \Ht_2)\Psit_{({k^\sharp})}\left[2\nabla\rho_T\cdot\nabla \Psit_{({k^\sharp})}+\rho_T\Delta \Psit_{({k^\sharp})}\right]={k^\sharp}\int (\Ht_2+\Lambda \Ht_2)\Psi_{({k^\sharp})}\nabla\cdot(\rhot_T^2\nabla \Psi_{({k^\sharp})})\\
& = & -{k^\sharp}\left[\int |\nabla \Psit_{({k^\sharp})}|^2(\Ht_2+\Lambda \Ht_2)\rho_T^2|\nabla \Psit_{({k^\sharp})}|^2+\int \rho_T^2\Psit_{({k^\sharp})}^2\left(\frac{\nabla \cdot(\rho_T^2\nabla(\Ht_2+\Lambda \Ht_2))}{2\rho^2_T}\right)\right]\\
&=& -{k^\sharp}\int \left[1+O\left(\frac 1{{k^\sharp}}\right)\right] (\Ht_2+\Lambda \Ht_2)\rho_T^2|\nabla \Psit_{({k^\sharp})}|^2+e^{-c_{{k^\sharp}}\tau}
\eee
Then from \eqref{smallglobalboot}:
$$
\left|\int 2\rho_T\nabla \Psit\cdot\nabla \Psit_{({k^\sharp})}(2\nabla \rho_T\cdot\nabla \Psit_{({k^\sharp})})\right|\lesssim \int \rho_T^2|\nabla \Psit_{({k^\sharp})}|^2$$
 and using \eqref{pohozaevbispouet}:
\bee
\left|\int 2\rho_T\nabla \Psit\cdot\nabla \Psit_{({k^\sharp})}(\rho_T\Delta \Psit_{({k^\sharp})})\right|\lesssim \int |\nabla \Psit_{({k^\sharp})}|^2|\pa(\rho_T^2\nabla \Psit)|\lesssim \int \rho_T^2|\nabla\Psit_{({k^\sharp})}|^2.
\eee
Arguing verbatim like in the proof of \eqref{shaprpohoazev}:
$$\left|\int \rho_TH_2\Lambda \Psit_{({k^\sharp})}\left(2\nabla \rho_T\cdot\nabla \Psit_{({k^\sharp})}+\rho_T\Delta \Psit_{({k^\sharp})}\right)\right|\lesssim \int\rho_T^2|\nabla \Psi_{({k^\sharp})}|^2.
$$

\noindent\underline{Remaining terms}. We claim the following exact identities:
\be
\label{globalbeahvoiru}
\frac{\pa_\tau\rho_D+\Lambda\rho_D}{\rho_D}=-\frac{2(r-1)}{p-1}+O\left(\frac{1}{\la Z\ra^\delta}\right)
\ee
and
\be
\label{globalbeahvoirubis}
\frac{\pa_\tau\rho_T+\Lambda \rho_T}{\rho_T}=-\frac{2(r-1)}{p-1}+O\left(\frac{1}{\la Z\ra^\delta}\right)
\ee
which imply the rough bound
$$\left| \frac 12\int\left(\frac{\pa_\tau\rho_T}{\rho_T}+(p-2)\frac{\pa_\tau\rho_D}{\rho_D}\right)(p-1)\rho_D^{p-2}\rho_T\rhot_{({k^\sharp})}^2+\frac 12\int\left(2\frac{\pa_\tau\rho_T}{\rho_T}\right)\rhot_T^2|\nabla \Psi_{({k^\sharp})}|^2\right|\lesssim I_{{k^\sharp}}.$$
\noindent{\em Proof of \eqref{globalbeahvoiru}, \eqref{globalbeahvoirubis}}. From \eqref{fromularhod} and since 
$\lambda=e^{-\tau}$: 
\bee
\pa_\tau\rho_D+\Lambda \rho_D=-\Lambda \zeta(\lambda Z)\rho_P(Z)+\Lambda \zeta(\lambda Z)\rho_P(Z)+\zeta(\l Z)\Lambda \rho_P= \zeta(\l Z)\Lambda \rho_P
\eee
$$\frac{\pa_\tau\rho_D+\Lambda\rho_D}{\rho_D}=\frac{\Lambda \rho_P}{\rho_P}=-\frac{2(r-1)}{p-1}+O\left(\frac{1}{\la Z\ra^{\delta}}\right)
$$
and \eqref{globalbeahvoiru} is proved. We then recall \eqref{renormalizedflow}:
$$\pa_\tau \rho_T=-\rho_T\Delta \Psi_T-\frac{\ell(r-1)}{2}\rho_T-\left(2\pa_Z\Psi_T+ Z\right)\pa_Z\rho_T$$ which yields:
\bee
\left|\frac{\pa_\tau \rho_T+\Lambda \rho_T}{\rho_T}+\frac{\ell(r-1)}{2}\right|=\left|-\Delta \Psi_T-2\frac{\pa_Z\Psi_T\pa_Z\rho_T}{\rho_T}\right|
\eee
and \eqref{globalbeahvoirubis} follows from \eqref{smallglobalboot}.\\

\noindent{\bf step 4} $F_1$ terms. We claim the bound:
\be
\label{estfoneessentialbis}
\int \rho_D^{p-1}F_1^2\lesssim {\mathcal d} I_{{k^\sharp}}+  e^{-c_{{k^\sharp}}\tau}.
\ee

\noindent\underline{\em Source term induced by localization}.  Recall \eqref{profileequationtilde}
 \bee
\tilde{\mathcal E}_{P,\rho}&=&\pa_\tau \rho_D+\rho_D\left[\Delta \Psi_D+\frac{\ell(r-1)}{2}+\left(2\pa_Z\Psi_D+ Z\right)\frac{\pa_Z\rho_D}{\rho_D}\right]\\
& = & \pa_\tau \rho_D+\Lambda \rho_D+\frac{\ell(r-1)}{2}\rho_D+\rho_D\Delta\Psi_D+2\pa_Z\Psi_D\pa_Z\rho_D
\eee
which together with the cancellation \eqref{globalbeahvoiru} which holds with similar proof for higher derivative, and the space localization of $\tilde{\mathcal E}_{P,\rho}$ 
ensures:
\be
\label{neineinneonev}
|\nabla^{{k^\sharp}}\tilde{\mathcal E}_{P,\rho}|\lesssim c_{{k^\sharp}}\frac{\rho_D}{\la Z\ra^{{k^\sharp}+\delta}}{\bf 1}_{Z\ge Z^*}
\ee
for some $\delta>0$. This implies that for ${k^\sharp}$ large enough:
$$
\int \rho_D^{p-2}\rho_T|\Delta^{K}\tilde{\mathcal E}_{P,\rho}|^2\le e^{-c_{{k^\sharp}}\tau}.
$$

\noindent\underline{\em $[\Delta^{K},H_1]$ term}. We use \eqref{veniovnineonelweroidre}, \eqref{esterrorpotentialsbis} to estimate
 $$(p-1)\int \rho_D ^{p-1}([\Delta^{K},H_1]\rhot)^2\lesssim \sum_{j=0}^{k^\sharp -1}\int  \rho_D^{p-1}\frac{|\nabla^j\rhot|^2}{\la Z\ra^{2(r+k-j)}}\leq e^{-c_{{k^\sharp}}\tau}.
$$

\noindent\underline{\em $\mathcal A_{{k^\sharp}}(\rhot)$ term}. From \eqref{estimatepenttniialvone}, \eqref{veniovnineonelweroidre}:
$$(p-1)\int \rho_D^{p-1}(\mathcal A_{{k^\sharp}}(\rhot))^2\lesssim \sum_{j=1}^{{k^\sharp}-1}\int \rho_D^{p-1} \frac{|\nabla^j\rhot|^2}{\la Z\ra^{2(r+{k^\sharp}-j)}}\leq e^{-c_{{k^\sharp}}\tau}
$$
and \eqref{estfoneessentialbis} is proved for this term.\\

\noindent\underline{\em Nonlinear term}. Changing indices, we need to estimate
$$
N_{j_1,j_2}=\nabla^{j_1}\rho_T\nabla^{j_2}\nabla \Psit, \ \ j_1+j_2={k^\sharp}+1, \ \ \left|\begin{array}{l} j_1\le {k^\sharp}\\ j_2\le {k^\sharp}-1
\end{array}\right.
$$
For $j_1\le {k^\sharp}-1$, we may use the pointwise bound \eqref{smallglobalboot} to estimate:
$$|\pa^{j_1}\rho_T\pa^{j_2}\nabla \Psit|\lesssim \rho_D\frac{|\nabla^{j_2}\nabla \Psit|}{\la Z\ra^{j_1}}= \rho_D\frac{|\nabla^{j_2}\nabla \Psit|}{\la Z\ra^{{k^\sharp}+1-j_2}}.$$ Then, after recalling \eqref{veniovnineonelweroidre},
$$\int(p-1) N_{j_1,j_2}^2\rho_D^{p-2}\rho_T\lesssim \int \frac{\rho_T^2|\nabla^{j_2}\nabla \Psit|^2}{\la Z\ra^{2({k^\sharp}+1-j_2)+2(r-1)}}\leq e^{-c_{{k^\sharp}}\tau}
$$
since $j_2\le {k^\sharp}-1$.
For $j_1={k^\sharp}$, $j_2=1$ and hence using \eqref{smallglobalboot}:
\bee
\int(p-1) N_{j_1,j_2}^2\rho_D^{p-2}\rho_T\lesssim \int \frac{\rho_T^2|\nabla^{j_2}\nabla \Psit|^2}{\la Z\ra^{2({k^\sharp}+1-j_2)+2(r-1)}}+ \int \rho_D^{p-1}|\nabla^{{k^\sharp}}\rhot|^2|\nabla^2\Psit|^2\le e^{-c_{{k^\sharp}}\tau}+\mathcal d I_{{k^\sharp}}
\eee
with $\mathcal d$ smallness coming form the bound on $\nabla^2\Psit$. This concludes the proof of \eqref{estfoneessentialbis}.

\noindent{\bf step 5} Dissipation term. We treat the dissipative term in $F_2$:
\bee
{\rm Diss}&=& \int\rho_T^2\nabla(b^2\Delta^{K}\mathcal F)\cdot\nabla \Psit_{({k^\sharp})}=(\mu+\mu')b^2\int \rho_T^2\Delta^{K}\left(\frac{\Delta \nabla\Psi_T}{\rho_T^2}\right)\cdot\nabla \Psit_{({k^\sharp})},
\eee
where we used that in spherical symmetry $\Delta \nabla\Psi_T=\nabla\div \nabla\Psi_T$.
The term with most derivatives falling on $\Psi_T$ is 
\bee(\mu+\mu')b^2\int \rho_T^2\frac{\Delta^{K+1}\nabla\Psi_T}{\rho_T^2}\cdot\nabla \Psit_{({k^\sharp})}&=&(\mu+\mu')b^2\int\left[\Delta^{K+1}(u_D)+\Delta \ut_{({k^\sharp})}\right]\cdot \ut_{({k^\sharp})}\\&\le& -(\mu+\mu')\frac{b^2}{2}\int |\nabla \ut_{({k^\sharp})}|^2+e^{-c_{{k^\sharp}}\tau}.
\eee
By Leibniz, we then need to estimate a generic term  with $k_1+k_2={k^\sharp}$, $k_2\ge 1$ 
$$I_{k_1,k_2}=(\mu+\mu')b^2\int \rho_T^2\nabla^{k_1+2} u_T\nabla^{k_2}\left(\frac 1{\rho_T^2}\right)\cdot\ut_{({k^\sharp})}.
$$
\noindent\underline{Pointwise bound}. We  claim: 
 \be
\label{nboienneoen1}
\left|\nabla^{j_2}\left(\frac{1}{\rho_T}\right)\right|\lesssim \left|\begin{array}{l} \frac{1}{\rho_T\la Z\ra^{j_2}}\ \ \mbox{for}\ \ j_2\le {k^\sharp}-2\\
\frac{1}{\rho_T\la Z\ra^{j_2}}+\frac{|\nabla^{{k^\sharp}-1} \rho_T|}{\rho_T^2}\ \ \mbox{for}\ \ j_2={k^\sharp}-1\\
\frac{1}{\rho_T\la Z\ra^{j_2}}+\frac{|\nabla^{{k^\sharp}-1} \rho_T|}{\la Z\ra \rho_T^2}+\frac{|\nabla^{{k^\sharp}}\rho_T|}{\rho_T^2}\ \ \mbox{for}\ \ j_2={k^\sharp}
 \end{array}
 \right.
 \ee
We estimate from the Faa di Bruno formula, using the pointwise bound \eqref{smallglobalboot} for $j_2\le {k^\sharp}-2$:
\bea
\left|\nabla^{j_2}\left(\frac{1}{\rho_T}\right)\right|&\lesssim&  \frac{1}{\rho_T^{j_2+1}}\sum_{m_1+2m_2+\dots+j_2m_{j_2}=j_2}\Pi_{i=0}^{j_2}|(\nabla^i\rho_T)^{m_i}|\label{eq:faa}\\
& \lesssim &\frac{1}{\rho_D^{j_2+1}}\Pi_{i=0}^{j_2}\left(\frac{\rho_D}{\la Z\ra^i}\right)^{m_i}\lesssim \frac{\rho_D^{j_2}}{\rho_D^{j_2+1}\la Z\ra^{j_2}}\lesssim \frac{1}{\rho_D\la Z\ra^{j_2}}\notag
\eea
where $m_0+m_1+\dots+m_{j_2}=1$.

For $j_2={k^\sharp}-1$, $m_{j_2}\neq 0$ implies $m_{j_2}=1$, $m_1=\dots=m_{j_2-1}=0$, $m_0=j_2-1$ and therefore,
$$
\left|\nabla^{{k^\sharp}-1}\left(\frac{1}{\rho_T}\right)\right|\lesssim \frac{1}{\rho_D\la Z\ra^{{k^\sharp}-1}}+\frac{|\nabla^{{k^\sharp}-1}\rho_T|}{\rho_T^2}.
$$
Similarly,
if $j_2={k^\sharp}$, $m_{j_2}\neq 0$ implies $m_{j_2}=1$, $m_1=\dots=m_{j_2-1}=0$, $m_0=j_2-1$. Also,  if $m_{j_2}=0$ and $m_{j_2-1}\ne 0$ then $m_{j_2-1}=1$,  $m_1=1$ and $m_2=\dots=m_{j_2-2}=0$. Hence
$$
\left|\nabla^{{k^\sharp}}\left(\frac{1}{\rho_T}\right)\right|\lesssim \frac{1}{\rho_D\la Z\ra^{{k^\sharp}}}+\frac{|\nabla^{{k^\sharp}-1} \rho_T|}{\la Z\ra \rho_T^2}+\frac{|\nabla^{{k^\sharp}}\rho_T|}{\rho_T^2}$$
 and \eqref{nboienneoen1} is proved. \\
 
We now estimate $I_{k_1,k_2}$.\\
\noindent\underline{case $k_1={k^\sharp}-1$}. By Leibniz and \eqref{nboienneoen1} 
for $j\le {k^\sharp}-2$:
\be
\label{estimtineti}
\left|\nabla^{j}\left(\frac{1}{\rho^2_T}\right)\right|\lesssim \frac{1}{\rho^2_D\la Z\ra^{j}}.
\ee
This yields:
\bea
|I_{{k^\sharp}-1,1}|&\lesssim& (\mu+\mu')b^2\int \rho_T^2\frac{|\nabla^{{k^\sharp}+1}u_T|}{\la Z\ra \rho_T^2}|\ut_{({k^\sharp})}|\le (\mu+\mu')\frac{b^2}{10}\int |\nabla \ut_{({k^\sharp})}|^2+Cb^2\int \frac{|\ut_{({k^\sharp})}|^2}{\la Z\ra^2}\notag\\
&\le& (\mu+\mu')\frac{b^2}{10}\int |\nabla \ut_{({k^\sharp})}|^2+e^{-c_{{k^\sharp}}\tau}\label{eq:stup}
\eea
where in the last step we used that 
$$
\int \frac{|\ut_{({k^\sharp})}|^2}{\la Z\ra^2}\le \int \la Z\ra^{-2{k^\sharp}+d+2\sigma-\ell(r-1)} \chi_{{k^\sharp}} \rho_T^2\frac{|\ut_{({k^\sharp})}|^2}{\la Z\ra^2}\le
\|\rhot,\Psit\|^2_{{k^\sharp}, \sigma+ {k^\sharp}+1-\frac d2+\frac \ell 2(r-1)-\sigma}\le e^{-c_{{k^\sharp}}\tau},
$$
 since ${k^\sharp}$ is a large parameter $\gg \frac d2$ and $\sigma$ is fixed and small.

\noindent\underline{case $k_1\le {k^\sharp}-2$}.  Since $k_2\ge 1$, we integrate by parts  and use \eqref{boundbootbound}, \eqref{smallglobalboot} to estimate in the case when $k_2\le {k^\sharp}-1$:
\bee
 (\mu+\mu')b^2&&\left|\int \rho_T^2\nabla^{k_1+2} u_T\nabla^{k_2}\left(\frac 1{\rho_T^2}\right)\cdot\ut_{({k^\sharp})}\right|\lesssim b^2\int \left|\nabla^{k_2-1}\left(\frac{1}{\rho_T^2}\right)\right|\\ &&\left[|\nabla\rho_T^2||\nabla^{k_1+2}u_T||\ut_{({k^\sharp})}|+\rho_T^2|\nabla^{k_1+3}u_T||\ut_{({k^\sharp})}|+\rho_T^2|\nabla^{k_1+2}u_T||\nabla\ut_{({k^\sharp})}|\right]\\
&\lesssim& (\mu+\mu') b^2\int \frac{1}{\la Z\ra^{k_2-1}}\left[\frac{|\nabla^{k_1+2}u_T||\ut_{({k^\sharp})}|}{\la Z\ra}+|\nabla^{k_1+3}u_T||\ut_{({k^\sharp})}|+|\nabla^{k_1+2}u_T||\nabla\ut_{({k^\sharp})}|\right]\\
& \leq & (\mu+\mu')\frac{b^2}{10}\int |\nabla \ut_{({k^\sharp})}|^2+e^{-c_{{k^\sharp}}\tau}
\eee
It leaves us with the case ${k^\sharp}=k_2$ and $k_1=0$. We will take the highest order term in \eqref{nboienneoen1}
\bee
(\mu+\mu') b^2&&\left|\int \rho_T^2\nabla^{2} u_T\nabla^{{k^\sharp}}\left(\frac 1{\rho_T^2}\right)\cdot\ut_{({k^\sharp})}\right|\lesssim (\mu+\mu')b^2
\int \left| \nabla^{2} u_T\frac{\nabla^{{k^\sharp}}\rho_T}{\rho_T}\cdot\ut_{({k^\sharp})}\right|\\ &&+ (\mu+\mu')b^2\int\left| \nabla^{2} u_T\frac{\nabla^{{k^\sharp}-1}\rho_T}{\la Z\ra \rho_T}\cdot\ut_{({k^\sharp})}\right|+(\mu+\mu')b^2\int \left| \nabla^{2} u_T\frac{1}{\la Z\ra^{{k^\sharp}}}\cdot\ut_{({k^\sharp})}\right|
\eee
The last term is easily controlled:
$$
(\mu+\mu') b^2\int \left| \nabla^{2} u_T\frac{1}{\la Z\ra^{{k^\sharp}}}\cdot\ut_{({k^\sharp})}\right|\le (\mu+\mu')b^2\int \frac{|\ut_{({k^\sharp})}|^2}{\la Z\ra^{{2{k^\sharp}}+4-2d}}+(\mu+\mu')b^2\le e^{-c_{{k^\sharp}}\tau},
$$
where in the last step we used that ${k^\sharp}$ is large and the line of argument similar to \eqref{eq:stup}.
The most difficult term is
\bee
&& (\mu+\mu') b^2
\int \left| \nabla^{2} u_T\frac{\nabla^{{k^\sharp}}\rho_T}{\rho_T}\cdot\ut_{({k^\sharp})}\right|\\
&\le&  (\mu+\mu') b^2
\int \frac{|\nabla^{{k^\sharp}-1}\rho_T|}{\rho_T}\left[ |\nabla^3 u_T| |\ut_{({k^\sharp})}| + |\nabla^2 u_T| |\nabla\ut_{({k^\sharp})}|
+|\nabla^2 u_T| |\ut_{({k^\sharp})}|\frac{|\nabla\rho_T|}{\rho_T}\right]
\eee
We can estimate 
$$
(\mu+\mu') b^2\int \frac{|\nabla^{{k^\sharp}-1}\rho_T|}{\rho_T} |\nabla^2 u_T| |\nabla\ut_{({k^\sharp})}|\le 
(\mu+\mu')\frac{b^2}{10}\int  |\nabla\ut_{({k^\sharp})}|^2+ C b^2 \int\frac{|\nabla^{{k^\sharp}-1}\rho_T|^2}{\la Z\ra^4 \rho_T^2}
$$
To control the last term we first see that 
$$
b^2 \int\frac{|\nabla^{{k^\sharp}-1}\rho_D|^2}{\la Z\ra^4 \rho_T^2}\lesssim b^2 \int\frac{1}{\la Z\ra^{4 +2({k^\sharp}-1)}}\le b^2
$$
and for the remaining $\rhot$ contribution 
could again use the bootstrap assumptions on the $\|\rhot,\Psit\|$ norm 
$$
b^2 \int\frac{|\nabla^{{k^\sharp}-1}\rhot|^2}{\la Z\ra^4 \rho_T^2}\lesssim b^2 \int \frac 1{\rho_D^{p+1}\chi_{{k^\sharp}}} \rho_D^{p-1}\chi_{{k^\sharp}}
\frac{|\nabla^{{k^\sharp}-1}\rhot|^2}{\la Z\ra^4}\le e^{-c_{{k^\sharp}}\tau}
$$
using that in the expression 
$$
\frac 1{\rho_D^{p+1}\chi_{{k^\sharp}}} 
$$
the dominant factor is $\la Z\ra^{-2{k^\sharp}}$ since ${k^\sharp}$ is chosen to be large. Since $\rho_D^{-1}$ contains a factor of 
$\la Z\ra^{n_P}$, this would however require imposing the condition that ${k^\sharp}\gg n_P$ which is acceptable but not necessary.
We can take a slightly different route and use the estimate \eqref{eq:strrhor} instead:
$$
\int_{Z\ge 12Z^*}\la Z\ra ^{-d+2m} \left\la \frac Z{Z^*}\right\ra^{\mu-2\sigma}
\left|\frac{\nabla^m\rhot}{\rho_D}\right|^2\le \mathcal d
$$
which holds with $\mu=\min\{1,2(r-1)\}$ for any $m\le {k^\sharp}-1$ and $\sigma>0$. Then
$$
b^2 \int_{Z\ge 12 Z^*} \frac{|\nabla^{{k^\sharp}-1}\rhot|^2}{\la Z\ra^4 \rho_T^2}\lesssim b^2
$$
just under the condition that ${k^\sharp}\gg \frac d2$. On the other hand,
\bee
b^2 \int_{Z\le 12 Z^*} \frac{|\nabla^{{k^\sharp}-1}\rhot|^2}{\la Z\ra^4 \rho_T^2}&&\lesssim b^2\int
\la Z\ra^{-2{k^\sharp}+d-\ell(r-1)-2(r-1)+2(r-1)\frac{p+1}{p-1}}\chi_{{k^\sharp}} \rho_D^{p-1} \frac{|\nabla^{{k^\sharp}-1}\rhot|^2}{\la Z\ra^2}
\\ && \lesssim b^2\int
\la Z\ra^{-2{k^\sharp}+d}\chi_{{k^\sharp}} \rho_D^{p-1} \frac{|\nabla^{{k^\sharp}-1}\rhot|^2}{\la Z\ra^2}\le e^{-c_{{k^\sharp}}\tau}.
\eee
The remaining lower order terms can be treated similarly.

\noindent{\bf step 6} $F_2$ terms. We claim: 
\be
\label{estfoneessentialftwobisbis}
\int\rho_T^2|\nabla (F_2-b^2\Delta^{K}\mathcal F+\Delta^{K}\NL(\rhot))|^2 \leq C  I_{{k^\sharp}}+ e^{-c_{{k^\sharp}}\tau}
\ee
for some universal constant $C$ independent of ${k^\sharp}$.
The nonlinear term $\Delta^{K}\NL(\rhot)$ will be treated in the next step.

\noindent\underline{\em Source term induced by localization}. Recall \eqref{profileequationtilde}:
$$\tilde{\mathcal E}_{P,\Psi}=\pa_\tau \Psi_D+\left[|\nabla \Psi_D|^2+\rho_D^{p-1}+(r-2)\Psi_D+\Lambda \Psi_D\right]$$
which yields
$$\pa_Z\tilde{\mathcal E}_{P,\Psi}=\pa_\tau u_D+\left[2u_D\pa_Zu_D+(p-1)\rho_D^{p-1}\pa_Z\rho_D+(r-1)u_D+\Lambda u_D\right].$$ In view of the exact profile equation for $u_P$ and the fact that $u_P$ coincides with $u_D$ for $Z\le Z^*$,
$\pa_Z\tilde{\mathcal E}_{P,\Psi}$ is supported in $Z\ge Z^*$. Furthermore,
from \eqref{definitionprofilewithtailchange}:
$$u_D(\tau,Z)=\zeta(\l Z)u_P(Z)$$ and hence 
\bee
&&\pa_\tau u_D+\Lambda u_D+(r-1)u_D=-\Lambda \zeta(x) u_P(Z)+\Lambda \zeta(x) u_P(Z)+\zeta(x)\Lambda u_P(Z)+(r-1)\zeta(x)u_P(Z)\\
&=& \zeta(x)\left[(r-1)u_P+\Lambda u_P\right](Z)=O\left(\frac{{\bf 1}_{Z\le 10Z^*}}{\la Z\ra^{r-1+\delta}}\right).
\eee
Using that $|u_D|+\rho_D^{\frac{p-1}2}\lesssim \la Z\ra^{-(r-1)}$, with the inequality becoming $\sim$ in the region 
$Z^*\le Z\le 10 Z^*$ and that $u_D$ vanishes for $Z\ge 10 Z^*$, 
we infer 
$$|\pa_Z\tilde{\mathcal E}_{P,\Psi}|\lesssim \frac{{\bf 1}_{Z\ge Z^*}}{\la Z\ra^{r-1+\delta}}$$
with a similar statement holding for higher derivatives
$$|\nabla \nabla^{{k^\sharp}}\tilde{\mathcal E}_{P,\Psi}|\lesssim \frac{{\bf 1}_{Z\ge Z^*}}{\la Z\ra^{{k^\sharp}+r-1+\delta}}$$
 Then, 
 \bee
\int \rho_T^2|\nabla \nabla^{{k^\sharp}}\tilde{\mathcal E}_{P,\Psi}|^2\lesssim \int_{Z\ge Z^*}Z^{d-1} \frac{\rho_T^2}{\la Z\ra^{2{k^\sharp}+2(r-1)+2\delta}}dZ\leq e^{-c_{{k^\sharp}}\tau}
\eee
if ${k^\sharp}\gg \frac d2$ is large enough.

\noindent\underline{\em $ \matchal A_{{k^\sharp}}(\Psi)t$ term}. From \eqref{estimatepenttniialvone}
$$
|\nabla\mathcal A_{{k^\sharp}}(\Psit)|\lesssim \sum_{j=1}^{{k^\sharp}}\frac{|\nabla^j\Psit|}{\la Z\ra^{r+{k^\sharp}-j+1}}$$
and hence from \eqref{veniovnineonelweroidre} :
$$
\int\rho_T^2|\nabla\mathcal A_{{k^\sharp}}(\Psit)|^2\lesssim \sum_{j=0}^{{k^\sharp}-1}\int\rhot_T^2\frac{|\nabla \pa^j\Psit|^2}{\la Z\ra^{2(r+{k^\sharp}-j)+2}}\le  e^{-c_{{k^\sharp}}\tau}.
$$

\noindent\underline{$[\Delta^{K},\rho_D^{p-2}]$ term}. We first claim the bound: let $\alpha\in \Bbb R$ and $\beta\in \Bbb N^d$ with $|\beta|=m$. Then for any $m$
\be
\label{boundnonlienalpha}
\nabla^{\beta}(\rho_D^\alpha)=O_{\alpha,m}\left(\frac{\rho_D^\alpha }{\la Z\ra^m} \right)
\ee
This is proved below. We conclude from \eqref{estimatecommutatorvlkeveln}:
$$\left|[\Delta^{K},\rho_D^{p-2}]\rhot-{k^\sharp}(p-2)\rho_D^{p-3}\nabla\rho_D\cdot\nabla\Delta^{K-1}\rhot\right|\lesssim \sum_{j=0}^{{k^\sharp}-2}\frac{|\nabla^j\rhot|}{\la Z\ra^{{k^\sharp}-j}}\rho_D^{p-2}$$
 and similarly, taking a derivative and using \eqref{veniovnineonelweroidre},
\bee
&&\int \rho_T^2\left|\nabla \left[[\Delta^{K},\rho_D^{p-2}]\rhot-{k^\sharp}(p-2)\rho_D^{p-3}\nabla\rho_D\cdot\nabla\Delta^{K-1}\rhot\right]\right|^2\\
&\lesssim& \sum_{j=0}^{{k^\sharp}-1}\int\rho_D^{2(p-2)+2}\frac{|\nabla^j\rhot|^2}{\la Z\ra^{2({k^\sharp}-j)+2}}=\sum_{j=0}^{{k^\sharp}-1}\int\rho_D^{2(p-1)}\frac{|\nabla^j\rhot|^2}{\la Z\ra^{2({k^\sharp}-j)+2}}\leq e^{-c_{{k^\sharp}}\tau}.
\eee

\noindent{\em Proof of \eqref{boundnonlienalpha}}. Let $g=\rho_D^\alpha$, then $$\frac{\nabla g}{g}=\alpha\frac{\nabla\rho_D}{\rho_D}$$ and \eqref{kevnkneonenoen} yields: $$|\nabla g|\lesssim \frac{|g|}{\la Z\ra }\lesssim \frac{\rho_D^\alpha}{\la Z\ra}.$$ We now prove by induction on $m\ge1$:
\be
\label{boundnonlienarbis}
|\nabla^{m}g|\lesssim \frac{\rho_D^\alpha}{\la Z\ra^{m}}.
\ee
We assume  $m$ and prove $m+1$. Indeed,   
$$|\nabla^{m+1}g|=\left|\alpha \nabla^m\left[g\frac{\pa\rho_D}{\rho_D}\right]\right|\lesssim \sum_{j_1+j_2+j_3=m}|\nabla^{j_1}g|\left|\nabla^{j_2}\left(\frac{1}{\rho_D}\right)\right||\nabla^{j_3+1}\rho_D|.
$$
From \eqref{eq:faa} with $\rho_D$ in place of $\rho_T$:
$$\left|\nabla^{j_2}\left(\frac{1}{\rho_D}\right)\right|\lesssim \frac{1}{\rho_D\la Z\ra^{j_2}}$$ and hence using the induction claim:
 $$
 |\pa^{m+1}g|\lesssim \sum_{j_1+j_2+j_3=m}\frac{\rho^\alpha_D}{\la Z\ra^{j_1}}\frac{1}{\rho_D\la Z\ra ^{j_2}}\frac{\rho_D}{\la Z\ra^{j_3+1}}\lesssim \frac{\rho_D^\alpha}{\la Z\ra^{m+1}}
 $$
 and \eqref{boundnonlienarbis} is proved. 
 This concludes the proof of \eqref{boundnonlienalpha}.\\

\noindent\underline{Nonlinear $\Psi$ term}. Let $$\pa N_{j_1,j_2}=\nabla^{j_1}\nabla\Psi\cdot\nabla^{j_2}\nabla\Psi, \ \ j_1+j_2={k^\sharp}+1, \ \ j_1\leq j_2, \ \ j_1,j_2\ge 1.$$ We have $j_1\le \frac{{k^\sharp}}{2}$ and hence the $L^\infty $ smallness \eqref{smallglobalboot} yields:
$$
\int \rho_T^2|\nabla^{j_1}\nabla\Psi\nabla^{j_2}\nabla\Psi|^2\leq \mathcal d \int \rho_T^2\frac{|\nabla^{j_2}\nabla\Psi|^2}{\la Z\ra^{2({k^\sharp}-j_2)}}\le e^{-c_{{k^\sharp}}\tau}+{\mathcal d}I_{{k^\sharp}}.
$$

\noindent{\bf step 7} Pointwise bound on the nonlinear term. From \eqref{defnlthoth}: $$
\NL(\tilde{\rho})=(\rho_D+\rhot)^{p-1}-\rho_D^{p-1}-(p-1)\rho_D^{p-2}\rhot=\rho_D^{p-1}F\left(\frac{\rhot}{\rho_D}\right), \ \ F(v)=(1+v)^{p-1}-1-(p-1)v$$ which satisfies for $|v|\le \frac 12$:
 $$ |F^{(m)}(v)|\lesssim_m\left|\begin{array}{l} v^2\ \ \mbox{for}\ \ m=0\\ |v| \  \ \mbox{for}\ \ m=1\\ 1\ \ \mbox{for}\ \ m\ge 2.
 \end{array}\right.
 $$
 We claim with $v=\frac{\rhot}{\rho_D}$:
\be
\label{pointwiseboundnltilde}
\nabla \Delta^{K}\NL(\rhot)=F'(v)\rho_D^{p-1}\frac{\nabla \rhot_{({k^\sharp})}}{\rho_D}+O\left(\frac{\mathcal d}{\rho_D} \rho_D^{p-1}\sum_{j=0}^{{k^\sharp}}\frac{|\nabla^{j}\rhot|}{\la Z\ra^{{k^\sharp}+1-j}}\right).
\ee
 Indeed, we expand:
 \bee
 &&\nabla \Delta^{K}\NL(\rhot)= \nabla \Delta^{K}\left[\rho_D^{p-1}F(v)\right]\\
 &=& \rho_D^{p-1}\nabla\Delta ^{K} F(v)+\sum_{j_1+j_2={k^\sharp}+1, j_2\le {k^\sharp}}c_{j_1,j_2}\nabla^{j_1}(\rho_D^{p-1})\nabla^{j_2}F(v)
 \eee
 and claim:
 \be
 \label{veniennenvenevo}
 \rho_D^{p-1}\nabla\Delta ^{K} F(v) =\rho_D^{p-1} F'\frac{\nabla \rhot_{({k^\sharp})}}{\rho_D}+O\left(\frac{\mathcal d}{\rho_D} \rho_D^{p-1}\sum_{j=0}^{{k^\sharp}}\frac{|\nabla^{j}\rhot|}{\la Z\ra^{{k^\sharp}+1-j}}\right)
 \ee
 and 
 \be
 \label{cenvenvneneneo}
 \left|\sum_{j_1+j_2={k^\sharp}+1, j_2\le {k^\sharp}}c_{j_1,j_2}\nabla^{j_1}(\rho_D^{p-1})\nabla^{j_2}F(v)\right|\leq \frac{\mathcal d}{\rho_D} \rho_D^{p-1}\sum_{j=0}^{{k^\sharp}}\frac{|\nabla^{j}\rhot|}{\la Z\ra^{{k^\sharp}+1-j}}
 \ee
 which yield \eqref{pointwiseboundnltilde}.\\
 
 \noindent{\em Proof of \eqref{veniennenvenevo}}. We  recall the general Faa di Bruno formula
 $$\nabla^jF(G(x))= \sum_{m_1+2m_2+\dots+jm_j=j}c_{m_1,\dots,m_j}F^{(m_1+\dots+m_j)}(x)\Pi_{i=1}^j(\nabla^iG(x))^{m_i}.$$
For $j={k^\sharp}+1$ the highest order derivative is $m_{{k^\sharp}+1}=1$, $m_1=\dots=m_{{k^\sharp}}=0$ and hence: 
 \bea
 \label{neinveinenvoenve}
\nonumber  &&\nabla \Delta^{K}F(G(x))=F'(G(x))\nabla \Delta^{K}G(x)\\
 &+& \sum_{m_1+2m_2+\dots+{k^\sharp}m_{{k^\sharp}}={k^\sharp}+1}c_{m_1,\dots,m_{{k^\sharp}}}F^{(m_1+\dots+m_{{k^\sharp}})}(x)\Pi_{j=1}^{{k^\sharp}}(\nabla^jG)^{m_j}.
 \eea
 From Leibniz with $G=\frac{\rhot}{\rho_D}$:
 $$|\nabla^{j}G|\lesssim\sum_{j_1+j_2=j}\frac{|\nabla^{j_1}\rhot|}{\rho_D\la Z\ra^{j_2}}\lesssim\frac{1}{\rho_D} \sum_{j_1=0}^j\frac{|\nabla^{j_1}\rhot|}{\la Z\ra^{j-j_1}}.
 $$
 \noindent\underline{First term}. We compute:
 \bee
 F'(G(x))\nabla \Delta^{K}G(x)&=&F'(G(x))\left[ \frac{\nabla \rhot_{({k^\sharp})}}{\rho_D}+O\left(\frac{1}{\rho_D} \sum_{j=0}^{{k^\sharp}}\frac{|\nabla^{j}\rhot|}{\la Z\ra^{{k^\sharp}+1-j}}\right)\right]\\
 & = & F'(G(x))\frac{\nabla \rhot_{({k^\sharp})}}{\rho_D}+O\left(\frac{\mathcal d}{\rho_D} \sum_{j=0}^{{k^\sharp}}\frac{|\nabla^{j}\rhot|}{\la Z\ra^{{k^\sharp}+1-j}}\right)
 \eee
 with $\mathcal d$-smallness coming from $|F'|\le \frac {\rhot}{\rho_D}\le \mathcal d$.\\

\noindent\underline{Faa di Bruno term \eqref{neinveinenvoenve}}. We distinguish cases.\\
 If $m_{{k^\sharp}}=1$, then $m_{1}=1$ and $m_2=\dots=m_{{k^\sharp}-1}=0$ and therefore
 \bee
 &&|F^{(m_1+\dots+m_{{k^\sharp}})}(v)\Pi_{j=1}^{{k^\sharp}}(\nabla^jG)^{m_j}|=|F''(v)|\nabla^{{k^\sharp}}G||\nabla G|\leq \mathcal d \sum_{j=0}^{{k^\sharp}}\frac{|\nabla^{j}\rhot|}{\la Z\ra \rho_D\la Z\ra^{{k^\sharp}-j}}\\
 &\le& \frac{\mathcal d}{\rho_D} \sum_{j=0}^{{k^\sharp}}\frac{|\nabla^{j}\rhot|}{\la Z\ra^{{k^\sharp}+1-j}}
 \eee
 with $\mathcal d$-smallness coming from the bound for $\nabla G$.

 If $m_{{k^\sharp}}=0$, then all $j$-derivatives are of order $\le {k^\sharp}-1$. If $j\le {k^\sharp}-2$ then $$|\nabla^{j}G|\lesssim \frac{1}{\rho_D} \sum_{j_1=0}^j\frac{|\nabla^{j_1}\rhot|}{\la Z\ra^{j-j_1}}\lesssim \frac{\mathcal d}{\la Z\ra^j}.$$ Now, either there exist $i_0< j_0$ with $m_{i_0}\ge 1, m_{j_0}\ge 1$, or there exist $i_0<{k^\sharp}-2$ with $m_{i_0}\ge 2$. In the either case:
 $$|\Pi_{j=1}^{{k^\sharp}-1}(\nabla^jG)^{m_j}|\lesssim \frac{1}{\la Z\ra^{{k^\sharp}+1}}\Pi_{j=1}^{{k^\sharp}}(\la Z\ra^{j}\pa^jG)^{m_j}\leq \frac{\mathcal d }{\la Z\ra^{{k^\sharp}+1}}\sum_{j=0}^{{k^\sharp}-1}\la Z\ra^j|\nabla^{j}\rhot|
$$

The collection of above bounds concludes the proof of \eqref{veniennenvenevo}.\\

\noindent{\em Proof of \eqref{cenvenvneneneo}}. First
$$\left|\sum_{j_1+j_2={k^\sharp}+1, j_2\le {k^\sharp}}c_{j_1,j_2}\nabla^{j_1}(\rho_D^{p-1})\nabla^{j_2}F(v)\right|\lesssim \rho_D^{p-1}\left|\sum_{j=0}^{{k^\sharp}}\frac{|\nabla^{j}F(v)|}{\la Z\ra^{{k^\sharp}+1-j}}\right|.$$ Let $n\le {k^\sharp}$, then 
\bee
|\nabla^{n}F(v)|&\lesssim&  \sum_{m_1+2m_2+\dots+n m_{n}=n}|F^{(m_1+\dots+m_n)}(v)|\Pi_{j=1}^n|\nabla^jG(x)|^{m_j}\\
& \lesssim & \frac{1}{\la Z\ra^{n}}\sum_{m_1+2m_2+\dots+n m_{n}=n}|F^{(m_1+\dots+m_n)}(v)|\Pi_{j=1}^n|\la Z\ra^j\nabla^jG(x)|^{m_j}.
\eee
 Either $m_n=1$ in which case $m_1=\dots=m_{n-1}=0$ and hence $$|F^{(m_1+\dots+m_n)}(v)|\Pi_{j=1}^n|\la Z\ra^j\nabla^jG(x)|^{m_j}\leq \frac{\mathcal d}{\rho_D} \frac{|\nabla^n\rhot|}{\la Z\ra^{n-j}}
 $$
 or $m_n=0$ and there at least two terms as above:
 $$|F^{(m_1+\dots+m_n)}(v)|\Pi_{j=1}^n|\la Z\ra^j\nabla^jG(x)|^{m_j}\leq \frac{\mathcal d}{\rho_D} \sum_{j=0}^{n}\frac{|\nabla^{j}\rhot|}{\la Z\ra^{n-j}}.
 $$
 Hence, by Leibniz:
 \bee
 &&\left|\sum_{j_1+j_2={k^\sharp}+1, j_2\le {k^\sharp}}c_{j_1,j_2}\nabla^{j_1}(\rho_D^{p-1})\nabla^{j_2}F(v)\right|\lesssim \sum_{j_1+j_2={k^\sharp}+1}\frac{\rho_{D}^{p-1}}{\la Z\ra^{j_1}}\frac{\mathcal d}{\rho_D} \sum_{j=0}^{j_2}\frac{|\nabla^{j}\rhot|}{\la Z\ra^{j_2-j}}\\
 & \lesssim & \frac{\mathcal d}{\rho_D} \rho_D^{p-1}\sum_{j=0}^{{k^\sharp}}\frac{|\nabla^{j}\rhot|}{\la Z\ra^{{k^\sharp}+1-j}}
 \eee
 and  \eqref{cenvenvneneneo} is proved.\\

\noindent{\bf step 8} $\NL(\rhot)$ term. We claim 
\be
\label{enioneigoiheohie}
\mathcal J\equiv \int\rho_T^2\nabla \Delta^{K}\NL(\rhot)\cdot\nabla \Psit_{({k^\sharp})}=\frac{d}{d\tau}\left\{O(\mathcal d I_{{k^\sharp}})\right\}+O\left( e^{-c_{{k^\sharp}}\tau}+\mathcal d I_{{k^\sharp}}\right).
\ee 
Indeed, we inject \eqref{pointwiseboundnltilde} and estimate:
\bee
 \int \rho_T^2\frac{\mathcal d}{\rho_D} \rho_D^{p-1}\sum_{j=0}^{{k^\sharp}}\frac{|\nabla^{j}\rhot|}{\la Z\ra^{{k^\sharp}+1-j}}|\nabla \Psit_{({k^\sharp})}|\le e^{-c_{{k^\sharp}}\tau}+\mathcal d I_{{k^\sharp}}
\eee
Hence
$$\matchal J=\int \rho_T^2F'\left(\frac{\rhot}{\rho_D}\right)\rho_D^{p-1}\frac{\nabla \rhot_{({k^\sharp})}}{\rho_D}\cdot\nabla \Psit_{({k^\sharp})}+O\left(e^{-c_{{k^\sharp}}\tau}+\mathcal d I_{{k^\sharp}}\right).$$ We now integrate by parts:
\bee
&&\int \rho_T^2F'\left(\frac{\rhot}{\rho_D}\right)\rho_D^{p-1}\frac{\nabla \rhot_{({k^\sharp})}}{\rho_D}\cdot\nabla \Psit_{({k^\sharp})}=-\int \rhot_{({k^\sharp})}\nabla \cdot\left(F'\left(\frac{\rhot}{\rho_D}\right)\rho_D^{p-2}\rho_T^2\nabla \Psit_{({k^\sharp})}\right)\\
& = & -\int \rhot_{({k^\sharp})}\left[F'\left(\frac{\rhot}{\rho_D}\right)\rho_D^{p-2}\nabla \cdot(\rho_T^2\nabla \Psit_{({k^\sharp})})+\rho_T^2\nabla \Psit_{({k^\sharp})}\cdot\nabla\left(F'\left(\frac{\rhot}{\rho_D}\right)\rho_D^{p-2}\right)\right].
\eee
We estimate
$$\left|\nabla\left(F'\left(\frac{\rhot}{\rho_D}\right)\rho_D^{p-2}\right)\right|\lesssim \frac{\mathcal d \rho_D^{p-2}}{\la Z\ra}$$
and hence 
$$\left|\int \rhot_{({k^\sharp})}\rho_T^2\nabla \Psit_{({k^\sharp})}\cdot\nabla\left(F'\left(\frac{\rhot}{\rho_D}\right)\rho_D^{p-2}\right)\right|\lesssim \mathcal d \int \frac{\rhot_{({k^\sharp})}|\nabla \Psit_{({k^\sharp})}|\rho_D^{p-1}\rho_D}{\la Z\ra}\le \mathcal d I_{{k^\sharp}}.$$
We now insert \eqref{estqthohrkbisbis} 
\bea
\label{vnopeeopopejpjpejo}
&&-\int \rhot_{({k^\sharp})}F'\left(\frac{\rhot}{\rho_D}\right)\rho_D^{p-2}\nabla \cdot(\rho_T^2\nabla \Psit_{({k^\sharp})})\\
\nonumber & = &\int \rhot_{({k^\sharp})}F'\left(\frac{\rhot}{\rho_D}\right)\rho_D^{p-2}\rho_T\left[ \pa_\tau \rhot_{({k^\sharp})}-(\Ht_1-{k^\sharp}(\Ht_2+\Lambda \Ht_2))\rhot_{({k^\sharp})}+\Ht_2\Lambda \rhot_{({k^\sharp})}\right.\\
\nonumber &+& \left. (\Delta^{K}\rho_T)\Delta \Psit+{k^\sharp}\nabla\rho_T\cdot\nabla \Psit_{({k^\sharp})}+ 2\nabla(\Delta^{K}\rho_T)\cdot\nabla \Psit-  F_1\right]
\eea
and treat all terms in the above identity. The $\pa_\tau \rhot_{({k^\sharp})}$ is integrated by parts in time:
\bee
&&\int \rhot_{({k^\sharp})}F'\left(\frac{\rhot}{\rho_D}\right)\rho_D^{p-2}\rho_T\pa_\tau \rhot_{({k^\sharp})}=\frac12\frac{d}{d\tau}\left\{\int F'\left(\frac{\rhot}{\rho_D}\right)\rho_D^{p-2}\rho_T\rho_{({k^\sharp})}^2\right\}\\
& - & \frac 12\int \rho_{({k^\sharp})}^2\pa_\tau \left(F'\left(\frac{\rhot}{\rho_D}\right)\rho_D^{p-2}\rho_T\right).
\eee
We estimate the boundary term in time $$\left|\int F'\left(\frac{\rhot}{\rho_D}\right)\rho_D^{p-2}\rho_T\rho_{({k^\sharp})}^2\right|\le \mathcal d \int \rho_D^{p-1}\rhot_{({k^\sharp})}^2.$$
Then from \eqref{globalbeahvoirubis}:
$$\left|F'\left(\frac{\rhot}{\rho_D}\right)\pa_\tau(\rho_D^{p-2}\rho_T)\right|\leq \mathcal d \rho_T^{p-1}$$
and using \eqref{exactliearizedflowtilde}, \eqref{smallglobalboot}:
$$\left|\pa_\tau \left[F'\left(\frac{\rhot}{\rho_D}\right)\right]\right|\lesssim \frac{|\pa_\tau\rhot|}{\rho_D}+\frac{\rhot}{\rho_D}\frac{|\pa_\tau\rho_D|}{\rho_D}\le \mathcal d$$
with $\mathcal d$-smallness coming from the pointwise estimates for $\rhot$ and $F'$,
which ensures
$$\left|\int \rho_{({k^\sharp})}^2\pa_\tau \left(F'\left(\frac{\rhot}{\rho_D}\right)\rho_D^{p-2}\rho_T\right)\right|\leq \mathcal d I_{{k^\sharp}}.$$ The remaining terms in \eqref{vnopeeopopejpjpejo} are estimated by brute force. First
$$
\left|\int \rhot_{({k^\sharp})}F'\left(\frac{\rhot}{\rho_D}\right)\rho_D^{p-2}\rho_T\left[-(\Ht_1-{k^\sharp}(\Ht_2+\Lambda \Ht_2))\rhot_{({k^\sharp})}\right]\right|\leq \mathcal d I_{{k^\sharp}}$$
with $\mathcal d$-smallness coming from $F'$.
Integrating by parts,
\bee
&&\left|\int \rhot_{({k^\sharp})}F'\left(\frac{\rhot}{\rho_D}\right)\rho_D^{p-2}\rho_T\left[H_2\Lambda \rhot_{({k^\sharp})}\right]\right|\\
&=&  \left|\frac 12\int \rhot_{({k^\sharp})}^2\left[d\left(F'\left(\frac{\rhot}{\rho_D}\right)\rho_D^{p-2}\rho_T\right)+\Lambda \left(F'\left(\frac{\rhot}{\rho_D}\right)\rho_D^{p-2}\rho_T\right)\right]\right|\\
& \leq & \mathcal d I_{{k^\sharp}}
\eee
with $\mathcal d$-smallness coming from either $F'$ or the pointwise estimates for $\rhot$. Then using \eqref{veniovnineonelweroidre}:
\bee
&&\left|\int \rhot_{({k^\sharp})}F'\left(\frac{\rhot}{\rho_D}\right)\rho_D^{p-2}\rho_T \Delta^{K}\rho_T\Delta \Psit\right|\le \mathcal d \int \rhot_{({k^\sharp})}\rho^{p-1}_T\left[\frac{\rho_T|\Delta \Psit|}{\la Z\ra^{{k^\sharp}}}+|\rhot_{({k^\sharp})}|\right]\\
& \leq & \mathcal d\left[\int \rho^{p-1}_T\rhot_{({k^\sharp})}^2+\int \rho_T^2\frac{|\Delta \Psit|^2}{\la Z\ra^{2{k^\sharp}}}\right]\le \mathcal d  I_{{k^\sharp}}+e^{-c_{{k^\sharp}}\tau}.
\eee
We finally estimate
\bee
\left|\int \rhot_kF'\left(\frac{\rhot}{\rho_D}\right)\rho_D^{p-2}\rho_Tk\nabla\rho_T\cdot\nabla \Psit_k\right|\leq \mathcal d\int \rho_T^{p-1}\rhot_k\frac{\rho_T}{\la Z\ra}|\nabla \Psit_k|\le \mathcal d I_{{k^\sharp}}
\eee
and from \eqref{estfoneessentialbis}:
$$
\left|\int \rhot_kF'\left(\frac{\rhot}{\rho_D}\right)\rho_D^{p-2}\rho_TF_1\right|\leq e^{-c_{{k^\sharp}}\tau}+\mathcal d I_{{k^\sharp}}.$$
The collection of above bounds concludes the proof of \eqref{enioneigoiheohie}.\\

\noindent{\bf step 9} Conclusion for ${k^\sharp}\geq {k^\sharp}(d,r)$ large enough. The collection of above bounds yields, using also \eqref{globalbeahvoiru}, \eqref{globalbeahvoirubis},
 the differential inequality 
\bee
&&\frac 12\frac{d}{d\tau}\left\{I_{{k^\sharp}}(1+O(\mathcal d))\right\}\\
&\leq& -{k^\sharp}\left[1+O\left(\frac{1}{{k^\sharp}}\right)\right]\int( \Ht_2+\Lambda \Ht_2)\left[(p-1) \rho_D^{p-2}\rho_T\rhot_{{({k^\sharp})}}^2+\rho_T^2|\nabla \Psit_{({k^\sharp})}|^2\right]\\
& - & {k^\sharp}\int (p-1)\rho_D\pa_Z (\rho_D^{p-1})\rhot_{({k^\sharp})}\pa_Z \Psit_{({k^\sharp})}+\mathcal d I_{{k^\sharp}}+e^{-c_{{k^\sharp}}\tau}.
\eee
We now recall \eqref{enineinevnveo}: $\exists c_{d,p,r}>0$ such that uniformly $\forall Z\ge 0$,
\bea
\label{enineinevnveo'}
\nonumber 
&&(\Ht_2+\Lambda \Ht_2)\left[ (p-1)\rho_D^{p-2}\rho_T\rhot_{{({k^\sharp})}}^2+\rho_T^2|\nabla \Psit_{({k^\sharp})}|^2\right]+ (p-1)\rho_D\pa_Z (\rho_D^{p-1})\rhot_{({k^\sharp})}\pa_Z \Psit_{({k^\sharp})}\\
& \ge & c_{d,p,r}\left[ (p-1) \rho_D^{p-2}\rho_T\rhot_{{({k^\sharp})}}^2+\rho_T^2|\nabla \Psit_{({k^\sharp})}|^2\right]
\eea
which taking ${k^\sharp}>k^*(d,p)$ yields the pointwise differential inequality:
\bea\label{eq:finalODEinequalityforthehighestenergyNS}
\frac 12\frac{d}{d\tau}\left\{I_{{k^\sharp}}(1+O(\mathcal d))\right\}+\sqrt{{k^\sharp}}I_{{k^\sharp}}\le e^{-c_{{k^\sharp}}\tau}.
\eea 
Integrating in time,
we obtain \eqref{cneioneoneon}.
\end{proof}

%%%%%%%%%%%%%%%%%%%%%%%%%%

\section{The highest energy norm: the Euler case}

%%%%%%%%%%%%%%%%%%%%%%%%%%%

The Euler case in $d=2$ and $d=3$ for $\ell\leq\sqrt 3$ requires special consideration. In those cases, property (P)
of \eqref{P}, which ensures coercivity of the corresponding quadratic form in \eqref{enineinevnveo}, does not hold for $Z>Z_2$. On the other hand, \eqref{coercivityquadrcouplinginside} still gives us the required coercivity for $Z<Z_2$.
To address this we use the energy indentity \eqref{algebracienergyidnentiypouet}
\bea
\label{algebracienergyidnentiy'}
  &&\frac 12\frac{d}{d\tau}\left\{(p-1)\int \chi\rho_D^{p-2}\rho_T\rhot_{({k^\sharp})}^2+\int\chi\rho_T^2|\nabla \Psit_{({k^\sharp})}|^2\right\}\\
\nonumber& = &  \frac 12\int\left(\frac{\pa_\tau\chi}{\chi}+\frac{\pa_\tau\rho_T}{\rho_T}+(p-2)\frac{\pa_\tau\rho_D}{\rho_D}\right)(p-1)\chi\rho_D^{p-2}\rho_T\rhot_{({k^\sharp})}^2+\frac 12\int\left(\frac{\pa_\tau\chi}{\chi}+2\frac{\pa_\tau\rho_T}{\rho_T}\right)\rhot_T^2|\nabla \Psi_{({k^\sharp})}|^2\\
\nonumber& - & \int (p-1)\chi\rho_D^{p-2}\rho_T\rhot_{({k^\sharp})}^2\left[-\Ht_1+k^{\sharp}(\Ht_2+\Lambda\Ht_2)-\frac d2\Ht_2-\frac 12\Lambda \Ht_2-\frac{p-2}{2}\Ht_2\frac{\Lambda\rho_D}{\rho_D}-\frac{\Ht_2}2\frac{\Lambda\chi}{\chi}\right]\\
\nonumber& - & \int \chi\rhot_T^2|\nabla \Psit_{({k^\sharp})}|^2\left[k^{\sharp}(\Ht_2+\Lambda \Ht_2)+r-2-\frac{d-2}{2}\Ht_2+\frac 12\Lamdba \Ht_2-\frac{\Ht_2}2\frac{\Lambda \chi}{\chi}-\Ht_2\frac{\Lambda\rho_T}{\rho_T}\right]\\
\nonumber& &+\int \rhot_{({k^\sharp})}\pa_Z\Psit_{({k^\sharp})}\left[- k^{\sharp}(p-1)\chi\rho_D^{p-2}\rho_T\pa_Z\rho_T\right.\\
\nonumber &- & \left.k^{\sharp}(p-1)(p-2)\chi\rho_T^2\rho_D^{p-3}\pa_Z\rho_D+(p-1)\rho_D^{p-2}\rho^2_T\pa_Z\chi\right]\\
\nonumber & + & \int F_1\chi(p-1)\rho_D^{p-2}\rho_T\rhot_{({k^\sharp})}+\int \chi\rho^2_T\nabla F_2\cdot\nabla \Psit_{({k^\sharp})}\\
\nonumber& + & \nonumber  \int \left[-(\Delta^K\rho_T)\Delta \Psit-2\nabla(\Delta^K\rho_T)\cdot\nabla \Psit\right](p-1)\chi\rho_D^{p-2}\rho_T\rhot_{({k^\sharp})}\\
\nonumber&- & k^{\sharp}(p-1)(p-2)\int \chi\rho_T^2\pa_Z \Psi_{({k^\sharp})}\left[\pa_Z\left(\rho_D^{p-3}\pa_Z\rho_D\pa_Z\Delta^{K-1}\rhot\right)-\rho_D^{p-3}\pa_Z\rho_D\rhot_{({k^\sharp})}\right]\\
\nonumber& + & 2\int\nabla \Psit\cdot\nabla \Psit_{({k^\sharp})}\nabla\cdot(\chi\rho_T^2\nabla \Psit_{({k^\sharp})})-k^{\sharp}\int \chi\rho_T^2\Psit_{({k^\sharp})}\nabla \Psit_{({k^\sharp})}\cdot \nabla(\Ht_2+\Lambda \Ht_2).
\eea
In the previous section we used this energy inequality with $\chi\equiv 1$. This time we first choose 
\be
\chi=\left|\begin{array}{ll}1\ \ \ \ Z\leq Z_2\\
e^{-j^\sharp (Z-Z_2)}\ \ Z>Z_2, \ \ .
\end{array}\right.
\ee
with $j^\sharp\gg k^\sharp$. This guarantees the coercitivity of the quadratic form \eqref{coereuler}:
\bea
\label{coereuler'}
\nonumber 
k^\sharp \chi (\Ht_2&+&\Lambda \Ht_2)\left[ (p-1)\rho_D^{p-2}\rho_T\rhot_{{({k^\sharp})}}^2+\rho_T^2|\nabla \Psit_{({k^\sharp})}|^2\right]+ (p-1)\rho_D\pa_Z (\rho_D^{p-1})\rhot_{({k^\sharp})}\pa_Z \Psit_{({k^\sharp})}
\\&-&\frac{\Lambda \chi}{\chi}\left\{\frac{H_2}{2}\left[(p-1) Q\rhot_{({k^\sharp})}^2+\rho_P^2|\nabla \Psit_{({k^\sharp})}|^2\right]-\frac{1}{Z}(p-1)\rho_P Q\rhot_{({k^\sharp})}\pa_Z\Psit_{({k^\sharp})}\right\}\notag\\
& \ge & c_{d,p,r} k^\sharp \chi\left[ (p-1) \rho_D^{p-2}\rho_T\rhot_{{({k^\sharp})}}^2+\rho_T^2|\nabla \Psit_{({k^\sharp})}|^2\right]
\eea
We then add \eqref{algebracienergyidnentiy'} wirh $\chi=1$, {multiplied by $\delta>0$, recalling that the analog of  \eqref{enineinevnveo}    holds for $Z\leq Z_2$ and for $Z\geq Z(\mathcal{d})$ for $Z(\mathcal{d})$ large enough. The error term estimates are identical to the ones carried out in the proof of Lemma \ref{proproopr}, and we obtain the following analog of \eqref{eq:finalODEinequalityforthehighestenergyNS}
\bee
\frac 12\frac{d}{d\tau}\left\{\big(I_{{k^\sharp}, \chi}+\de I_{{k^\sharp},\chi=1}\big)(1+O(\mathcal d))\right\}+\sqrt{{k^\sharp}}\big(I_{{k^\sharp},\chi}+\de I_{{k^\sharp},\chi=1}\big)\le e^{-c_{{k^\sharp}}\tau}+\de k^\sharp I_{{k^\sharp},\chi(Z_2,Z(\mathcal d))},
\eee 
where $\chi(Z_2,Z(\mathcal d))$ denotes the characteristic function of the set $Z_2<Z<Z(\mathcal{d})$.
We now choose $\de$ such that 
\bee
\delta\ll \frac{1}{\sqrt{k^\sharp}}e^{-j^\sharp(Z(\mathcal{d})-Z_2)}
\eee
which implies 
\bee
\frac 12\frac{d}{d\tau}\left\{\big(I_{{k^\sharp}, \chi}+\de I_{{k^\sharp},\chi=1}\big)(1+O(\mathcal d))\right\}+\sqrt{{k^\sharp}}\big(I_{{k^\sharp},\chi}+\de I_{{k^\sharp},\chi=1}\big)\le e^{-c_{{k^\sharp}}\tau}. 
\eee
This yields the following lemma.}\\

\begin{lemma}[Control of the highest unweighted energy norm]
\label{proproopr'}
For some universal constant $c_{{k^\sharp}}$ ($c_{{k^\sharp}}\ll \delta_g$),
\be
\label{cneioneoneonbis}
(p-1)\int \rho_D^{p-2}\rho_T\rhot_{({k^\sharp})}^2+\int\rho_T^2|\nabla \Psit_{({k^\sharp})}|^2\le e^{-c_{{k^\sharp}}\tau}
\ee
\end{lemma}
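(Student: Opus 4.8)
\textbf{Proof plan for Lemma \ref{proproopr'}.} The goal is to reproduce the conclusion of Lemma \ref{proproopr} in the Euler regimes ($d=2$, or $d=3$ with $\ell\le\sqrt 3$) where property $(P)$ of \eqref{P} fails for $Z>Z_2$, so that the unweighted identity \eqref{algebracienergyidnentiybis} no longer produces a coercive leading quadratic form on all of space. The plan is to run the quasilinear energy identity \eqref{algebracienergyidnentiypouet} \emph{twice with two different cutoffs}, and combine the two identities with a well-chosen small coefficient so that the combined energy is coercive globally.

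\emph{Step 1 (weighted identity).} First I would use \eqref{algebracienergyidnentiypouet} with the cutoff
\be
\chi=\left|\begin{array}{ll}1\ \ \ \ Z\leq Z_2\\ e^{-j^\sharp (Z-Z_2)}\ \ Z>Z_2\end{array}\right.
\ee
with $j^\sharp\gg k^\sharp$. The point of this exponential tail is that the $\Lambda\chi$ quadratic form in \eqref{outside} has \emph{negative} discriminant for $Z>Z_2$ (by $\Delta(Z)>0$ there), so a very steep $\chi$ makes the full $(\chi,\Lambda\chi)$ quadratic form positive definite — this is precisely \eqref{coereuler'}. For $Z\le Z_2$ the inner repulsivity \eqref{coercivityquadrcouplinginside} already gives coercivity (and $\Lambda\chi=0$ there). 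So with this $\chi$ the leading-order terms in \eqref{algebracienergyidnentiypouet} yield, on the whole space, a term $-c_{d,p,r}k^\sharp\, I_{k^\sharp,\chi}$ on the right, where $I_{k^\sharp,\chi}=(p-1)\int\chi\rho_D^{p-2}\rho_T\rhot_{(k^\sharp)}^2+\int\chi\rho_T^2|\nabla\Psit_{(k^\sharp)}|^2$.

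\emph{Step 2 (unweighted identity and its defect).} Next I would use \eqref{algebracienergyidnentiybis} (i.e.\ $\chi\equiv1$), exactly as in Lemma \ref{proproopr}. The analog of the coercivity bound \eqref{enineinevnveo} now holds only on $\{Z\le Z_2\}\cup\{Z\ge Z(\mathcal d)\}$ for $Z(\mathcal d)$ large (the latter because $w,\sigma\to0$ so $H_2+\Lambda H_2\to1$ and the cross term becomes negligible). On the remaining compact annulus $Z_2<Z<Z(\mathcal d)$ the leading quadratic form may fail to be positive, producing an uncontrolled term bounded by $C k^\sharp\, I_{k^\sharp,\chi(Z_2,Z(\mathcal d))}$, where $\chi(Z_2,Z(\mathcal d))$ is the indicator of that annulus.

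\emph{Step 3 (combine and absorb).} Then I would add $\delta$ times the unweighted identity to the weighted one. All the error terms ($F_1$, $F_2$, dissipation, $\NL$, cross terms, $\pa_\tau\rho$ terms) are estimated verbatim as in the proof of Lemma \ref{proproopr} — they produce $O(e^{-c_{k^\sharp}\tau}+\mathcal d I_{k^\sharp})$ and a total-derivative nuisance $\frac{d}{d\tau}\{O(\mathcal d I_{k^\sharp})\}$ which is absorbed on the left — so nothing new is needed there. The combined leading terms give $-\sqrt{k^\sharp}(I_{k^\sharp,\chi}+\delta I_{k^\sharp,\chi=1})$ minus the bad contribution $+\delta C k^\sharp I_{k^\sharp,\chi(Z_2,Z(\mathcal d))}$. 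On the annulus $Z_2<Z<Z(\mathcal d)$ one has $\chi\ge e^{-j^\sharp(Z(\mathcal d)-Z_2)}$, hence $I_{k^\sharp,\chi(Z_2,Z(\mathcal d))}\le e^{j^\sharp(Z(\mathcal d)-Z_2)}I_{k^\sharp,\chi}$; choosing
\be
\delta\ll \frac{1}{\sqrt{k^\sharp}}\,e^{-j^\sharp(Z(\mathcal d)-Z_2)}
\ee
makes $\delta C k^\sharp I_{k^\sharp,\chi(Z_2,Z(\mathcal d))}$ absorbable by $\tfrac12\sqrt{k^\sharp}I_{k^\sharp,\chi}$. This yields
\bee
\frac 12\frac{d}{d\tau}\left\{\big(I_{{k^\sharp}, \chi}+\de I_{{k^\sharp},\chi=1}\big)(1+O(\mathcal d))\right\}+\sqrt{{k^\sharp}}\big(I_{{k^\sharp},\chi}+\de I_{{k^\sharp},\chi=1}\big)\le e^{-c_{{k^\sharp}}\tau},
\eee
and integrating in time (Gronwall) and using $\delta I_{k^\sharp,\chi=1}\lesssim I_{k^\sharp}$ plus $I_{k^\sharp,\chi}\sim I_{k^\sharp}$ on the compact set where $\chi\sim1$ — together with the separate control of the far region $Z>Z_2$ coming from the weighted bound — gives \eqref{cneioneoneonbis}. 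The main obstacle is Step 1: verifying that the steep-tail cutoff genuinely restores global positivity of the $(\chi,\Lambda\chi)$ leading quadratic form \eqref{coereuler'} uniformly in $Z$, balancing the (bounded, negative-discriminant) $\Lambda\chi$ form against the possibly-indefinite $\chi$ form while keeping $j^\sharp$ independent of $\tau$; everything else is a bookkeeping repetition of Section \ref{sectionsobolev}.
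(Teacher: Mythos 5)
Your proposal is correct and follows essentially the same route as the paper: run the quasilinear identity \eqref{algebracienergyidnentiypouet} with the steep exponential cutoff $\chi$ to obtain global coercivity via \eqref{coereuler}, then add $\delta$ times the $\chi\equiv1$ identity, note the $\chi=1$ quadratic form is coercive on $\{Z\le Z_2\}\cup\{Z\ge Z(\mathcal d)\}$, absorb the leftover annular contribution by taking $\delta\ll \frac{1}{\sqrt{k^\sharp}}e^{-j^\sharp(Z(\mathcal d)-Z_2)}$, and integrate in time — exactly the argument of Section 8.
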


%%%%%%%%%%%%%%%%%%%%%%%%%%%%%%%%%%%%%%%%%%%%%%%%%%%

\section{Weighted energy estimates}

%%%%%%%%%%%%%%%%%%%%%%%%%%%%%%%%%%%%%%%%%%%%%%%%%%%

We now rerun the energy estimates with suitable growing weights. This will allow us to close the bound \eqref{boundbootbound}.  Given $\sigma\in \Bbb R$, we recall the notation
$$
\|\rhot,\Psit\|_{m,\sigma}^2=\sum_{k=0}^m\int \la Z\ra^{2k-2\sigma-d+\frac{2(r-1)(p+1)}{p-1}}
\left\la \frac Z{Z^*}\right\ra^{2n_P-\frac{2(r-1)(p+1)}{p-1}+2\sigma}\left[(p-1)\rho_D^{p-2}\rho_T\rhot_k^2+\rho_T^2|\nabla \Psit_k|^2\right]
$$
We let \be
\label{defimsigma}
I_{m,\sigma}=\int \la Z\ra^{2k-2\sigma-d+\frac{2(r-1)(p+1)}{p-1}}\left\la \frac Z{Z^*}\right\ra^{2n_P-\frac{2(r-1)(p+1)}{p-1}+2\sigma}\left[(p-1)\rho_D^{p-2}\rho_T\rhot_m^2+\rho_T^2|\nabla \Psit_m|^2\right]
\ee
and claim:

\begin{lemma}[Weighted energy bounds]
\label{propinduction}
There exists $0<\sigma({k^\sharp})\ll \de_g$ such that {for $\sigma=\sigma({k^\sharp})$,} and $\tau_0\ge \tau_0({k^\sharp})\gg 1$, for all $1\le m\le {k^\sharp}$, $I_{m,\sigma}$ given by \eqref{defimsigma} satisfies the bound for all $\tau\ge\tau_0$
\be
\label{estnienonneoinduction}
I_{m,\sigma}(\tau) \le \d_0 e^{-2\sigma\tau}
\ee
where $\d_0$ is a smallness constant dependent on the data and $\tau_0$.
\end{lemma}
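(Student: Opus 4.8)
The plan is to prove Lemma \ref{propinduction} by a descending induction on the number of derivatives $m$, starting from $m = {k^\sharp}$ where the claim is essentially a consequence of the unweighted bound of Lemma \ref{proproopr} (or Lemma \ref{proproopr'} in the Euler cases) combined with the weighted bootstrap assumption \eqref{boundbootbound}. At the top level, the weight $\la Z\ra^{2{k^\sharp}-2\sigma-d+\ldots}\la Z/Z^*\ra^{\ldots}$ built into $I_{{k^\sharp},\sigma}$ is dominated, for $Z\le Z^*$, by a negative power of $\la Z\ra$ thanks to the choice of the $\chi_j$ weights in \eqref{eight}, so that the unweighted control of $\Delta^K\rhot,\Delta^K\Psit$ and interpolation with the local decay \eqref{eq:bootdecay} already give the required smallness; for $Z\ge Z^*$ the exterior weighted estimates \eqref{eq:strrhor}, \eqref{eq:strrhor} and the bootstrap \eqref{boundbootbound} take over. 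First I would set up the base case carefully, then perform the induction step $m+1 \Rightarrow m$.

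For the induction step I would run the quasilinear energy identity \eqref{algebracienergyidnentiypouet} with $\rhot_{({k^\sharp})},\Psit_{({k^\sharp})}$ replaced by $\pa^m\rhot,\pa^m\Psit$ (using the equivalence of norms noted in the remarks) and with the weight $\chi = \chi_m$ from \eqref{eight}. The key algebraic point is exactly as in Section \ref{sectionsobolev}: the leading-order $(\chi,\Lambda\chi)$ quadratic form is coercive — here one uses \eqref{enineinevnveo} together with the fact that $\Lambda\chi_m/\chi_m$ is a bounded function with the sign structure needed (it is essentially the constant $2m - 2\sigma - d + \frac{2(r-1)(p+1)}{p-1}$ for $Z\le Z^*$ and a different constant for $Z\ge 10Z^*$, with the transition region contributing a bounded error), so that choosing ${k^\sharp}$ large makes the ${k^\sharp}(\Ht_2+\Lambda\Ht_2)$ term dominate and the quadratic form is $\ge c_{d,p,r}{k^\sharp}\chi_m[\ldots]$. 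Then all the error terms $F_1$, $F_2$, the dissipation term, and $\NL(\rhot)$ are estimated verbatim as in the proof of Lemma \ref{proproopr}, except that now the commutator terms $[\Delta^m,H_i]$, $\A_m(\cdot)$, the localization errors $\Et_{P,\rho},\Et_{P,\Psi}$, and the nonlinear Faa di Bruno terms all involve \emph{lower-order} derivatives $|\nabla^j\rhot|,|\nabla^j\Psit|$ with $j\le m$, which are controlled by $I_{j,\sigma}$ for $j\le m$ — and the ones with $j = m$ come with a prefactor that is either $\mathcal d$-small (from the pointwise bootstrap) or of order $\la Z\ra^{-r}$ (tail decay), hence absorbable. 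This produces a differential inequality of the form $\frac12\frac{d}{d\tau}\{I_{m,\sigma}(1+O(\mathcal d))\} + (\sqrt{{k^\sharp}} - C\sigma) I_{m,\sigma} \le C\sum_{j<m} I_{j,\sigma} + e^{-c_{{k^\sharp}}\tau}$, and an inductive argument plus Grönwall — using $I_{j,\sigma}\le \d_0 e^{-2\sigma\tau}$ for $j>m$ from higher steps and for $j<m$... wait, actually the direction must be consistent: I would do \emph{ascending} induction in $m$ from $m=0$ (or from low $m$ where the low-Sobolev decay \eqref{eq:bootdecay} applies with the shifted weight $\nu_0$), with the lower-order terms on the right already controlled, giving $I_{m,\sigma}(\tau)\le I_{m,\sigma}(\tau_0)e^{-2\sigma\tau} + \text{(absorbed)}$ once $\sqrt{{k^\sharp}}\gg 2\sigma$.

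The main obstacle I expect is the Navier--Stokes dissipation term $b^2\Delta^m\mathcal F$ appearing in $F_2$, exactly as in step 5 of the proof of Lemma \ref{proproopr}, but now with the growing weight $\chi_m$ inside the integral: one must show $b^2\int \chi_m\rho_T^2 \nabla\Delta^m\mathcal F \cdot \nabla\Psit_m$ is controlled, which requires using the global dissipative estimate \eqref{eq:globald} of Lemma \ref{lem:NS} together with the decay $b^2 = e^{-2\e\tau}$ and the condition $\e>0$, and checking that the weight $\chi_m$ combined with the powers of $(Z^*)$ produced by renormalization still yields a convergent $\tau$-integral. This is the one place where the dimension $d$ and the restriction $\ell>\sqrt3$ genuinely enter, and the bookkeeping of powers of $\la Z\ra$ versus $\la Z/Z^*\ra$ in the regions $Z\le Z^*$, $Z^*\le Z\le 10Z^*$, $Z\ge 10Z^*$ is delicate; everything else is a routine (if lengthy) repetition of the unweighted estimate with an extra bounded logarithmic-derivative-of-weight factor. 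A secondary subtlety is the coercivity of the $\Lambda\chi_m$ quadratic form for $Z>Z_2$ in the Euler cases $d=2$, $d=3,\ell\le\sqrt3$, which is handled exactly as in Section 8 by adding a $\delta$-multiple of the $\chi\equiv1$ identity localized past $Z_2$ with $\delta$ chosen exponentially small in $j^\sharp(Z(\mathcal d)-Z_2)$.
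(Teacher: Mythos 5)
Your proposal correctly identifies the scaffolding — the $\chi_m$-weighted quasilinear energy identity with $\pa^m$ commutation, the need to control the Navier--Stokes dissipation via \eqref{eq:globald}, and the $Z\lessgtr Z^*$ region split in the $\tau$-integral — but it misidentifies the mechanism producing the good sign in the linear part. You propose to reuse the large-$k^{\sharp}$ coercivity of $(\Ht_2+\Lambda\Ht_2)$ from \eqref{enineinevnveo}/\eqref{P} and to obtain a differential inequality with rate $\sqrt{k^{\sharp}}-C\sigma$. This is not what happens once the weight $\chi_m$ from \eqref{eight} is inserted: its logarithmic derivative $\tfrac12\tfrac{\pa_\tau\chi_m+\Lambda\chi_m}{\chi_m}=k-\sigma-\tfrac d2+\tfrac{(p+1)(r-1)}{p-1}+O(\la Z\ra^{-1})$ cancels the contribution of the potential $(\Ht_1-k\Ht_2)$ (with $\Ht_1\to -\tfrac{2(r-1)}{p-1}$, $\Ht_2\to1$ at leading order) together with the Pohozaev and $\pa_\tau\rho_T$ terms, and the surviving coefficient is exactly $-\sigma+O(\la Z\ra^{-1})$ — this is the content of the paper's Step 4 ``Conclusion for linear terms.'' In particular, no $(P)$-type repulsivity is invoked in the weighted estimate; correspondingly your suggested Section-8 exponential-cutoff fix for the Euler cases $d=2$ and $d=3,\ell\le\sqrt3$ is unnecessary here. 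The cross terms (which in Section 7 force the discriminant analysis of \eqref{enineinevnveo}) come in Section 9 with an extra $\la Z\ra^{-1/2}$ and are shown to be $\lesssim\|\rhot,\Psit\|^2_{k,\sigma+1/2}$, i.e. they are absorbed by decay in $Z$, not by coercivity.

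The second missing ingredient is the interpolation Lemma \ref{lem:inter}: for any $\mathcal a>0$ and $m\le k^{\sharp}$, $\|\rhot,\Psit\|^2_{m,\sigma+\mathcal a}\lesssim e^{-c_{\mathcal a,k^{\sharp}}\tau}$, obtained by splitting at $Z=(Z^*)^c$ and interpolating the highest-order \emph{unweighted} bound \eqref{cneioneoneon} of Lemma \ref{proproopr} (together with the local low-Sobolev decay \eqref{eq:bootdecay}) against the bootstrap \eqref{boundbootbound}. This is what kills every commutator $[\pa^m,\Ht_i]$, every source term $\Et_{P,\cdot}$, every cross term and every sub-top Faa di Bruno contribution — each carries an extra $\la Z\ra^{-\delta}$ and hence lives in $\|\cdot\|_{m,\sigma+\delta}$. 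Your proposed induction on $m$ (which you yourself hesitate between descending and ascending) does not achieve this: the error terms sit in $I_{j,\sigma+\delta}$, which is not the quantity you would be inducting on, and the extra time decay produced by the shifted weight is precisely what cannot be recovered from $I_{j,\sigma}$ alone. The paper instead treats all $1\le m\le k^{\sharp}$ simultaneously, sums with decreasing weights $C_m$ to absorb the lower-order pieces of the dissipation, integrates the single resulting inequality in $\tau$, and closes the dissipation contribution by the split $Z\lessgtr Z^*_{\mathcal b}=(Z^*)^{1+\mathcal b}$ combining \eqref{eq:globald} with the decay $b^2=e^{-2\e\tau}$ — this last part you diagnosed correctly, and it is indeed the only place $d=3$ and $\ell>\sqrt3$ genuinely enter this lemma.
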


\begin{proof}[Proof of Lemma \ref{propinduction}] The proof is parallel to the one of Lemma \ref{proproopr} with one main difference: exponential decay on the compact set $Z\leq (Z^*)^c$ for $0<c\ll 1$ is provided by Lemma \ref{proproopr}, and we use optimal weight in \eqref{defimsigma} to propagate the {\em sharp} exponential decay. This will be essential to close the scale invariant pointwise bounds \eqref{smallglobalboot}.\\

\noindent{\bf step 1} Equation for derivatives. In this section we use 
$$\pa^k=(\pa_1^k,...,\pa_d^k) $$
and $$ \rhot_k=\pa^k\rhot, \ \ \Psit_k=\pa^k\Psit.$$ We use $$[\pa^k,\Lambda]=k\pa^k$$ to compute from \eqref{exactliearizedflowtilde}:
\bea
\label{estqthohrk}
\nonumber \pa_\tau \rhot_k&=&(\Ht_1-k\Ht_2)\rhot_k-\Ht_2\Lambda \rhot_k-(\pa^k\rho_T)\Delta \Psit-k\pa\rho_T\pa^{k-1}\Delta \Psit-\rho_T\Delta\Psit_k\\
\nonumber &-& 2\nabla(\pa^k\rho_T)\cdot\nabla \Psit-2\nabla \rho_T\cdot\nabla \Psit_k\\
& + & F_1
\eea
with
\bea
\label{formluafone}
F_1&=&-\pa^k\tilde{\mathcal E}_{P,\rho}+[\pa^k,\Ht_1]\rhot-[\pa^k,\Ht_2]\Lambda \rhot\\
\nonumber &-& \sum_{\left|\begin{array}{ll} j_1+j_2=k\\ j_1\geq 2, j_2\geq 1\end{array}\right.}c_{j_1,j_2}\pa^{j_1}\rho_T\pa^{j_2}\Delta\Psit-\sum_{\left|\begin{array}{ll}j_1+j_2=k\\
j_1,j_2\geq 1\end{array}\right.}c_{j_1,j_2}\pa^{j_1}\nabla\rho_T\cdot\pa^{j_2}\nabla\Psit.
\eea
For the second equation:
\bea
\label{nkenononenon}
\nonumber \pa_\tau\Psit_k&=&-k\Ht_2\Psit_k-\Ht_2\Lambda \Psit_k-(r-2)\Psit_k-2\nabla \Psit\cdot\nabla \Psit_k\\
&-& \left[(p-1)\rho_D^{p-2}\rhot_k+k(p-1)(p-2)\rho_D^{p-3}\pa\rho_D\pa^{k-1}\rhot\right]+F_2
\eea
with
\bea
\label{estqthohrkbis}
\nonumber F_2&=&b^2\pa^k\mathcal F -\pa^k\tilde{\mathcal E}_{P,\Psi}- [\pa^k,\Ht_2]\Lambda \Psit-(p-1)\left([\pa^k,\rho_D^{p-2}]\rhot-k(p-2)\rho_D^{p-3}\pa\rho_D\pa^{k-1}\rhot\right)\\
&-& \sum_{j_1+j_2=k,j_1,j_2\geq 1}\pa^{j_1}\nabla\Psit\cdot\pa^{j_2}\nabla\Psit-\pa^k\NL(\rhot).
\eea

\noindent{\bf step 2} Algebraic energy identity. Let $\chi$ be a smooth function $\chi=\chi(\tau,Z)$ 
and compute:
\bee
&&\frac 12\frac{d}{d\tau}\left\{(p-1)\int \chi\rho_D^{p-2}\rho_T\rhot_k^2+\int\chi\rho_T^2|\nabla \Psit_k|^2\right\}\\
&=& \frac 12\left\{(p-1)\int \pa_\tau\chi\rho_D^{p-2}\rho_T\rhot_k^2+\int\pa_\tau\chi\rho_T^2|\nabla \Psit_k|^2\right\}
\\&+&
\frac{p-1}{2}\int \chi(p-2)\pa_\tau\rho_D\rho_D^{p-3}\rho_T\rhot_k^2+ \int\chi\pa_\tau\rho_T\left[\frac{p-1}{2}\rho_D^{p-2}\rhot_k^2+\rho_T|\nabla \Psit_k|^2\right]\\
&+& \int \pa_\tau\rhot_k\left[(p-1)\chi\rho_D^{p-2}\rho_T\rhot_k\right]\\
&-&  \int\pa_\tau\Psit_k\left[2\chi\rho_T\nabla\rho_T\cdot\nabla \Psit_k+\chi\rho_T^2\Delta \Psit_k+\rho_T^2\nabla \chi\cdot\nabla \Psit_k\right],
\eee
\bee
&&\int\pa_\tau \rhot_k\left[(p-1)\chi\rho_D^{p-2}\rho_T\rhot_k\right]=  \int F_1\left[(p-1)\chi\rho_D^{p-2}\rho_T\rhot_k\right]\\
& + & \int \left[(\Ht_1-k\Ht_2)\rhot_k-\Ht_2\Lambda \rhot_k-(\pa^k\rho_T)\Delta \Psit-2\nabla(\pa^k\rho_T)\cdot\nabla \Psit\right]\left[(p-1)\chi\rho_D^{p-2}\rho_T\rhot_k\right]\\
& - & \int k\pa\rho_T\pa^{k-1}\Delta \Psit\left[(p-1)\chi\rho_D^{p-2}\rho_T\rhot_k\right]-  \int (\rho_T\Delta\Psit_k+2\nabla \rho_T\cdot\nabla \Psit_k)\left[(p-1)\chi\rho_D^{p-2}\rho_T\rhot_k\right]
\eee
and
\bee
&-&  \int\pa_\tau\Psit_k\left[2\chi\rho_T\nabla\rho_T\cdot\nabla \Psit_k+\chi\rho_T^2\Delta \Psit_k+\rho_T^2\nabla \chi\cdot\nabla \Psit_k\right]=  -\int F_2\nabla\cdot(\chi\rho_T^2\nabla \Psi_k)\\
&-& \int\Big\{-k\Ht_2\Psit_k-\Ht_2\Lambda \Psit_k-(r-2)\Psit_k-2\nabla \Psit\cdot\nabla \Psit_k\\
&-& \left[(p-1)\rho_P^{p-2}\rhot_k+k(p-1)(p-2)\rho_D^{p-3}\pa\rho_D\pa^{k-1}\rhot\right]\Big\}\left[2\chi\rho_T\nabla\rho_T\cdot\nabla \Psit_k+\chi\rho_T^2\Delta \Psit_k+\rho_T^2\nabla \chi\cdot\nabla \Psit_k\right]\\
& = & \int \chi\rho^2_T\nabla \Psi_k\cdot\nabla F_2\\
&-& \int\left[-k\Ht_2\Psit_k-\Ht_2\Lambda \Psit_k-(r-2)\Psit_k-2\nabla \Psit\cdot\nabla \Psit_k\right]\nabla\cdot(\chi\rho_T^2\nabla \Psi_k)\\
& +& \int (p-1)\rho_P^{p-2}\rhot_k\left[2\chi\rho_T\nabla\rho_T\cdot\nabla \Psit_k+\chi\rho_T^2\Delta \Psit_k+\rho_T^2\nabla \chi\cdot\nabla \Psit_k\right]\\
& + & \int k(p-1)(p-2)\rho_D^{p-3}\pa\rho_D\pa^{k-1}\rhot\nabla\cdot(\chi\rho_T^2\nabla \Psi_k)
\eee
This yields the energy identity:
\bea
\label{algebracienergyidnentiy}
&&\frac 12\frac{d}{d\tau}\left\{(p-1)\int \chi\rho_D^{p-2}\rho_T\rhot_k^2+\int\chi\rho_T^2|\nabla \Psit_k|^2\right\}\\
\nonumber & = & \frac 12\int\left(\frac{\pa_\tau\chi}{\chi}+\frac{\pa_\tau\rho_T}{\rho_T}+(p-2)\frac{\pa_\tau\rho_D}{\rho_D}\right)(p-1)\chi\rho_D^{p-2}\rho_T\rhot_k^2+\frac 12\int\left(\frac{\pa_\tau\chi}{\chi}+2\frac{\pa_\tau\rho_T}{\rho_T}\right)\rhot_T^2|\nabla \Psi_k|^2\\
\nonumber & + & \int F_1\chi(p-1)\rho_D^{p-2}\rho_T\rhot_k+\int \chi\rho^2_T\nabla F_2\cdot\nabla \Psit_k\\
\nonumber  & + & \int \left[(\Ht_1-k\Ht_2)\rhot_k-\Ht_2\Lambda \rhot_k-(\pa^k\rho_T)\Delta \Psit-2\nabla(\pa^k\rho_T)\cdot\nabla \Psit\right](p-1)\chi\rho_D^{p-2}\rho_T\rhot_k\\
\nonumber&-& \int\left[-k\Ht_2\Psit_k-\Ht_2\Lambda \Psit_k-(r-2)\Psit_k-2\nabla \Psit\cdot\nabla \Psit_k\right]\nabla\cdot(\chi\rho_T^2\nabla \Psi_k)\\
\nonumber& - & \int k\pa\rho_T\pa^{k-1}\Delta \Psit\left[(p-1)\chi\rho_D^{p-2}\rho_T\rhot_k\right]+  \int k(p-1)(p-2)\rho_D^{p-3}\pa\rho_D\pa^{k-1}\rhot\nabla\cdot(\chi\rho_T^2\nabla \Psi_k)\\
\nonumber& + &\int(p-1)\rho_P^{p-2}\rhot_k\left[\rho_T\nabla \chi\cdot\nabla \Psit_k\right]
\eea

\noindent{\bf step 3} Bootstrap bound.  We now run \eqref{algebracienergyidnentiy} with 
\be
\label{choihceofchi}
\chi(\tau,Z)=\chi_k=\la Z\ra^{2k-2\sigma-d+\frac{2(r-1)(p+1)}{p-1}}\left\la\frac {Z}{Z^*}\right\ra^{2n_P-\frac {2(r-1)(p+1)}{(p-1)}+2\sigma}, \ \  \ \ 1\leq k\leq {k^\sharp}
\ee
with $Z^*=e^\tau$
 and estimate all terms. We will use the algebra
 $$
 \frac{2(r-1)(p+1)}{p-1}=2(r-1)\left(1+\frac{2}{p-1}\right)=2(r-1)\left(1+\frac{\ell}{2}\right)=(\ell+2)(r-1)=\ell(r-1)+2(r-1)
 $$
 \be
 \label{equivalnetofmrulachigk}
 \chi_k=\la Z\ra^{2k-2\sigma-d+\ell(r-1)+2(r-1)}\left\la\frac {Z}{Z^*}\right\ra^{2n_P-{\ell(r-1)}-2(r-1)+2\sigma}
 \ee
We will use the bound for the damped profile from \eqref{dampenedprofile}, \eqref{definitionprofilewithtailchange}: 
\be
\label{estiamtionnprofile}
|Z^k\pa_Z^k\rho_D|\lesssim \frac{1}{\la Z\ra^{\frac{2(r-1)}{p-1}}}{\bf 1}_{Z\le Z^*}+\frac{1}{(Z^*)^{\frac{2(r-1)}{p-1}}}\frac{1}{\left(\frac{Z}{Z^*}\right)^{n_P}}{\bf 1}_{Z\ge Z^*}
\ee
and $$\frac{|Z^k\pa_Z^k\rho_D|}{\rho_D}\le c_k.$$
In particular,
\bea\notag
\chi_k \rho_D^2&\sim&  \la Z\ra^{2k-2\sigma-d+\ell(r-1)+2(r-1)} \rho_P^2 \left\la\frac {Z}{Z^*}\right\ra^{2\sigma-2(r-1)}\\&\sim& 
 \la Z\ra^{2k-d+2(r-1)}\la Z\ra^{-2\sigma} \left\la\frac {Z}{Z^*}\right\ra^{2\sigma-2(r-1)}\label{pow}
\eea
as well as
\bea\label{power}
\chi_k\le   \frac {\la Z\ra^{2k-d}}{\rho_D^2 \rho_P^{p-1}} \la Z\ra^{-2\sigma}\left\la\frac {Z}{Z^*}\right\ra^{2\sigma-2(r-1)}
\eea
One of our main tools below will be the following interpolation result 
\begin{lemma}\label{lem:inter}
For any $\mathcal a>0$ and any $m\le {k^\sharp}$,
\be
\label{interpolatedbound}
\|\rhot,\Psit\|^2_{m,\sigma+\mathcal a}\leq e^{-c_{\mathcal a,{k^\sharp}}\tau}.
\ee
\end{lemma}
\begin{proof}
For $0\le m\le {k^\sharp}$, on the set $Z\le Z^*_c=(Z^*)^c$, $0<c\ll1$ we control the desired norm by interpolating 
between the bootstrap bound (which controls all the lower Sobolev norms) and the bound \eqref{cneioneoneon} of Lemma
\ref{proproopr} for the highest Sobolev norm. For $Z\ge Z^*_c$ we just use the extra power of $Z$ and the bootstrap  
bound \eqref{boundbootbound} 
\be
\label{inerpo;soationcrucial}
\|\rhot,\Psit\|^2_{m,\sigma+\mathcal a}\lesssim (Z^*)^Ce^{-c_{{k^\sharp}}\tau}+\frac{1}{(Z^*_c)^{2\mathcal a}}\|\rhot,\Psit\|^2_{m,\sigma}\lesssim e^{-c_{\mathcal a, {k^\sharp}}\tau}
\ee
\end{proof}
Unlike the previously dealt with case of the highest Sobolev norms, estimates below do not require us tracking the dependence 
on the parameter $k$. Therefore, we will let $\lesssim$ to include that dependence.\\

\noindent{\bf step 4} Leading order terms.\\

\noindent\underline{\em Cross term}. We estimate the cross term:
\bee
&&k(p-1)\left|\int\chi\pa\rho_T\pa^{k-1}\Delta\Psit\rho_D^{p-2}\rho_T\rhot_k\right|\lesssim c_k\int \chi\frac{\rho_T^{p-1}}{\la Z\ra}|\rho_T\pa^{k-1}\Delta \Psit||\rhot_k|\\
&\lesssim &\int \frac{\chi}{\la Z\ra}\rho_D^{p-1}\rhot_k^2+\int\frac{\chi}{\la Z\ra}\rho_T^2|\nabla \pa^k\Psit|^2\le\|\rhot,\Psit\|^2_{k,\sigma+\frac 12}\lesssim e^{-c_{{k^\sharp}}\tau}.
\eee
The other remaining cross term is estimated using an integration by parts:
\bee
&& k(p-1)(p-2)\left|\int \nabla \cdot(\rho_T^2\nabla \Psi_k)\chi\rho_D^{p-3}\pa\rho_D\pa^{k-1}\rhot\right|\\
&\lesssim &  \int \frac{\chi}{\la Z\ra}\rho_D^{p-1}|\nabla \rhot_{k-1}|^2+ \int \frac{\chi}{\la Z\ra^3}\rho_D^{p-1}\rhot_{k-1}^2+\int\frac{\chi}{\la Z\ra}\rho_T^2|\nabla \Psit_k|^2\le
\|\rhot,\Psit\|_{k,\sigma+\frac 12}^2\\
&\lesssim& e^{-c_{{k^\sharp}}\tau}.
\eee

\noindent\underline{\em $\rho_k$ terms}. We compute using \eqref{esterrorpotentials}:
\bee
&&\int\chi(\Ht_1-k\Ht_2)\rhot_k((p-1)\rho_D^{p-2}\rho_T\rhot_k)\\
\nonumber&\le&  -\int \chi \left(k+\frac{2(r-1)}{p-1}+O\left(\frac{1}{\la Z\ra}\right)\right)(p-1)\rho_D^{p-2}\rho_T\rhot^2_k\\
\nonumber& \lesssim & e^{-c_{{k^\sharp}}\tau}-\int  \chi\left(k+\frac{2(r-1)}{p-1}\right)(p-1)\rho_D^{p-2}\rho_T\rhot^2_k.
\eee
We next recall that by definition of the norm:
\bee
\|\rhot,\Psit\|_{k,\sigma}^2\gtrsim \sum_{m=0}^k\int \chi\frac{\rho_T^2|Z^m\pa^m\nabla \Psi|^2}{\la Z\ra^{2k}}\gtrsim\sum_{m=1}^{k+1}\int\chi\frac{\rho_T^2|Z^m\pa^m \Psi|^2}{\la Z\ra^{2k+2}}
\eee
and hence
\bee
&&\left|\int \chi\left[(\pa^k\rho_D)\Delta \Psit+2\nabla(\pa^k\rho_D)\cdot\nabla \Psit\right](p-1)\rho_D^{p-2}\rho_T\rhot_k\right|\\
& \lesssim & \int \chi\frac{\rho_D^{p-2}\rho_T\rhot_k^2}{\la Z\ra}+\int\chi\rho_T^{p-1}\rho_T^2\left[\frac{|\pa^2\Psit|^2}{\la Z\ra^{2k-1}}+\frac{|\pa\Psit|^2}{\la Z\ra^{2(k+1)-1}}\right]\\
& \leq & \|\rhot,\Psit\|_{k,\sigma+\frac 12}^2\lesssim e^{-c_{{k^\sharp}}\tau}.
\eee

For the nonlinear term, we integrate by parts and use \eqref{smallglobalboot}:
$$\left|\int \chi\left[\rhot_k\Delta \Psit+2\nabla\rhot_k\cdot\nabla \Psit\right](p-1)\rho_D^{p-2}\rho_T\rhot_k\right|\lesssim \int \chi\frac{\rho_D^{p-2}\rho_T\rhot_k^2}{\la Z\ra}\lesssim e^{-c_{{k^\sharp}}\tau}.
$$
Integrating by parts and using \eqref{esterrorpotentials}:
\bee
&&-\int \chi \Ht_2\Lambda \rhot_k\left[(p-1)\rho_D^{p-2}\rho_T\rhot_k\right]+\frac{p-1}{2}\int \chi(p-2)\pa_\tau\rho_D\rho_D^{p-3}\rho_T\rhot_k^2+\frac{p-1}{2}\int \chi\pa_\tau\rho_T\rhot^{p-2}\rhot_k^2\\
&=& \frac {p-1}2\int \rhot_k^2\left[\nabla \cdot(Z\chi \Ht_2\rho_D^{p-2}\rho_T)+\chi\rho_T\pa_\tau(\rho_D^{p-2})+\chi\pa_\tau\rho_T\rho_D^{p-2}\right]\\
& = & \frac{p-1}{2}\int \chi\rho_D^{p-2}\rho_T\rhot_k^2\left[ d+ \frac{\Lambda \chi}{\chi}+(p-2)\left(\frac{\pa_\tau\rho_D+\Lambda \rho_D}{\rho_D}\right)+\frac{\pa_\tau\rho_T+\Lambda \rho_T}{\rho_T}+O\left(\frac{1}{\la Z\ra^r}\right)\right].
\eee
We now estimate from \eqref{globalbeahvoiru}, \eqref{globalbeahvoirubis}:
\bee
&&-\int \chi \Ht_2\Lambda \rhot_k\left[(p-1)\rho_D^{p-2}\rho_T\rhot_k\right]+\frac{p-1}{2}\int \chi(p-2)\pa_\tau\rho_D\rho_D^{p-3}\rho_T\rhot_k^2+\frac{p-1}{2}\int \pa_\tau\rho_T\rhot^{p-2}\rhot_k^2\\
& = &  \frac{p-1}{2}\int \chi\rho_D^{p-2}\rho_T\rhot_k^2\left[d+\frac{\Lambda \chi}{\chi}-2(r-1)+O\left(\frac{1}{\la Z\ra^r}\right)\right]\\
& = & \frac{p-1}{2}\int \chi\rho_D^{p-2}\rho_T\rhot_k^2\left[d+\frac{\Lambda \chi}{\chi}-2(r-1)\right]+O\left(e^{-c_{{k^\sharp}}\tau}\right).
\eee

\noindent\underline{\em $\Psi_k$ terms}. Integrating by parts:
$$(r-2)\int\Psi_k\nabla\cdot(\chi\rho_T^2\nabla \Psi_k)=-(r-2)\int \chi\rho_T^2|\nabla \Psit_k|^2.
$$
Similarly using \eqref{esterrorpotentials}:
\bee
&&k\int \Ht_2\Psit_k\nabla \cdot(\chi\rho_T^2\nabla \Psi_k)=  k\left[\int \chi \Ht_2\Psi_k\nabla\cdot(\rho_T^2\nabla \Psi_k)+\Ht_2\Psi_k\rho_T^2\nabla \chi\cdot\nabla \Psi_k\right]\\
& = & k\left\{-\int\rho_T^2\nabla \Psi_k\cdot\left[\chi \Ht_2\nabla \Psi_k+\Ht_2\Psi_k\nabla \chi+\chi\nabla \Ht_2\Psi_k\right]+\int \Ht_2\Psi_k\rho_T^2\nabla \chi\cdot\nabla \Psi_k\right\}\\
& = & -k\left[\int \chi \left[1+O\left(\frac{1}{\la Z\ra^{r}}\right)\right]\rho_T^2|\nabla \Psit_k|^2+O\left(\int \chi\rho_T^2\frac{|\Psit_k|^2}{\la Z\ra^{r+1}}\right)\right]\\
& = & -k\int \chi \rho_T^2|\nabla \Psit_k|^2+O\left(e^{-c_{{k^\sharp}}\tau}\right),
\eee
where we also used that $r>1$ and $k\ne 0$ (since otherwise the above term vanishes.)
Next, using \eqref{smallglobalboot}:
$$
\left|\int 2\chi\rho_T\nabla \Psit\cdot\nabla \Psit_k(2\nabla \rho_T\cdot\nabla \Psit_k)\right|\lesssim \int \chi\frac{\rho_T^2|\nabla \Psit_k|^2}{\la Z\ra}\lesssim e^{-c_{{k^\sharp}}\tau}$$
 and from \eqref{pohozaevbispouet}, \eqref{smallglobalboot}:
\bee
\left|\int 2\chi\rho_T\nabla \Psit\cdot\nabla \Psit_k(\rho_T\Delta \Psit_k)\right|&\lesssim& \int \chi|\nabla \Psit_k|^2\left(|\pa(\rho_T^2\nabla \Psit)|+\frac{|\rho_T^2\nabla \Psit|}{\la Z\ra}\right)\lesssim \int \chi\frac{\rho_T^2|\nabla \Psit_k|^2}{\la Z\ra^2}\\
&\lesssim &e^{-c_{{k^\sharp}}\tau}
\eee
We now carefully compute from \eqref{pohozaevbispouet} again:
\bea
\label{shaprpohoazev}
\nonumber &&\int \chi\rho_T \Ht_2\Lambda \Psit_k\left(2\nabla \rho_T\cdot\nabla \Psit_k+\rho_T\Delta \Psit_k\right)+\int \Ht_2\Lambda \Psi_k\rho_T^2\nabla \chi\cdot\nabla \Psi_k\\
\nonumber & = & 2\sum_{i,j}\int \chi\rho_T \Ht_2Z_j\pa_j\Psit_k\pa_i\rho_T\pa_i\Psit_k-\sum_{i,j}\int \pa_i(\chi Z_j \Ht_2\rho_T^2)\pa_i\Psit_k\pa_j\Psit_k+\frac 12\int\nabla \cdot(\chi Z \Ht_2\rho_T^2)|\nabla \Psit_k|^2\\
\nonumber & + & \sum_{i,j} \Ht_2\rho_T^2Z_j\pa_j\Psi_k\pa_i\chi\pa_i\Psi_k\\
\nonumber &= & \sum_{i,j}\Ht_2\pa_j\Psi_k\pa_i\Psi_k\left[2\chi\rho_T\pa_i\rho_TZ_j-\pa_i\chi Z_j\rho_T^2-\delta_{ij}\rho_T^2-2Z_j\rho_T\pa_i\rho_T+Z_j\rho_T^2\pa_i\chi\right]\\
\nonumber & + & \frac 12\int\chi \Ht_2\rho_T^2|\nabla \Psi_k|^2\left[d+\frac{\Lambda \chi}{\chi}+\frac{\Lambda \Ht_2}{\Ht_2}+2\frac{\Lambda \rho_T}{\rho_T}+O\left(\frac{1}{\la Z\ra}\right)\right]\\
& = & \frac 12\int \chi\rho_T^2|\nabla \Psi_k|^2\left[d-2+\frac{\Lambda \chi}{\chi}+2\frac{\Lambda \rho_T}{\rho_T}+O\left(\frac{1}{\la Z\ra}\right)\right].
\eea
Finally, recalling \eqref{globalbeahvoirubis}:
\bee
&&\int \chi\rho_T \Ht_2\Lambda \Psit_k\left(2\nabla \rho_T\cdot\nabla \Psit_k+\rho_T\Delta \Psit_k\right)+\int \Ht_2\Lamdba \Psit_k\rho_T^2\nabla \chi\cdot\nabla \Psit_k+\int\chi\pa_\tau\rho_T\rho_T|\nabla \Psit_k|^2\\
& = &  \int  \chi\rho_T^2|\nabla \Psit_k|^2\left[\frac{d-2}{2}+\frac 12\frac{\Lambda\chi}{\chi}+\frac{\Lambda \rho_T}{\rho_T}+\frac{\pa_\tau\rho_T}{\rho_T}+O\left(\frac{1}{\la Z\ra}\right)\right]\\
& = &  \int  \chi\rho_T^2|\nabla \Psit_k|^2\left[\frac{d-2}{2}+\frac 12\frac{\Lambda\chi}{\chi}-\frac{2(r-1)}{p-1}+O\left(\frac{1}{\la Z\ra}\right)\right]\\
& = & \int  \chi\rho_T^2|\nabla \Psit_k|^2\left[\frac{d-2}{2}+\frac 12\frac{\Lambda\chi}{\chi}-\frac{2(r-1)}{p-1}\right]+O(e^{-c_{{k^\sharp}}\tau}).
\eee

\noindent\underline{Conclusion for linear terms}. The collection of above bounds yields:
\bee
&&\frac 12\frac{d}{d\tau}\left\{(p-1)\int \chi\rho_D^{p-2}\rho_T\rhot_k^2+\int\chi\rho_T^2|\nabla \Psit_k|^2\right\}\leq e^{-c_{{k^\sharp}\tau}}\\
\nonumber & + & \int\chi\left[-k+\frac d2-(r-1)-\frac{2(r-1)}{p-1}+\frac12\frac{\pa_\tau\chi+\Lambda \chi}{\chi}\right]\left[(p-1)\rho_D^{p-2}\rho_T\rhot_k^2+\rho_T^2|\nabla \Psit_k|^2\right]\\
\nonumber & + & \int F_1\chi(p-1)\rho_D^{p-2}\rho_T\rhot_k+\int \chi\rho^2_T\nabla F_2\cdot\nabla \Psit_k.
\eee
We now compute from \eqref{choihceofchi}:
\bee
&&-k+\frac d2-(r-1)-\frac{2(r-1)}{p-1}+\frac12\frac{\pa_\tau\chi+\Lambda \chi}{\chi}\\
& = & -k+\frac d2-(r-1)-\frac{2(r-1)}{p-1}+\frac12\left[2k-2\sigma-d+\frac{2(p+1)(r-1)}{p-1}+O\left(\frac{1}{\la Z\ra}\right)\right]\\
&= & -\sigma+O\left(\frac{1}{\la Z\ra}\right)
\eee
and hence the first bound: $\forall m\le {k^\sharp}$
\bea
\label{firstestimate}
&&\frac 12\frac{I_{\sigma,m}}{d\tau}+\sigma  I_{\sigma,m}\leq  e^{-c_{{k^\sharp}\tau}}\\
\nonumber & + & \int F_1\chi(p-1)\rho_D^{p-2}\rho_T\rhot_k+\int \chi\rho^2_T\nabla F_2\cdot\nabla \Psit_k.
\eea

\noindent{\bf step 5} $F_1$ terms. We recall \eqref{formluafone} and claim the bound:
\be
\label{boundfone}
(p-1)\int \chi_k F_1^2\rho_D^{p-2}\rho_T\lesssim e^{-c_{{k^\sharp}}\tau}.
\ee

\noindent\underline{\em Source term induced by localization}.
Recalling \eqref{power}, \eqref{estiamtionnprofile}, \eqref{neineinneonev}:
$$\int \chi \rho_{D}^{p-2}\rho_T|\pa^k\tilde{\mathcal E}_{P,\rho}|^2\lesssim \int_{Z\geq Z^*}\frac{\rho_P^2\la Z\ra^{2k-d}}{\rho_D^2\rho_P^{p+1}}\frac{\rho_D^{p-2}\rho_D\rho_D^2}{\la Z\ra^{2k+2\delta}}Z^{d-1}dZ\lesssim \int_{Z\ge Z^*}\frac{dZ}{\la Z\ra^{2\delta+1}}\lesssim e^{-2\delta\tau}.
$$

\noindent\underline{\em $[\pa^k,\Ht_1]$ term}. From \eqref{esterrorpotentialsbis}, \eqref{interpolatedbound}:
\bee
(p-1)\int \chi_k\rho_D^{p-1}([\pa^k,\Ht_1]\rhot)^2&\lesssim &\sum_{j=0}^{k-1}\int \rho_D^{p-1}\chi_k \frac{|\pa^j\rhot|^2}{\la Z\ra^{2(r+k-j)}}\\
&\lesssim&\|\rhot,\Psit\|^2_{k-1,\sigma+r}\leq  e^{-c_{{k^\sharp}}\tau}.
\eee

\noindent\underline{\em $[\pa^k,\Ht_2]$ term}. We argue similarly using \eqref{esterrorpotentialsbis}:
\be
\label{cneoneoneone}
|[\pa^k,\Ht_2]\Lambda\rhot|\lesssim  \sum_{j=1}^{k}\frac{|\pa^j\rhot|}{\la Z\ra^{r-1+k-j}}\\
\ee
Hence, using $r>1$ and \eqref{interpolatedbound}:
\bee
(p-1)\int \chi\rho_D^{p-1}([\pa^k,\Ht_2]\Lambda \rhot)^2&\lesssim &\sum_{j=1}^{k}\int \chi \rho_D^{p-1} \frac{|\pa^j\rhot|^2}{\la Z\ra^{2(r-1+k-j)}}\\
& \lesssim &   \|\rhot,\Psit\|^2_{k,\sigma+r-1}\leq  e^{-c_{{k^\sharp}}\tau}.
\eee

\noindent\underline{\em Nonlinear term}. $$
N_{j_1,j_2}=\pa^{j_1}\rho_T\nabla^{j_2}\nabla \Psit, \ \ j_1+j_2=k+1, \ \ 2\le j_1\le k\ \ j_2\le k-1
$$
If $j_1\le {k^\sharp}-2$ then we use the pointwise bound \eqref{smallglobalboot} to estimate:
$$|\pa^{j_1}\rho_T\nabla^{j_2}\nabla \Psit|\lesssim \rho_D\frac{|\nabla^{j_2}\nabla \Psit|}{\la Z\ra^{j_1}}= \rho_D\frac{|\pa^{j_2}\nabla \Psit|}{\la Z\ra^{k+1-j_2}}$$ and hence recalling \eqref{interpolatedbound}:
\bee
\int(p-1)\chi N_{j_1,j_2}^2\rho_D^{p-2}\rho_T&\lesssim& \int\chi_k\frac{\rho_T^2|\pa^{j_2}\nabla \Psit|^2}{\la Z\ra^{2(k+1-j_2)+2(r-1)}}\\
& \lesssim &  \int \frac{\chi_{j_2}}{\la Z\ra^{2r}}\rho_T^2|\nabla^{j_2}\nabla \Psit|^2\leq e^{-c_{{k^\sharp}}\tau}
\eee
In the other case, when $j_2\le {k^\sharp}-2$, we use the pointwise bound \eqref{smallglobalboot} for $\nabla\Psi$ instead
and estimate similarly.

\noindent{\bf step 6} Dissipation. [{\it Calculations below and specification $d=3$ are only needed in the Navier-Stokes case}.]\\
We now compute carefully the  dissipation term in \eqref{algebracienergyidnentiy}:
$${\rm Diss}_k=\int \chi_k\rho^2_T\nabla(b^2\pa^k\mathcal F)\cdot\nabla \Psit_k.$$ 
 Indeed, recalling \eqref{defmthjacfl}:
$$\nabla \mathcal F(u_T,\rho_T)=(\mu+\mu')\frac{\Delta u_T}{\rho_T^2}$$ yields
$${\rm Diss}_k=(\mu+\mu') b^2\int \chi_k\rho^2_T\pa^k\left(\frac{\Delta u_T}{\rho_T^2}\right)\cdot \ut_k$$
\noindent\underline{case $k=0$}. We conclude:
$${\rm Diss}_0=(\mu+\mu') b^2\int \chi_0\rho^2_T\frac{\Delta {u_T}}{\rho_T^2}\cdot \ut={(\mu+\mu')}b^2\int \chi_0\Delta u_T\cdot \ut.$$
For $Z\le 10 Z^*$, we use the bootstrap bound $|\la Z\ra^k \pa^ku_{T,D}|\lesssim \frac{1}{\la Z\ra^{r-1}}$ 
as well as that $u_D$ is supported in $Z\le 10 Z^*$ to estimate, recalling \eqref{equivalnetofmrulachigk},\eqref{pow}, 
\bee
\label{enionveinvoenoen}
\nonumber b^2\int\chi_0|\Delta u_T\cdot \ut|&\lesssim &b^2\int\chi_0|\Delta u_T\cdot u_T|+b^2\int\chi_0|\Delta u_T\cdot u_D|\\
&\lesssim&
b^2 \int \la Z\ra^{-2-2\sigma-d+2(r-1)} \left\la \frac Z{Z^*}\right\ra^{-2(r-1)+2\sigma}
\frac{u_T^2+|Z^2\Delta u_T|^2}{\rho_D^2}
 \\ &+& \frac{1}{(Z^*)^{\ell(r-1)+r-2}}\int_{Z\le 10Z^*}\frac{\la Z\ra^{\ell(r-1)}}{\la Z\ra ^{2\sigma+3}}dZ\\&\leq& b^2 \int \la Z\ra^{-2-2\sigma-d+2(r-1)} \left\la \frac Z{Z^*}\right\ra^{-2(r-1)+2\sigma}
\frac{u_T^2+|Z^2\Delta u_T|^2}{\rho_D^2}+e^{-c_e\tau}
\eee
with 
$$
c_e=\min\{\ell(r-1)+r-2,r\}>0
$$
Exactly the same bounds apply to 
$$
b^2\int \chi_0 |\nabla \ut|^2\le  b^2\int \la Z\ra^{-2-2\sigma-d+2(r-1)} \left\la \frac Z{Z^*}\right\ra^{-2(r-1)+2\sigma} 
\frac{u_T^2+|Z^2\Delta u_T|^2}{\rho_D^2}+e^{-c_e\tau}.
$$
Therefore,
\bee
{\rm Diss}_0&\le& -(\mu+\mu') b^2\int\chi_0|\nabla \ut|^2\\ &+&(\mu+\mu') b^2 \int \la Z\ra^{-2-2\sigma-d+2(r-1)} \left\la \frac Z{Z^*}\right\ra^{-2(r-1)+2\sigma} 
\frac{u_T^2+|Z^2\Delta u_T|^2}{\rho_D^2}+e^{-c_e\tau}
\eee

\noindent\underline{case $k\ge 1$}:
\bee
{\rm Diss}_k=(\mu+\mu') b^2\int \chi_k\rho^2_T\pa^k\left(\frac{\Delta u_T}{\rho_T^2}\right)\cdot \ut_k=
(\mu+\mu')\sum_{k_1+k_2=k}b^2\int\chi_k \rho_T^2\pa^{k_1}\Delta u_T\pa^{k_2}\left(\frac1{\rho_T^2}\right)\ut_k
\eee
For $k_2=0$ we decompose $u_T=u_D+\ut$
 and estimate, using that
$u_D$ localized for $Z\le 10 Z^*$
\bee
&&b^2\int \chi_k\pa^{k}{\Delta u_D}\cdot \ut_k\le b^{2\delta}\int \chi_k\rho_T^2|\ut_k|^2\\
&+& b^{4-2\delta}\int_{Z\le 10Z^*}\frac{\la Z\ra^{2k-2\sigma-d+\ell(r-1)+2(r-1)}
}{\la Z\ra^{2(r-1+2+k)}}\la Z\ra^{\ell(r-1)} Z^{d-1}dZ\\
& \le & e^{-c_{{k^\sharp}}\tau}+ b^{4-2\delta}\int _{Z\le 10Z^*}\frac{\la Z\ra^{2\ell(r-1)}dZ}{\la Z\ra^{2\sigma+5}}\\
& \le & e^{-c_{{k^\sharp}}\tau}+ b^{4-2\delta}\lesssim e^{-c_{{k^\sharp}}\tau}
\eee
since the condition 
\bea\label{eq:conditionrminusoneisstriclylessthan2ford=3}
\ell(r-1)<2\ \ \textrm{ holds for }\ \ d=3
\eea 
in view of 
$$
r^*(d,\ell)<r_+(d,\ell)=1+\frac{d-1}{(1+\sqrt \ell)^2}
$$
The main dissipation term is
\bee
&&(\mu+\mu') b^2\int \chi_k\Delta \ut_k\cdot\ut_k=-(\mu+\mu') b^2\left[\int \chi_k|\nabla \ut_k|^2-\frac 12\int \Delta  \chi_k|\ut_k|^2\right]\\
& \leq & -(\mu+\mu') b^2\int \chi_k|\nabla \ut_k|^2+O(b^2)\int \frac{\chi_k}{\la Z\ra^2}|\ut_k|^2\\
&\leq& -{(\mu+\mu')}b^2\int \chi_k|\nabla \ut_k|^2+(\mu+\mu') O(b^2)\sum_{j=0}^{k-1}\chi_j|\nabla \ut_j|^2
\eee
If $1\le k_2\le {k^\sharp}-2$, we estimate from \eqref{estimtineti} and Leibniz (similarly to the above, every term below 
should have a factor of $(\mu+\mu')$, which we suppress):
\bee
&&b^2\int \chi_k\rho^2_T\sum_{k_1+k_2=k,k_1\le k-1}\left|\pa^{k_1}\Delta u_T\pa^{k_2}\left(\frac1{\rho_T^2}\right)\right||\ut_k| \lesssim  b^2\int \chi_k\rho_T^2|\ut_k|\sum_{k_1+k_2=k+1,k_1\le k}\frac{|\nabla \pa^{k_1} u_T|}{\rho_D^2\la Z\ra^{k_2}}\\
& \lesssim & b^2\int \frac{\sqrt{\chi_k}|\ut_k|}{\la Z\ra}\sum_{k_1=0}^k\sqrt{\chi_{k_1}}|\nabla \pa^{k_1}u_T|\leq\frac{b^2}{10}\int \chi_k|\pa^k\nabla u_T|^2+C_kb^2\sum_{j=0}^{k-1}\int\chi_j\left(|\nabla \ut_j|^2+|\nabla \pa^j u_T|^2\right).
\eee

For $k_2={k^\sharp}-1$, $k_1\le 1$, we integrate by parts once and use \eqref{estimtineti} to estimate
\bee
&&b^2\left|\int \chi_k\rho_T^2 \pa^{k_1}\Delta u_T\pa^{k_2}\left(\frac{1}{\rho_T^2}\right)\cdot \tilde{u}_k\right|\\&\lesssim& b^2\int\frac{\chi_k\rho_T^2}{\rho_T^2\la Z\ra^{k_2-1}}\left[|\pa^{k_1+1}\Delta u_T||\ut_k|+\frac{|\pa^{k_1}\Delta u_T||\ut_k|}{\la Z\ra}+|\pa^{k_1}\Delta \ut||\nabla \ut_k|\right]\\
& \leq & \frac{b^2}{10}\int \chi_k|\nabla \ut_k|^2+C_kb^2\sum_{j=0}^{k-1}\chi_j \left(|\nabla \ut_j|^2+|\nabla \pa^ju_T|^2\right).
\eee
For $k_2={k^\sharp}$, $k_1=0$ and $k={k^\sharp}$, we integrate by parts once and use \eqref{estimtineti} to estimate (the highest 
derivative term)
\bee
&&b^2\left|\int  \chi_k\rho_T^2 \Delta u_T\pa^{k_2}\left(\frac{1}{\rho_T^2}\right)\cdot \tilde{u}_k\right|\lesssim b^2\int\frac{\chi_k|\pa^{k-1}\rho_T|}{\rho_T}\left[|\pa\Delta u_T||\ut_k|+\frac{|\Delta u_T||\ut_k|}{\la Z\ra}+|\Delta u_T||\nabla \ut_k|\right].
\eee
(Lower derivative terms are easier to estimate. We omit the details.)
Estimates for the three terms are similar but for the first two we can use estimates from the step $k={k^\sharp}-1$. We therefore
 will only explicitly treat the term
\bee
b^2\int\frac{\chi_k|\pa^{k-1}\rho_T|}{\rho_T} |\Delta u_T||\nabla \ut_k|
\eee
First,
\bee
b^2\int_{Z\le 12Z^*} &&\frac{\chi_k|\pa^{k-1}\rho_T|}{\rho_T} |\Delta u_T||\nabla \ut_k|\lesssim
\frac {b^2}{10} \int \chi_k |\nabla \ut_k|^2 + C b^2 \int_{Z\le 12Z^*} 
\frac{\chi_k|\pa^{k-1}\rho_T|^2}{\la Z\ra^{4+2(r-1)}\rho_T^2}\\ &&\le \frac {b^2}{10} \int \chi_k |\nabla \ut_k|^2 + C b^2 \int_{Z\le 12Z^*} \la Z\ra^{2(r-1)\frac{p+1}{p-1}-2-2(r-1)}\chi_k \rho_D^{p-1}
\frac{|\pa^{k-1}\rho_T|^2}{\la Z\ra^2}\\ &&\le \frac {b^2}{10} \int \chi_k |\nabla \ut_k|^2 + C \frac 1{(Z^*)^{\ell(r-1)+r-2}} \int_{Z\le 12Z^*} \la Z\ra^{\ell(r-1)-2}\chi_k \rho_D^{p-1}
\frac{|\pa^{k-1}\rho_T|^2}{\la Z\ra^2}\\ &&\le\frac {b^2}{10} \int \chi_k |\nabla \ut_k|^2 + C  \int_{Z\le 12Z^*} \la Z\ra^{-r}\chi_k \rho_D^{p-1}
\frac{|\pa^{k-1}\rhot|^2}{\la Z\ra^2} \\ &&+C \frac 1{(Z^*)^{\ell(r-1)+r-2}} \int_{Z\le 12Z^*} \la Z\ra^{\ell(r-1)-2}\chi_k \rho_D^{p-1}
\frac{|\pa^{k-1}\rho_D|^2}{\la Z\ra^2} \\ &&\le \frac {b^2}{10} \int \chi_k |\nabla \ut_k|^2 + e^{-c_{{k^\sharp}}\tau}\\&&+ 
C\frac 1{(Z^*)^{\ell(r-1)+r-2}} \int_{Z\le 12Z^*} \la Z\ra^{\ell(r-1)-2-d+2k+\ell(r-1)-2\sigma}
\frac{Z^{d-1}}{\la Z\ra^{2+2(k-1)+\ell(r-1)}} dZ\\ &&\le\frac {b^2}{10} \int \chi_k |\nabla \ut_k|^2 + e^{-c_{{k^\sharp}}\tau}+C\frac 1{(Z^*)^{\ell(r-1)+r-2}}\int_{Z\le 12Z^*} \frac{\la Z\ra^{\ell(r-1)}dZ}{\la Z\ra^{2\sigma+3}}\\ &&\le\frac {b^2}{10} \int \chi_k |\nabla \ut_k|^2 + e^{-c_{{k^\sharp}}\tau}
\eee
where we used the condition that $\ell(r-1)+r-2>0$, see \eqref{conditione}, as well as \eqref{eq:conditionrminusoneisstriclylessthan2ford=3}.
We now estimate 
$$
b^2\int_{Z\ge 12 Z^*}\frac{\chi_k|\pa^{k-1}\rho_T|}{\rho_T} |\Delta u_T||\nabla \ut_k|
$$
We first decompose $\rho_T=\rho_D+\rhot$.
\bee
b^2\int_{Z\ge 12 Z^*}\frac{\chi_k|\pa^{k-1}\rho_D|}{\rho_T} |\Delta u_T||\nabla \ut_k|&&\lesssim
b^2\int_{Z\ge 12 Z^*}\chi_k \la Z\ra^{-k+1} |\Delta u_T||\nabla \ut_k|\\
&&\le \frac {b^2}{10}\int\chi_k |\nabla \ut_k|^2+ Cb^2\int_{Z\geq 12Z_*}\la Z\ra^{-2k+2} \chi_k|\Delta u_T|^2\\
&&\le \frac {b^2}{10}\int\chi_k |\nabla \ut_k|^2+Cb^2\sum_{j=0}^{k-1} \int \chi_j |\nabla \ut_j|^2
\eee
where we used that $\chi_k=\la Z\ra^{2k-2}\chi_1$ and that $u_D$ is supported in $Z\le 10Z^*$.

Integrating from infinity and using Cauchy-Schwarz,
$$
Z^{d-1}\frac {\chi_2}{Z} |\Delta\ut|^2(Z)\lesssim \int_Z^\infty Z^{d-1} \chi_2 |\nabla \Delta \ut|^2 dZ + \int_Z^\infty Z^{d-1} \frac{\chi_2}{Z^2} |\Delta \ut|^2 dZ \lesssim\int \chi_2 |\nabla \ut_2|^2+\int \chi_1|\nabla \ut_1|^2
$$
Using that $u_D$ is supported in $Z\le 10Z^*$ and that $\chi_k=\la Z\ra^{2k-4} \chi_2$ we then obtain
\bee
b^2\int_{Z\ge 12 Z^*}\frac{\chi_k|\pa^{k-1}\rhot|}{\rho_T} |\Delta u_T||\nabla \ut_k|&&\le \frac {b^2}{10}\int \chi_k |\nabla \ut_k|^2\\&&+ Cb^2 \left(\sum_{j=0}^{k-1} \int \chi_j |\nabla \ut_j|^2\right) \int_{Z\ge 12 Z^*}Z^{-d}\frac{|\la Z\ra^{k-1}\pa^{k-1}\rhot|^2}{\rho_T^2} 
\eee
We now use the estimate \eqref{eq:strrhor}
$$
\int_{Z\ge 12Z^*}\la Z\ra ^{-d+2k} \left\la \frac Z{Z^*}\right\ra^{\mu-2\sigma}
\left|\frac{\nabla^k\rhot}{\rho_D}\right|^2\le \delta
$$
which holds for any $k\le {k^\sharp}-1$ and positive $\mu=\min\{1,2(r-1)\}$, to conclude that
$$
b^2\int_{Z\ge 12 Z^*}\frac{\chi_k|\pa^{k-1}\rhot|}{\rho_T} |\Delta u_T||\nabla \ut_k|\le \frac {b^2}{10}\int\chi_k |\nabla \ut_k|^2+Cb^2\sum_{j=0}^{k-1} \int \chi_j |\nabla \ut_j|^2
$$
We now set
$$
J:=(\mu+\mu') b^2 \int \la Z\ra^{-2-2\sigma-d+2(r-1)} \left\la \frac Z{Z^*}\right\ra^{-2(r-1)+2\sigma} 
\frac{u_T^2+|Z^2\Delta u_T|^2}{\rho_D^2}
$$
Choose a decreasing sequence of positive constants $C_m$ and sum the above inequalities to obtain 
for 
\be\label{eye}
I:= \sum_{m=0}^{{k^\sharp}} C_m I_{m,\sigma}
\ee
\be\label{eq:interm}
\frac 12\frac {dI}{d\tau}  + \sigma  I+
\frac 12 b^2  \sum_{m=0}^{{k^\sharp}} C_m \int \chi_m |\nabla\ut_m|^2\le C J +  \sum_{m=0}^{{k^\sharp}} C_m
\int \chi_m\rho^2_T\nabla \tilde F^m_2\cdot\nabla \Psit_m+ e^{-c_{{k^\sharp}}\tau},
\ee
where 
$$
\tilde F_2=F_2^m-b^2\pa^m\mathcal F^m
$$
denotes the $F_2$ terms minus the contribution from the dissipative terms.

\noindent{\bf step 7} $\tilde F_2$ terms. We claim:
\be
\label{estfoneessentialftwo}
\sum_{m=0}^{{k^\sharp}} C_m
\int \chi_m\rho^2_T\nabla \tilde F^m_2\cdot\nabla \Psit_m\le O( e^{-c_{{k^\sharp}}\tau})-\sigma C_{{k^\sharp}}\mathcal  K - \frac 12 \frac {d}{d\tau} C_{{k^\sharp}}\mathcal K 
\ee
with
$$
|\mathcal K|\le \mathcal d I_{{k^\sharp},\sigma}
$$
\noindent\underline{\em Source term induced by localization}. Recall \eqref{profileequationtilde}:
$$\tilde{\mathcal E}_{P,\Psi}=\pa_\tau \Psi_D+\left[|\nabla \Psi_D|^2+\rho_D^{p-1}+(r-2)\Psi_D+\Lambda \Psi_D\right]$$
which yields
$$\pa_Z\tilde{\mathcal E}_{P,\Psi}=\pa_\tau u_D+\left[2u_D\pa_Zu_D+(p-1)\rho_D^{p-1}\pa_Z\rho_D+(r-1)u_D+\Lambda u_D\right].$$ In view of the exact profile equation for $u_P$ and the fact that $u_P$ coincides with $u_D$ for $Z\le Z^*$,
$\pa_Z\tilde{\mathcal E}_{P,\Psi}$ is supported in $Z\ge Z^*$. Furthermore,
from \eqref{definitionprofilewithtailchange}:
$$u_D(\tau,Z)=\zeta(\l Z)u_P(Z)$$ and hence 
\bee
&&\pa_\tau u_D+\Lambda u_D+(r-1)u_D=-\Lambda \zeta(x) u_P(Z)+\Lambda \zeta(x) u_P(Z)+\zeta(x)\Lambda u_P(Z)+(r-1)\zeta(x)u_P(Z)\\
&=& \zeta(x)\left[(r-1)u_P+\Lambda u_P\right](Z)=O\left(\frac{{\bf 1}_{Z^*\le Z\le 10Z^*}}{Z^{r-1+\delta}}\right).
\eee
Using that $|u_D|+\rho_D^{\frac{p-1}2}\lesssim \la Z\ra^{-(r-1)}$, with the inequality becoming $\sim$ in the region 
$Z^*\le Z\le 10 Z^*$ and that $u_D$ vanishes for $Z\ge 10 Z^*$, 
we infer 
$$|\pa_Z\tilde{\mathcal E}_{P,\Psi}|\lesssim \frac{{\bf 1}_{Z\ge Z^*}}{\la Z\ra^{\delta}}\rho_D^{\frac{p-1}{2}}$$
with a similar statement holding for higher derivatives
$$|\nabla \pa^k\tilde{\mathcal E}_{P,\Psi}|\lesssim \frac{{\bf 1}_{Z\ge Z^*}}{\la Z\ra^{k+\delta}}\rho_D^{\frac{p-1}{2}}$$
 Then, using \eqref{power},
\bee
\int \chi\rho_T^2|\nabla \pa^k\tilde{\mathcal E}_{P,\Psi}|^2\lesssim \int_{Z\ge Z^*}\frac{Z^{d-1} Z^{2k}}{Z^{d}\rho_P^{p-1}\rho_D^2}\frac{\rho_T^2\rho_D^{p-1}}{\la Z\ra^{2k+2\delta}}dZ\leq e^{-2\delta\tau}.
\eee

\noindent\underline{\em $ [\pa^k,\tilde{H}_2]\Lambda \Psi$ term}. From \eqref{esterrorpotentialsbis}:
\bee
|\nabla([\pa^k,\tilde H_2]\Lambda\Psi)|\lesssim \sum_{j=1}^{k+1}\frac{|\pa_j\Psit|}{\la Z\ra^{r+1+k-j}}\lesssim  \sum_{j=0}^{k}\frac{|\nabla \pa^j\Psit|}{\la Z\ra^{r+k-j}}
\eee
and hence:
$$
\int\chi_k\rho_T^2|\nabla([\pa^k,\tilde H_2]\Lambda\Psi)|^2\lesssim \sum_{j=0}^{k}\int\chi_k\rho_T^2\frac{|\nabla \pa^j\Psit|^2}{\la Z\ra^{2(k-j)+2r}}\lesssim e^{-c_{{k^\sharp}}\tau}.
$$

\noindent\underline{$[\pa^k,\rho_D^{p-2}]$ term}. By Leibniz and \eqref{boundnonlienalpha}:
$$\left|\left[[\pa^k,\rho_D^{p-2}]\rhot-k(p-2)\rho_D^{p-3}\pa\rho_D\pa^{k-1}\rhot\right]\right|\lesssim \sum_{j=0}^{k-2}\frac{|\pa^j\rhot|}{\la Z\ra^{k-j}}\rho_D^{p-2}$$
and hence taking a derivative and using \eqref{interpolatedbound}:
\bee
&&\int \chi_k\rho_T^2\left|\nabla \left[[\pa^k,\rho_D^{p-2}]\rhot-k(p-2)\rho_D^{p-3}\pa\rho_D\pa^{k-1}\rhot\right]\right|^2\lesssim\sum_{j=0}^{k-1}\int\chi_k\rho_D^{2(p-2)+2}\frac{|\pa^j\rhot|^2}{\la Z\ra^{2(k-j)+2}}\\
& \lesssim &e^{-c_{{k^\sharp}}\tau}
\eee
since
$2(p-2)+2=2(p-1)>p-1$.

\noindent\underline{Nonlinear $\Psi$ term}. Let $$\pa N_{j_1,j_2}=\pa^{j_1}\nabla\Psi\pa^{j_2}\nabla\Psi, \ \ j_1+j_2=k+1, \ \ j_1\leq j_2, \ \ j_1,j_2\ge 1.$$ We have $j_1\le \frac{{k^\sharp}}{2}$ and hence the $L^\infty $ smallness \eqref{smallglobalboot} yields:
\bee
\int \chi_k\rho_T^2|\pa^{j_1}\nabla\Psi\pa^{j_2}\nabla\Psi|^2&\leq &\mathcal d \int_{Z\le Z^*}\chi_k \rho_T^2\frac{|\pa^{j_2}\nabla\Psi|^2}{\la Z\ra^{2(k-j_2)+2(r-1)}}\\ &+& \mathcal d e^{-2(r-1)\tau}\int_{Z\ge Z^*}\chi_k \rho_T^2\frac{|\pa^{j_2}\nabla\Psi|^2}{\la Z\ra^{2(k-j_2)}}\\ &\le& \|\rhot,\Psit\|^2_{j_2,\sigma+r-1}+e^{-2(r-1)\tau}\|\rhot,\Psit\|^2_{j_2,\sigma}\le e^{-c_{{k^\sharp}}\tau}.
\eee

\noindent{\bf step 8} $\NL(\rhot)$ term. Arguing as for the proof of \eqref{pointwiseboundnltilde} yields:
\be
\label{vnienvonnenovlveporke}
\nabla \pa^{k}\NL(\rhot)=F'\left(\frac{\rhot}{\rho_D}\right)\rho_D^{p-1}\frac{\nabla \rhot_k}{\rho_D}+O\left(\frac{\delta}{\rho_D} \rho_D^{p-1}\sum_{j=0}^{k}\frac{|\pa^{j}\rhot|}{\la Z\ra^{k+1-j}}\right).
\ee
We recall that
$$
F(v)=(1+v)^{p-1}-1-(p-1) v,\qquad F'(v)=(p-1)\left((1+v)^{p-2}-1\right)
$$

We need to estimate going back to \eqref{algebracienergyidnentiy}
$$\mathcal J_k=-\int \chi\rho^2_T\nabla \pa^{k}\NL(\rhot)\cdot\nabla \Psit_k$$ and claim 
\be
\label{vevnenveineneoinvoe}
|\mathcal J_{k}|\leq e^{-c_{{k^\sharp}}\tau}\ \ \mbox{for}\ \ k\le {k^\sharp}-1
\ee
and for $k={k^\sharp}$:
\bea
\label{nnveneonoenvenvon}
\nonumber \mathcal J_{k}&\le & -\frac12\frac{d}{d\tau}\left\{\int \chi_kF'\left(\frac{\rhot}{\rho_D}\right)\rho_D^{p-2}\rho_T\rho_k^2\right\}-\sigma \int \chi_kF'\left(\frac{\rhot}{\rho_D}\right)\rho_D^{p-2}\rho_T\rho_k^2\\
&+& O(e^{-c_{{k^\sharp}}\tau}).
\eea
Indeed, we estimate:
\bee
&&\mathcal J_k=-\int \chi_k\rho^2_T\left[F'\left(\frac{\rhot}{\rho_D}\right)\rho_D^{p-1}\frac{\nabla \rhot_k}{\rho_D}+O\left(\frac{\delta}{\rho_D} \rho_D^{p-1}\sum_{j=0}^{k}\frac{|\pa^{j}\rhot|}{\la Z\ra^{k+1-j}}\right)\right]\cdot\nabla \Psit_k\\
& = & -\int \chi_k\rho_T^2F'\left(\frac{\rhot}{\rho_D}\right)\rho_D^{p-1}\frac{\nabla \rhot_k}{\rho_D}\cdot\nabla \Psit_k+O(e^{-c_{{k^\sharp}}\tau})
\eee
and we now integrate by parts:
\bee
&&-\int \chi_k\rho_T^2F'\left(\frac{\rhot}{\rho_D}\right)\rho_D^{p-1}\frac{\nabla \rhot_k}{\rho_D}\cdot\nabla \Psit_k=\int \rhot_k\nabla \cdot\left(\chi_kF'\left(\frac{\rhot}{\rho_D}\right)\rho_D^{p-2}\rho_T^2\nabla \Psit_k\right)\\
& = & \int \rhot_k\left[\chi_kF'\left(\frac{\rhot}{\rho_D}\right)\rho_D^{p-2}\nabla \cdot(\rho_T^2\nabla \Psit_k)+\rho_T^2\nabla \Psit_k\cdot\nabla\left(\chi_kF'\left(\frac{\rhot}{\rho_D}\right)\rho_D^{p-2}\right)\right].
\eee
We estimate
$$\left|\nabla\left(F'\left(\frac{\rhot}{\rho_D}\right)\rho_D^{p-2}\right)\right|\lesssim \frac{\delta \rho_D^{p-2}}{\la Z\ra}$$
and hence 
$$\left|\int \rhot_k\rho_T^2\nabla \Psit_k\cdot\nabla\left(\chi_kF'\left(\frac{\rhot}{\rho_D}\right)\rho_D^{p-2}\right)\right|\lesssim \delta \int \frac{\chi_k\rhot_k|\nabla \Psit_k|\rho_D^{p-1}\rho_D}{\la Z\ra}\le e^{-c_{{k^\sharp}}\tau}.$$ For $k\le {k^\sharp}-1$, we estimate directly
\bee
&&\left|\int \rhot_k\chi_kF'\left(\frac{\rhot}{\rho_D}\right)\rho_D^{p-2}\nabla \cdot(\rho_T^2\nabla \Psit_k)\right|\lesssim \int \chi_k|\rhot_k|\rho_D^{p-2}\left[\frac{|\rho^2_D|}{\la Z\ra}|\nabla \Psit_k|+\rho_D^2|\Delta \Psit_k|\right]\\
& \lesssim & I_{{k^\sharp},\sigma+1}\leq e^{-c_{{k^\sharp}}\tau}
\eee
and \eqref{vevnenveineneoinvoe} is proved. We now let $k={k^\sharp}$ and insert \eqref{estqthohrkbisbis} 
\bee
&&\int \chi_k\rhot_kF'\left(\frac{\rhot}{\rho_D}\right)\rho_D^{p-2}\nabla \cdot(\rho_T^2\nabla \Psit_k)\\
\nonumber & = -&\int \chi_k\rhot_kF'\left(\frac{\rhot}{\rho_D}\right)\rho_D^{p-2}\rho_T\left[ \pa_\tau \rhot_k-(\Ht_1-k(\Ht_2+\Lambda \Ht_2)\rhot_k+\Ht_2\Lambda \rhot_k\right.\\
\nonumber &+& \left. (\Delta^{K}\rho_T)\Delta \Psit+k\nabla\rho_T\cdot\nabla \Psit_k+ 2\nabla(\Delta^{K}\rho_T)\cdot\nabla \Psit-  F_1\right]
\eee
and treat all terms in the above identity. The $\pa_\tau \rhot_k$ is integrated by parts in time:
\bee
&&-\int \chi_k\rhot_kF'\left(\frac{\rhot}{\rho_D}\right)\rho_D^{p-2}\rho_T\pa_\tau \rhot_k=-\frac12\frac{d}{d\tau}\left\{\int \chi_kF'\left(\frac{\rhot}{\rho_D}\right)\rho_D^{p-2}\rho_T\rho_k^2\right\}\\
& +& \frac 12\int \rhot_k^2\pa_\tau \left(\chi_k F'\left(\frac{\rhot}{\rho_D}\right)\rho_D^{p-2}\rho_T\right).
\eee
We now recall the identity 
$$\int \rhot_k G\Lambda \rhot_k=-\frac 12\int \rhot_k^2(dG +\Lambda G)$$ Therefore,
\bee
&&\frac 12\int \rhot_k^2\pa_\tau \left(\chi_k F'\left(\frac{\rhot}{\rho_D}\right)\rho_D^{p-2}\rho_T\right)\\
&-& \int \chi_k\rhot_kF'\left(\frac{\rhot}{\rho_D}\right)\rho_D^{p-2}\rho_T\left[-[\Ht_1-k(\Ht_2+\Lambda \Ht_2)]\rhot_k+\Ht_2\Lambda \rhot_k\right]=  \int \rhot_k^2 \mathcal A
\eee
with
\bee
\mathcal A&=&\frac 12\pa_\tau \left(\chi_k F'\left(\frac{\rhot}{\rho_D}\right)\rho_D^{p-2}\rho_T\right)+\chi_k F'\left(\frac{\rhot}{\rho_D}\right)\rho_D^{p-2}\rho_T\left[\Ht_1-k\Ht_2-k\Lambda \Ht_2\right]\\
& + & \frac d2\chi_kF'\left(\frac{\rhot}{\rho_D}\right)\rho_D^{p-2}\rho_T+\frac 12\Lambda  \left(\chi_k \Ht_2F'\left(\frac{\rhot}{\rho_D}\right)\rho_D^{p-2}\rho_T\right)
\eee
\noindent\underline{leading order term}. 
We claim
\be
\label{boundpotienntial}
\mathcal A\le \chi_k F'\left(\frac{\rhot}{\rho_D}\right)\rho_D^{p-2}\rho_T\left[-\sigma+O\left(\frac{1}{\la Z\ra^r}\right)\right]
\ee
which ensures 
\bee
&&-\int \chi_k\rhot_kF'\left(\frac{\rhot}{\rho_D}\right)\rho_D^{p-2}\rho_T\pa_\tau \rhot_k\\
&\le  & -\frac12\frac{d}{d\tau}\left\{\int \chi_kF'\left(\frac{\rhot}{\rho_D}\right)\rho_D^{p-2}\rho_T\rho_k^2\right\}-\sigma \int \chi_kF'\left(\frac{\rhot}{\rho_D}\right)\rho_D^{p-2}\rho_T\rho_k^2+O(e^{-c_{{k^\sharp}}\tau}).
\eee
Therefore, see \eqref{estfoneessentialftwo},
$$
\mathcal K=-\int \chi_kF'\left(\frac{\rhot}{\rho_D}\right)\rho_D^{p-2}\rho_T\rhot_k^2\
$$
and 
\be
\label{venivnioenovlmevmepmenev}
|\mathcal K|\le \delta \int\chi_k \rho_D^{p-1}\rhot_k^2.
\ee
\noindent{\em Proof of \eqref{boundpotienntial}}: First
$$\left|F'\left(\frac{\rhot}{\rho_D}\right)\Lambda \Ht_2\right|\lesssim \frac{C}{\la Z\ra^{r}}.$$
 Then from \eqref{choihceofchi}, \eqref{globalbeahvoiru}, \eqref{globalbeahvoirubis}, \eqref{esterrorpotentials}:
 \bee
&&\frac 12\frac{\pa_\tau(\rho_D^{p-2}\rho_T)+\Ht_2\Lambda (\rho_D^{p-2}\rho_T)}{\rho_D^{p-2}\rho_T}+\Ht_1-k\Ht_2+\frac d2+\frac 12\frac{\pa_\tau\chi_k+\Lambda \chi_k\Ht_2}{\chi_k}\\
& = & \frac12\left[(p-2)\left(-\frac{2(r-1)}{p-1}\right)-\frac{2(r-1)}{p-1}\right]-\frac{2(r-1)}{p-1}-k+
\frac d2\\
& + & \frac{2k-2\sigma-d+\frac{4(r-1)}{p-1}+2(r-1)}{2}+O\left(\frac{1}{\la Z\ra^{r}}\right)= -\sigma+O\left(\frac{1}{\la Z\ra^{r}}\right).
\eee
We then estimate from \eqref{exactliearizedflowtilde}, \eqref{smallglobalboot}, \eqref{esterrorpotentials}, \eqref{neineinneonev}:
\bee
&&\left(\pa_\tau+\Lambda\right)\left[F'\left(\frac{\rhot}{\rho_D}\right)\right]=F''\left(\frac{\rhot}{\rho_D}\right)\left\{\frac{(\pa_\tau+\Lambda) \rhot}{\rho_D}-\frac{\rhot}{\rho_D}\frac{(\pa_\tau+\Lambda) \rho_D}{\rho_D}\right\}\\
& = & F''\left(\frac{\rhot}{\rho_D}\right)\left\{-\frac{2(r-1)}{p-1}+\frac{2(r-1)}{p-1}+O\left(\frac{1}{\la Z\ra^{r}}\right)\right\}=O\left(\frac{1}{\la Z\ra^{r}}\right).
\eee
and \eqref{boundpotienntial} is proved. \\
\noindent\underline{lower order terms}. Using \eqref{veniovnineonelweroidre}, \eqref{smallglobalboot}:
\bee
&&\left|\int\chi_k \rhot_kF'\left(\frac{\rhot}{\rho_D}\right)\rho_D^{p-2}\rho_T \Delta^{K}\rho_T\Delta \Psit\right|\le \delta \int \chi_k\rhot_k\rho^{p-1}_T|\Delta \Psit|\left[\frac{\rho_D}{\la Z\ra^{{k^\sharp}}}+|\rhot_k|\right]\\
& \leq & e^{-c_{{k^\sharp}}\tau}.
\eee
The term 
$$
\left|\int \chi_k\rhot_kF'\left(\frac{\rhot}{\rho_D}\right)\rho_D^{p-2}\rho_T\nabla\Delta^{K}\rho_T\cdot\nabla \Psit\right|
$$
is treated similarly after integrating by parts once.
Furthermore,
\bee
\left|\int \chi_k\rhot_kF'\left(\frac{\rhot}{\rho_D}\right)\rho_D^{p-2}\rho_T\nabla\rho_T\cdot\nabla \Psit_k\right|\leq \delta\int \chi_k\rho_T^{p-1}\rhot_k\frac{\rho_T}{\la Z\ra}|\nabla \Psit_k|\le e^{-c_{{k^\sharp}}\tau}
\eee
Finally, from \eqref{boundfone}:
$$
\left|\int \chi_k\rhot_kF'\left(\frac{\rhot}{\rho_D}\right)\rho_D^{p-2}\rho_TF_1\right|\leq e^{-c_{{k^\sharp}}\tau}.$$
The collection of above bounds concludes the proof of \eqref{nnveneonoenvenvon}.\\

\noindent{\bf step 9} Conclusion. 
Going back to \eqref{eq:interm} we obtain
\be\label{eq:bull}
\frac 12\frac {d(I+C_{{k^\sharp}}\mathcal K)}{d\tau}  + \sigma  (I+C_{{k^\sharp}}\mathcal K)+
\frac 12 (\mu+\mu') b^2  \sum_{m=0}^{{k^\sharp}} C_m \int \chi_m |\nabla\ut_m|^2\le C J + e^{-c_{{k^\sharp}}\tau}.
\ee
We integrate in time and use 
\eqref{venivnioenovlmevmepmenev} to obtain for $\sigma<c_{{k^\sharp}}$
$$
I(\tau)\le e^{-2\sigma(\tau-\tau_0)}I(\tau_0) + C e^{-2\sigma\tau} \int_{\tau_0}^\tau e^{2\sigma\tau'} J + e^{-c_{{k^\sharp}}\tau}
$$
We now recall \eqref{eq:stup}, choose a small constant $\mathcal b>0$ (which will depend only on the constants $r$ and $n_P$,)
 let $Z^*_{\mathcal b}=(Z^*)^{1+\mathcal b}$ 
and estimate 
\bee
\int_{\tau_0}^\tau e^{2\sigma\tau'} J &=& (\mu+\mu')\int_{\tau_0}^\tau b^2 (Z^*)^{2\sigma}\int \la Z\ra^{2(r-1)-2-2\sigma-d} \left\la\frac{Z}
{Z^*}\right\ra^{2\sigma-2(r-1)}
\frac{u_T^2+(Z^2\Delta u_T)^2}{\rho_D^2}\\ &=&(\mu+\mu')\int_{\tau_0}^\tau b^2   (Z^*)^{2\sigma}\int_{Z\le Z^*_{\mathcal b}} \la Z\ra^{2(r-1)-2-2\sigma-d} \left\la\frac{Z}
{Z^*}\right\ra^{2\sigma-2(r-1)}
\frac{u_T^2+(Z^2\Delta u_T)^2}{\rho_D^2}\\ &+&(\mu+\mu')\int_{\tau_0}^\tau b^2 (Z^*)^{2(r-1)}\int_{Z\ge Z^*_{\mathcal b}} 
\la Z\ra^{-2-d} 
\frac{u_T^2+(Z^2\Delta u_T)^2}{\rho_D^2}
\eee
We first obtain
\bee
(\mu+\mu')\int_{\tau_0}^\tau &&b^2 (Z^*)^{2\sigma}\int_{Z\le Z^*_{\mathcal b}} \la Z\ra^{2(r-1)-2-2\sigma-d} \left\la\frac{Z}
{Z^*}\right\ra^{2\sigma-2(r-1)}
\frac{u_T^2+(Z^2\Delta u_T)^2}{\rho_D^2}\\ &\lesssim&(\mu+\mu')\int_{\tau_0}^\tau (Z^*)^{-\ell(r-1) -r+2+2\sigma}\int_{Z\le Z^*_{\mathcal b}} \la Z\ra^{-2-2\sigma-1} \la Z\ra^{\ell(r-1)+2\mathcal b n_P}dZ\\ &&\lesssim e^{-(r-2\mathcal bn_P)\tau_0}+e^{-(\ell(r-1) +r-2-2\sigma)\tau_0}\le e^{-\delta\tau_0}
\eee
as long as $\mathcal b$ has been chosen small enough, so that $r\gg \mathcal b n_P$ and $\sigma$ is small enough so that
$2\sigma<\ell(r-1)+r-2$.

To control the second integral we use the global bound \eqref{eq:globald}
\bee
(\mu+\mu')\int_{\tau_0}^\tau&& b^2 (Z^*)^{2(r-1)}\int_{Z\ge Z^*_{\mathcal b}} \la Z\ra^{-2-d} 
\frac{u_T^2+(Z^2\Delta u_T)^2}{\rho_D^2}\\&=&(\mu+\mu')\int_{\tau_0}^\tau b^2 (Z^*)^{-d+ 2r}\int_{Z\ge Z^*_{\mathcal b}} \left(\frac {Z^*}{Z}\right)^{d-2} 
\frac{u_T^2+(Z^2\Delta u_T)^2}{\la Z\ra^{4}\rho_D^2}\\ &\le& (\mu+\mu')e^{-\mathcal b(d-2)\tau_0}\int_{\tau_0}^\tau b^2 (Z^*)^{-d+ 2r}\int
 \frac{u_T^2+(Z^2\Delta u_T)^2}{\la Z\ra^{4}\rho_D^2}\le \mathcal D e^{-\mathcal b(d-2)\tau_0}\le e^{-\delta \tau_0},
\eee
where the penultimate and last inequalities hold since\footnote{Once again, dimensional restriction arises in the treatment 
of the dissipative term. It is not needed in the Euler case.} $d= 3$.
This concludes the proof of \eqref{estnienonneoinduction}.
\end{proof}

 %%%%%%%%%%%%%%%%%%%%%%%%%%%%%%%%%%%%%%%%

\section{$L^\infty$ bounds}

 %%%%%%%%%%%%%%%%%%%%%%%%%%%%%%%%%%%%%%%%

We are now in position to improve the bound  \eqref{smallglobalboot}.

\begin{lemma}[Improved $L^\infty$ bounds]
\label{lemmainprovedlinftysindaie}
For all $0\le k\le {k^\sharp}-2$, 
\be
\label{imprvedpoitnwibounds}
\left\|\frac{\la Z\ra^{k}\nabla^k\rhot}{\rho_D}\right\|_{L^\infty}+\left\|
\la Z\ra ^{k+(r-1)} \nabla^k\ut\right\|_{L^\infty(Z\leq Z^*)}\leq \mathcal d_0
\ee
and for all $0\le k\le {k^\sharp}-1$, 
\be
\label{imprvedpoitnwibounds:bisrepetita}
\left\|
\la Z\ra ^{k+(r-1)}\left\la \frac Z{Z^*}\right\ra^{-2(r-1)} \nabla^k\ut\right\|_{L^\infty(Z\geq 1)}\leq \mathcal d_0
\ee
\end{lemma}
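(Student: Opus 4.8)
The plan is to derive the pointwise bounds \eqref{imprvedpoitnwibounds}, \eqref{imprvedpoitnwibounds:bisrepetita} from the \emph{sharp} weighted energy bounds of Lemma \ref{propinduction}, $I_{m,\sigma}(\tau)\le\mathcal d_0e^{-2\sigma\tau}$ for $0\le m\le{k^\sharp}$, by a weighted radial Sobolev embedding performed on dyadic regions $Z\sim\Lambda$. The identity that makes this run is $(Z^*)^{2\sigma}e^{-2\sigma\tau}=1$: the weight $\chi_k$ in the energy carries an extra factor $\la Z\ra^{-2\sigma}$, which on a dyadic piece $Z\sim\Lambda$ is $\sim\Lambda^{2\sigma}\le(Z^*)^{2\sigma}$ in the interior, and this is exactly what absorbs the decay $e^{-2\sigma\tau}$ upon taking the supremum over $\Lambda$, leaving the prefactor $\mathcal d_0$ intact. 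One loses only a single derivative in the (one-dimensional) radial Sobolev inequality on an annulus, so the bounds hold up to $k\le{k^\sharp}-1$ for the velocity and $k\le{k^\sharp}-2$ for the density, the latter range being dictated by the auxiliary exterior bound \eqref{eq:strinfr}; since ${k^\sharp}\gg d/2$ this loss is harmless.

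\textbf{Interior $Z\le Z^*$.} Here $\la Z/Z^*\ra\sim1$, $\rho_T\sim\rho_D\sim\la Z\ra^{-2(r-1)/(p-1)}$, and $\la Z\ra^j|\nabla^j(1/\rho_D)|\lesssim\rho_D^{-1}$ by \eqref{boundnonlienalpha}. Combining the norm equivalence (the Remark following \eqref{sobolevinitnioeneon}) with the formulas \eqref{pow}--\eqref{power} for $\chi_k\rho_D^2$ and $\chi_k\rho_D^{p-1}$, Lemma \ref{propinduction} gives, for every $0\le j\le{k^\sharp}$, $\int_{Z\le Z^*}\la Z\ra^{2j-d-2\sigma}|\nabla^j\rhot/\rho_D|^2\lesssim\mathcal d_0e^{-2\sigma\tau}$ and $\int_{Z\le Z^*}\la Z\ra^{2j-d+2(r-1)-2\sigma}|\nabla^j\ut|^2\lesssim\mathcal d_0e^{-2\sigma\tau}$. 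Running the radial Sobolev embedding on $Z\sim\Lambda\le Z^*$ with $j\in\{k,k+1\}$ yields $\|\la Z\ra^k\nabla^k\rhot/\rho_D\|_{L^\infty(Z\sim\Lambda)}^2+\|\la Z\ra^{k+(r-1)}\nabla^k\ut\|_{L^\infty(Z\sim\Lambda)}^2\lesssim\Lambda^{2\sigma}\mathcal d_0e^{-2\sigma\tau}$, and the supremum over $\Lambda\le Z^*$ is $\lesssim(Z^*)^{2\sigma}\mathcal d_0e^{-2\sigma\tau}=\mathcal d_0$ (absorbing the implicit constant and the square root into $\mathcal d_0$, which by convention is any constant small with the data and $\tau_0^{-1}$). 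This gives \eqref{imprvedpoitnwibounds}.

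\textbf{Exterior $Z\ge Z^*$.} On $Z^*\le Z\le10Z^*$ one still has $\la Z/Z^*\ra\sim1$ and the same dyadic Sobolev argument applies. For $Z\ge10Z^*$ the dampened velocity profile $u_D$ vanishes, so $\ut=u_T$, but the renormalized energy weight is now exactly $\la Z\ra^{2k+2(r-1)-d-2\sigma}\la Z/Z^*\ra^{2\sigma-2(r-1)}$ and no longer controls the stronger factor $\la Z/Z^*\ra^{-2(r-1)}$ demanded by \eqref{imprvedpoitnwibounds:bisrepetita}; here I would instead feed the dyadic Sobolev embedding with the auxiliary \emph{improved} exterior bounds of Section 5 — namely \eqref{eq:strrhor}, \eqref{eq:strinfr} for $\rhot$, which already carry the extra decay $\la Z/Z^*\ra^{\mu/2-\sigma}$ with $\mu=\min\{1,2(r-1)\}$, and the renormalized form of \eqref{eq:xuh}, \eqref{eq:xuhrho} for the velocity. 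Now it is the $\la Z/Z^*\ra$ weights (rather than $\la Z\ra^{-2\sigma}$) that do the work when summing the dyadic pieces; gluing with the interior bound on $1\le Z\le Z^*$ then yields \eqref{imprvedpoitnwibounds:bisrepetita} for $0\le k\le{k^\sharp}-1$.

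\textbf{Main obstacle.} The delicate part is precisely the exterior velocity bound \eqref{imprvedpoitnwibounds:bisrepetita}, which upgrades the bootstrap weight $\la Z/Z^*\ra^{-(r-1)}$ of \eqref{smallglobalboot} to $\la Z/Z^*\ra^{-2(r-1)}$. That extra $\la Z/Z^*\ra^{-(r-1)}$ of decay is not visible from the renormalized weighted energy, whose exterior weight degenerates, and has to be imported from Section 5, where it rests on the global dissipative estimate \eqref{eq:globald} of Lemma \ref{lem:NS} (which in turn uses the velocity dissipation of Lemma \ref{lemmainitialization}) together with the rapid vanishing $\hat\rho_D\sim x^{-n_P}$ of the dampened profile; one then checks that the resulting constants are $\le\mathcal d_0$ — they are, being controlled by the size of the full data at $\tau_0$ — and that the derivative count is consistent with ${k^\sharp}\gg1$. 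The remaining steps are the routine bookkeeping of weighted Sobolev interpolation over dyadic shells.
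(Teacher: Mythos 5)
Your overall strategy — a weighted radial Sobolev embedding fed by the sharp weighted energy bounds of Lemma \ref{propinduction}, with the algebra $(Z^*)^{2\sigma}e^{-2\sigma\tau}=1$ doing the bookkeeping — is the paper's strategy. The paper does not dyadically decompose: it uses a single fundamental-theorem-of-calculus inequality \eqref{triv} integrated from $\infty$ (for $\ut$, and for $\rhot$ integrated from $12Z^*$ inward), but that difference is cosmetic.

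The genuine gap is in your treatment of the exterior velocity bound \eqref{imprvedpoitnwibounds:bisrepetita}, which you label the ``main obstacle'' and for which you invoke Section 5 and ultimately the global dissipative estimate \eqref{eq:globald}. This is a misreading of the exponents. First, \eqref{imprvedpoitnwibounds:bisrepetita} does \emph{not} upgrade the exterior decay of \eqref{smallglobalboot}: the bootstrap gives $|\nabla^k\ut|\le\mathcal d\,\la Z\ra^{-k-(r-1)}\la Z/Z^*\ra^{r-1}$, while the lemma only asserts $|\nabla^k\ut|\le\mathcal d_0\,\la Z\ra^{-k-(r-1)}\la Z/Z^*\ra^{2(r-1)}$, which for $Z\ge Z^*$ allows an extra growth $\la Z/Z^*\ra^{r-1}\ge 1$. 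The improvement is in the constant only. Second, the weighted energy already controls a strictly stronger exterior bound than the one stated: from \eqref{pow}, $\la Z\ra^d\chi_k\rho_D^2\sim\la Z\ra^{2k+2(r-1)-2\sigma}\la Z/Z^*\ra^{2\sigma-2(r-1)}$, and applying \eqref{triv} with $f^2=\la Z\ra^d\chi_k\rho_D^2|\nabla^k\ut|^2$ gives, after using $\la Z\ra^{2\sigma}\lesssim(Z^*)^{2\sigma}\la Z/Z^*\ra^{2\sigma}$ and $(Z^*)^{2\sigma}e^{-2\sigma\tau}=1$,
$$
\la Z\ra^{2(k+(r-1))}\la Z/Z^*\ra^{-2(r-1)}|\nabla^k\ut|^2(Z)\lesssim\mathcal d_0,\qquad Z\ge 1,\ k\le{k^\sharp}-1,
$$
i.e.\ the \emph{bootstrap} weight $\la Z/Z^*\ra^{-(r-1)}$ is recovered with the improved constant, and \eqref{imprvedpoitnwibounds:bisrepetita} is a strict weakening. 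The combination $\la Z\ra^{-2\sigma}\la Z/Z^*\ra^{+2\sigma}\sim(Z^*)^{-2\sigma}$ inside $\chi_k$ is precisely what does the work; you dropped the $\la Z/Z^*\ra^{+2\sigma}$ half of it. No appeal to Lemma \ref{lem:NS} is needed, and in fact \eqref{eq:xuh} carries only an $O(1)$ constant (being a translation of the bootstrap \eqref{boundbootbound}, not of Lemma \ref{propinduction}), so that route would not even close the bootstrap.

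Where Section 5 \emph{is} genuinely needed is for the density: the ratio of the wanted weight $\la Z\ra^{2k}/\rho_D^2$ to the energy weight $\la Z\ra^d\chi_k\rho_D^{p-1}$ contains a growing factor $\la Z/Z^*\ra^{n_P(p-1)}$ at large $Z$, so \eqref{triv} integrated from $\infty$ fails. The paper therefore takes \eqref{eq:strinfr} directly for $Z\ge 12Z^*$, and on $1\le Z\le 12Z^*$ uses \eqref{triv} integrated \emph{from} $12Z^*$, with the boundary term controlled by \eqref{eq:strinfr}; you would need this boundary-term mechanism spelled out in your $\rhot$ argument. Finally, for $Z\le 1$ both estimates in \eqref{imprvedpoitnwibounds} follow from the unweighted Sobolev embedding $H^{{k^\sharp}}\hookrightarrow W^{{k^\sharp}-2,\infty}$ in $d\le 3$ applied to $\|\rhot,\Psit\|_{{k^\sharp}}$, which your proposal does not address.
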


\begin{proof}[Proof of Lemma \ref{lemmainprovedlinftysindaie}] For any spherically symmetric function vanishing at infinity
\be\label{triv}
|f|^2(Z)\le \int_{Z}^\infty Z^{-d} {|Z\pa_Z f|^2} Z^{d-1} dZ+ \int_{Z}^\infty Z^{-d} {|f|^2} Z^{d-1} dZ
\ee
We apply this to $f^2=\la Z\ra^{d}\chi_k \rho_D^2|\nabla^k\ut|^2$ with $\chi_k$ from \eqref{equivalnetofmrulachigk}. For $Z\ge 1$ we then obtain
$$
\la Z\ra^{d}\chi_k \rho_D^2|\nabla^k\ut|^2(Z) \lesssim \int  \chi_{k+1} \rho_D^2{|\nabla^{k+1}\ut|^2}+ \int
\chi_k \rho_D^2{|\nabla^k \ut|^2}\le e^{-2\sigma\tau}\mathcal d_0
$$
We now observe that from \eqref{pow}
$$
\la Z\ra^{d}\chi_k \rho_D^2\sim  \la Z\ra^{2k+2(r-1)-2\sigma}\left\la \frac Z{Z^*}\right\ra^{2\sigma-2(r-1)}. 
$$
The estimate \eqref{imprvedpoitnwibounds}
for $\nabla^k \ut(Z)$ with $Z\ge 1$ and $k\le {k^\sharp}-1$ follows immediately. For $Z\le 1$ the estimates for both 
$\nabla^k\rhot$ and $\nabla^k\ut$ for $k\le {k^\sharp}-2$ follow from the boundedness of the Sobolev norm 
$\|\rhot,\Psit\|_{{k^\sharp}}$ in dimension $d\le 3$.

The exterior estimates for $\rhot$ have been already established in \eqref{eq:strinfr}
$$
\left\|\left\la\frac Z{Z^*}\right\ra^{\frac \mu 2-\sigma}\frac{\la Z\ra^{k}\nabla^k\rhot}{\rho_D}\right\|_{L^\infty(Z\ge 12Z^*)}\le \mathcal d_0
$$
for any $0\le k\le {k^\sharp}-2$ and $\mu=\min\{1,2(r-1)\}$. It remains to prove \eqref{imprvedpoitnwibounds} for $\rhot$ for 
$1\le Z\le 12Z^*$. We again use \eqref{triv} but integrating from $12Z^*$ instead. Setting 
$f^2=\la Z\ra^{d}\chi_k \rho_D^{p-1}|\nabla^k\rhot|^2$, we obtain
\bee
\la Z\ra^{d}\chi_k \rho_D^{p-1}|\nabla^k\rhot|^2 &\lesssim& \la Z\ra^{d}\chi_k \rho_D^{p-1}|\nabla^k\rhot|^2|_{Z=12Z^*}+
\int_{Z\leq 12 Z^*}  \chi_{k+1} \rho_D^{p-1}{|\nabla^{k+1}\rhot|^2}\\
&&+ \int_{Z\leq 12 Z^*} 
\chi_k \rho_D^{p-1}{|\nabla^k \rhot|^2}
\eee
We now observe that for $Z\le 12 Z^*$ 
$$
\la Z\ra^{d}\chi_k \rho_D^{p-1}\sim \la Z\ra^{2k-2\sigma+\ell(r-1)}\sim  \frac{\la Z\ra^{2k-2\sigma}}{\rho_D^2},
$$
which implies 
$$
\la Z\ra^{-2\sigma} \frac{|\nabla^k\rhot|^2}{\rho_D^2}\le (Z^*)^{-2\sigma} \mathcal d_0+e^{-2\sigma\tau} \mathcal d_0
$$
The result now follows immediately.
\end{proof}

\section{Control of low Sobolev norms and proof of Theorem \ref{thmmain}}
\label{low}
 %%%%%%%%%%%%%%%%%%%%%%%%%%%%%%%%%%%%%%%%
  %%%%%%%%%%%%%%%%%%%%%%%%%%%%%%%%%%%%%%%%

Our aim in this section is to control weighted low Sobolev norms in the interior region $|x|\le 1$ which in renormalized variables corresponds to $Z\le Z^*$. On our way we will conclude the proof of the bootstrap Proposition \ref{propboot}. Theorem \ref{thmmain} will then follow from a classical topological argument. In this section all of the analysis will take
place in the region $Z\le 5Z^*$ where $\rho_D=\rho_P$ and $\Psi_D=\Psi_P$. We recall the decomposition \eqref{eq:lk}
$$
\rho_T=\bar\rho+\rho_P,\ \ \Psi_T=\Psi_P+\bar\Psi,\ \ \Phi=\rho_P\bar\Psi 
$$
and note that $(\bar\rho, \bar\Psi)=(\rhot,\Psit)$ for $Z\le 5 Z^*$.
 %%%%%%%%%%%%%%%%%%%%%%%%%%%%%%%%%%%%%%%%

\subsection{Exponential decay slightly beyond the light cone}

 %%%%%%%%%%%%%%%%%%%%%%%%%%%%%%%%%%%%%%%%

We use the exponential decay estimate \eqref{stabiliteexpo} for a linear problem to prove exponential decay 
for the nonlinear evolution in the region slightly past the light cone.
We recall the notations of Section 3, in particular $Z_a$ of Lemma \ref{shiftoight}.
\begin{lemma}[Exponential decay slightly past the light cone]
\label{lemmalightcone}
Let $$\tilde{Z_a}=\frac{Z_2+Z_a}2.$$ Then
\be
\label{firstbound}
 \|\nabla \Phi\|_{H^{2k_{\flat}}(Z\leq\tilde{Z_a})}+\|\bar\rho\|_{H^{2k_{\flat}}(Z\leq \tilde{Z_a})}\lesssim e^{-\frac{\delta_g}{2}\tau}.
  \ee
\end{lemma}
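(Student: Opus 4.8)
\textbf{Proof strategy for Lemma \ref{lemmalightcone}.}

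The plan is to convert the linear decay estimate \eqref{stabiliteexpo} of Lemma \ref{elmsnjennnw} into a nonlinear one on the region $Z\le \tilde Z_a$ by viewing \eqref{newlinearflow} as the linear evolution $\pa_\tau X=\mathcal M X+G$ and applying the Duhamel formula together with the source bound \eqref{eq:Gest} from Proposition \ref{propboot}. First I would recall that in the region $Z\le 5Z^*$ we have $\rho_D=\rho_P$, $\Psi_D=\Psi_P$, so that $(\bar\rho,\bar\Psi)=(\rhot,\Psit)$ and the variables $\Phi=\rho_P\bar\Psi$, $\T=\pa_\tau\Phi+aH_2\Lambda\Phi$ are exactly those of Section \ref{sectionlinear}; the commuted variables $\Phi_k=\Delta^k\Phi$, $\T_k=\Delta^k\T$ for $k$ up to $k_{\flat}$ give an element $X\in \Bbb H_{2k_{\flat}}$ whose norm controls $\|\nabla\Phi\|_{H^{2k_{\flat}}(Z\le Z_a)}+\|\bar\rho\|_{H^{2k_{\flat}}(Z\le Z_a)}$ up to the cutoff geometry built into the scalar product \eqref{defscalarproduct}. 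Using the decomposition $H=U\bigoplus V$ from \eqref{eq:decomp} and the projection $P$ on the unstable part, I split $X=(I-P)X+PX$: for the stable part I write Duhamel's formula $ (I-P)X(\tau)=e^{(\tau-\tau_0)\mathcal M}(I-P)X(\tau_0)+\int_{\tau_0}^\tau e^{(\tau-s)\mathcal M}(I-P)G(s)\,ds$ and apply \eqref{stabiliteexpo}, which gives $\|e^{t\mathcal M}|_U\|\lesssim e^{-\frac{\delta_g}2 t}$, together with the data bound $\|(I-P)X(\tau_0)\|\le e^{-\frac{\delta_g}2\tau_0}$ and the source estimate $\|G(s)\|\le e^{-\frac{2\delta_g}3 s}$ from \eqref{eq:Gest}; the convolution of $e^{-\frac{\delta_g}2(\tau-s)}$ with $e^{-\frac{2\delta_g}3 s}$ is bounded by $C e^{-\frac{\delta_g}2\tau}$, which is the desired rate. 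For the unstable part $PX(\tau)$, I invoke the bootstrap assumption \eqref{eq:unstboot} (or its weaker consequence \eqref{eq:unstX}), which directly gives $\|PX(\tau)\|_{\Bbb H_{2k_{\flat}}}\le e^{-\frac{\delta_g}2\tau}$, so both pieces decay at rate $\tfrac{\delta_g}2$.

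Next I would address the passage from the abstract $\Bbb H_{2k_{\flat}}$-norm to the concrete Sobolev norm appearing in \eqref{firstbound}. The norm \eqref{defscalarproduct}--\eqref{defscalarproductbis} is built from $-(\mathcal L_g\Phi_k,\Phi_k)_g$ plus lower-order $\chi$-weighted terms; by the asymptotics \eqref{asymptoticsg} of the measure $g$, which is $\sim 1$ near $Z=0$ and degenerates like $(Z_a-Z)^{c_g}$ with $c_g>0$ as $Z\uparrow Z_a$, the quantity $\|X\|_{\Bbb H_{2k_{\flat}}}^2$ is comparable to $\int_{0}^{Z_a} (Z^2(-D_a)|\pa_Z\Phi_k|^2 + |\T_k|^2 + \ldots) g Z^{d-1}\,dZ$. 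Since $-D_a>0$ strictly on $[0,\tilde Z_a]\subset[0,Z_a)$ (because $\tilde Z_a=(Z_2+Z_a)/2<Z_a$) and $g$ is bounded above and below there, restricting the integral to $Z\le \tilde Z_a$ gives a two-sided equivalence with $\|\nabla\Phi_k\|_{L^2(Z\le\tilde Z_a)}^2+\|\T_k\|_{L^2(Z\le\tilde Z_a)}^2$; summing over $k\le k_{\flat}$ and using the relation between $\Phi$ and $\bar\rho$ in the second equation of \eqref{nekoneneon} (i.e. $\pa_\tau\Phi$, hence $\T$, controls $(p-1)Q\bar\rho$ modulo lower-order and nonlinear terms, and $Q=\rho_P^{p-1}$ is bounded below on the compact set $Z\le\tilde Z_a$ by Theorem \ref{propexistenceprofile} part 4) lets me recover $\|\bar\rho\|_{H^{2k_{\flat}}(Z\le\tilde Z_a)}$. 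Standard elliptic/Sobolev interpolation on the bounded interval converts the control of $\Delta^k\Phi$ for $k\le k_{\flat}$ into control of $\nabla^m\Phi$ for all $m\le 2k_{\flat}$, and similarly for $\bar\rho$.

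The main obstacle I anticipate is not the semigroup estimate itself but verifying that the source term $G$, when measured in $\Bbb H_{2k_{\flat}}$, really does satisfy \eqref{eq:Gest} with the stated rate $e^{-\frac{2\delta_g}3\tau}$ — this is exactly the content of part 2 of Proposition \ref{propboot}, so strictly speaking I would simply cite it, but one must be careful that $G=(0,G_\T)$ with $G_\T$ given by \eqref{defgt} involves $\pa_\tau G_\Phi$, the potential terms, and the dissipative contribution $b^2\rho_P\mathcal F$, all of which must be estimated in the $2k_{\flat}$-regularity norm using the already-established global bounds (Lemmas \ref{proproopr}, \ref{propinduction}, \ref{lemmainprovedlinftysindaie}) and the low-Sobolev bootstrap \eqref{eq:bootdecay}; the bilinear terms $|\nabla\bar\Psi|^2$ and $\NL(\rho)$ in $G_\Phi$ are where smallness $\mathcal d_0$ and the gain from $e^{-\delta_g\tau_0}$-type data bounds must be exploited to beat the loss coming from the quadratic structure. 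A secondary technical point is the compatibility of the cutoffs: the norm \eqref{defscalarproduct} only sees the region $Z\le Z_a$ with the weight $g$, while the nonlinear solution lives on all of $Z\ge 0$, so I must restrict the equation \eqref{exactliearizedflowtilde} to $Z\le Z_a$ and control the boundary/flux terms near $Z=Z_a$ — but here the red-shift/repulsivity estimate \eqref{coercivityquadrcouplinginside}, equivalently the positivity \eqref{eineneoneonoe} of $-D_a$ and of the surface gravity, guarantees that the characteristics point inward at $Z=Z_a$, so no incoming boundary data is needed and the cutoff does not generate uncontrolled terms (this is precisely the maximal accretivity encoded in Proposition \ref{propaccretif}). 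Once these two points are in place, combining the stable Duhamel bound, the unstable bootstrap bound, and the norm equivalence on $Z\le\tilde Z_a$ yields \eqref{firstbound}.
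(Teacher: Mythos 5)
Your overall plan — Duhamel applied to $\pa_\tau X=\mathcal M X+G$, the semigroup decay \eqref{stabiliteexpo} on the stable range of $(I-P)$, the bootstrap/Brouwer control of $PX$, and then translating the $\Bbb H_{2k_{\flat}}$-bound back to concrete Sobolev norms of $(\bar\rho,\bar\Psi)$ on $Z\le\tilde Z_a$ via the algebraic relation \eqref{formulat} and the non-degeneracy of the weight $g$ away from $Z_a$ — matches the paper's three-step proof, and the norm-equivalence discussion is substantively correct.

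The one place where the sketch should be tightened is the treatment of the source bound $\|G\|_{\Bbb H_{2k_{\flat}}}\le e^{-\frac{2\delta_g}{3}\tau}$. You cannot "simply cite" part 2 of Proposition \ref{propboot} here: that estimate is not an independent input but is established \emph{inside} the proof of this lemma (Step 3 in the paper), as part of a coupled argument in which the linear decay of $X$ and the smallness of $G$ are closed together. Concretely, the paper proves \eqref{eq:G} directly from the bootstrap \emph{assumptions} by interpolating the low-Sobolev bound \eqref{eq:bootdecay} (rate $e^{-\frac{3\delta_g}{8}\tau}$, with polynomial loss in the compact-set size) against the global high-Sobolev control of Lemma \ref{proproopr}, which yields roughly $\|\bar\rho\|_{H^{2k_{\flat}+10}(Z\le Z_0)}+\|\bar\Psi\|_{H^{2k_{\flat}+10}(Z\le Z_0)}\le e^{-(\frac{3}{8}-\frac{1}{50})\delta_g\tau}$; the algebra property of $H^{2k_{\flat}}$ on the compact set then squares this to $e^{-\frac{2\delta_g}{3}\tau}$. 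Related to this, your framing that the quadratic terms represent a "loss" to be "beaten" by smallness $\mathcal d_0$ is backwards: the quadratic structure of $G_\Phi$ and $G_\rho$ is precisely the \emph{gain}, since the product of two factors each decaying near $e^{-\frac{3\delta_g}{8}\tau}$ decays near $e^{-\frac{3\delta_g}{4}\tau}$, which comfortably beats the required $e^{-\frac{2\delta_g}{3}\tau}$; the dissipative term $b^2\rho_P\mathcal F$ is handled separately via the decay of $b^2$. With that mechanism in place, the rest of your argument goes through as written.
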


\begin{proof} The proof relies on the spectral theory beyond the light cone and an elementary finite speed propagation like 
argument in renormalized variables, related to \cite{MZwave}.\\

\noindent{\bf step 1} Semigroup decay in $X$ variables. Recall the definition \eqref{notatinotphi} of $X=(\Phi,\T)$ 
 \be\label{formulat}
  \left|\begin{array}{ll}
  \Phi=\rho_P\bar\Psi \\
   \T=\pa_\tau\Phi+aH_2\Lambda \Phi=-(p-1)\qx\bar\rhox-H_2\Lambda \Phix+(H_1-e)\Phix+G_\Phi+aH_2\Lambda \Phi
   \end{array}\right.
  \ee
with $G_\Phi$ given by \eqref{defgrho}, the scalar product \eqref{defscalarproduct} and the definitions
\eqref{nvknneknengno}, \eqref{eq:decomp}: $$\left|\begin{array}{l}
\Lambda_0=\{\l \in \Bbb C, \ \ \Re(\l)\ge 0\} \cap \{\l\ \ \mbox{is an eigenvalue of}\ \ \mathcal M\}=(\l_i)_{1\le i\le N}\\
V=\cup_{1\leq i\leq N}\mbox{\rm ker} (\mathcal M-\l_i I)^{k_{\l_i}}
\end{array}\right.
$$
the projection $P$ associated with $V$, the decay estimate \eqref{stabiliteexpo} on the range of $(I-P)$ and 
the results of Lemma \ref{browerset}. Relative to the $X$ variables our equations take the form 
$$
\pa_\tau X=\mathcal M X + G,
$$ 
which are considered on the time interval $\tau\ge \tau_0\gg 1$ and the space interval $Z\in [0,Z_a]$ (no boundary conditions
at $Z_a$.) We consider evolution in the Hilbert space ${\Bbb H_{2k_{\flat}}}$ with initial data such that 
\be\label{data}
\|(I-P) X(\tau_0)\|_{\Bbb H_{2k_{\flat}}}\le e^{-\frac{\delta_g}2\tau_0},\qquad \|P X(\tau_0)\|_{\Bbb H_{2k_{\flat}}}\le e^{-\frac {3\delta_g}5\tau_0}.
\ee
According to the bootstrap assumption \eqref{eq:unstX}
\be\label{eq:unstX'}
\|PX(\tau)\|_{\Bbb H_{2k_{\flat}}}\le e^{-\frac {\delta_g}{2}\tau}, \qquad \forall\tau\in [\tau_0,\tau^*]
\ee
Lemma \ref{browerset} shows that as long as 
\be\label{eq:G}
\|G\|_{\Bbb H_{2k_{\flat}}}\le e^{-\frac{2\delta_g}3\tau}, \qquad \tau\ge \tau_0
\ee
there exists $\Gamma$, which can
 be made as large as we want with a choice of $\tau_0$, such that 
  \be\label{eq:Xfgrow}
 \|PX(\tau)\|_{\Bbb H_{2k_{\flat}}}\lesssim e^{-\frac{\delta_g}{2}\tau},\qquad \tau_0\le \tau\le \tau_0+\Gamma.
 \ee
 This will allow us to show eventually that if we can verify \eqref{eq:G}, the bootstrap time $\tau^*\ge \tau_0+\Gamma$.
 
Moreover,  as long as \eqref{eq:G} holds, the decay estimate \eqref{stabiliteexpo} implies that
\bea
 \label{kmdommepeo}
 \nonumber &&\|(I-P)X(\tau)\|_{\Bbb H_{2k_{\flat}}}\lesssim e^{-\frac{\delta_g}{2}(\tau -\tau_0)}\|X(\tau_0)\|_{\Bbb H_{2k_{\flat}}}+\int_{\tau_0}^{\tau}e^{-\frac{\delta_g}{2}(\tau-\sigma)}\|G(\sigma)\|_{\Bbb H_{2k_{\flat}}}d\sigma\\
 & \lesssim & e^{\frac{-\delta_g}{2}\tau}\left[e^{\frac{\delta_g}{2}\tau_0}\|X(\tau_0)\|_{\Bbb H_{2k_{\flat}}}+\int_{\tau_0}^{+\infty}e^{-\frac{\delta_g}{6}\tau}d\tau\right]\le e^{-\frac{\delta_g}2\tau}.
 \eea
 As a result,
 \be\label{eq:Xf}
 \|X(\tau)\|_{\Bbb H_{2k_{\flat}}}\lesssim e^{-\frac{\delta_g}{2}\tau},\qquad \tau_0\le \tau\le \tau^*
 \ee
 Below we will verify \eqref{eq:G} $\forall \tau\in[\tau_0,\tau^*]$ under the assumption \eqref{kmdommepeo}, closing
 both. Once again,  this will allow us to show eventually that the length of the bootstrap interval $\tau^*-\tau_0\ge \Gamma$ is sufficiently large.\\ 
 
 Recall from \eqref{defgt}, \eqref{newlinearflow}, \eqref{defscalarproduct}:
 \be
 \label{vnebonennnevno}
 \|G\|^2_{\Bbb H_{2k_{\flat}}}\lesssim \int_{Z\le Z_a} |\nabla\Delta^{k_{\flat}}G_\T|^2gZ^{d-1}dZ+\int_{Z\le Z_a} G_\T^2Z^{d-1}dZ
 \ee
 with
 $$\left|\begin{array}{lll}
G_\T=\pa_\tau G_\Phi-\left(H_1+H_2\frac{\Lambda Q}{Q}\right)G_\Phi+H_2\Lambda G_\Phi-(p-1)QG_\rho\\
G_\rho=-\bar\rho\Delta \bar\Psi-2\nabla\bar\rho\cdot\nabla \bar\Psix\\
G_\Phi=-\bar\rho_P(|\nabla \bar\Psi|^2+\NL(\rho))+b^2\rho_P\mathcal F(u_T,\rho_T).
\end{array}\right.
$$
\noindent{\bf step 2} Semigroup decay for $(\bar\rho,\bar\Psi)$.  
 We now translate the $X$ bound to the bounds for $\bar\rho$ and $\bar\Psi$ and then verify \eqref{eq:G}.
 We recall \eqref{formulat} and obtain for any $\hat Z>Z_2$
  \bee
  \|\T\|_{H^{2k_{\flat}}(Z\le \hat{Z})}+\|\Phi\|_{H^{2k_{\flat}+1}(Z\le \hat{Z})}&\lesssim&\|\bar\rho\|_{H^{2k_{\flat}}(Z\le \hat{Z})}+\|\bar\Psi\|_{H^{2k_{\flat}+1}(Z\le \hat{Z})} +\|G_\Phi\|_{H^{2k_{\flat}}(Z\le \hat{Z})}\\&\lesssim& \|T\|_{H^{2k_{\flat}}(Z\le \hat{Z})}+\|\Phi\|_{H^{2k_{\flat}+1}(Z\le \hat{Z})}+\|G_\Phi\|_{H^{2k_{\flat}}(Z\le \hat{Z})}
 \eee
  and claim:
\be
\label{boundgphi}
\|G_{\Phi}\|_{H^{2k_{\flat}}(Z\le \hat{Z})}\lesssim \|\nabla \bar\Psi\|_{H^{2k_{\flat}}(Z\le \hat{Z})}^2+\|\bar\rho\|_{H^{2k_{\flat}}(Z\le \hat{Z})}^2+e^{-{\delta_g}\tau}.
\ee
Indeed, since $H^{2k_{\flat}}(Z\le \hat{Z})$ is an algebra for $k_{\flat}$ large enough:
  $$\|\rho_P(|\nabla \bar\Psi|^2+\NL(\rho))\|_{H^{2k_{\flat}}(Z\le \hat{Z})}\lesssim \|\nabla \bar\Psi\|_{H^{2k_{\flat}}(Z\le \hat{Z})}^2+\|\bar\rho\|_{H^{2k_{\flat}}(Z\le \hat{Z})}^2.$$
    The remaining term, see \eqref{def-f}, is treated using the pointwise bound \eqref{smallglobalboot} and the smallness of $b$ which imply: $$\|b^2\rho_P\mathcal F(u_T,\rho_T)\|_{H^{2k_{\flat}}(Z\leq \hat{Z})}\lesssim (Z_0)^Cb^2\leq  e^{-\delta_g\tau}$$
  provided $\delta_g>0$ has been chosen small enough, and \eqref{boundgphi} is proved. 
  Choosing $\hat Z>Z_2$, this implies from \eqref{formulat} and the initial bound \eqref{improvedsobolevlowinit}:
  \bea
  \label{boundonthedata}
  \nonumber \|X(\tau_0)\|_{\Bbb H^{2k_{\flat}}}&\lesssim&  \|\bar\Psi(\tau_0)\|_{H^{2k_{\flat}+1}(Z\le \hat{Z})}+\|\bar\rho(\tau_0)\|_{H^{2k_{\flat}}(Z\le \hat{Z})}+e^{-\delta_g\tau_0}\\
  &\lesssim & e^{-\frac{\delta_g\tau_0}2}.
  \eea
  This verifies \eqref{data}. On the other hand, choosing $\hat Z=\tilde Z_a$ with 
$$\tilde{Z_a}=\frac{Z_2+Z_a}2,$$ we also obtain from \eqref{eq:Xf}
  \be\label{eq:rhopsi}
  \|\bar\Psi(\tau)\|_{H^{2k_{\flat}+1}(Z\le \tilde{Z}_a)}+\|\bar\rho(\tau)\|_{H^{2k_{\flat}}(Z\le \tilde{Z}_a)}
  \lesssim \|X(\tau)\|_{\Bbb H^{2k_{\flat}}}+ e^{-\delta_g\tau}\lesssim  e^{-\frac{\delta_g\tau}2}.
  \ee
 The estimate \eqref{firstbound} follows.\\
  
 \noindent{\bf step 3} Estimate for $G$. Proof of \eqref{eq:G}. We recall \eqref{vnebonennnevno}.
 On a fixed compact domain $Z\le Z_0$ with $Z_0>Z_2$,
 we can interpolate the bootstrap bound \eqref{eq:bootdecay} with the global energy bound \eqref{cneioneoneon} and obtain for ${k^\sharp}$ large enough and $b_0<b_0({k^\sharp})$ small enough:
 \be
 \label{cneoineonenvoen}
 \|\bar\rho\|_{H^{2k_{\flat}+10}(Z\le Z_0)}+\|\bar\Psi\|_{H^{2k_{\flat}+10}(Z\le Z_0)}\leq C_Ke^{-\left[\frac{3}{8}-\frac{1}{100}\right]\delta_g\tau}\le e^{-\left[\frac{3}{8}-\frac{1}{50}\right]\delta_g\tau}
 \ee
 and since $H^{2k_{\flat}}$ is an algebra and all terms are either quadratic or with a $b$ term, \eqref{cneoineonenvoen} implies
\bea
 \label{estimatheG}
 \nonumber 
 &&\|G_\T\|_{H^{2k_{\flat}+5}(Z\leq Z_0)}+\|G_\rho\|_{H^{2k_{\flat}+5}(Z\leq Z_0)}+\|G_\Phi\|_{H^{2k_{\flat}+5}(Z\leq Z_0)}\\
 &\leq &e^{-\left(\frac 34-\frac1{20}\right)\delta_g\tau}\le e^{-\frac{2\delta_g}{3}\tau}
 \eea
 which in particular using \eqref{vnebonennnevno} implies \eqref{eq:G}.
\end{proof}

 %%%%%%%%%%%%%%%%%%%%%%%%%%%%%%%%%%%%%%%%

\subsection{Weighted decay for $m\le 2k_{\flat}$ derivatives}

 %%%%%%%%%%%%%%%%%%%%%%%%%%%%%%%%%%%%%%%%

We recall the notation \eqref{defnewvariablephi}. We now transform the exponential decay \eqref{firstbound} from just past the light cone into weighted decay estimate. It is {\em essential} for this argument that the decay \eqref{firstbound} has been 
shown in the region strictly including the light cone $Z=Z_2$. The estimates in the lemma below close the remaining bootstrap 
bound \eqref{eq:bootdecay}.

\begin{lemma}[Weighted Sobolev bound for $m\leq 2k_{\flat}$]
\label{lemmabootlocalnorms}
 Let $m\leq 2k_{\flat}$ and $\nu_0=\frac{\delta_g}{2}-\frac {2(r-1)}{p-1}$, recall \eqref{defchimunu}
 $$
 \xi_{\nu_0,m}=\frac{1}{\la Z\ra^{d-2(r-1)+2(\nu_0-m)}}\zeta\left(\frac{Z}{Z^*}\right), \ \ \zeta(Z)=\left|\begin{array}{ll}1\ \ \mbox{for}\ \ Z\leq 2\\ 0\ \ \mbox{for}\ \ Z\ge 3,
 \end{array}\right.
 $$
 then:
 \be
 \label{improvedsobolevlowbetter}
 \sum_{m=0}^{2k_{\flat}}\int (p-1)Q(\pa^m\bar\rho)^2\xi_{\nu_0,m}+|\nabla \pa^m\Phi|^2\xi_{\nu_0,m}\leq C e^{-\frac{4\delta_g}{5}\tau}.
 \ee
 \end{lemma}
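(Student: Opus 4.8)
The strategy is a weighted energy estimate in the region $Z\le 3Z^*$, combining the exponential decay from just past the light cone established in Lemma~\ref{lemmalightcone} with a Gronwall-type argument for the weighted norm, where the cut-off $\zeta(Z/Z^*)$ localizes the estimate to $Z\le 3Z^*$ and the only ``boundary'' where derivatives of the cut-off fall is the fixed compact region $Z_2\le Z\le \tilde Z_a$ where \eqref{firstbound} already provides $e^{-\delta_g\tau}$-type control. First I would fix $m\le 2k_{\flat}$, apply $\pa^m$ to the linearized system \eqref{exactliearizedflowtilde} (restricted to $Z\le 5Z^*$ where $\rho_D=\rho_P$, $\Psi_D=\Psi_P$, so $(\bar\rho,\bar\Psi)=(\rhot,\Psit)$ and the localization source terms $\tilde{\mathcal E}_P$ vanish), obtaining equations of the schematic form \eqref{estqthohrk}--\eqref{nkenononenon} with $\tilde H_i$ replaced by the genuine profile potentials $H_i$. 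Then I would run the algebraic energy identity \eqref{algebracienergyidnentiy} with the specific weight $\chi=\xi_{\nu_0,m}$ in place of $\chi_k$, using the new unknown $\Phi_m=\pa^m\Phi$ where convenient so that the $\Psi$-equation terms are expressed through $\nabla\Phi$ as in \eqref{improvedsobolevlowbetter}.

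The second step is the coercivity of the leading-order $(\chi,\Lambda\chi)$ quadratic form. Here the key input is \emph{not} the global property $(P)$ but the \emph{local} repulsivity \eqref{coercivityquadrcouplinginside} inside the light cone $Z\le Z_2$, together with the surface gravity positivity at $P_2$ (the red-shift mechanism). The weight $\xi_{\nu_0,m}\sim\la Z\ra^{2m-d+2(r-1)-2\nu_0}$ on $Z\le 2Z^*$ contributes a $\Lambda\chi/\chi$ term of size $2m-d+2(r-1)-2\nu_0$; since $\nu_0=\frac{\delta_g}{2}-\frac{2(r-1)}{p-1}$ is chosen so that, after combining with the $-k$ from the $\Ht_2$ commutator terms, the $-(r-2)$ from the $\Psi$-equation, the $-\frac{2(r-1)}{p-1}$ from $H_1$, and the $\frac d2$, $\frac12\Lambda H_2$ contributions, the net coefficient of the quadratic form becomes $\frac{\delta_g}{2}+O(1/\la Z\ra^r)$ up to the cross-term, mimicking the computation in step~4 of Lemma~\ref{propinduction} but now aiming for the prescribed rate $\frac{4\delta_g}{5}$ rather than an optimal $\sigma$. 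The cross term $\rho_D\pa_Z Q\,\rhot_m\pa_Z\Psit_m$ is controlled by \eqref{enineinevnveobis}: its discriminant $4(p-1)\rho_P^2Q[F^2-(1-w-\Lambda w)^2]$ is strictly negative on $Z\le Z_2$ by \eqref{coercivityquadrcouplinginside}, and for $Z_2<Z\le 3Z^*$ one absorbs it using the cut-off gradient and the already-established exponential decay near $Z_2$ from \eqref{firstbound}, exactly as the $d=2$ or $d=3,\ell\le\sqrt3$ argument handles the region outside the light cone in Section~8.

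The third step is error control. All the $F_1$, $F_2$ error terms (localization sources vanish here; commutators $[\pa^m,H_i]$; lower-order nonlinear products; the dissipative $b^2\pa^m\mathcal F$; and $\NL(\rhot)$) are estimated verbatim as in Lemmas~\ref{proproopr} and \ref{propinduction}: commutators gain powers of $\la Z\ra^{-r}$ and are absorbed after interpolating against lower Sobolev norms via Lemma~\ref{lem:inter} (or directly \eqref{eq:bootdecay}), quadratic nonlinearities are handled by the pointwise bootstrap bounds \eqref{smallglobalboot}, and the dissipative term contributes a good negative $b^2\int\xi_{\nu_0,m}|\nabla\ut_m|^2$ plus a controllable remainder since $b^2=e^{-2\mathcal e\tau}$ with $\mathcal e>0$ and we are on a fixed-$Z^*$-scaled compact set. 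The terms where $\nabla\zeta$ lands produce contributions supported in $2Z^*\le Z\le 3Z^*$; there one pays a power of $Z^*$ but gains from the faster decay in that annulus inherited from \eqref{boundbootbound}, as in \eqref{inerpo;soationcrucial}. Summing over $0\le m\le 2k_{\flat}$ with a decreasing sequence of constants $C_m$ (so higher-derivative commutator losses are absorbed by lower-derivative good terms), one arrives at a differential inequality $\frac12\frac{d}{d\tau}\mathcal I+\frac{2\delta_g}{5}\mathcal I\le e^{-\delta_g\tau}$ for $\mathcal I=\sum_m C_m\int(\xi_{\nu_0,m})[(p-1)Q(\pa^m\bar\rho)^2+|\nabla\pa^m\Phi|^2]$; integrating from $\tau_0$ with the initial bound \eqref{improvedsobolevlowinit} $\mathcal I(\tau_0)\le e^{-\delta_g\tau_0}$ yields \eqref{improvedsobolevlowbetter}.

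\textbf{Main obstacle.} The delicate point is the coercivity of the full $(\chi,\Lambda\chi)$ quadratic form \emph{across} the light cone $Z=Z_2$ with the growing weight $\xi_{\nu_0,m}$: inside $Z\le Z_2$ one has genuine positivity from \eqref{coercivityquadrcouplinginside} with the large constant $k$ available for margin, but the weight $\xi_{\nu_0,m}$ is tuned to capture the \emph{sharp} rate $\frac{\delta_g}{2}$, which leaves no room to spend, and for $Z$ slightly past $Z_2$ the quadratic form in the $\rhot_m\pa_Z\Psit_m$ coupling loses its sign (precisely the failure of $(P)$ exploited in Section~8). The resolution is that on $Z_2<Z<\tilde Z_a$ we do \emph{not} need the energy method to produce decay at all --- Lemma~\ref{lemmalightcone} already supplies $e^{-\frac{\delta_g}{2}\tau}$ there directly from the semigroup theory of the maximal-accretive operator $\tilde{\mathcal M}$, so one only needs the energy estimate to be non-catastrophic on that sliver and genuinely coercive on $Z\le Z_2$ and on $\tilde Z_a\le Z\le 3Z^*$. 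Stitching these two mechanisms (linear spectral decay on the compact set including the cone, weighted energy decay past it) without mismatch in the rates $\frac{\delta_g}{2}$ vs.\ $\frac{4\delta_g}{5}$ --- note $\frac{4\delta_g}{5}<\delta_g$ but $>\frac{\delta_g}{2}$, so the energy rate is the binding one and must be honestly produced by the weight --- is the technically subtle part, and it is exactly why $\nu_0$ is defined with the specific shift $\frac{\delta_g}{2}-\frac{2(r-1)}{p-1}$.
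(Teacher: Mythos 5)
Your proposal correctly identifies the broad structure (energy identity with the weight $\xi_{\nu_0,m}$, combined with decay from Lemma~\ref{lemmalightcone}) but misses the paper's central device and misdiagnoses the obstacle, so the proof as sketched would not close.

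\emph{Wrong cross term.} You propose to control a cross term of the form $\rho_D\pa_Z Q\,\rhot_m\pa_Z\Psit_m$ using the discriminant argument \eqref{enineinevnveobis}/\eqref{coercivityquadrcouplinginside} and then worry about the failure of property $(P)$ past $Z_2$. But that cross term belongs to the high-Sobolev estimates of Sections~6--9, which run in the $(\rhot,\Psit)$ variables. The present lemma is phrased, and must be proved, in the $(\bar\rho,\Phi)$ variables with $\Phi=\rho_P\bar\Psi$ --- exactly the change of unknown introduced in \eqref{defnewvariablephi}. In these variables the relevant cross term in the energy identity \eqref{energyidentittyk} is $\int (p-1)Q\,\bar\rho_m\,\nabla\chi\cdot\nabla\Phi_m$, where the radial part of $\nabla\chi$ is $\sim\chi/Z$; the factor $Q\sim Z^{-2(r-1)}$ attaches only to $\bar\rho_m$, and one checks that relative to the diagonal terms $Q\bar\rho_m^2+|\nabla\Phi_m|^2$ this cross term carries an extra factor $Z^{-r}$. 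Hence it is harmlessly absorbable for $Z\ge Z_0$ with a fixed large $Z_0$, and there is no acoustic-cone coercivity issue in the energy identity itself. The reason the paper passes to $(\bar\rho,\Phi)$ in Section~3 is precisely to achieve this benign cross-term structure, so reinstating the $(\rhot,\Psit)$ cross term here defeats the purpose.

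\emph{Missing finite-speed-of-propagation step.} What remains to be produced is decay on the compact set $Z\le Z_0$. Lemma~\ref{lemmalightcone} only gives it for $Z\le\tilde Z_a$. The paper's Step~3 supplies the bridge: a time-\emph{dependent} cut-off $\chi(\tau,Z)=\zeta(Z/\nu(\tau))$ with $\nu(\tau)=\frac{Z_0}{2\hat Z_a}e^{-\omega_0(\tau_f-\tau)}$ shrinking backward in time, so that at time $\tau_\Gamma=\tau_f-\Gamma$ it is supported in $Z\le\tilde Z_a$ (where the decay is known) and at time $\tau_f$ it covers $Z\le Z_0$. The sign of the $\Lambda\chi$ boundary term then hinges not on $(P)$ but on the Killing-horizon property $D(Z)=(1-w)^2-\sigma^2>0$ for $Z>Z_2$, with a margin absorbing the $-\omega_0$ shift; the support of $\Lambda\chi$ is kept strictly at $Z\ge\hat Z_a>Z_2$ throughout. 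This is a genuinely different mechanism from the exponentially decaying cut-off of Section~8 that you invoke by analogy: the latter requires a cut-off decaying like $e^{-j^\sharp(Z-Z_2)}$ outside the cone, which is incompatible with the polynomially growing weight $\xi_{\nu_0,m}$ you need here. Your ``stitching'' idea has the right spirit --- the decay near the cone comes from semigroup theory, not the energy method --- but without the moving cut-off you have no way to propagate it to $Z_0$ without a $Z_0$-dependent loss, and without the change to $\Phi$ variables you would be fighting a cross-term obstruction that isn't actually present.

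A smaller quibble: you suggest aiming directly for the rate $\tfrac{4\delta_g}{5}$ via the choice of $\nu_0$. In the paper the weight is tuned to produce exactly $\tfrac{\delta_g}{2}$ as the drift coefficient; the final rate $\tfrac{4\delta_g}{5}$ comes from the source terms (which decay like $e^{-\delta_g\tau}$ by \eqref{cenoeneonoenevne}, using $c_g>\delta_g$) after integrating the differential inequality, not from the weight itself.
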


\begin{proof}[Proof of Lemma \ref{lemmabootlocalnorms}]  The proof relies on a sharp energy estimate with time dependent localization of $(\bar\rho,\Phi)$. This is  a renormalized version of the finite speed of propagation. (Remember: this part of the 
argument treats the dissipative Navier-Stokes term as perturbation and, at the expense of loosing derivatives, relies on the structure of the compressible Euler equations.)\\

\noindent{\bf step 1} $\dot{H}^{m}$ localized energy identity. Pick a smooth well localized spherically symmetric function $\chi(\tau,Z)$. For  integer $m$ let
$$\bar\rho_m=\pa^m\bar\rho, \ \ \Phi_m=\pa^m\Phi.$$ 
We recall the Emden transform formulas \eqref{defhtwohunbis}:
$$
\left|\begin{array}{ll}
H_2=\mu(1-w)\\
H_1=\frac{\mu\ell}{2}(1-w)\left[1+\frac{\Lambda \sigma}{\sigma}\right]\\
H_3=\frac{\Delta \rho_P}{\rho_P}
\end{array}\right.
$$
which yield the bounds using \eqref{limitprofilesbsi}, \eqref{decayprofile}:
 \be
\label{esterrorpotentials}
\left|\begin{array}{llll}
H_2=1+O\left(\frac{1}{\la Z\ra^r}\right), \ \ H_1=-\frac{2(r-1)}{p-1}+O\left(\frac{1}{\la Z\ra^r}\right)\\
 |\la Z\ra^j\pa_Z^j H_1|+|\la Z\ra^j\pa_Z^jH_2|\lesssim \frac 1{\la Z\ra^{r}}, \ \ j\ge 1\\
 |\la Z\ra^j\pa_Z^j H_3|\lesssim \frac{1}{\la Z\ra^2}\\
  \frac{1}{\la Z\ra^{2(r-1)}}\left[1+O\left(\frac{1}{\la Z\ra^{r}}\right)\right]\lesssim_j |\la Z\ra^j\pa_Z^jQ|\lesssim_j \frac{1}{\la Z\ra^{2(r-1)}}
 \end{array}\right.
 \ee
and the commutator bounds:
$$
\left|\begin{array}{lllll}
|[\pa_i^m,H_1]\bar\rho|\lesssim  \sum_{j=0}^{m-1}\frac{|\pa_Z^j\bar\rho|}{\la Z\ra^{r+m-j}}\\
|\nabla\left([\pa_i^m,H_1]\bar\rho\right)|\lesssim \sum_{j=0}^{m}\frac{|\pa_Z^j\bar\rho|}{\la Z\ra^{m-j+r+1}}\\
|[\pa_i^m,Q]\bar\rho|\lesssim Q\sum_{j=0}^{m-1}\frac{|\pa_Z^j\bar\rho|}{\la Z\ra^{m-j}}\\
|[\pa_i^m,H_2]\Lambda\bar\rho|\lesssim \sum_{j=1}^{m}\frac{|\pa_Z^j\bar\rho|}{\la Z\ra^{r+m-j}}\\
|\nabla\left([\pa_i^m,H_2]\Lambda\Phi\right)|\lesssim \sum_{j=1}^{m+1}\frac{|\pa^j_Z\Phi|}{\la Z\ra^{r+1+m-j}}.
\end{array}\right.
$$
Commuting \eqref{nekoneneon} with $\pa_i^m$:
$$
\left|\begin{array}{ll}
\pa_\tau\bar\rho_m=H_1\bar\rho_m-H_2(m+\Lambda)\bar\rho_m-\Delta \Phi_m+\pa_i^mG_\rho+E_{m,\rho}\\
\pa_\tau\Phi_m=-(p-1)Q\bar\rho_m-H_2(m+\Lambda) \Phi_m+(H_1-(r-2))\Phi_m+\pa^m_iG_\Phi+E_{m,\Phi}
\end{array}\right.
$$
with the bounds $$\left|\begin{array}{ll} |E_{m,\rho}|\lesssim \sum_{j=0}^{m}\frac{|\pa_Z^j\bar\rho|}{\la Z\ra^{r-1+m-j}}+\sum_{j=0}^{m}\frac{|\pa_Z^j\Phi|}{\la Z\ra^{m-j+2}}\\  |\nabla E_{m,\Phi}|\lesssim Q\sum_{j=0}^{m}\frac{|\pa_Z^j\bar\rho|}{\la Z\ra^{m+1-j}}+ \sum_{j=0}^{m+1}\frac{|\pa^j_Z\Phi|}{\la Z\ra^{r+m-j}}.
\end{array}\right.$$

We derive  the corresponding energy identity:
\bee
&&\frac 12\frac{d}{d\tau}\left\{\int (p-1)Q\bar\rho_m^2\chi+|\nabla \Phi_m|^2\chi\right\}=\frac 12\int \pa_\tau\chi\left[(p-1)Q\bar\rho_m^2+|\nabla \Phi_m|^2\right]\\
& +& \int(p-1)Q\bar\rho_m\chi\left[H_1\bar\rho_m-H_2(m+\Lambda)\bar\rho_m-\Delta \Phi_m+\pa_i^mG_\rho+E_{m,\rho}\right]\\
& + & \int \chi\nabla \Phi_m\cdot\nabla\left[-(p-1)Q\bar\rho_m-H_2(m+\Lambda \Phi_m)+(H_1-(r-2))\Phi_m+\pa^i_mG_\Phi+E_{m,\Phi}\right]\\
& = & \frac 12\int \pa_\tau\chi\left[(p-1)Q\bar\rho_m^2+|\nabla \Phi_m|^2\right]\\
& + & \int(p-1)Q\bar\rho_m\chi\left[H_1\bar\rho_m-H_2(m+\Lambda)\bar\rho_m+\pa_i^mG_\rho+E_{m,\rho}\right]+\int(p-1)Q\bar\rho_m\nabla\chi\cdot\nabla \Phi_m\\
&+& \int \chi\nabla \Phi_m\cdot\nabla\left[-H_2(m+\Lambda) \Phi_m+(H_1-(r-2))\Phi_m+\pa^m_iG_\Phi+E_{m,\Phi}\right].\\
\eee
In what follows we will use $\omega>0$ as a small universal constant to denote the power of tails of the error terms. 
In most cases, the power is in fact $r>1$ which we do not need.\\
\noindent\underline{$\bar\rho_m$ terms}. From the asymptotic behavior of $Q$ \eqref{decayprofile} and \eqref{esterrorpotentials}:
\bee
&&-\int(p-1)Q\bar\rho_m\chi H_2\Lambda \bar\rho_m=\frac{p-1}2\int \bar\rho_m^2 \chi QH_2\left[d+\frac{\Lambda Q}{Q}+\frac{\Lambda H_2}{H_2}+\frac{\Lambda \chi}{\chi}\right]\\
&= & \int \bar\rho_m^2 (p-1)\chi Q\left[\frac d2-(r-1)+O\left(\frac{1}{\la Z\ra^\omega}\right)\right]+\frac 12\int (p-1)QH_2\Lambda \chi \bar\rho_m^2
\eee
\noindent\underline{$\Phi_m$ terms}. We first estimate recalling \eqref{esterrorpotentials}:
\bee
&&\int \chi\nabla \Phi_m\cdot\nabla\left[(-mH_2+H_1-(r-2))\Phi_m\right]\\
&=&\int(-mH_2+H_1-(r-2))\chi|\nabla\Phi_m|^2+O\left(\int \frac{\chi}{\la Z\ra^{r}}|\nabla\Phi_m||\Phi_m|\right)\\
& = & -\left[(m+r-2)+\frac{2(r-1)}{p-1}\right]\int \chi|\nabla\Phi_m|^2+O\left(\int \frac{\chi}{\la Z\ra^\omega}\left[|\nabla \Phi_m|^2+\frac{\Phi_m^2}{\la Z\ra^2}\right]\right)
\eee
We recall Pohozhaev identity for spherically symmetric functions
\bee
\nonumber \int_{\Bbb R^d} f\Delta g \pa_rgdx&=&c_d\int_{\Bbb R^+}\frac{f}{r^{d-1}} \pa_r(r^{d-1}\pa_rg)r^{d-1}\pa_rgdr\\
&=&-\frac 12\int_{\Bbb R^d}|\pa_rg|^2\left[f'-\frac{d-1}{r}f\right]dx
\eee
and for general functions
\bea
\label{pohozaevbispouet}
\nonumber \int\Delta g F\cdot\nabla gdx&=&\sum_{i,j=1}^d \int \pa_i^2 g F_j\pa_jgdx=-\sum_{i,j=1}^d \int\pa_ig(\pa_iF_j\pa_jg+F_j\pa^2_{i,j}g)\\
&=&-\sum_{i,j=1}^d \int\pa_iF_j\pa_ig\pa_jg+\frac 12\int |\nabla g|^2\nabla \cdot F.
\eea
Now, taking $F=\chi H_2(Z_1,\dots,Z_d)$ in the above:
\bee
&&-\int \chi\nabla \Phi_m\cdot\nabla (H_2\Lambda \Phi_m)=\int H_2\Lambda \Phi_m[ \chi \Delta \Phi_m+\nabla \chi\cdot\nabla \Phi_m]\\
& = & -\sum_{i,j=1}^d \int\pa_iF_j\pa_i\Phi_m\pa_j\Phi_m+\frac 12\int |\nabla \Phi_m|^2\nabla \cdot F+\int H_2\Lambda \Phi_m\nabla \chi\cdot\nabla \Phi_m\\
& = & \sum_{i,j=1}^d\pa_i\Phi_m\pa_j\Phi_m\left[-\pa_i(\chi H_2 Z_j)+H_2Z_j\pa_i\chi\right]+\frac 12\int |\nabla \Phi_m|^2\chi H_2\left[d+\frac{\Lambda\chi}{\chi}+\frac{\Lambda H_2}{H_2}\right]\\
& = &\frac{(d-2)}{2}\int\chi|\nabla \Phi_m|^2+\frac 12 \int H_2\Lambda \chi|\nabla \Phi_m|^2+O\left(\int \frac{\chi}{\la Z\ra^\omega}|\nabla \Phi_m|^2\right)
\eee

The collection of above bounds yields for some universal constant $\omega>0$ the weighted energy identity:
\bea
\label{energyidentittyk}
&&\frac 12\frac{d}{d\tau}\left\{\int (p-1)Q\bar\rho_m^2\chi+|\nabla \Phi_m|^2\chi\right\}\\
\nonumber & = &-\int\chi\left[(p-1)Q\bar\rho_m^2+|\nabla\Phi_m|^2\right]\left[\left(m-\frac d2+r-1\right)+\frac{2(r-1)}{p-1}+O\left(\frac{1}{\la Z\ra^\omega}\right)\right] \\
\nonumber& + & \frac 12\int (p-1)Q\bar\rho_m^2\left[\pa_\tau \chi+H_2\Lambda \chi\right]+\frac 12\int |\nabla \Phi_m|^2\left[\pa_\tau \chi+H_2\Lambda \chi\right]+\int(p-1)Q\bar\rho_m\nabla\chi\cdot\nabla \Phi_m\\
\nonumber& + & O\left(\int \chi\left[\sum_{j=0}^{m+1}\frac{|\pa_Z^j\Phi|^2}{\la Z\ra^{2(m+1-j)+\omega}}+\sum_{j=0}^{m}\frac{Q|\pa_Z^j\bar\rho|^2}{\la Z\ra^{2(m-j)+\omega}}\right]\right)\\
\nonumber&+ & O\left(\int \chi|\nabla \Phi_m||\nabla \pa^mG_\Phi|+\int\chi Q|\bar\rho_m||\pa^mG_\rho|\right)
\eea

\noindent{\bf step 2} Nonlinear and source terms. We claim the bound for $\chi=\xi_{\nu_0,m}$:
\bea
\label{cenoeneonoenevne}
\nonumber &&\sum_{m=0}^{2k_{\flat}} \sum_{i=1}^d\int \xi_{\nu_0,m}|\nabla \pa^mG_\Phi|^2+\int (p-1)Q\xi_{\nu_0,m}|\pa^mG_\rho|^2\\
& \lesssim & \left(\sum_{m=0}^{2k_{\flat}}\sum_{i=1}^d\int Q\bar\rho_m^2\xi_{\nu_0+1,m}+|\nabla \Phi_m|^2\xi_{\nu_0+1,m}\right)+e^{-c_g\tau}
\eea
for some positive $c_g>0$. 
\begin{remark}
\label{rem:cg}
Crucially, the constant $c_g$ can be chosen to be such that $c_g> \delta_g$. More accurately, the constant $c_g$
will be computed to explicitly depend on the speed $\mathcal e=\ell(r-1)+r-2$, $r$ and $\delta_g$. It will be clear
that adjusting $\delta_g$ while keeping all the other universal constants ($\ell, r$) fixed we can satisfy the inequality
$c_g> \delta_g$.
\end{remark}
\noindent\underline{$G_\rho$ term}. Recall \eqref {defgrho}
$$G_\rho=-\bar\rho\Delta \bar\Psi-2\nabla\bar\rho\cdot\nabla \bar\Psix, $$ then by Leibniz:
$$
|\pa^mG_\rho|^2\lesssim \sum_{j_1+j_2=m+2, j_2\geq 1}|\pa^{j_1}\bar\rho|^2|\pa^{j_2}\bar\Psi|^2.$$ 
We recall the pointwise bounds \eqref{smallglobalboot} for $Z\leq 3Z^*$,
\bee
|\pa^{j_1}\bar\rho|\le \frac{C_K}{\la Z\ra^{j_1+\frac{2(r-1)}{p-1}}}, \ \ |\pa^{j_2}\bar\Psi|&\leq& \frac{C_K}{\la Z\ra^{j_2+r-2}}.
\eee
This yields, recalling \eqref{beubeibebiev}, for $j_1\le 2k_{\flat}$:
 \bee
&&\int \xi_{\nu_0,m}Q|\pa^{j_1}\bar\rho|^2|\pa^{j_2}\bar\Psi|^2\lesssim \int Q\zeta\left(\frac{Z}{Z^*}\right) \frac{|\pa^{j_1}\bar\rho|^2}{Z^{2(j_2-m)+d-2(r-1)+2(r-2)+2\nu_0}}\\
& \lesssim & \int\zeta\left(\frac{Z}{Z^*}\right)Q\frac{|\pa^{j_1}\bar\rho|^2}{\la Z\ra^{d-2(r-1)+2(\nu_0-j_1)+2}}\lesssim  \sum_{j=0}^{j_1}\int \xi_{\nu_0+1,j_1}Q|\pa_Z^{j}\bar\rho|^2\\
& \lesssim & \sum_{m=0}^{2k_{\flat}}\sum_{i=1}^d\int Q\bar\rho_m^2\xi_{\nu_0+1,m}+|\nabla \Phi_m|^2\xi_{\nu_0+1,m}.
\eee
For $j_1=m+1$, $j_2=1$, we use the other variable:
\bee
&&\int \xi_{\nu_0,m}Q|\pa^{j_1}\bar\rho|^2|\pa^{j_2}\bar\Psi|^2\lesssim \int Q\zeta\left(\frac{Z}{Z^*}\right)\frac{|\pa^{j_2}\bar\Psi|^2}{Z^{2(j_1-m)+d-2(r-1)+\frac{4(r-1)}{p-1}+2\nu_0}}\\
& \lesssim & \int\zeta\left(\frac{Z}{Z^*}\right)\frac{\rho_P^2|\pa^{j_2}\bar\Psi|^2}{\la Z\ra^{d-2(r-1)+2(\nu_0-j_2)+2}}\lesssim \sum_{j=0}^{j_2}\int\zeta\left(\frac{Z}{Z^*}\right)\frac{|\pa_Z^{j}\Phi|^2}{\la Z\ra^{d-2(r-1)+2(\nu_0-j)+2}}\\
&\lesssim &  \sum_{j=0}^{j_2}\int \xi_{\nu_0+1,j}|\pa_Z^{j}\Phi|^2\lesssim  \sum_{m=0}^{2k_{\flat}}\sum_{i=1}^d\int Q\bar\rho_m^2\xi_{\nu_0+1,m}+|\nabla \Phi_m|^2\xi_{\nu_0+1,m}
\eee
and \eqref{cenoeneonoenevne} follows for $G_\rho$ by summation on $0\leq m\leq 2k_{\flat}$ .

\noindent\underline{$G_\Phi$ term}. Recall \eqref{defgrho}
$$G_\Phi=-\rho_P(|\nabla \bar\Psi|^2+\NL(\rho))+{b^2}\rho_P\mathcal F(u_T,\rho_T).$$
We estimate using the pointwise bounds \eqref{smallglobalboot} for $j_3\le 2k_{\flat}$:
\bee
&&|\nabla \pa^m(\rho_P|\nabla \bar\Psi|^2)|\lesssim \sum_{j_1+j_2+j_3=m+1,j_2\le j_3}\frac{\rho_P}{\la Z\ra^{j_1}}|\pa^{j_2+1}\bar\Psi\pa^{j_3+1}\bar\Psi|\\
&\lesssim&  \sum_{j_1+j_2+j_3=m+1,j_2\le j_3}\frac{1}{\la Z\ra^{\frac{2(r-1)}{p-1}+j_1+r-2+j_2+1}}|\pa^{j_3+1}\bar\Psi|\lesssim \sum_{j_3=0}^{2k_{\flat}}\frac{|\pa^{j_3+1}\Phi|}{\la Z\ra^{r+m-j_3}}
\eee
and  since $r>1$:
\bee
\sum_{j_3=0}^{2k_{\flat}}\int\xi_{\nu_0,m}\frac{|\pa^{j_3+1}\Phi|^2}{\la Z\ra^{2(r+m-j_3)}}\lesssim \sum_{j_3=0}^{2k_{\flat}}\int\xi_{\nu_0+1,j_3}|\nabla\Phi_{j_3}|^2.
\eee
For $j_3=2k_{\flat}+1$, we use the other variable and the conclusion follows similarly.

The dissipative term is estimated using the pointwise bounds \eqref{smallglobalboot}:
\bee
\int \xi_{\nu_0,m}&&\left|\nabla \pa^m\left(b^2\rho_P\mathcal F(u_T,\rho_T)\right)\right|^2\lesssim   b^4 \int_{Z\le 3Z^*} \xi_{\nu_0.m}\sum_{j=0}^{m+1}\rho_P^2\frac{|\pa^j\mathcal F(u_T,\rho_T)|^2}{\la Z\ra^{2(m+1-j)}}\\
&\lesssim&   b^4C_K \int_{Z\le 3Z^*}\frac{1}{\la Z\ra^{d+\delta_g-2(r-1)-\frac{4(r-1)}{p-1}-2m}}
\sum_{j=0}^{m+1}\frac{1}{\la Z\ra^{\frac{4(r-1)}{p-1}}}\frac{|\pa^j\mathcal F(u_T,\rho_T)|^2}{\la Z\ra^{2(m+1-j)}}\\
& \lesssim & b^4\int_{Z\le 3Z^*}\frac{\la Z\ra^{2(r-1)}}{\la Z\ra^{d+\delta_g+2}}\sum_{j=0}^{m+1}|\la Z\ra^j\pa^j\mathcal F(u_T,\rho_T)|^2
\eee
For $j\ge 1$, we estimate pointwise from \eqref{smallglobalboot}:
\bee
&&\la Z\ra^j|\pa^j\mathcal F(u_T,\rho_T)|\lesssim {(\mu+\mu')}\la Z\ra^j\left|\pa^{j-1}\left(\frac{\Delta u_T}{\rho_T^2}\right)\right|
\lesssim {(\mu+\mu')}\la Z\ra^j\sum_{j_1+j_2=j-1}\frac{|\pa^{j_1}\Delta u_T|}{\rho_T^2\la Z\ra^{j_2}}\\
&\lesssim&  {(\mu+\mu')}\la Z\ra^{j+\frac{4(r-1)}{p-1}}\sum_{j_1+j_2=j-1}\frac{1}{\la Z\ra^{r-1+j_1+2+j_2}}\lesssim {(\mu+\mu')}\frac{1}{\la Z\ra^{r-\frac{4(r-1)}{p-1}}}={(\mu+\mu')}\frac{\la Z\ra^{\ell(r-1)}}{\la Z\ra^{r}}
\eee
Therefore, recalling \eqref{definitioncontstas1}:
\bee
&&b^4\int_{Z\le 3Z^*}\frac{\la Z\ra^{2(r-1)}}{\la Z\ra^{d+\delta_g+2}}\sum_{j=1}^{m+1}|\la Z\ra^j\pa^j\mathcal F(u_T,\rho_T)|^2\\
&\lesssim& {(\mu+\mu')}\frac{1}{\la Z^*\ra^{2[\ell(r-1)+r-2]}}\int_{Z\leq 3Z^*}\frac{\la Z\ra^{2(r-2)}}{\la Z\ra^{1+\delta_g}}\frac{\la Z\ra^{2\ell(r-1)}}{\la Z\ra^{2r}}dZ\lesssim  e^{-c_g\tau},
\eee
where $c_g=\min\{2[\ell(r-1)+r-2], \delta_g+2r\}>0$.
For $j=0$, we have the bound:
$${(\mu+\mu')}|\mathcal F(u_T,\rho_T)|\lesssim {(\mu+\mu')}\int_0^Z\frac{dz}{\la z\ra^{r-1+2-\ell(r-1)}}={(\mu+\mu')}\int_0^Z\frac{dz}{\la z\ra^{1+r-\ell(r-1)}}$$ We observe at $r^*(3,\ell)$: 
\bee
&&r^*(\ell)-\ell(r^*(\ell)-1)>0\Leftrightarrow \ell(r^*(\ell)-1)<r^*(\ell)\Leftrightarrow \ell\left(\frac{\ell+3}{\ell+\sqrt{3}}-1\right)<\frac{\ell+3}{\ell+\sqrt{3}}\\
& \Leftrightarrow& (3-\sqrt{3})\ell<\ell+3\Leftrightarrow \ell<\frac{3}{2-\sqrt{3}}
\eee
which holds since $\ell<3<\frac{3}{2-\sqrt{3}}$.  
Therefore, in the case $r\sim r^*$, we have the estimate $|\mathcal F(u_T,\rho_T)|\lesssim {(\mu+\mu')}$, which yields the contribution:
 $${(\mu+\mu')}b^4\int_{Z\le 3Z^*} \frac{\la Z\ra^{2(r-1)}}{\la Z\ra^{d+\delta_g+2}}Z^{d-1}dZ\leq \frac{{(\mu+\mu')}}{\la Z^*\ra^{2[\ell(r-1)+r-2]}}\left(1+(Z^*)^{2(r-2)-\delta_g}\right)\le e^{-c_g\tau},$$ 
 where  $c_g=\min\{2[\ell(r-1)+r-2], \delta_g+2\ell(r-1)\}>0$.
In the case of $r\sim r_+$, we have either  $|\mathcal F(u_T,\rho_T)|\lesssim {(\mu+\mu')}$ in which case we obtain the bound as above, or $|\mathcal F(u_T,\rho_T)|\lesssim {(\mu+\mu')}Z^{\ell(r-1)-r}$. Then, we obtain 
$${(\mu+\mu')}b^4\int_{Z\le 3Z^*} \frac{\la Z\ra^{(r-2)+\ell(r-1)}}{\la Z\ra^{d+\delta_g+2}}Z^{d-1}dZ\lesssim \frac{{(\mu+\mu')}}{\la Z^*\ra^{[\ell(r-1)+r-2]}}\int_{Z\le 3Z^*}  \frac{dZ}{\la Z\ra^{1+\delta_g+2}}\lesssim e^{-2\mathcal{e}\tau}.$$  
This concludes the proof of \eqref{cenoeneonoenevne}.\\ 

\noindent{\bf step 2} Initialization and lower bound on the bootstrap time $\tau^*$.\\
 Fix a large enough $Z_0$ and pick a small enough universal constant $\omega_0$ such that 
\be
 \label{positiiviiefecorrection}
 \forall Z\ge 0, \ \ -\omega_0+H_2\ge \frac{\omega_0}{2}>0
 \ee
 and let $\Gamma=\Gamma(Z_0)$ such that  
\be
\label{definitionna}
\frac{Z_0}{2\hat{Z_a}}e^{-\omega_0\Gamma}=1.
\ee
We claim that provided ${\tau_0}$ has been chosen sufficiently large, the bootstrap time $\tau^*$ of Proposition \ref{propboot} satisfies the lower bound
\be
\label{vneiovnevneonvein}
\tau^*\ge \tau_0+\Gamma.
\ee
Indeed, in view of the results in sections 7 and 8 there remains to control the bound \eqref{eq:bootdecay} on $[\tau_0,\tau_0+\Gamma]$. By \eqref{eq:Xfgrow}, the desired bounds already hold for $Z\le \tilde Z_a$ on $[\tau_0,\tau_0+\Gamma]$.\\

We now run the energy estimate \eqref{energyidentittyk} with $\chi=\xi_{\nu_0,m}$ and obtain from \eqref{energyidentittyk}, \eqref{cenoeneonoenevne} and the Remark \ref{rem:cg} the rough bound on $[\tau_0,\tau^*]$:
$$\frac{d}{d\tau}\left\{\int (p-1)Q\bar\rho_m^2\xi_{\nu_0,m}+|\nabla \Phi_m|^2\xi_{\nu_0,m}\right\}\leq C\int (p-1)Q\bar\rho_m^2\xi_{\nu_0,m}+|\nabla \Phi_m|^2\xi_{\nu_0,m}+e^{-\delta_g\tau}.
$$
which yields using \eqref{improvedsobolevlowinit}:
\bee
&&\int (p-1)Q\bar\rho_m^2\xi_{\nu_0,m}+|\nabla \Phi_m|^2\xi_{\nu_0,m}\le e^{C(\tau-\tau_0)}\int (p-1)Q(\bar\rho_m(0))^2\xi_{\nu_0,m}+|\nabla \Phi_m(0)|^2\xi_{\nu_0,m}\\
& + & e^{C\tau}\int_{\tau_0}^{\tau}e^{-(C+\delta_g)\sigma}d\sigma\leq e^{C\Gamma}\left[C_0e^{-\delta_g\tau_0}+e^{-\delta_g\tau_0}\right]\leq 2e^{C\Gamma}C_0e^{-\delta_g\tau_0}
\eee
and hence
\bee
&&e^{\frac{4\delta_g}{5}\tau}\left[\int (p-1)Q\bar\rho_m^2\xi_{\nu_0,m}+|\nabla \Phi_m|^2\xi_{\nu_0,m}\right]\\
&\le&  e^{2C\Gamma}C_0e^{-\delta_g\tau_0}e^{\frac{4\delta_g}{5}\tau_0}\le e^{2C\Gamma}e^{-\frac{\delta_g}{10}\tau_0}\le 1
\eee
which concludes the proof of \eqref{vneiovnevneonvein} and \eqref{improvedsobolevlowbetter} for $\tau\in[\tau_0,\tau_0+\Gamma]$.\\

\noindent{\bf step 3} Finite speed of propagation.
We now pick a time $\tau_f\in[\tau_0+\Gamma,\tau^*]$ and $Z_a<Z_0<\infty$ and propagate the bound \eqref{firstbound} to the compact set $Z\leq {Z_0}$ using a finite speed of propagation argument. We claim:
\be
\label{lowsobolev}
\|\bar\rho\|^2_{H^{2k_{\flat}}(Z\leq \frac{Z_0}{2})}+\|\nabla \bar\Psi\|^2_{H^{2k_{\flat}}(Z\le \frac{Z_0}{2})}\le Ce^{-{\delta_g}\tau}.
\ee
 Here the key is that \eqref{firstbound} controls a norm on the set {\em strictly including} the light cone $Z\le Z_2$. 
 Let $$\hat{Z_a}=\frac{\tilde{Z_a}+Z_2}{2}$$ and note that we may, without loss of generality by taking $a>0$ small enough, assume:
\be
\label{estnisneione}
\frac{\tilde{Z}_a}{\hat{Z_a}}\leq 2.
\ee
Recall that $\Gamma=\Gamma(Z_0)$ is parametrized by \eqref{definitionna}. We define $$\chi(\tau,Z)=\zeta\left(\frac{Z}{\nu(\tau)}\right), \ \ \nu(\tau)=\frac{Z_0}{2\hat{Z}_a}e^{-\omega_0(\tau_f-\tau)}$$ with $\omega_0>0$ 
defined in \eqref{positiiviiefecorrection} and \eqref{definitionna} and a fixed spherically symmetric non-increasing cut off function 
\be
\label{defzetavneneov}
\zeta(Z)=\left|\begin{array}{ll} 1\ \ \mbox{for}\ \ 0\leq Z\leq \hat{Z_a}\\ 0\ \ \mbox{for}\ \ Z\geq \tilde{Z}_a.
\end{array}\right., \ \ \zeta'\leq 0
\ee 
We define $$\tau_\Gamma=\tau_f-\Gamma$$ so that from \eqref{definitionna}:
\be
\label{defstopingtime}
\left|\begin{array}{l}
 \tau_0\le \tau_\Gamma\le \tau^*\\
\nu(\tau_\Gamma)=\frac{Z_0}{2\hat{Z}_a}e^{-\omega_0(\tau_f-\tau_\Gamma)}=\frac{Z_0}{2\hat{Z_a}}e^{-\omega_0\Gamma}=1.
\end{array}\right.
\ee

We pick $$0\leq m\leq 2k_{\flat}$$ 
then  \eqref{defzetavneneov}, \eqref{defstopingtime} ensure ${\rm Supp}(\chi(\tau_\Gamma,\cdot))\subset \{Z\le \tilde{Z}_a\}$ and hence from \eqref{firstbound}:
\be
\label{initiazliationtauagtronwall}
\left(\int (p-1)Q\bar\rho_m^2\chi+|\nabla \Phi_m|^2\chi\right)(\tau_\Gamma)\lesssim e^{-\delta_g \tau_\Gamma}.
\ee
This estimate implies that we can integrate energy identity \eqref{energyidentittyk} {\it just} on the interval 
$[\tau_\Gamma,\tau_f]$. 
We now estimate all terms in  \eqref{energyidentittyk}.\\

\noindent\underline{Boundary terms}. We compute the quadratic terms involving $\Lambda \chi$ which should be thought of as boundary terms. First $$\pa_\tau\chi(\tau,Z)=-\frac{\pa_\tau \nu}{\nu} \frac{Z}{\nu}\pa_Z\zeta\left(\frac{Z}{\nu}\right)=-\omega_0\Lambda \chi.$$ 
We now assume that $\omega_0$ has been chosen small enough so that \eqref{positiiviiefecorrection} holds, and hence the lower bound on the full boundary quadratic form using $\Lambda \chi\le 0$:
\bee
 &&\frac 12\int (p-1)Q\bar\rho_m^2\left[\pa_\tau \chi+H_2\Lambda \chi\right]+\frac 12\int |\nabla\Phi_m|^2\left[\pa_\tau \chi+H_2\Lambda \chi\right]+\int(p-1)Q\bar\rho_m\nabla\chi\cdot\nabla \Phi_m\\
 & = & \int\left\{\frac 12  (p-1)Q\bar\rho_m^2\left[-\omega_0+H_2\right]+\frac 12|\nabla\Phi_m|^2\left[-\omega_0+H_2\right]+(p-1)\frac{Q}{Z}\pa_Z \Phi_m\bar\rho_m\right\}\Lambda \chi.
 \eee

The discriminant of the above quadratic form is given by the following expression in the variables of Emden
transform 
\bee
&& \left[(p-1)\frac{Q}Z\right]^2-(-\omega_0+H_2)^2(p-1)Q=(p-1)Q\left[\frac{(p-1)Q}{Z^2}-(-\omega_0+H_2)^2\right]\\
 &= & (p-1)Q\left[\sigma^2-\left(-\omega_0+1-w\right)^2\right]= (p-1)Q\left[-D(Z)+O(\omega_0)\right].
 \eee
 where $D(Z)=(1-w)^2-\sigma^2$, see Lemma \ref{shiftoight}.

 We then observe by definition of $\chi$ that for $\tau\geq \tau_\Gamma$: $$Z\in {\rm Supp}\Lambda \chi\Leftrightarrow \hat{Z}_a\leq \frac{Z}{\nu(\tau)}\leq \tilde{Z}_a \Rightarrow  Z\geq \nu(\tau) \hat{Z}_a\geq \nu(\tau_\Gamma) \hat{Z}_a=\hat{Z}_a$$ from which since $\hat{Z}_a>Z_2$: $$Z\in {\rm Supp}\Lambda \chi \Rightarrow -D(Z)+O(\omega_0)<0$$ provided $0<\omega_0\ll 1$ has been chosen small enough. 

 Together with \eqref{positiiviiefecorrection} and $\Lambda\chi<0$, this ensures: $\forall \tau\in [\tau_\Gamma,\tau^*]$, 
\bea
\label{signfnonon}
\nonumber &&\frac 12\int (p-1)Q\bar\rho_m^2\left[\pa_\tau \chi+H_2\Lambda \chi\right]+\frac 12\int |\nabla\Phi_m|^2\left[\pa_\tau \chi+H_2\Lambda \chi\right]+\int(p-1)Q\bar\rho_m\nabla\chi\cdot\nabla \Phi_m\\
&<&0
\eea

\noindent\underline{Nonlinear terms}. From \eqref{defzetavneneov}, \eqref{estnisneione} for $\tau\leq \tau_f$:
$$\mbox{Supp}\chi\subset\{Z\leq \nu(\tau)\tilde{Z}_a\}\subset \{Z\leq \nu(\tau_f)\tilde{Z}_a\}=\left\{Z\leq \frac{Z_0}{2}\frac{\tilde{Z}_a}{\hat{Z}_a}\right\}\subset\{Z\leq Z_0\},$$ and hence from \eqref{estimatheG}:
$$\int \chi|\nabla \pa^mG_\Phi|^2+\int (p-1)Q\chi|\pa^mG_\rho|^2\lesssim  \|\nabla G_\Phi\|^2_{H^{2k_{\flat}}(Z\leq Z_0)}+\|\pa^mG_\rho\|^2_{H^{2k_{\flat}}(Z\leq Z_0)}\leq  e^{-\frac{4\delta_g}{3}\tau}.
$$
\noindent\underline{Conclusion}. Inserting the collection of above bounds into \eqref{energyidentittyk} and summing over $m\in[0,2k_{\flat}]$ yields the crude bound: $\forall \tau\in [\tau_\Gamma,\tau_f]$,
$$
\frac{d}{d\tau}\left\{\sum_{m=0}^{2k_{\flat}}\int (p-1)Q\bar\rho_m^2\chi+|\nabla \Phi_m|^2\chi\right\}\leq C \sum_{m=0}^{2k_{\flat}}\int (p-1)Q\bar\rho_m^2\chi+|\nabla \Phi_m|^2\chi+e^{-\frac{4\delta_g}{3}\tau}.
$$
We integrate the above  on $[\tau_\Gamma,\tau_f]$
and conclude using $$\chi(\tau_f,Z)=\zeta\left(\frac{Z}{\nu(\tau_f)}\right)=\zeta\left(\frac{Z}{\frac{Z_0}{2\hat{Z}_a}}\right)=1\ \ \mbox{for}\ \ Z\leq {Z_0}$$ and the initial data \eqref{initiazliationtauagtronwall}:
\bee
&&\left[\|\bar\rho\|^2_{H^{2k_{\flat}}(Z\leq {Z_0})}+\|\nabla \bar\Psi\|^2_{H^{2k_{\flat}}(Z\leq {Z_0})}\right](\tau_f)\\
& \lesssim & e^{C(\tau_f-\tau_\Gamma)}e^{-\delta_g\tau_\Gamma}+\int_{\tau_\Gamma}^{\tau_f}e^{C(\tau_f-\sigma)}e^{-\frac{4\delta_g}{3}\sigma}d\sigma\lesssim C(\Gamma)e^{-\delta_g\tau_f}=C(Z_0)e^{-\delta_g\tau_f}.
\eee
Since the time $\tau_f$ is arbitrary in $[\tau_0+\Gamma,\tau^*]$, the bound \eqref{lowsobolev} follows.\\

\noindent{\bf step 4} Proof of \eqref{improvedsobolevlowbetter}. We run the energy identity \eqref{energyidentittyk} with $\xi_{\nu_0,m}$  and estimate each term.\\

 \noindent\underline{\em terms $\frac{Z_0}{3}\leq Z\leq \frac{Z_0}{2}$}. In this zone, we have by construction $$\bar\rho=\rhot$$ and hence the bootstrap bounds \eqref{boundbootbound} imply $$\|\bar\rho\|_{H^{{k^\sharp}}(Z\leq \frac{Z_0}{2})}+\|\nabla \bar\Psi\|_{H^{{k^\sharp}}(Z\leq \frac{Z_0}{2})}\lesssim 1$$ and hence interpolating with \eqref{lowsobolev} for ${k^\sharp} $ large enough:
 \bea
 \label{inteprolsaiotnnormone}
 \nonumber \|\bar\rho\|_{H^{m}(\frac{Z_0}{3}\leq Z\leq \frac{Z_0}{2})}&\lesssim& \|\bar\rho\|^{\frac{m}{{k^\sharp}}}_{H^{{k^\sharp}}(\frac{Z_0}{3}\leq Z\leq \frac{Z_0}{2})}\|\bar\rho\|^{1-\frac{m}{{k^\sharp}}}_{L^2(\frac{Z_0}{3}\leq Z\leq \frac{Z_0}{2})}\lesssim e^{-\frac{\delta_g}{2}\left(1-\frac{m}{{k^\sharp}}\right)}\\
 &\leq& e^{-\frac{4\delta_g}{10}}
 \eea
  and similarly 
  \be
 \label{inteprolsaiotnnormbisone}
 \|\nabla\bar\Psi\|_{H^{m}(\frac{Z_0}{3}\leq Z\leq \frac{Z_0}{2})}\lesssim e^{-\frac{\delta_g}{2}\left(1-\frac{m}{{k^\sharp}}\right)}\leq e^{-\frac{4\delta_g}{10}}.
 \ee
 
 \noindent\underline{\em Linear term}. We observe the cancellation using \eqref{esterrorpotentials}, \eqref{renormalization}:
  \bea
 \label{vnnenoenoene}
 \nonumber &&\pa_\tau\xi_{\nu_0,m}+H_2\Lambda \xi_{\nu_0,m}=\frac{1}{\la Z\ra^{d-2(r-1)+2(\nu_0-m)}}\left[-\Lambda \zeta\left(\frac{Z}{Z^*}\right)\right]\\
 \nonumber &+& (1-w)\left[\frac{1}{\la Z\ra^{d-2(r-1)+2(\nu_0-m)}}\Lambda \zeta\left(\frac{Z}{Z^*}\right)+\Lambda \left(\frac{1}{\la Z\ra^{d-2(r-1)+2(\nu_0-m)}}\right)\zeta\left(\frac{Z}{Z^*}\right)\right]\\
 & = &-\left[d-2(r-1)+2(\nu_0-m)\right]\xi_{\nu_0,m}+O\left(\frac{1}{\la Z\ra^{d-2(r-1)+2(\nu_0-m)+\omega}}\right)
\eea
for some universal constant $\omega>0$. We now estimate the norm for $2Z^*\le Z\le 3Z^*$.
Using spherical symmetry for $Z\ge 1$ and $m\ge 1$:
\be
\label{beubeibebiev}
|Z^{m}\pa^m\bar\rho|\lesssim \sum_{j=1}^mZ^{m}\frac{|\pa_Z^j\bar\rho|}{Z^{m-j}}\lesssim  \sum_{j=1}^m Z^{j}|\pa_Z^j\bar\rho|
\ee
and hence using the outer $L^\infty$ bound \eqref{smallglobalboot}:
\bea
\label{beibeibeibiebebv}
\nonumber &&\int_{2Z^*\leq Z\leq 3Z^*}\frac{(p-1)Q|\pa^m\bar\rho|^2+|\pa^m\nabla \Phi|^2}{\la Z\ra^{d-2(r-1)+2(\nu_0-m)+\omega}}\\
\nonumber&\lesssim&\int_{2Z^*\leq Z\leq 3Z^*}\left[\sum_{j=0}^m \left|\frac{Z^{j}\pa_Z^j\bar\rho}{\la Z\ra^{\frac d2+\nu_0+\frac{\omega}2}}\right|^2+\sum_{j=1}^{m+1}\left|\frac{Z^{j}\pa_Z^j\Phi}{(Z^*)^{\nu_0+\frac d2-(r-1)+1+\frac{\omega}{2}}}\right|^2\right]\\
\nonumber& \lesssim & \int_{2Z^*\leq Z\leq 3Z^*}\left[\sum_{j=0}^m \left|\frac{Z^{j}\pa_Z^j\bar\rho}{\rho_P\la Z\ra^{\frac d2+\nu_0+\frac{2(r-1)}{p-1}+\frac{\omega}2}}\right|^2+\sum_{j=1}^{m+1}\left| \la Z\ra^{r-2}\frac{Z^{j}\pa_Z^j\bar\Psi}{\la Z\ra^{\nu_0+\frac{2(r-1)}{p-1}+\frac d2+\frac{\omega}{2}}}\right|^2\right]\\
& \lesssim & \frac{1}{(Z^*)^{\omega+2\left[
\nu_0+\frac{2(r-1)}{p-1}\right]}}\leq e^{-\delta_g\tau}
\eea
using 
the explicit choice from \eqref{venovnoenneneo}: $$2\left(\nu_0+\frac{2(r-1)}{p-1}\right)=\delta_g$$
\noindent\underline{Conclusion} Inserting the above bounds into \eqref{energyidentittyk} yields:
\bee
&&\frac 12\frac{d}{d\tau}\left\{\int (p-1)Q\bar\rho_m^2\xi_{\nu_0,m}+|\nabla \Phi_m|^2\xi_{\nu_0,m}\right\}\\
\nonumber & = &-\int\xi_{\nu_0,m}\left[(p-1)Q\bar\rho_m^2+|\nabla\Phi_m|^2\right]\left[\nu_0+\frac{2(r-1)}{p-1}\right] \\
\nonumber& + & O\left(\int_{Z_0\leq Z\leq 2Z^*} \xi_{\nu_0,m}\left[\sum_{m=0}^{m+1}\frac{|\pa_Z^j\Phi|^2}{\la Z\ra^{2(m+1-j)+2\omega}}+\sum_{j=0}^{m}\frac{Q|\pa_Z^j\bar\rho|^2}{\la Z\ra^{2(m-j)+2\omega}}\right]+e^{-\frac{4\delta_g}{5}\tau}\right)\\
\nonumber&+ &  O\left(\int \xi_{\nu_0,m}|\nabla \Phi_m||\nabla \pa^mG_\Phi|+\int\xi_{\nu_0,m}Q|\bar\rho_m||\pa^mG_\rho|\right)\eee
and hence after summing over $m$:
\bee
&&\frac 12\frac{d}{d\tau}\left\{\sum_{m=0}^{2k_{\flat}}\int (p-1)Q\bar\rho_m^2\xi_{\nu_0,m}+|\nabla \Phi_m|^2\xi_{\nu_0,m}\right\}\\
\nonumber& = & -\left[\nu_0+\frac{2(r-1)}{p-1}\right]\sum_{m=0}^{2k_{\flat}} \int\xi_{\nu_0,m}\left[(p-1)Q\bar\rho_m^2+|\nabla\Phi_m|^2\right]\\
& + & O\left(e^{-\frac{4\delta_g}{5}\tau}+\sum_{m=0}^{2k_{\flat}}\int (p-1)Q\bar\rho_m^2\xi_{\nu_0+\omega,m}+|\nabla \Phi_m|^2\xi_{\nu_0+\omega,m}\right)\\
\nonumber&+ &  \sum_{m=0}^{2k_{\flat}}O\left(\int \xi_{\nu_0,m}|\nabla \Phi_m||\nabla \pa^mG_\Phi|+\int\xi_{\nu_0,m}Q|\bar\rho_m||\pa^mG_\rho|\right)
\eee
Using \eqref{lowsobolev} we conclude 
\bea
\label{estigneononeo}
&&\frac 12\frac{d}{d\tau}\left\{\sum_{m=0}^{2k_{\flat}}\int (p-1)Q\bar\rho_m^2\xi_{\nu_0,m}+|\nabla \Phi_m|^2\xi_{\nu_0,m}\right\}\\
\nonumber& = & -\left[\nu_0+\frac{2(r-1)}{p-1}+O\left(\frac{1}{Z_0^C}\right)\right]\sum_{m=0}^{2k_{\flat}} \int\xi_{\nu_0,m}\left[(p-1)Q\bar\rho_m^2+|\nabla\Phi_m|^2\right]\\
\nonumber&+ & O\left(e^{-\frac{4\delta_g}{5}}+\sum_{m=0}^{2k_{\flat}} \int \xi_{\nu_0,m}|\nabla \pa^mG_\Phi|^2+\int (p-1)Q\xi_{\nu_0,m}|\pa^mG_\rho|^2\right).
\eea
Therefore, using also \eqref{cenoeneonoenevne}, for $Z_0$ large enough and universal and 
$$2\left(\nu_0+\frac{2(r-1)}{p-1}\right)=\delta_g,$$ 
there holds
 \bee
 &&\frac{d}{d\tau}\left\{\sum_{m=0}^{2k_{\flat}}\int (p-1)Q\bar\rho_m^2\xi_{\nu_0,m}+|\nabla \Phi_m|^2\xi_{\nu_0,m}\right\}\\
\nonumber& \le & -\frac{9}{10}\delta_g\sum_{m=0}^{2k_{\flat}} \int\xi_{\nu_0,m}\left[(p-1)Q\bar\rho_m^2+|\nabla\Phi_m|^2\right]+C e^{-\frac{4\delta_g\tau}{5}}.
\eee
Integrating in time and using \eqref{improvedsobolevlowinit} yields \eqref{improvedsobolevlowbetter}.
 \end{proof}

%%%%%%%%%%%%%%%%%%%%%%%%%%%%%%%%%%%%%%

\subsection{Closing the bootstrap and proof of Theorem \ref{thmmain}}
\label{proofthmmamin}
%%%%%%%%%%%%%%%%%%%%%%%%%%%%%%%%%%%%%%%%

At this point all the required bounds of the bootstrap Proposition \ref{propboot} have been improved. This now will immediately imply Theorem \ref{thmmain}.

 \begin{proof}[Proof of Theorem \ref{thmmain}] 
We conclude the proof with a classical topological argument \`a la Brouwer. The bounds of sections 5,6,7,8 have been  shown to hold for all initial data on the time 
 interval $[\tau_0, \tau_0+\Gamma]$ with $\Gamma$ large. Moreover, as explained in the proof of Lemma \ref{lemmalightcone},
 they can be immediately propagated to any time $\tau^*$ after a choice of projection of initial data on the subspace of unstable 
 modes $PX(\tau_0)$. This choice is dictated by Lemma \ref{browerset}. A continuity argument implies $\tau^*=\infty$ for this data, and the conclusions of Theorem \ref{thmmain} follow.
 \end{proof}

%%%%%%%%%%%%%%%%%%%%%%%%%%%%%%%%%%%%%%%%%%%%%%%%%%%%%%%%%%%
%%%%%%%%%%%%%%%%%%%%%%%%%%%%%%%%%%%%%%%%%%%%%%%%%%%%%%%%%%%

\begin{appendix}

%%%%%%%%%%%%%%%%%%%%%%%%%%%%%%%%%%%%%%%%%%%%%%%%%%%%%%%%%%%
%%%%%%%%%%%%%%%%%%%%%%%%%%%%%%%%%%%%%%%%%%%%%%%%%%%%%%%%%%%

%%%%%%%%%%%%%%%%%%%%%%%%%%%%%%%%%%%%%%%%%%%%%%%%%%%%%%%%%%%
%%%%%%%%%%%%%%%%%%%%%%%%%%%%%%%%%%%%%%%%%%%%%%%%%%%%%%%%%%%

\section{Hardy inequality}

%%%%%%%%%%%%%%%%%%%%%%%%%%%%%%%%%%%%%%%%%%%%%%%%%%%%%%%%%%%
%%%%%%%%%%%%%%%%%%%%%%%%%%%%%%%%%%%%%%%%%%%%%%%%%%%%%%%%%%%

\begin{lemma}[Hardy inequality]
There holds for $\alpha\neq 2-d$ and $r_0>0$:
\be
\label{hardyinetatiliy}
\int_{|x|\ge r_0}|x|^{\alpha-2}|u|^2dx\le c_{r_0,\alpha}\|u\|_{L^\infty(r=r_0)}^2+\frac{4}{(d-2+\alpha)^2}\int_{|x|\ge r_0}|x|^{\alpha}|\nabla u|^2dx.
\ee
\end{lemma}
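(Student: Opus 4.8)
The plan is to prove the weighted Hardy inequality \eqref{hardyinetatiliy} by the standard integration-by-parts (divergence-identity) method, carried out on the radial variable since the domain $\{|x|\ge r_0\}$ and the weights $|x|^{\alpha}$, $|x|^{\alpha-2}$ are radially symmetric. First I would reduce to the one-dimensional problem: using spherical coordinates, $\int_{|x|\ge r_0}|x|^{\alpha-2}|u|^2\,dx = c_d\int_{r_0}^\infty r^{d-3+\alpha}|u|^2\,r\,dr$ (after absorbing the sphere measure), so it suffices to bound $\int_{r_0}^\infty r^{\beta-2}|u|^2\,dr$ in terms of a boundary term and $\int_{r_0}^\infty r^{\beta}|u_r|^2\,dr$ with $\beta = d-1+\alpha$, for which the relevant non-degeneracy condition $\beta-1\neq 0$ is exactly $\alpha\neq 2-d$.

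\textbf{Key steps.} The core computation: write $r^{\beta-2} = \frac{1}{\beta-1}\frac{d}{dr}\left(r^{\beta-1}\right)$ (valid since $\beta\neq 1$), then integrate by parts
\[
\int_{r_0}^\infty r^{\beta-2}|u|^2\,dr = \frac{1}{\beta-1}\left[r^{\beta-1}|u|^2\right]_{r_0}^\infty - \frac{2}{\beta-1}\int_{r_0}^\infty r^{\beta-1}u\,u_r\,dr.
\]
Here one must justify that the boundary term at $+\infty$ vanishes — this follows because $u$ decays sufficiently (the inequality is first proved for $u\in C_c^\infty$ or Schwartz-type functions and then extended by density; in the application $u$ is a derivative of $\rho_T$ or $u_T$ with the decay furnished by the bootstrap bounds). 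The boundary term at $r_0$ contributes $-\frac{1}{\beta-1}r_0^{\beta-1}|u(r_0)|^2$, whose absolute value is $\le c_{r_0,\alpha}\|u\|_{L^\infty(r=r_0)}^2$. For the remaining term, apply Cauchy–Schwarz:
\[
\left|\frac{2}{\beta-1}\int_{r_0}^\infty r^{\beta-1}u\,u_r\,dr\right| \le \frac{2}{|\beta-1|}\left(\int_{r_0}^\infty r^{\beta-2}|u|^2\,dr\right)^{1/2}\left(\int_{r_0}^\infty r^{\beta}|u_r|^2\,dr\right)^{1/2}.
\]
Denoting $A = \int_{r_0}^\infty r^{\beta-2}|u|^2\,dr$ and $B = \int_{r_0}^\infty r^{\beta}|u_r|^2\,dr$, one gets $A \le c_{r_0,\alpha}\|u\|_{L^\infty(r=r_0)}^2 + \frac{2}{|\beta-1|}A^{1/2}B^{1/2}$; absorbing $\frac{2}{|\beta-1|}A^{1/2}B^{1/2}\le \frac12 A + \frac{2}{(\beta-1)^2}B$ and multiplying through by $2$ yields $A \le 2c_{r_0,\alpha}\|u\|_{L^\infty}^2 + \frac{4}{(\beta-1)^2}B$, which is exactly the claimed constant $\frac{4}{(d-2+\alpha)^2}$ since $\beta-1 = d-2+\alpha$. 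Translating back to $\mathbb R^d$ gives \eqref{hardyinetatiliy}.

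\textbf{Main obstacle.} The only delicate point is the vanishing of the boundary term at infinity, i.e.\ ensuring $\liminf_{r\to\infty} r^{\beta-1}|u(r)|^2 = 0$; when $\beta-1>0$ (the case relevant to the Navier–Stokes application, since $\alpha$ and $d$ are positive), one needs genuine decay of $r^{(\beta-1)/2}u$, not merely $u\in L^2$-type control. I would handle this by proving the inequality first for compactly supported (or Schwartz) $u$, where the boundary term is trivially zero, and then extending to the class of functions arising in the application by density / a truncation argument — in the body of the paper the estimate is only ever invoked on such decaying functions (the high-derivative remainders of $\rho_T,u_T$ with the bootstrap decay), so this causes no difficulty. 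A secondary bookkeeping point is that $r_0>0$ keeps all negative powers of $r$ bounded near the inner endpoint, so there is no singularity issue at $r=r_0$. No further subtleties are expected; the argument is entirely elementary once the $1$D reduction is in place.
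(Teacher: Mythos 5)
Your proof is correct and takes essentially the same approach as the paper: the paper computes $\nabla\cdot(r^{\alpha-1}e_r)=(d-2+\alpha)r^{\alpha-2}$ directly in $\mathbb{R}^d$, integrates by parts, applies Cauchy--Schwarz, and absorbs; your radial reduction with $\beta=d-1+\alpha$ is the same argument written in the one-dimensional variable. (There is a harmless typo in your reduction — the volume element should give $r^{d-3+\alpha}|u|^2\,dr$, without the extra factor of $r$ — but your choice of $\beta$ is consistent with the intended integrand, so nothing downstream is affected.)
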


\begin{proof} 
We compute $$\nabla \cdot (r^{\alpha-1}e_r)=\frac{1}{r^{d-1}}\pa_r(r^{d-1+\alpha-1})=(d-2+\alpha)r^{\alpha-2}$$ and hence
\bee
&&\int_{|x|\ge r_0}|x|^{\alpha-2}u^2dx=\frac{1}{d-2+\alpha}\int_{|x|\ge r_0}u^2\nabla \cdot (r^{\alpha-1}e_r)dx\\
&=&\frac{1}{d-2+\alpha}\int_{|x|=r_0}r^{\alpha-1}u^2d\sigma -\frac{2}{d-2+\alpha}\int_{|x|\ge r_0}r^{\alpha-1}\pa_ru udx\\
& \leq & c_{r_0}\|u\|_{L^\infty(r=r_0)}^2+\frac{2}{|d-2+\alpha|}\left(\int_{|x|\ge r_0}|x|^{\alpha-2}u^2dx\right)^{\frac 12}\left(\int_{|x|\ge r_0}|x|^{\alpha}|\nabla u|^2dx\right)^{\frac 12}
\eee
and \eqref{hardyinetatiliy} is proved using H\"older and optimizing the constant.
\end{proof}

%%%%%%%%%%%%%%%%%%%%%%%%%%%%%%%%%%%%%%%%%%%%%%%%%%%%%%%%%%%
%%%%%%%%%%%%%%%%%%%%%%%%%%%%%%%%%%%%%%%%%%%%%%%%%%%%%%%%%%%

\section{Commutator for $\Delta^k$}

%%%%%%%%%%%%%%%%%%%%%%%%%%%%%%%%%%%%%%%%%%%%%%%%%%%%%%%%%%%
%%%%%%%%%%%%%%%%%%%%%%%%%%%%%%%%%%%%%%%%%%%%%%%%%%%%%%%%%%%

\begin{lemma}[Commutator for $\Delta^k$]
Let $k\ge 1$, then for any two smooth function $V,\Phi$, there holds:
\be
\label{estimatecommutatorvlkeveln}
[\Delta^k,V]\Phi-2k\nabla V\cdot\nabla \Delta^{k-1}\Phi=\sum_{|\alpha|+|\beta|=2k,|\beta|\le 2k-2}c_{k,\alpha,\beta}\nabla^\alpha V\nabla^\beta\Phi.
\ee
where $\nabla^\alpha=\pa_1^{\alpha_1}\dots\pa^{\alpha_d}_d$, $|\alpha|=\alpha_1+\dots+\alpha_d.$
\end{lemma}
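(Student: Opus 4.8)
The statement is a purely combinatorial identity about iterated Laplacians, so the plan is to proceed by induction on $k$, extracting at each stage the ``top'' commutator term $2k\,\nabla V\cdot\nabla\Delta^{k-1}\Phi$ and showing that everything left over is a sum of terms $\nabla^\alpha V\,\nabla^\beta\Phi$ with $|\alpha|+|\beta|=2k$ and at least two derivatives removed from $\Phi$, i.e.\ $|\beta|\le 2k-2$. First I would record the base case $k=1$: the Leibniz rule gives
\be
[\Delta,V]\Phi=\Delta(V\Phi)-V\Delta\Phi=2\nabla V\cdot\nabla\Phi+(\Delta V)\Phi,
\ee
so the leftover is the single term $(\Delta V)\Phi$, which has $|\alpha|=2$, $|\beta|=0\le 0$, as required.

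For the inductive step I would write $\Delta^{k+1}(V\Phi)=\Delta\big(\Delta^k(V\Phi)\big)$ and substitute the induction hypothesis
\be
\Delta^k(V\Phi)=V\Delta^k\Phi+2k\,\nabla V\cdot\nabla\Delta^{k-1}\Phi+\sum_{|\alpha|+|\beta|=2k,\ |\beta|\le 2k-2}c_{k,\alpha,\beta}\nabla^\alpha V\,\nabla^\beta\Phi.
\ee
Now apply $\Delta$ to each piece. The term $\Delta(V\Delta^k\Phi)=V\Delta^{k+1}\Phi+2\nabla V\cdot\nabla\Delta^k\Phi+(\Delta V)\Delta^k\Phi$; here $V\Delta^{k+1}\Phi$ is the term moved to the left side, $2\nabla V\cdot\nabla\Delta^k\Phi$ contributes half of the desired top term, and $(\Delta V)\Delta^k\Phi$ is an admissible leftover ($|\alpha|=2$, $|\beta|=2k\le 2(k+1)-2$). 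The term $\Delta\big(2k\,\nabla V\cdot\nabla\Delta^{k-1}\Phi\big)$ expands by Leibniz into $2k\,\nabla V\cdot\nabla\Delta^k\Phi$ (the other half of the top term, using $\Delta\nabla\Delta^{k-1}\Phi=\nabla\Delta^k\Phi$ and that $\Delta$ of a dot product of two vector fields produces, besides the two pure second-derivative-shifts, cross terms) plus terms in which at least one derivative has landed on $V$; careful bookkeeping shows each such term is of the form $\nabla^\alpha V\,\nabla^\beta\Phi$ with $|\alpha|+|\beta|=2(k+1)$ and $|\beta|\le 2k+1$ — but one must check the worst case $|\beta|=2k+1$ cannot occur: it would require all new derivatives plus the original $2k-1$ to fall on $\Phi$, which is incompatible with at least one derivative having been forced onto $V$, so in fact $|\beta|\le 2k=2(k+1)-2$. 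Finally $\Delta$ of the remaining sum $\sum c_{k,\alpha,\beta}\nabla^\alpha V\nabla^\beta\Phi$ with $|\beta|\le 2k-2$ produces, by Leibniz, terms with $|\beta|\le 2k\le 2(k+1)-2$, all admissible. Collecting, the two half-contributions combine to $2(k+1)\,\nabla V\cdot\nabla\Delta^k\Phi$ and all other terms have the claimed structure.

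The only genuinely delicate point — the ``main obstacle'' — is the degree bookkeeping in $\Delta\big(\nabla V\cdot\nabla\Delta^{k-1}\Phi\big)$: one must verify that the term with the maximal number of derivatives on $\Phi$ is exactly $\nabla V\cdot\nabla\Delta^k\Phi$ (so that it merges cleanly with the other top term) and that every other term genuinely loses at least two $\Phi$-derivatives relative to order $2k+2$. Writing $\Delta(fg)=(\Delta f)g+2\nabla f\cdot\nabla g+f\Delta g$ componentwise for $f=\partial_i V$, $g=\partial_i\Delta^{k-1}\Phi$ and summing over $i$ makes this transparent: the term $\sum_i \partial_i V\,\partial_i\Delta^k\Phi=\nabla V\cdot\nabla\Delta^k\Phi$ is the unique one with a $(2k+1)$-st order factor on $\Phi$, while $\sum_i(\Delta\partial_i V)(\partial_i\Delta^{k-1}\Phi)$ and $\sum_i 2\nabla\partial_i V\cdot\nabla\partial_i\Delta^{k-1}\Phi$ have $\Phi$-orders $2k-1$ and $2k$ respectively, both $\le 2(k+1)-2$. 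Once this is laid out the identity follows, with the constants $c_{k+1,\alpha,\beta}$ determined (non-uniquely, since mixed partials can be regrouped) by the multinomial coefficients arising in the successive Leibniz expansions; the precise values are irrelevant for the applications, only the index constraints matter.
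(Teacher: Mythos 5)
Your proof is correct and follows essentially the same route as the paper: induction on $k$, with the inductive step obtained by applying $\Delta$ to the identity at level $k$ and using Leibniz to collect the two contributions $2\nabla V\cdot\nabla\Delta^k\Phi$ and $2k\nabla V\cdot\nabla\Delta^k\Phi$ into the top term. Your version is in fact slightly more careful: in the base case the paper's displayed line omits the $(\Delta V)\Phi$ term (it is of course harmless, being an admissible remainder with $|\alpha|=2$, $|\beta|=0$), whereas you record it explicitly, and you spell out the componentwise degree bookkeeping in $\Delta(\nabla V\cdot\nabla\Delta^{k-1}\Phi)$ that the paper leaves implicit.
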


\begin{proof} 
We argue by induction on $k$. For $k=1$:
$$\Delta(V\Phi)-V\Delta \Phi=2\nabla V\cdot\nabla \Phi.$$
We assume \eqref{estimatecommutatorvlkeveln} for $k$ and prove $k+1$. Indeed,
\bee
&&\Delta^{k+1}(V\Phi)=\Delta([\Delta^k,V]\Phi+V\Delta^k\Phi)\\
&=&\Delta\left(2k\nabla V\cdot\nabla\Delta^{k-1} \Phi+\sum_{|\alpha|+|\beta|=2k, |\beta|\le 2k-2}c_{k,\alpha,\beta}\nabla^\alpha V\nabla^\beta\Phi+V\Delta^k\Phi\right)=  2k\nabla V\cdot\nabla \Delta^{k}\Phi\\
&+&\sum_{|\alpha|+|\beta|=2k+2, |\alpha|\ge 1}\tilde{c}_{k,\alpha,\beta}\nabla^\alpha V\nabla^\beta\Phi+2k\nabla V\cdot\nabla \Delta^k\Phi+V\Delta^{k+1}\Phi+2\nabla V\cdot\nabla \Delta^k\Phi\\
& = & V\Delta^{k+1}\Phi+2(k+1)\nabla V\cdot\nabla \Delta^{k}\Phi+\sum_{|\alpha|+|\beta|=2k+2, |\alpha|\ge 1}c_{k+1,\alpha,\beta}\nabla^\alpha V\nabla^\beta\Phi
\eee
and \eqref{estimatecommutatorvlkeveln} is proved.
\end{proof}

\end{appendix}

\end{document}